\documentclass[11pt]{amsart}

\usepackage[numbered]{bookmark}
\usepackage{changepage} 
\usepackage[T1]{fontenc}
\usepackage[utf8]{inputenc}
\usepackage{graphics, graphicx}
\usepackage{amsmath, amsfonts, amsthm, amssymb, mathrsfs, mathtools, mathabx}
\usepackage{verbatim}
\usepackage{mathrsfs}

\usepackage{enumerate}
\usepackage{textcase}

\usepackage{xcolor}
\usepackage{cite}
\usepackage{wrapfig}
\usepackage{subcaption}
\usepackage[font=small]{caption}

\newtheorem{theorem}{Theorem}[section]
\newtheorem{definition}[theorem]{Definition}
\newtheorem{lemma}[theorem]{Lemma}
\newtheorem{proposition}[theorem]{Proposition}
\newtheorem{corollary}[theorem]{Corollary}

\newtheorem{remark}{Remark}[section]

\newtheorem{theoremx}{Theorem}

\newtheorem{corollaryx}{Corollary}

\newtheorem{theoremxx}{Theorem}

\numberwithin{equation}{section}

\newcommand{\N}{\mathbb{N}}

\newcommand{\R}{\mathbb{R}}
\newcommand{\C}{\mathbb{C}} 
\newcommand{\T}{\mathbb{T}}
\newcommand{\Z}{\mathbb{Z}}

\newcommand{\Lo}{\ensuremath{\mathcal{L}_\omega}}

\newcommand{\LO}{\mathfrak{L}_{\omega,\Omega}}
\newcommand{\Mell}{\ensuremath{M_\Omega^{\mathrm{ell}}}}
\newcommand{\Mhyp}{\ensuremath{M_\Omega^{\mathrm{hyp}}}}
\newcommand{\Mpar}{\ensuremath{M_\Lambda^{\mathrm{par}}}}
\newcommand{\Spol}{\ensuremath{\mathcal{S}^{T}}}

\newcommand{\dec}{\ensuremath{l}}

\newcommand{\Lell}{\mathcal{L}}

\newcommand{\Function}[5]{\begin{array}{cccc} #1 : & #2 & \rightarrow & #3 \\ & #4 & \mapsto & #5 \end{array}}

\definecolor{modgreen}{rgb}{0.00,0.50,0.00}

\date{}
\author{Donato Scarcella}
\address{Departament de Matemàtiques i Informàtica, Universitat de Barcelona, Gran Via de les Corts Catalanes 585, 08007 Barcelona, Spain}
\email{donato.scarcella@ub.edu}

\author{Frank Trujillo}
\address{Centre de Mathématiques Laurent Schwartz (CMLS), CNRS, École polytechnique, Institut Polytechnique de Paris, Palaiseau, France.}
\email{frank.trujillo-amezquita@cnrs.fr}

\begin{document}
\title[Non-autonomous KAM theory II: Normally parabolic case]{Non-autonomous KAM theory for lower dimensional invariant tori (II):
\NoCaseChange{Normally parabolic case}}
\maketitle

\begin{abstract}
    Dynamical systems subject to non-autonomous perturbations decaying in time arise naturally in many physical contexts, including laser-molecule interactions, epidemiological models, nonlinear oscillatory systems, and celestial mechanics. In this  paper, we consider time-dependent perturbations decaying polynomially fast in time of Hamiltonian systems having a lower-dimensional isotropic normally parabolic invariant torus supporting quasiperiodic solutions. We prove the existence of invariant manifolds in the extended phase space, and determine the asymptotic behavior of the associated transverse dynamics. The above results are obtained for Hölder, smooth, and analytic Hamiltonian systems.
\end{abstract}

\section{Introduction}
\label{sc:introduction}
This paper is the second part of our study on non-autonomous KAM theory for lower-dimensional invariant tori. While the first part is devoted to the normally elliptic and hyperbolic cases~\cite{scarcella_KAMST}, here we consider the normally parabolic setting. We refer the reader to that paper for the motivation, the relevant background, and a comprehensive overview of the existing literature.

More precisely, we study time-dependent perturbations of Hamiltonians having a lower dimensional isotropic normally parabolic invariant torus with quasiperiodic solutions. Under suitable polynomial decay in time of the perturbations, we prove the existence of an \textit{asymptotic KAM torus} (see Definition \ref{def:hyp_asym_KAM_tori} below). Heuristically, it consists of an invariant manifold in the extended phase space whose dynamics converge, as time tends to infinity, to the quasiperiodic solutions of the unperturbed system.

Besides proving the existence of asymptotic KAM tori, the main objective of the present work is to investigate the asymptotic behaviour of the transverse dynamics along these invariant manifolds. More precisely, we study the solutions of the linearized Hamiltonian vector field along the trajectories contained in the asymptotic KAM torus and show that the corresponding cocycle is asymptotically conjugate to an explicit model cocycle displaying parabolic behavior. Consequently, the asymptotic transverse dynamics is of the same type as that of the unperturbed invariant torus. We call these objects \textit{asymptotically parabolic KAM tori} (see Definition \ref{def:ell_hyp_par_trasv_dyn_asym_KAM} below). The terminology normally parabolic refers to the fact that the linearized Hamiltonian vector field along the invariant torus has vanishing normal eigenvalues. In contrast with the normally elliptic and hyperbolic cases, this degeneracy leads to polynomial, rather than oscillatory or exponential, behavior of the transverse dynamics.

The purpose of this paper, together with~\cite{scarcella_KAMST}, is to develop a non-autonomous KAM theory for lower-dimensional invariant tori. The classical KAM theory, initiated by Kolmogorov~\cite{Kol54}, Arnold~\cite{Arn63a}, and Moser~\cite{M62} shows the persistence of quasiperiodic invariant tori in nearly-integrable Hamiltonian systems.  Over the past seventy years, this theory has undergone numerous developments, generalizations, and refinements, including persistence results for lower-dimensional invariant tori with normally elliptic and hyperbolic behavior~\cite{El88, G74, Zeh76, M18, M19, Ru01}.

To the best of our knowledge, there are no results in terms of persistence of normally parabolic invariant tori. Nevertheless, such invariant objects naturally arise in several problems of celestial mechanics, in particular in the study of parabolic motions at infinity in the $N$-body problem, where they play a fundamental role in the description of the asymptotic dynamics~\cite{BFM20, BFM24}. Moreover, they provide the geometric framework underlying several instability and chaotic phenomena, including Arnold diffusion~\cite{DKdlRS19, GPS25}.

A recent direction of research concerns perturbations depending explicitly on time and decaying as time tends to infinity. In a series of works~\cite{FW14,CdlL15, Sca22, Sca22b, Sca22c, Sca25}, time-dependent perturbations of Hamiltonians possessing a Lagrangian invariant torus supporting quasiperiodic dynamics are considered. In this non-autonomous framework, one cannot expect the persistence of invariant tori under the perturbation. Instead, it is possible to prove the existence of asymptotic KAM tori. We point out that, unlike classical KAM theory, this non-autonomous framework does not require non-degeneracy conditions or arithmetic assumptions on the frequency vector. This is due to the absence of small denominators in the homological equations arising in time-dependent problems.

As mentioned before, in this work, we consider time-dependent Hamiltonian vector fields converging polynomially fast in time to Hamiltonian vector fields having a lower-dimensional isotropic normally parabolic invariant torus that supports quasiperiodic solutions. We prove the existence of asymptotic KAM tori and asymptotically parabolic KAM tori in the Hölder (Theorems \ref{Thm:par_Csigma_0} and \ref{Thm:par_Csigma}), $C^\infty$ (Theorem \ref{thm:KAM_smooth}) and analytic (Theorems  \ref{Thm:par_Csigma_0_analy} and \ref{Thm:par_analy}) framework, the proofs in the analytic setting being very similar to those in the Hölder case.

The normally parabolic setting naturally splits into two regimes, although in both cases all transverse eigenvalues of the linearized unperturbed Hamiltonian vector field along the invariant torus vanish. Heuristically, the distinction lies in the transverse dynamics near the invariant torus of the unperturbed system. In one case, the transverse dynamics is degenerate, whereas in the other a non-trivial linear behavior appears. We refer the reader to Section~\ref{sc:analysis_transversal} for further details. For this reason, these two cases are treated separately (see Theorems \ref{Thm:par_Csigma_0} and \ref{Thm:par_Csigma_0_analy} for the degenerate case and Theorems \ref{Thm:par_Csigma} and  \ref{Thm:par_analy} for the general case). The general case requires stronger polynomial decay assumptions, making the proof technically more involved because of the presence of the non-trivial linear transverse components.

In both cases, first we prove the existence of an asymptotic KAM torus (see Item \ref{thm:C_existence} of Theorems \ref{Thm:par_Csigma_0} and \ref{Thm:par_Csigma}). The proof follows the same general strategy as in the normally elliptic and hyperbolic case~\cite{scarcella_KAMST}. The main difficulty is again to reformulate the invariance equation in a functional framework suitable for applying a quantitative version of the Implicit Function Theorem. This requires a careful analysis of the Banach spaces involved and of the corresponding linearized problem, which is solved through suitable cohomological equations. In the general case, however, the analysis of both the invariance equation and its linearization becomes considerably more technical and computationally involved. We point out that no smallness assumption on the perturbative terms are assumed. We prove the existence of asymptotic KAM tori for time sufficiently large. Then, if the flow is well-defined for all $t \in \R$ one could extend the set of definition using the flow (see Definition \ref{def:Csigma_KAM_torus} and~\eqref{eq:asymptotic_KAM_flow} below).

As in~\cite{scarcella_KAMST}, the novely of the present work is to prove that the above asymptotic KAM torus is asymptotically parabolic (see Item \ref{thm:C_transverse} of Theorems \ref{Thm:par_Csigma_0} and \ref{Thm:par_Csigma}). More precisely, assuming a slightly stronger polynomial decay in time of the perturbation, we prove that the transverse dynamics of the linearized Hamiltonian vector field along the trajectories contained in the asymptotic KAM torus has an asymptotically parabolic character. To this end, we construct a $C^1$ non-autonomous matrix that asymptotically conjugates the corresponding cocycle to the cocycle of a suitable linear model exhibiting parabolic transverse dynamics. The proof relies on the introduction of a nonlinear functional equation whose unknown is the conjugating matrix. The main difficulty consists in identifying a formulation for this equation whose linearization can be analyzed in appropriate weighted Banach spaces. This eventually reduces to solving a coupled system of cohomological equations with suitable decay estimates, allowing the application of a quantitative Implicit Function Theorem to establish the desired asymptotic conjugacy. While the overall strategy is the same in both parabolic regimes, the general case requires a considerably more delicate analysis due to the presence of a non-trivial linear transverse dynamics.

In addition, under slightly stronger polynomial decay assumptions, we show that the asymptotic transverse dynamics is completely determined by that of the limiting autonomous Hamiltonian system (see Corollaries \ref{cor:par_deg} and \ref{cor:par}). In particular, we establish a one-to-one asymptotic correspondence between the transverse solutions of the perturbed and limiting linearized Hamiltonian vector fields.

 The interest of this result lies, to our knowledge, in the fact that no analogous persistence theory currently exists for isotropic normally parabolic invariant tori within the classical KAM framework. Although the overall strategy follows the approach developed in~\cite{scarcella_KAMST}, extending it to the parabolic setting is far from straightforward: the general case requires substantially stronger decay assumptions and a considerably more delicate and computationally involved analysis.

The paper is organized as follows. In Section \ref{sc:results}, we state the main results of this work. Section \ref{sc:analysis_transversal} studies the asymptotic properties of a suitable linear model displaying parabolic transverse dynamics. In Section \ref{sc:criteria_asym_dyn}, we provide sufficient conditions determining the asymptotic character of the transverse dynamics associated with an asymptotic KAM torus. Section \ref{sec:Proof_Theorem_Par_0} containes the proofs of Theorems \ref{Thm:par_Csigma_0}, \ref{Thm:par_Csigma_0_analy} and Corollary \ref{cor:par_deg}, whereas the proofs of Theorems \ref{Thm:par_Csigma}, \ref{Thm:par_analy} and Corollary \ref{cor:par}  are given in Section \ref{sec:Proof_Theorem_Par}. 

\section{Main results}
\label{sc:results}

\subsection{Preliminaries}

Throughout this article, we will often consider compositions of the form $f(g(\cdot, t), t)$ where $f(\cdot, t)$ and $g(\cdot, t)$ are two (possibly) time-dependent maps. Notice that, using the notation above, this map is formally defined as $(\cdot, t) \mapsto f^t \circ g^t (\cdot)$. However, to avoid cumbersome notation and since we will not consider any kind of time reparametrizations, given $f(\cdot, t)$ and $g(\cdot, t)$ we will denote simply by $f \circ g$ the map $(\cdot, t) \mapsto f(g(\cdot, t), t)$, whenever it is well-defined. In particular, when $g$ is time-independent, the map $f \circ g$ is given by $(\cdot, t) \mapsto f(g(\cdot), t).$

Let $B \subset \R^{n+2m}$ be an open ball centered at the origin.  For any $T \geq 1$ we denote $I_T = [T, +
\infty)$. Given $f:\T^n \times B \times I_T \to \R$, we define
\begin{equation}\label{def:f0}
    f_0:\T^n \times I_T \to \R, \quad f_0(q,t) = f(q,0,0,t)
\end{equation}
whereas, for a fixed $t \in I_T$, let
\begin{equation}\label{def:ft}
        f^t:\T^n \times B \to \R, \quad f^t(q,p, z) = f(q,p,z,t).
\end{equation}
The notation extends naturally to real-valued and matrix-valued functions.

Let $\sigma \ge 1$, a positive integer $k \ge 0$ and a vector $\omega \in \R^n$, we consider time-dependent vector fields $X^t, X_0^t \in C^{\sigma+k}\left(\T^n \times B\right)$, for all fixed $t \in I_T$, continuous with respect to $t$, and a $C^\sigma$ embedding $\varphi_0:\T^n \to \T^n \times B$ such that   
\begin{equation}
\begin{aligned}\label{def:hyp_asym_KAM_tori}
    &\lim_{t \to +\infty} \left|X^t - X^t_0\right|_{C^{\sigma+k}} = 0,\\
  &X_0 \circ \varphi_0 = \partial_q\varphi_0 \cdot \omega. 
\end{aligned}
\end{equation}

\begin{definition}\label{def:Csigma_KAM_torus}
    We assume that $(X, X_0, \varphi_0)$, defined on $\T^n \times \R^{n + 2m} \times I_T$, satisfy~\eqref{def:hyp_asym_KAM_tori}. A family of embeddings $\varphi:\T^n \times I_{T'} \to \T^n \times B$, with $T' \geq T$, is a $C^\sigma$ \emph{asymptotic KAM torus} associated with $(X, X_0, \varphi_0)$, for some $\sigma \geq 1$, if
    \begin{align}\label{def:cond1_asymKAMtorus}
            &\lim_{t \to +\infty}\left|\varphi^t -\varphi_0\right|_{C^\sigma}=0,\\ \label{def:cond2_asymKAMtorus}
             &X \circ \varphi = \partial_q\varphi \cdot \omega + \partial_t \varphi.
    \end{align}
    
We say that $\varphi$ is a $C^\infty$ asymptotic KAM torus associated with $(X, X_0, \varphi_0)$ if it is a $C^\sigma$ asymptotic KAM torus associated with $(X, X_0, \varphi_0)$, for all $\sigma \geq 1$.
\end{definition}

Let us point out that \eqref{def:cond2_asymKAMtorus} is equivalent to
    \begin{equation}
    \label{eq:asymptotic_KAM_flow}
        \psi_{t_0, X}^t \circ \varphi^{t_0} (q) = \varphi^t(q + \omega(t-t_0)), \qquad \text{ for any } t_0, t \in I_{T'} \text{ and any } q\in \T^n,
    \end{equation}
where $\psi_{t_0, X}^t$ denotes the flow at time $t$ with initial time $t_0$ associated with $X$.

Let $\mathsf{r}, \mathsf{s} \in \N$, we denote by $\mathcal{M}_{\mathsf{r} \times \mathsf{s}}(\R)$ the space of $\mathsf{r} \times \mathsf{s}$ matrices with real coefficients and write $\mathcal{M}_\mathsf{r}(\mathbb{R})$ when $\mathsf{r} = \mathsf{s}$.   Given  $\mathsf{r} \in \N$, we define the standard symplectic matrix as
\begin{equation}\label{def:J}
     J_\mathsf{r} = \begin{pmatrix} 0 & \mathrm{Id}_\mathsf{r} \\ -\mathrm{Id}_\mathsf{r} & 0\end{pmatrix},
\end{equation}
where $\mathrm{Id}_\mathsf{r} \in \mathcal{M}_m(\R)$ denotes the $\mathsf{r} \times \mathsf{r}$ identity matrix. 

Given $\Lambda_1,\dots,\Lambda_m \in \R_{\ge 0}$, we define $\Lambda = \mathrm{diag}(\Lambda_1,\dots, \Lambda_m)$ and denote
\begin{equation}\label{Ms}
   \Mpar = \begin{pmatrix} \Lambda & 0 \\ 0 & 0 \end{pmatrix}.
\end{equation}

To describe a \emph{parabolic} asymptotic behaviour of the linearized system $DX$ along the solutions given by an asymptotic KAM torus $\varphi$ associated to $(X, X_0, \varphi)$, we introduce the following definition.

    \begin{definition}
    \label{def:ell_hyp_par_trasv_dyn_asym_KAM}
    Let $(X, X_0, \varphi_0)$ defined on $\T^n \times B \subseteq \T^n \times \R^{n + 2m}$ satisfying \eqref{def:hyp_asym_KAM_tori} and let $\varphi: \T^n \times [T', +\infty) \to \T^n \times \R^{n + 2m}$ be a $C^\sigma$ asymptotic KAM torus associated with $(X, X_0, \varphi_0)$.
    
    We say that $\varphi$ is said to be asymptotically \emph{parabolic}, if there exist a continuous map $A:\T^n \times [T'', +\infty) \to \mathcal{M}_{2n+2m}(\R)$ of the form
    \begin{equation}
     \label{def:A}
        A^t(q) =\begin{pmatrix} 0_{n \times n} & a_{12}^t(q) & a_{13}^t(q) \\
                                0_{n \times n} & 0_{n \times n} & 0_{n \times 2m} \\
                                0_{2m \times n} & a_{32}^t(q) & J_m \Mpar \end{pmatrix},
    \end{equation}
   with $a_{12}^t:\T^n \to \mathcal{M}_n(\R)$, $a_{13}^t :\T^n \to \mathcal{M}_{n\times 2m}(\R)$, and  $a_{32}^t:\T^n \to \mathcal{M}_{2m\times n}(\R)$,  satisfying
    \begin{equation}
    \label{def:cond1_ellhyppar_asymKAMtorus}
        \sup_{t \in I_{T^{''}}}|A^t|_{C^0}<\infty,
    \end{equation}
    such that the cocycles $\Phi(t; q, t_0)$ and $\Phi_A(t; q, t_0)$, given by the fundamental solutions of the systems
    \begin{gather}
    \label{eq:linearized_system}
    \dot \eta(t) = DX^t \circ \varphi^t (q + \omega t) \eta(t),  \qquad (q, t) \in \T^n \times [T'', +\infty),\\
    \label{eq:Trasv_dyn_AA}
    \dot \xi(t) = A^t(q + \omega t) \xi(t),  \qquad (q, t) \in \T^n \times [T'', +\infty),
    \end{gather}
are \emph{conjugated} by a $C^1$ map $S: \T^n \times [T'', +\infty) \to GL(2n + 2m, \R)$,  that is,
    \begin{equation}
\Phi(t; q, t_0) = S(q + \omega t, t)\Phi_A(t; q, t_0)S(q + \omega t_0, t_0)^{-1},
        \label{eq:conjugated_cocycles}
    \end{equation} 
    and satisfy
    \begin{equation}\label{def:trasv_dyn_limit} 
        \lim_{t \to +\infty}\big|\pi_z \Phi(t; q,t_0) - \pi_z \Phi_A(t; q, t_0)S(q + \omega t_0, t_0)^{-1}\big| = 0.
    \end{equation}
\end{definition}

As we shall see, the linear cocycle $\Phi_{A}(t; q, t_0)$ in the definition above for a map $A$ as in \eqref{def:A} displays a transverse parabolic behaviour similar to that of the linearized system of a Hamiltonian with a parabolic invariant torus (see Section \ref{sc:analysis_transversal}).

Let us recall that, denoting
\[  \Mell = \begin{pmatrix} \Omega & 0 \\ 0 & \Omega \end{pmatrix}, \qquad \Mhyp = \begin{pmatrix} \Omega & 0 \\ 0 & -\Omega \end{pmatrix},\]
for $\Omega = \mathrm{diag}(\Omega_1,\dots, \Omega_m)$ with $\Omega_1,\dots,\Omega_m \in \R_{>0}$, if we replace $\Mpar$ by $\Mell$ (resp. $\Mhyp$) in the definition above, we recover the notion of asymptotically \emph{elliptic} (resp. \emph{hyperbolic}) KAM torus, which was explored in detail in \cite{scarcella_KAMST}.

In this work, we consider the following perturbative Hamiltonian setting: $X_0$ and $X$ correspond to Hamiltonian vector fields $X_{H_0}, X_H$ associated to Hamiltonians $H_0$ and $H$, where $H$ is an appropriate perturbation of $H$. We will work on the extended phase space $\T^n \times \R^n \times \R^{2m} \times \R$, where $n \geq 1, \, m \geq 0$, and whose coordinates we denote by $(q,p,z,t)$. We sometimes denote $z = (x, y) \in \R^m \times \R^m$. Given $T \geq 0$, we introduce the interval $I_T=[T, +\infty) \subset \R$. We endow $\T^n \times \R^n \times \R^{2m}$ with the canonical symplectic form $\sum_{i = 1}^n dq_i\wedge dp_i + \sum_{j = 1}^m dx_i \wedge dy_i$ and denote 
\begin{equation}
    \label{eq:big_symplectic_matrix}
    J = \mathrm{diag}(J_n, J_m) = \begin{pmatrix}
        J_n & 0_{2m \times 2m} \\
        0_{2m \times 2n} & J_m
    \end{pmatrix}
\end{equation}
where we refer to~\eqref{def:J} for the definition of $J_n$ and $J_m$.
Notice that for any Hamiltonian $H$ of class $C^1$ on $(q, p, z)$, defined on an open subset of $\T^n \times \R^n \times \R^{2m} \times \R$, the associated Hamiltonian vector field (with respect to the canonical symplectic form), which throughout this work we denote by $X_H$, is given by
\begin{equation}
    X_H = J \nabla H,
\end{equation}
where $J$ is given by \eqref{eq:big_symplectic_matrix}.

Throughout this work, given $\mathsf{r}, \mathsf{s}\in\N$ and a bilinear form $M :\R^\mathsf{r}\times \R^\mathsf{s} \to \R$, we denote the action of $M$ on $(v_1, v_2)\in \R^\mathsf{r}\times \R^\mathsf{s}$ by $M \cdot (v_1, v_2)$. When, $\mathsf{r} = \mathsf{s}$ and $v_1 = v_2 = v$, we simply write $M \cdot v^2$. This notation extends naturally to $k$-linear forms.

\subsection{Statements of the Main Results}

We will consider a (possibly) non-autonomous Hamiltonian $H_0: \T^n \times B \times I_T \to \R$ of the form 
\begin{equation} \label{def:H_aut}
H_0(q, p, z, t) =  \omega \cdot p + \Mpar \cdot z^2 +  \mathcal{O}(p^2,pz,z^3), 
\end{equation}
where $B \subseteq \R^{n + 2m}$ is an open ball centred at the origin and the matrix $\Mpar$ is given by \eqref{Ms}, for some $\Lambda_1, \dots, \Lambda_n \in \R_{\geq 0}$ so that the \emph{trivial embedding} $\varphi_0$ given by
\begin{equation}\label{def:varphi0=(q,0,0)}
    \varphi_0 : \T^n \to \T^n \times \R^n \times \R^{2m}, \qquad \varphi_0(q) = (q,0,0),
\end{equation}
defines an invariant normally parabolic torus with restricted dynamics given by the continuous translation by $\omega$. We then consider $H = H_0 + P$ for some $P: \T^n \times B \times I_T \to \R$ exhibiting some appropriate decay in time as $t$ goes to infinity.

Let us recall that if we replace $\Mpar$ by $\Mell$ (resp. $\Mhyp$) in \eqref{def:H_aut}, we recover the setting of \emph{elliptic} (resp. \emph{hyperbolic}) invariant torus for the unperturbed system, which we treated in \cite{scarcella_KAMST}.

We will analyze the degenerate case $\Lambda_1,\dots, \Lambda_m =0$  and the general case $\Lambda_1,\dots, \Lambda_m \ge 0$, separately (we refer to Theorems \ref{Thm:par_Csigma_0}, \ref{Thm:par_Csigma_0_analy} and Theorem \ref{Thm:par_Csigma}, \ref{Thm:par_analy}, respectively).

\subsubsection{Hölder setting}\label{sec:Holder_Set}

In order to quantify the regularity of time-dependent smooth functions, we introduce the following Banach spaces. Given $\sigma \ge 0$, $k \in \Z_{\geq0}$, $T\ge 1$ and $\ell >0$, we define
\begin{align}\label{def:S}
\mathcal{S}^{ T}_{(\sigma,k),\ell}
    = \left\{
    f : \T^n \times B \times I_T \to \R \;\middle|\;
    \begin{aligned}
        &f^t \in C^{\sigma+k}(\T^n \times B), \text{ for } t \in I_T; \\
        & \sup_{t \in I_T} \bigl(|f^t|_{C^{\sigma+k}}\, t^\ell \bigr) < \infty; \\
        &\partial^i_{(q,p,z)} f \in C(\T^n \times B \times I_T), \text{ for } 0 \le |i| \le k
    \end{aligned}
    \right\}
\end{align}
and endow it with the norm
\begin{equation}\label{def:norm_S}
    |f|^{T}_{\sigma+k, \ell} = \sup_{t \in I_T}|f^t|_{C^{\sigma+k}}t^\ell,
\end{equation}
where $\partial^{i}_{(q,p,z)}=\partial^{i_1}_{q_1} \cdots\partial^{i_n}_{q_n}\partial_{p_1}^{i_{n+1}}\cdots \partial_{p_1}^{i_{2n}}\partial_{z_1}^{i_{2n+1}}\cdots \partial_{z_{2m}}^{i_{2n+2m}}$ and $|i|= |i|_1,$ for any $i \in \N^{2n + 2m}$ and, as a convention, we adopt $\partial^0_{(q,p,z)}f = f$.

In general, we will use the same notation if the function is defined on $\T^n \times I_T$, as well as for vector-valued or matrix-valued functions. More precisely, we say that a vector-valued or matrix-valued function belongs to $\Spol_{(\sigma, k), \ell}$ if it is the case for each of its components.  The norms of a real-valued function or a matrix are defined as the maximum of those norms of its components. It will be specified by the context. %
When the need arises to specify the codomain of the function, we will denote the space of functions in $\Spol_{(\sigma, k), \ell}$ taking values in $\R^{\mathsf{p}}$ and $\mathrm{Mat}_{{\mathsf{p} \times \mathsf{r}}}(\R)$ by $\Spol_{(\sigma, k), \ell}(\R^\mathsf{p})$ and $\Spol_{(\sigma, k), \ell}(\R^{\mathsf{p} \times \mathsf{r}})$, respectively.

The following theorem concerns the case  $\Lambda_1,\dots, \Lambda_m =0$ with Hölder regularity assumptions.

\begin{theoremx}\label{Thm:par_Csigma_0}
    Fix $\sigma \ge 1$, $\ell >1$ and $l \ge 0$. Let $H_0: \T^n  \times B \times [1, +\infty) \to \R$ of the form
\begin{equation}
\label{eq:initial_hamiltonian_par_deg}
 H_0(q, p, z, t) =  \omega \cdot p  + O(p^2,pz,z^3), \qquad \partial_{(p,z)}^2 H_0 \in \mathcal{S}_{(\sigma,3),0}^{1},
\end{equation}
and $P: \T^n  \times B \times [1, +\infty) \to \R$ of  the form
\begin{equation}\label{eq:perturbation_form_par_deg}
P(q, p, z, t) = a(q, t) + b(q, t) \cdot p + c(q, t) \cdot z + \frac{1}{2}d(q, t)\cdot z^2.
\end{equation}
Then, for $H := H_0 + P$ the following holds.
\begin{enumerate}
    \item \label{thm:Cdeg_existence} Suppose that
    \begin{equation*}
        a \in \mathcal{S}^{1}_{(\sigma, 2),0}, \hspace{2mm} \partial_qa \in \mathcal{S}_{(\sigma, 1), \ell+ \dec + 2}^{1}, \hspace{2mm}  b \in \mathcal{S}_{(\sigma, 2), \ell}^{1}, \hspace{2mm}  c \in \mathcal{S}_{(\sigma, 2), \ell+ \dec + 1}^{ 1}, \hspace{2mm} d \in \mathcal{S}_{(\sigma, 2), \ell}^{ 1}.
    \end{equation*}
      Then, there exists a $C^\sigma$ asymptotic KAM torus $\varphi$ associated with $(X_H, X_{H_0}, \varphi_0)$ of the form
    \begin{equation}
        \label{eq:asymp_KAM_torus_form_par0}
    \begin{gathered}
        \varphi:\T^n \times I_T \to \T^n \times \R^n \times \R^{2m}, \qquad \varphi^t = (\textup{id}_{\T^n} + u^t, v^t, w^t), \qquad T \geq 0, \\
     u \in \mathcal{S}_{(\sigma, 0), \ell-1}^{T}, \quad  v \in \mathcal{S}_{(\sigma, 0), \ell+ \dec + 1}^{T}, \quad w \in \mathcal{S}_{(\sigma, 0), \ell+ \dec}^{T}.
    \end{gathered}
        \end{equation}
        Moreover, any other $C^\sigma$ asymptotic KAM torus associated with $(X_H, X_{H_0}, \varphi_0)$ of the form \eqref{eq:asymp_KAM_torus_form_par0} coincides with $\varphi$ in the intersection of their domains.
    \medskip
    
    \item \label{thm:Cdeg_transverse}  Suppose that $\sigma \ge 2$ and
    \begin{equation}
    \label{eq:decay_pardeg_2}
        a \in \mathcal{S}^{1}_{(\sigma, 2),0}, \hspace{2mm}   \partial_qa \in \mathcal{S}_{(\sigma, 1), \ell+ \dec + 3}^{ 1}, \hspace{2mm}  b \in \mathcal{S}_{(\sigma, 2), \ell}^{ 1}, \hspace{2mm}  c \in \mathcal{S}_{(\sigma, 2), \ell+\dec+2}^{ 1}, \hspace{2mm}  d \in \mathcal{S}_{(\sigma, 2), \ell+\dec+1}^{ 1}.
    \end{equation}
    Then the $C^\sigma$ asymptotic KAM torus $\varphi$ associated with $(X_H, X_{H_0}, \varphi_0)$ given by the first statement is asymptotically parabolic.
\end{enumerate}
\end{theoremx}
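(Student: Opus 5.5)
For Item~\ref{thm:Cdeg_existence}, I will write the invariance equation \eqref{def:cond2_asymKAMtorus} for $\varphi^t=(\mathrm{id}+u^t,v^t,w^t)$ and $X_H=J\nabla(H_0+P)$ componentwise. Denote by $\nabla_\omega=\omega\cdot\partial_q+\partial_t$ the derivative along the translation flow. Since $H_0=\omega\cdot p+O(p^2,pz,z^3)$, the equations take the form
\[
\nabla_\omega u=\partial_pH_0\circ\varphi-\omega+b,\qquad
\nabla_\omega v=-\partial_q H\circ\varphi,\qquad
\nabla_\omega w=J_m\partial_zH\circ\varphi .
\]
The right-hand sides consist of inhomogeneous terms and terms that are at least quadratic in $(v,w)$ or linear with small, decaying coefficients. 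Explicitly:
\begin{itemize}
\item[(i)] The $v$-equation has source $-\partial_q a$, of order $t^{-(\ell+\dec+2)}$, plus terms $\partial_q b\cdot v$, $\partial_q c\cdot w$, $\partial_q d\cdot w^2$, $\partial_qO(v^2,vw,w^3)$.
\item[(ii)] The $w$-equation has source $J_m c$, of order $t^{-(\ell+\dec+1)}$, plus $J_m d\, w$ and $J_m\partial_zO(p^2,pz,z^3)$. The latter is $O(v,w^2)$, and this is the only place where the degeneracy $\Lambda=0$ enters: there is no linear term $J_m\Mpar w$.
\item[(iii)] The $u$-equation has source $b$, of order $t^{-\ell}$, plus $O(v,w)$.
\end{itemize}
The key tool is the cohomological equation $\nabla_\omega f=g$ with $g\in\Spol_{(\sigma,0),\mu}$ and $\mu>1$. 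It is solved by integrating from infinity,
\[
f(q,t)=-\int_t^\infty g(q+\omega(s-t),s)\,ds ,
\]
which gives $f\in\Spol_{(\sigma,0),\mu-1}$ with $|f|^T_{\sigma,\mu-1}\le C|g|^T_{\sigma,\mu}$. There are no small divisors here, and this operator is the unique solution vanishing at infinity. This is exactly the loss of one power seen in \eqref{eq:asymp_KAM_torus_form_par0}: $v$ at $\ell+\dec+1$, $w$ at $\ell+\dec$, and $u$ at $\ell-1$.

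I will then set up a functional $\mathcal F(u,v,w)$ on $\Spol_{(\sigma,0),\ell-1}\times\Spol_{(\sigma,0),\ell+\dec+1}\times\Spol_{(\sigma,0),\ell+\dec}$, defined by the integral operators applied to the right-hand sides, and look for a fixed point. Choosing $T$ large makes the weights $t^{-\ell}$ etc. produce small factors, such as $T^{-(\ell-1)}$ times the norms of $b,d$. So no smallness of $P$ is needed, only that $T$ is large.

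I need to check that each nonlinear term lands in the right weighted space.
\begin{itemize}
\item In the $w$-equation, the $v$-term $\partial^2_{pz}H_0\cdot v$ decays like $t^{-(\ell+\dec+1)}$. This is fine because $\ell+\dec+1>\ell+\dec+1-0$, and after integration it loses one power, giving $\ell+\dec$.
\item The term $dw$ gives $t^{-\ell-(\ell+\dec)}$, which is better than needed.
\item In the $u$-equation, $\partial_pH_0\circ\varphi-\omega=O(v,w)$ gives decay at least $t^{-(\ell+\dec)}\le t^{-\ell}$.
\end{itemize}
Compositions with $\mathrm{id}+u$ are controlled in $C^\sigma$ using the $k=1,2,3$ extra derivatives assumed on $H_0,P$. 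These extra derivatives give Lipschitz bounds of the difference $\mathcal F(x)-\mathcal F(x')$ in $C^\sigma$, using the standard composition estimates for Hölder functions.

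I will apply the quantitative implicit function theorem (equivalently, a contraction on a ball) in these spaces. This gives existence of $\varphi$ for $t\ge T$. It also gives uniqueness within the class \eqref{eq:asymp_KAM_torus_form_par0}: any such torus is a fixed point of $\mathcal F$, because the vanishing at infinity forces the integral representation, and for $T$ large the contraction ball contains it. Restricting to larger $T$ then extends uniqueness to the intersection of the domains.

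For Item~\ref{thm:Cdeg_transverse}, I compute $DX_H\circ\varphi^t(q+\omega t)$. By the form of $H$, it equals the model matrix $A$ of \eqref{def:A} with $\Mpar=0$, plus a remainder $R^t$. The entries of $A$ are
\[
a_{12}=\partial^2_{pp}H_0\circ\varphi,\qquad a_{13}=\partial^2_{pz}H_0\circ\varphi,\qquad a_{32}=J_m\partial^2_{zp}H\circ\varphi .
\]
The remainder $R^t$ gathers the entries $\partial^2_{q\cdot}H\circ\varphi$, $\partial^2_{zz}H\circ\varphi$ and $\partial_qb$. These decay by the strengthened hypotheses \eqref{eq:decay_pardeg_2}. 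In particular, $\partial^2_{zz}H\circ\varphi=d+O(v,w)$ decays like $t^{-(\ell+\dec+1)}$, which is why $d$ needs the extra power. We need $\sigma\ge2$ so that $DX_H\circ\varphi$ is $C^{\sigma-1}$ with $\sigma-1\ge1$.

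I will look for $S=\mathrm{Id}+\Delta$ satisfying
\[
\nabla_\omega \Delta=(A+R)(\mathrm{Id}+\Delta)-(\mathrm{Id}+\Delta)A ,
\]
that is, $\nabla_\omega\Delta-[A,\Delta]=R+R\Delta$. Then \eqref{eq:conjugated_cocycles} holds automatically.

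This is the main obstacle. The operator $\Delta\mapsto[A,\Delta]$ is not small, since $A$ has $O(1)$ nilpotent blocks, so this is not a plain cohomological equation. Moreover, $\Phi_A$ grows polynomially: by Section~\ref{sc:analysis_transversal}, in the degenerate case its entries are of size $O(t)$, or $O(t^2)$ for the $q$-block. My plan is to exploit the block-triangular (nilpotent) structure of $A$. Ordering the blocks as $(p)$, then $(z)$, then $(q)$, I will solve the coupled cohomological equations block by block, each block by the integral operator above. Each step costs one power of $t$ for the integration, and the $O(1)$ coupling loses at most one power at each of at most two levels. The extra power of decay in \eqref{eq:decay_pardeg_2} compared to Item~\ref{thm:Cdeg_existence} is meant to pay for this. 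I will then apply the IFT in weighted spaces where $\Delta$ decays, so that $S$ is $C^1$ and invertible for $t$ large.

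Finally, \eqref{def:trasv_dyn_limit} follows from
\[
\pi_z\Phi-\pi_z\Phi_AS^{-1}=\pi_z\Delta\,\Phi_AS^{-1}.
\]
I will check that the decay of $\pi_z\Delta$ beats the polynomial growth of $\Phi_A$ given by Section~\ref{sc:analysis_transversal}, possibly using the criteria of Section~\ref{sc:criteria_asym_dyn}.
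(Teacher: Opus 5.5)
Your Item~\ref{thm:Cdeg_existence} follows essentially the paper's argument: an IFT for $X_{H_0+P}\circ\varphi-\Lo\varphi$, with the triangular linearization inverted by integrating $\Lo$ from infinity. One correction concerns smallness. The term $\partial_p\partial_zH_0\cdot v$ in the $w$-equation (and, for $\dec=0$, the $w$-term in the $u$-equation) has an $O(1)$ coefficient, and taking $T$ large does not make it small. These terms are handled because $D_\varphi\mathcal{F}^T(0,0)$ is triangular with bounded inverse, not by a plain contraction of $\Lo^{-1}$ applied to the right-hand side.

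Item~\ref{thm:Cdeg_transverse} has a genuine gap. You fix the model in advance, $a_{12}=\partial^2_{pp}H_0\circ\varphi$, $a_{13}=\partial^2_{pz}H_0\circ\varphi$, $a_{32}=J_m\partial^2_{zp}H\circ\varphi$, and look for $S=\mathrm{Id}+\Delta$ with $\Delta\to0$.
\begin{itemize}
\item The $(q,q)$ and $(p,p)$ blocks of your $R$ contain $\pm\partial_qb\circ\varphi$, which is only $O(t^{-\ell})$. The hypotheses \eqref{eq:decay_pardeg_2} do not strengthen $b$.
\item Your triangular scheme then gives $\Delta_{pp}=O(t^{1-\ell})$. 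Hence $\Lo\Delta_{zp}=a_{32}\Delta_{pp}+\dots$ is not integrable for $\ell\le2$. The $(q,p)$ block would even need $\ell>3$, since the nilpotent coupling chains up to four times, not two.
\end{itemize}
This is not a bookkeeping defect. Take $n=m=1$, $\omega=0$, $H_0=\tfrac12p^2+p\,\mu\cdot z$ with $\mu\neq0$, and $P=t^{-\ell}\sin(2\pi q)\,p$. Then $v=w=0$, and $u(0,t)=0$ by oddness in $q$. Along $q=0$, with $\epsilon(t)=2\pi t^{-\ell}$, the linearized system gives $\eta_p(t)=e^{-\int_{t_0}^t\epsilon}\,\eta_p(t_0)$ and $\eta_z(t)=\eta_z(t_0)+J_1\mu\,\eta_p(t_0)\int_{t_0}^t e^{-\int_{t_0}^s\epsilon}\,ds$. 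With your constant $a_{32}=J_1\mu$, every $\pi_z\xi$ is affine in $t$. The difference can tend to $0$ only if $\int_{t_0}^{\infty}\bigl(e^{-\int_{t_0}^s\epsilon}-e^{-\int_{t_0}^{\infty}\epsilon}\bigr)ds<\infty$, and this fails for $1<\ell\le2$. So \eqref{def:trasv_dyn_limit} fails for every $S$ with your $A$.

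The missing idea is that only the block form \eqref{def:A} of $A$ is prescribed; $a_{12},a_{13},a_{32}$ are outputs of the construction. The paper takes $S=\boldsymbol{S}(\varphi,G)$ as in \eqref{eq:S_formula}.
\begin{itemize}
\item Its first block column is $\partial_q\varphi$. Differentiating the invariance equation gives $\Lo\partial_q\varphi=DX_H\circ\varphi\,\partial_q\varphi$, so the first block column of $A$ vanishes.
\item Symplecticity then forces the $p$-row of $A$ to vanish.
\item The only remaining condition is $\boldsymbol{\zeta}(H,\varphi,G)=0$ for $G\in\mathfrak{sp}(2m,\R)$. Its linearization at $G=0$ is just $G\mapsto J_m\Lo G$, with no $O(1)$ commutator, and the IFT solves it in $\mathcal{S}p^{T}_{\sigma-1,\omega,\ell+\dec}$.
\end{itemize}
In the example this produces the time-dependent $a_{32}=J_1\mu\,(1+\partial_qu)^{-1}$, which absorbs exactly the drift above. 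Moreover $\pi_zS=(\partial_qw,\,0,\,e^G)$ has no $p$-component. Since $w=O(t^{-(\ell+\dec+1)})$ and $G=O(t^{-(\ell+\dec)})$, these beat the $t^2$ and $t$ growth of the $q$- and $z$-rows of $\Phi_A$ (Corollary~\ref{cor:criterion}). The term $\partial_qb$ never enters $\pi_zS$, which is why $b$ need not be strengthened.
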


The following corollary considers the case where the Hamiltonian $H$ in the statement of Theorem \ref{Thm:par_Csigma_0} converges, as time tends to infinity, to an autonomous Hamiltonian $H^\infty$ of the form~\eqref{def:H_aut}. 
Under slightly stronger assumptions than those of Item \ref{thm:Cdeg_transverse} of Theorem \ref{Thm:par_Csigma_0}, we prove a one-to-one correspondence between the solutions, restricted to the transverse $z$-coordinates, of the linearized vector field $DX_H$ along orbits contained in the asymptotic KAM torus $\varphi$ and those associated with $DX_{H^\infty} \circ \varphi_0(q+\omega t)$, where $\varphi_0$ is the trivial embedding in~\eqref{def:varphi0=(q,0,0)}. To state this result, we introduce the following projection \begin{equation}\label{def:Pi_p}
\Pi_p = \left(0_n \quad \mathrm{Id}_n \quad 0_{n \times 2m}\right) \in \mathcal{M}_{n \times 2n+2m}.
\end{equation}
\begin{corollaryx}\label{cor:par_deg}
    Under the hypotheses of Item \ref{thm:Cdeg_transverse} of Theorem \ref{Thm:par_Csigma_0}, we assume the existence of a $C^2$ autonomous Hamiltonian $H^\infty: \T^n  \times B  \to \R$ of the form~\eqref{def:H_aut}, with $M = \Mpar$ and $\Lambda_1, \dots, \Lambda_m =0$, such that  
\begin{equation}\label{hyp:cor_par0_decay}
        \int_1^{+\infty} \sup_{q \in \T^n}|\partial_z\partial_pH^t(q,0,0) - \partial_z\partial_pH^\infty(q,0,0)| \, dt < \infty.
\end{equation}
Let $\varphi: \T^n \times [T'', +\infty) \to \T^n \times \R^{n + 2m}$ be the $C^\sigma$ asymptotically elliptic KAM torus associated with $(X_H, X_{H_0}, \varphi_0)$ given by Theorem \ref{Thm:par_Csigma_0}, $S$ be the map defined in Definition \ref{def:ell_hyp_par_trasv_dyn_asym_KAM} and $\varphi_0$ be the trivial embedding in~\eqref{def:varphi0=(q,0,0)}. 
We consider the following systems 
\begin{align}\label{cor:syst_H_par_deg}
    \dot \eta(t) = DX_H^t \circ \varphi^t (q + \omega t) \eta(t), & \qquad (q, t) \in \T^n \times [T'', +\infty), \\ \label{cor:syst_Hinfty_par_deg}
    \dot \xi^\infty(t) = DX_{H^\infty}^t\circ \varphi_0(q + \omega t) \xi^\infty(t), & \qquad (q, t) \in \T^n \times [T'', +\infty).
\end{align}
    Then, if $\ell >2$, for every fixed $q\in\T^n$, $t_0\in I_{T^{''}}$ and $v_0 \in \R^n$ there exists a one-to-one correspondence between the sets 
     $$\left\{ \pi_z\eta \mid \eta \text{ is a solution  of ~\eqref{cor:syst_H_par_deg}} \text{ with } \pi_p\eta(t_0) = v_0\right\},$$
    $$\left\{ \pi_z\xi^\infty \mid \xi^\infty \text{ is a solution  of ~\eqref{cor:syst_Hinfty_par_deg}} \text{ with } \pi_p \xi^\infty(t_0) = \Pi_p S^{t_0}(q +\omega t_0)v_0\right\},$$ by the relation 
       \begin{equation*}
          \lim_{t \to +\infty}|\pi_z\eta(t)-\pi_z\xi^\infty(t)|=0.
    \end{equation*}
\end{corollaryx}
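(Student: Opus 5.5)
We first make both linear systems explicit. With $\Lambda=0$, $H^\infty=\omega\cdot p+O(p^2,pz,z^3)$, so along $\varphi_0(q+\omega t)=(q+\omega t,0,0)$ the matrix $DX_{H^\infty}\circ\varphi_0$ has the block form \eqref{def:A}. The $p$-row vanishes and the $z$-block $J_m\partial_z^2H^\infty(\cdot,0,0)$ is zero, since $H^\infty$ has no quadratic terms in $z$. Hence $\pi_p\xi^\infty(t)\equiv\pi_p\xi^\infty(t_0)=:\tilde v_0$ is constant, and
\[
\pi_z\xi^\infty(t)=\pi_z\xi^\infty(t_0)+\int_{t_0}^t a^\infty_{32}(q+\omega s)\,\tilde v_0\,ds ,
\qquad a^\infty_{32}=J_m\partial_z\partial_p H^\infty(\cdot,0,0).
\]
So each $z$-solution with prescribed $p$-component $\tilde v_0$ is determined by its initial value $\pi_z\xi^\infty(t_0)\in\R^{2m}$ alone, and it has this explicit form.

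For the perturbed system we use Item \ref{thm:Cdeg_transverse} of Theorem \ref{Thm:par_Csigma_0}, which gives $\Phi(t;q,t_0)=S(q+\omega t,t)\Phi_A(t;q,t_0)S(q+\omega t_0,t_0)^{-1}$ together with \eqref{def:trasv_dyn_limit}. In the degenerate case $\Phi_A$ has the same triangular structure. Write $\eta(t)=\Phi(t)\eta(t_0)$ and $\zeta_0=S(q+\omega t_0,t_0)^{-1}\eta(t_0)$. Then \eqref{def:trasv_dyn_limit} gives $\pi_z\eta(t)-\pi_z\Phi_A(t)\zeta_0\to0$. We also have $\pi_p\Phi_A(t)\zeta_0=\pi_p\zeta_0$, and
\[
\pi_z\Phi_A(t)\zeta_0=\pi_z\zeta_0+\int_{t_0}^t a_{32}(q+\omega s,s)\,\pi_p\zeta_0\,ds .
\]
We must identify $\pi_p\zeta_0$ with $\Pi_pS^{t_0}(q+\omega t_0)v_0$ in the normalization of the statement. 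This should follow from the construction of $S$: its $p$-row is essentially trivial up to the stated factor, because $\pi_p$ of $DX_H\circ\varphi$ has a decaying structure. I would verify this against the explicit form of $S$ produced in the proof of Item \ref{thm:Cdeg_transverse}. The correspondence is then $\pi_z\xi^\infty(t_0)=\pi_z\zeta_0+c$ for a constant $c$ to be fixed. This map is an affine bijection of $\R^{2m}$, as $\pi_z\zeta_0$ ranges over $\R^{2m}$ when $\pi_z\eta(t_0)$ does with $v_0$ fixed, because $S$ is invertible.

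The key step is to show that
\[
\int_{t_0}^t \bigl(a_{32}(q+\omega s,s)-a^\infty_{32}(q+\omega s)\bigr)\,ds
\]
converges as $t\to\infty$; the constant $c$ is then this limit applied to $\tilde v_0$. We would split the difference into three pieces:
\begin{itemize}
\item[(i)] $a_{32}-J_m\partial_z\partial_pH\circ\varphi$, which is controlled by the construction of $A$ in the proof of Item \ref{thm:Cdeg_transverse};
\item[(ii)] $J_m(\partial_z\partial_pH\circ\varphi-\partial_z\partial_pH(\cdot,0,0))$, which is bounded by $|\partial^3 H|\,|(u,v,w)|=O(t^{-(\ell-1)})$ using \eqref{eq:asymp_KAM_torus_form_par0};
\item[(iii)] $J_m(\partial_z\partial_pH^t-\partial_z\partial_pH^\infty)(\cdot,0,0)$, which is integrable by \eqref{hyp:cor_par0_decay}.
\end{itemize}
Piece (ii) is integrable precisely because $\ell>2$, which explains the hypothesis. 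In piece (ii) the $u$-shift enters through $\partial_q$, also with decay $t^{-(\ell-1)}$.

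The main obstacle is piece (i) together with the identification of $\pi_p\zeta_0$. Both require unpacking the precise normal form of $A$ and $S$ from the proof of Item \ref{thm:Cdeg_transverse}, and in particular showing that the $a_{32}$ chosen there differs from $J_m\partial_z\partial_pH\circ\varphi$ by an integrable term. If $a_{32}$ was defined exactly as that entry, piece (i) vanishes. Once these are checked, the limit relation follows from the triangle inequality, and uniqueness holds because two $\xi^\infty$ with the same $\tilde v_0$ differ by a constant $z$-vector.
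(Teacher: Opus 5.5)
Your route is the paper's: conjugate by $S$, use \eqref{def:trasv_dyn_limit} to replace $\pi_z\eta$ by $\pi_z\Phi_A(t)\zeta_0$, and reduce to the integrability condition \eqref{hyp:limit_aut_sys_par_deg} of Proposition~\ref{prop:par_deg_1:1_asym_sol}, which you re-derive inline. The gap is that the only non-formal step is the one you leave open, and both of your guesses about it are wrong. The paper carries this step out in the analogous computation of Section~\ref{sec:proof_cor_par}.

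\textbf{Piece (i) does not vanish.} The second block-column of $S$ in \eqref{eq:S_formula} is $(0,(\mathrm{Id}_n+\partial_q u)^{-\top},0)^\top$, and the third block-row of $S^{-1}$ is $(-e^{-G}\partial_q w(\mathrm{Id}_n+\partial_q u)^{-1},\,0,\,e^{-G})$. So the $\Lo S$ term contributes nothing to this block, and \eqref{eq:def_A_Hamiltonian} gives
\[
a_{32}=e^{-G}\bigl(J_m\,\partial_p\partial_z H\circ\varphi-\partial_q w\,(\mathrm{Id}_n+\partial_q u)^{-1}\,\partial_p^2 H\circ\varphi\bigr)(\mathrm{Id}_n+\partial_q u)^{-\top}.
\]
Hence $a_{32}-J_m\partial_p\partial_z H\circ\varphi=O(|G|+|\partial_q u|+|\partial_q w|)=O(t^{-(\ell+\dec)}+t^{-(\ell-1)}+t^{-(\ell+\dec+1)})$. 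This uses $G\in\mathcal{S}p^{T}_{\sigma-1,\omega,\ell+\dec}$ from Proposition~\ref{prop:Cdeg_transverse}, $\varphi\in\mathcal{E}^{\mathrm{par.deg},T}_{\sigma,\mathcal{K}^*}$, and $\partial_p^2H=\partial_p^2H_0$ bounded. The bound is integrable, but again only because $\ell>2$, through the $\partial_q u$ factor. So $\ell>2$ enters in piece (i) as well as in piece (ii). With this formula, your pieces (i)--(iii) give \eqref{hyp:limit_aut_sys_par_deg}.

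\textbf{The identification of $\pi_p\zeta_0$ also fails as you propose it.} The $p$-row of $S^{-1}$ is not essentially trivial. From \eqref{eq:S_formula}, evaluated at $(q+\omega t_0,t_0)$,
\[
\pi_p\zeta_0=(\mathrm{Id}_n+\partial_q u)^\top v_0-\bigl((\mathrm{Id}_n+\partial_q u)^\top\partial_q v+\partial_q w^\top J_m\partial_q w\bigr)(\mathrm{Id}_n+\partial_q u)^{-1}\pi_q\eta(t_0)+\partial_q w^\top J_m\,\pi_z\eta(t_0).
\]
So $\pi_p\zeta_0$ is not a function of $v_0$ alone: it moves as $\pi_z\eta(t_0)$ and $\pi_q\eta(t_0)$ vary with $v_0$ fixed. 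It must nevertheless equal $\pi_p\xi^\infty(t_0)$. Otherwise the difference contains $\int_{t_0}^t a^\infty_{32}(q+\omega s)(\pi_p\zeta_0-\tilde v_0)\,ds$, which by recurrence of the translation converges only if $a^\infty_{32}(q+\omega s)(\pi_p\zeta_0-\tilde v_0)\equiv 0$. Hence your affine bijection $\pi_z\xi^\infty(t_0)=\pi_z\zeta_0+c$ with one fixed $\tilde v_0$ does not go through.

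The consistent bookkeeping is to fix $\pi_p S(q+\omega t_0,t_0)^{-1}\eta(t_0)$ and use that same vector as $\pi_p\xi^\infty(t_0)$. This is what applying Proposition~\ref{prop:par_deg_1:1_asym_sol} to $\xi=\Phi_A(\cdot;q,t_0)\zeta_0$ actually fixes, namely $\xi_2(t_0)=\pi_p\zeta_0$, and it is what the paper's reduction implicitly uses.
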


In the following theorem, we consider the general case when $\Lambda_1,\dots, \Lambda_m \ge 0$ under Hölder regularity assumptions. 
\begin{theoremx}\label{Thm:par_Csigma}
Fix $\sigma \ge 1$, $\ell >1$ and $\dec \ge 0$. Let $H_0:\T^n \times B \times [1,+\infty) \to \R$ of the form
\begin{equation}
\label{eq:initial_hamiltonian_par}
 H_0(q, p, x,y, t) =  \omega \cdot p + \frac{1}{2}\Lambda  \cdot x^2 + O(p^2,px, py,x^3, y^3), \qquad \partial_{(p,x,y)}^2 H_0 \in \mathcal{S}_{(\sigma,3),0}^{1},
\end{equation}
and $P: \T^n  \times B \times [1, +\infty) \to \R$ of  the form
\begin{equation}
\label{eq:perturbation_form_par}
\begin{aligned}
P(q, p, z, t) & = a(q, t) + b(q, t) \cdot p + c_1(q, t) \cdot x + c_2(q,t) \cdot y  \\ & \quad + \frac{1}{2}d_1(q, t)\cdot x^2 + d_2(q,t)\cdot (x,y) + {1 \over 2} d_3(q,t) \cdot y^2.
\end{aligned}
\end{equation}
Then, for $H:= H_0 + P$ the following holds.
\begin{enumerate}
    \item \label{thm:C_existence} Suppose that
    \begin{equation*}
        a \in \mathcal{S}^{ 1}_{(\sigma, 2),0}, \quad \partial_q a \in \mathcal{S}_{(\sigma, 1), \ell+ \dec+4}^{1}, \quad b \in \mathcal{S}_{(\sigma, 2), \ell}^{1}, \quad  c_1 \in \mathcal{S}_{(\sigma, 2), \ell+ \dec+2}^{ 1}, \quad  c_2 \in \mathcal{S}_{(\sigma, 2), \ell+ \dec+3}^{ 1}, 
        \end{equation*}
        \begin{equation*}
         d_1 \in \mathcal{S}_{(\sigma, 2), \ell-1}^{ 1}, \quad d_2 \in \mathcal{S}_{(\sigma, 2), \ell}^{ 1}, \quad d_3 \in \mathcal{S}_{(\sigma, 2), \ell+1}^{ 1}.
    \end{equation*}
    Then, there exists a $C^\sigma$ asymptotic KAM torus $\varphi$ associated with $(X_H, X_{H_0}, \varphi_0)$ of the form
    \begin{equation}
        \label{eq:asymp_KAM_torus_form_par}
    \begin{gathered}
        \varphi:\T^n \times I_T \to \T^n \times \R^n \times \R^{2m}, \qquad \varphi^t = (\textup{id}_{\T^n} + u^t, v^t, w^t), \qquad T \geq 0, \\
            u \in \mathcal{S}_{(\sigma, 0), \ell-1}^{T}, \quad  v \in \mathcal{S}_{(\sigma, 0), \ell+ \dec + 3}^{T}, \quad w_x \in \mathcal{S}_{(\sigma, 0), \ell+ \dec + 2}^{T} , \quad w_y \in \mathcal{S}_{(\sigma, 0), \ell+ \dec + 1}^{T}.
    \end{gathered}
        \end{equation}
        Moreover, any other $C^\sigma$ asymptotic KAM torus associated with $(X_H, X_{H_0}, \varphi_0)$ of the form \eqref{eq:asymp_KAM_torus_form_par} coincides with $\varphi$ in the intersection of their domains.
    \medskip
    
    \item \label{thm:C_transverse} Suppose that $\sigma \ge 2$ and
    \begin{equation} \label{eq:Hyp_decay_par_2}
    \begin{aligned}
        a \in \mathcal{S}^{ 1}_{(\sigma, 2),0}, \quad \partial_q a &\in \mathcal{S}_{(\sigma, 1), \ell+\dec+6}^{1}, \quad b \in \mathcal{S}_{(\sigma, 2), \ell}^{1}, \quad  c_1 \in \mathcal{S}_{(\sigma, 2), \ell+\dec+4}^{ 1}, \quad  c_2 \in \mathcal{S}_{(\sigma, 2), \ell+\dec+5}^{ 1}, \\
         d_1 &\in \mathcal{S}_{(\sigma, 2), \ell+\dec+1}^{ 1}, \quad d_2 \in \mathcal{S}_{(\sigma, 2), \ell+\dec+2}^{ 1}, \quad d_3 \in \mathcal{S}_{(\sigma, 2), \ell+\dec+3}^{ 1}.
        \end{aligned} 
    \end{equation}
    Then the $C^\sigma$ asymptotic KAM torus $\varphi$ associated with $(X_H, X_{H_0}, \varphi_0)$ given by the first statement is asymptotically parabolic.
\end{enumerate}
\end{theoremx}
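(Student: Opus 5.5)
The plan follows the two-step scheme of \cite{scarcella_KAMST}, adapted to the nilpotent transverse block $J_m\Mpar$. For Item~\ref{thm:C_existence} we look for $\varphi^t = (\mathrm{id}+u^t, v^t, w^t)$ with $w=(w_x,w_y)$ and write the invariance equation \eqref{def:cond2_asymKAMtorus} as $\mathcal{F}(u,v,w_x,w_y)=0$. Since $J_m\Mpar$ has the block form $\begin{pmatrix}0&0\\-\Lambda&0\end{pmatrix}$, the equations split as
\[
\partial_q v\cdot\omega+\partial_t v = -\partial_q H\circ\varphi,\qquad
\partial_q w_x\cdot\omega+\partial_t w_x = \partial_y H\circ\varphi,
\]
\[
\partial_q w_y\cdot\omega+\partial_t w_y + \Lambda w_x = -\partial_x(H-\tfrac12\Lambda\cdot x^2)\circ\varphi,\qquad
\partial_q u\cdot\omega+\partial_t u = \partial_p(H-\omega\cdot p)\circ\varphi .
\]
The basic tool is the cohomological equation $\partial_q g\cdot\omega+\partial_t g = f$ with $f\in\mathcal S^T_{(\sigma,0),\kappa}$ and $\kappa>1$. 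Its unique solution decaying at infinity is $g(q,t)=-\int_t^\infty f(q+\omega(s-t),s)\,ds$, which lies in $\mathcal S^T_{(\sigma,0),\kappa-1}$. There are no small divisors, so no arithmetic condition on $\omega$ is needed.

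The linear part is triangular. First solve for $w_x$, then $w_y$ using $\Lambda w_x$ as a source, then $v$, and finally $u$. Each step loses one power of $t$. This explains the decay hierarchy $w_x\in\mathcal S_{\ell+\dec+2}$, $w_y\in \mathcal S_{\ell+\dec+1}$, $v\in\mathcal S_{\ell+\dec+3}$, $u\in\mathcal S_{\ell-1}$, and why $c_2$ must decay one power faster than $c_1$, and so on.

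We then apply the quantitative Implicit Function Theorem of \cite{scarcella_KAMST} on the product of these weighted spaces. The loss of regularity from composing with $\varphi$ is absorbed by the extra $k=2$ derivatives assumed on $a,b,c_i,d_i$, and by $\partial^2 H_0\in\mathcal S_{(\sigma,3),0}$. Smallness is obtained not from the size of $P$ but by taking $T$ large, since $|f|^T_{\sigma,\kappa'}\le T^{\kappa'-\kappa}|f|^T_{\sigma,\kappa}$.

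The main obstacle is the set of quadratic terms. These are $d_1 w_x$ with $d_1$ only in $\mathcal S_{\ell-1}$, the $O(pz,px,py,x^3,y^3)$ terms of $H_0$ evaluated on $\varphi$, and $\partial_q a\circ(\mathrm{id}+u)$. For each we must check that it lands in the space with the correct weight. Concretely, the off-diagonal entries of $D\mathcal F$ have to be strictly lower-order in the triangular scheme, so that the linearized operator is a small perturbation of the invertible triangular one for $T$ large. Uniqueness in the class \eqref{eq:asymp_KAM_torus_form_par} follows from the local uniqueness in the IFT, after increasing $T$ and extending by the flow via \eqref{eq:asymptotic_KAM_flow}.

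For Item~\ref{thm:C_transverse}, compute $DX_H\circ\varphi$. Its limiting part already has the structure \eqref{def:A}: the $p$-row vanishes at leading order, and the $z$-block is $J_m\Mpar+O(t^{-\ell-\dec-1})$. We then use the criterion of Section~\ref{sc:criteria_asym_dyn}: it suffices to find $S=\mathrm{Id}+R$, with $R$ decaying polynomially in $C^1$, solving
\[
\partial_q S\cdot\omega+\partial_t S = (DX_H\circ\varphi)\, S - S A,
\]
where the free blocks $a_{12},a_{13},a_{32}$ of $A$ absorb the non-decaying entries. Entrywise this is again a triangular system of cohomological equations. The parabolic model cocycle grows polynomially, with $\Phi_A$ of size $O(t)$ when $\Lambda\ne0$. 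To keep $\pi_z(\Phi-\Phi_A S^{-1})\to0$ in \eqref{def:trasv_dyn_limit} despite this growth, the stronger decay \eqref{eq:Hyp_decay_par_2} is used to make the decay of $R$ beat this growth.

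Solving the coupled equations for $R$ in weighted spaces is the hardest step. I would first order the unknown blocks so that the $\Lambda$-coupling is nilpotent, and then close the estimate with the same quantitative IFT. Limits are then verified using the explicit solution of the model from Section~\ref{sc:analysis_transversal}.
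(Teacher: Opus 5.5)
For Item~\ref{thm:C_existence}, your plan is the paper's: the functional $X_{H_0+P}\circ\varphi-\Lo\varphi$ on the weighted spaces $\mathcal{E}^{\mathrm{par},T}_{\sigma,\mathcal{K}}$, a triangular linearization inverted by $\Lo$-cohomological equations, and the quantitative IFT with smallness coming from large $T$. Your order of elimination is wrong, however. At $(0,0)$ the $w_x$-equation reads $(\overline{m}_2)_0\hat v-\Lo\hat w_x=g_3$ (see \eqref{proof:Csigma_def_par_DF_1}). The coefficient $(\overline{m}_2)_0=\partial_p\partial_yH_0(q,0,0,t)$ comes from the $py$ terms of $H_0$ and is $O(1)$. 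The product $(\overline{m}_2)_0\hat v$ lies exactly in the target weight $\ell+\dec+3$, so it is not a lower-order off-diagonal term. It is the $v$-equation $-\Lo\hat v=g_2$ that decouples. The triangular order is therefore $v$, then $w_x$, then $w_y$ (with $\Lambda\hat w_x$ and $(\overline{m}_1)_0\hat v$ as sources), then $u$.

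For Item~\ref{thm:C_transverse} you take a genuinely different route. The paper never solves for a general $S$. It uses the symplectic ansatz $\boldsymbol{S}(\varphi,G)$ of \eqref{eq:S_formula}, whose $q$- and $p$-columns are explicit in $\partial_q\varphi$, so the zero first column and zero $p$-row of $A$ come for free from $A\in\mathfrak{sp}$. The only unknown is $G$ with values in $\mathfrak{sp}(2m,\R)$, solving the nonlinear equation $\boldsymbol{\zeta}(H,\varphi,G)=\Mpar$. Its linearization $\mathfrak{L}^{\mathrm{par}}_{\omega,\Lambda}$ is inverted block by block ($G_2$, then $G_1$, then $G_3$), and the price is the exponential/adjoint estimates of Lemma~\ref{prop:exp_bound_par}.

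Your direct route can be made to work, and in one respect it is simpler. Fix the first column of $S$ to be $\partial_q\varphi$ (this needs $\sigma\ge2$) and set $S_{12}=S_{13}=S_{32}=0$. The $q$-rows of columns two and three and the $z$-row of column two then determine $a_{12},a_{13},a_{32}$. What remains is a linear problem for $(S_{23},S_{33})$, followed by a linear problem for $S_{22}$, with no exponentials.

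The cost is twofold. First, you must impose the zero $p$-row of $A$ by hand and solve for $S_{22},S_{23}$; this works because the $p$-row of $DX_H\circ\varphi$ decays like $t^{-\ell}$. Second, you must put the $x$- and $y$-columns of $S_{23}$ in $\mathcal{S}_{\ell+\dec+3}$ and $\mathcal{S}_{\ell+\dec+4}$, which is where the decay of $c_1,c_2$ is used. With these weights, the $O(1)$-coefficient coupling $J_m\,\partial_z\partial_pH\circ\varphi\,S_{23}$ in the $S_{33}$-equation becomes lower order. Then $S_{33}-\mathrm{Id}$ solves $\Lo R-[J_m\Mpar,R]=\cdots$ with the block pattern $\mathcal{D}(\ell+\dec)$. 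In the paper this $(p,z)$ block is the explicit matrix $B$ built from $\partial_q w$, and the conjugacy is symplectic; yours is not, but nothing in the definition requires it.

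One claim in your Item~2 is false and must be corrected before \eqref{def:trasv_dyn_limit} can be verified: $\Phi_A$ is not $O(t)$. By Proposition~\ref{prop:trasv_dyn_par}, its $q,p,x,y$ components grow like $t^3,1,t,t^2$; by Proposition~\ref{prop:trasv_dyn_par_deg_case}, they grow like $t^2,1,t,t$ even when $\Lambda=0$. Since $\pi_z\Phi-\pi_z\Phi_A S(q+\omega t_0,t_0)^{-1}=\pi_z\bigl(S(q+\omega t,t)-\mathrm{Id}\bigr)\Phi_A S(q+\omega t_0,t_0)^{-1}$, you need the following, as in Corollary~\ref{cor:criterion}:
\begin{itemize}
\item $S_{zq}=\partial_q w=o(t^{-3})$ and $S_{zp}=o(1)$;
\item the $x$-columns of $S_{zz}-\mathrm{Id}$ are $o(t^{-1})$ and its $y$-columns are $o(t^{-2})$.
\end{itemize}
These hold under \eqref{eq:Hyp_decay_par_2}, because $w_y\in\mathcal{S}_{\ell+\dec+3}$ and $w_x\in\mathcal{S}_{\ell+\dec+4}$ (from $\mathcal{K}(\ell,\dec+2)$), the normal block has pattern $\mathcal{D}(\ell+\dec)$, and $\ell>1$. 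Your $O(t)$ bound would wrongly suggest that $o(t^{-1})$ decay of $S-\mathrm{Id}$ is enough.
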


The following corollary is the counterpart of Corollary \ref{cor:par_deg} in the general parabolic setting.
\begin{corollaryx}\label{cor:par}
    Under the hypotheses of Item \ref{thm:C_transverse} of Theorem \ref{Thm:par_Csigma}, we assume the existence of a $C^2$ autonomous Hamiltonian $H^\infty: \T^n  \times B  \to \R$ of the form~\eqref{def:H_aut}, such that  
\begin{equation}\label{hyp:cor_par_decay}
\begin{aligned}
      &\int_1^{+\infty} (t-1) \sup_{q \in \T^n}|\partial_x\partial_pH^t(q,0,0) - \partial_x\partial_pH^\infty(q,0,0)| \, dt < \infty,\\      
      &\int_1^{+\infty} \sup_{q \in \T^n}|\partial_y\partial_pH^t(q,0,0) - \partial_y\partial_pH^\infty(q,0,0)| \, dt < \infty.
\end{aligned}        
\end{equation}
Let $\varphi: \T^n \times [T'', +\infty) \to \T^n \times \R^{n + 2m}$ be the $C^\sigma$ asymptotically elliptic KAM torus associated with $(X_H, X_{H_0}, \varphi_0)$ given by Theorem \ref{Thm:par_Csigma}, $S$ be the map defined in Definition \ref{def:ell_hyp_par_trasv_dyn_asym_KAM} and $\varphi_0$ be the trivial embedding in~\eqref{def:varphi0=(q,0,0)}. 
We consider the following systems 
\begin{align}\label{cor:syst_H_par}
    \dot \eta(t) = DX_H^t \circ \varphi^t (q + \omega t) \eta(t), & \qquad (q, t) \in \T^n \times [T'', +\infty), \\ \label{cor:syst_Hinfty_par}
    \dot \xi^\infty(t) = DX_{H^\infty}^t\circ \varphi_0(q + \omega t) \xi^\infty(t), & \qquad (q, t) \in \T^n \times [T'', +\infty).
\end{align}
    Then, if $\ell >3$, for every fixed $q\in\T^n$, $t_0\in I_{T^{''}}$ and $v_0 \in \R^n$ there exists a one-to-one correspondence between the sets 
     $$\left\{ \pi_z\eta \mid \eta \text{ is a solution  of ~\eqref{cor:syst_H_par}} \text{ with } \pi_p\eta(t_0) = v_0\right\},$$
    $$\left\{ \pi_z\xi^\infty \mid \xi^\infty \text{ is a solution  of ~\eqref{cor:syst_Hinfty_par}} \text{ with } \pi_p \xi^\infty(t_0) = \Pi_p S^{t_0}(q +\omega t_0)v_0\right\},$$ by the relation 
       \begin{equation*}
          \lim_{t \to +\infty}|\pi_z\eta(t)-\pi_z\xi^\infty(t)|=0.
    \end{equation*}
   We refer to~\eqref{def:Pi_p} for the definition of the projection $\Pi_p$.
\end{corollaryx}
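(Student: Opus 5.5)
The plan is to reduce the correspondence to a comparison of two explicitly solvable linear systems through the conjugacy $S$ from Item \ref{thm:C_transverse} of Theorem \ref{Thm:par_Csigma}. Any solution of \eqref{cor:syst_H_par} has the form $\eta(t)=\Phi(t;q,t_0)\eta(t_0)$. By \eqref{def:trasv_dyn_limit}, $\pi_z\eta(t)$ is asymptotic to $\pi_z\Phi_A(t;q,t_0)\xi_0$, where $\xi_0=S(q+\omega t_0,t_0)^{-1}\eta(t_0)$. The first step is therefore to write $\Phi_A$ explicitly for $A$ of the form \eqref{def:A}. The $p$-component is constant, $\pi_p\xi(t)=\pi_p\xi_0$. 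The $z$-component solves
\[
\dot z = J_m\Mpar z + a_{32}^t(q+\omega t)\,\pi_p\xi_0 .
\]
Since $J_m\Mpar$ is nilpotent with $(J_m\Mpar)^2=0$, we have $e^{sJ_m\Mpar}=\mathrm{Id}+sJ_m\Mpar$. The $x$-component is then affine in $t$ plus an integral of $a_{32,x}$, and the $y$-component contains a term linear in $t$ involving $\Lambda x$ together with a double integral of $a_{32,x}$.

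The second step does the same for \eqref{cor:syst_Hinfty_par}. Because $\varphi_0$ is trivial and $H^\infty$ has the form \eqref{def:H_aut}, the matrix $DX_{H^\infty}\circ\varphi_0$ has the same block structure. Its $(z,p)$ block is $a_{32}^\infty=J_m\partial_z\partial_pH^\infty(q+\omega t,0,0)$ and its $(z,z)$ block is $J_m\Mpar$. Hence $\pi_z\xi^\infty$ is given by the same explicit formula, with $a^\infty_{32}$ in place of $a_{32}$ and initial data $(\pi_p\xi^\infty(t_0),\pi_z\xi^\infty(t_0))$. The key point is to show that $a_{32}^t(q)$, which comes from the proof of Theorem \ref{Thm:par_Csigma}, equals $J_m\partial_z\partial_pH^t(q,0,0)$ up to an error decaying like $t^{-(\ell-1)}$ or faster. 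This should follow from $\varphi^t\to\varphi_0$ at the rates in \eqref{eq:asymp_KAM_torus_form_par}, from the decay of $S-\mathrm{Id}$ in weighted norms, and from the decay of $b,d_i$. Combining these with \eqref{hyp:cor_par_decay}, the difference $a_{32}-a_{32}^\infty$ satisfies the following. Its $y$-row component, which is fed by $\partial_x\partial_p H$, enters through a double integral and must therefore be integrable against the weight $(t-1)$. Its $x$-row component, fed by $\partial_y\partial_pH$, must be integrable. The condition $\ell>3$ is used exactly here: errors of order $t^{-(\ell-1)}$ are then integrable against $t$.

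The third step builds the correspondence. Fix $v_0$ and set $\pi_p\xi^\infty(t_0)=\Pi_pS^{t_0}(q+\omega t_0)v_0$; this should be read as the $p$-component of $\xi_0$, matching the normalization in the statement. Once both $p$-components agree, the forcing terms differ only by the integrable quantities above. Each difference integral over $[t_0,t]$ then splits into a convergent constant, $\int_{t_0}^\infty$, and a tail that tends to zero. For the double integrals one uses $\int_{t_0}^t\int_{t_0}^s f=\int_{t_0}^t (t-s)f(s)\,ds$, and the weighted integrability controls the resulting constant and linear-in-$t$ parts. The remaining difference between $\pi_z\eta(t)$ and $\pi_z\xi^\infty(t)$ is then of the form $(\mathrm{Id}+(t-t_0)J_m\Mpar)\Delta z_0 + c_1 + c_2 t + o(1)$, where $c_1,c_2$ are explicit constants. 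One chooses $\pi_z\xi^\infty(t_0)$ so as to cancel $c_1$ and $c_2$. This is possible because the $t$-coefficient lies in the image of $J_m\Mpar$: it arises as $J_m\Mpar$ applied to an integral, so one can solve for $\Delta z_0$. This yields a bijection, since the affine map between initial $z$-data is invertible. Uniqueness holds because two solutions of \eqref{cor:syst_Hinfty_par} with the same $p$-data whose $z$-components converge to each other differ by $(\mathrm{Id}+tJ_m\Mpar)\Delta$, and this tends to zero only if $\Delta=0$.

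I expect the main obstacle to be the second step: extracting from the construction of $S$ and $A$ the precise decay of $a_{32}-J_m\partial_z\partial_pH\circ\varphi_0$, component by component, with the extra factor of $t$ that the $y$-equation requires. The $t$-growth in the parabolic cocycle is what forces both the weighted hypothesis \eqref{hyp:cor_par_decay} and the condition $\ell>3$.
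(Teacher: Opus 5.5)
Your route is the paper's route: reduce through $S$ and \eqref{def:trasv_dyn_limit} to the model cocycle, compare the model with the limiting system by explicit integration as in Proposition \ref{prop:par_1:1_asym_sol}, and control $a_{32}-a_{32}^\infty$ using $u\in\mathcal{S}^{T''}_{(\sigma,0),\ell-1}$ and $\ell>3$. The gap is in your second step, which is exactly where \eqref{hyp:cor_par_decay} is supposed to enter.

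In your first step you correctly note the structure of the model system: $\dot x=\Pi_x a_{32}\,\xi_2$ and $\dot y=\Pi_y a_{32}\,\xi_2-\Lambda x$. So the double integral in the $y$-component is of the $x$-row $\Pi_x a_{32}$, and the $y$-row enters only through a single integral. The weight $(t-1)$ is therefore needed on $\Pi_x(a_{32}-a_{32}^\infty)$, as in \eqref{hyp:limit_aut_sys_par}.

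With your identification $a_{32}^\infty=J_m\,\partial_z\partial_pH^\infty\circ\varphi_0$, which is correct since $\dot x=\partial_yH$ and $\dot y=-\partial_xH$, the $x$-row is $\partial_y\partial_pH^\infty$ and the $y$-row is $-\partial_x\partial_pH^\infty$. The argument therefore needs $\int_1^{\infty}(t-1)\sup_q|\partial_y\partial_pH^t(q,0,0)-\partial_y\partial_pH^\infty(q,0,0)|\,dt<\infty$, together with only plain integrability of the $\partial_x\partial_p$-difference. That is the opposite of what \eqref{hyp:cor_par_decay} provides. Your sentence that the $y$-row, ``fed by $\partial_x\partial_pH$, enters through a double integral'' contradicts your own first step. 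Once the rows are assigned correctly, your third step fails: if $R_x=\Pi_x(a_{32}-a_{32}^\infty)\xi_2$ is merely integrable, $\Lambda\int_{t_0}^t(t-s)R_x(s)\,ds$ need not have the form $c_1+c_2t+o(1)$.

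This cannot be repaired under the stated hypotheses, because the statement as written fails. Take $n=m=1$, $\Lambda>0$, $P=0$, $H_0=\omega p+\frac12\Lambda x^2+t^{-3/2}py$ and $H^\infty=\omega p+\frac12\Lambda x^2$.
\begin{itemize}
\item The $\partial_x\partial_p$-difference vanishes and the $\partial_y\partial_p$-difference is $t^{-3/2}$, so \eqref{hyp:cor_par_decay} holds.
\item By uniqueness $\varphi=\varphi_0$, since $\varphi_0$ is invariant for $H_0$.
\item Along $\varphi_0$ one has $\dot\eta_x=t^{-3/2}v_0$ and $\dot\eta_y=-\Lambda\eta_x$, so $\eta_y(t)$ contains the term $4\Lambda v_0t^{1/2}$.
\item Every solution of the limiting system has $z$-components affine in $t$, whatever its $p$-datum.
\end{itemize}
Hence for $v_0\neq 0$ no $\pi_z\xi^\infty$ is asymptotic to $\pi_z\eta$.

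The paper's own proof reaches \eqref{hyp:cor_par_decay} only because its formula for $a_{32}$ omits the factor $J_m$ coming from $DX_H=J\,D^2H$. It then checks \eqref{hyp:limit_aut_sys_par} by comparing $\Pi_x a_{32}$ with $\partial_x\partial_pH^\infty\circ\varphi_0$, so following it will not help. If you exchange $\partial_x\partial_pH$ and $\partial_y\partial_pH$ in \eqref{hyp:cor_par_decay}, your plan closes.

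A smaller point concerns your reading of $\Pi_pS^{t_0}v_0$ as the $p$-component of $\xi_0$. At $(q+\omega t_0,t_0)$ one has $\pi_p(S^{-1}\eta(t_0))=-(\partial_qv)^\top\eta_q(t_0)+(\mathrm{Id}_n+\partial_qu)^\top v_0+(\partial_qw)^\top J_m\pi_z\eta(t_0)$. This depends on all of $\eta(t_0)$, so fixing $v_0$ does not fix the $p$-component of $\xi_0$. The correspondence should be formulated for fixed $\pi_p(S^{-1}\eta(t_0))$.
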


\subsubsection{Smooth setting}\label{sec:Smooth_Set} In this section, we show that Theorems \ref{Thm:par_Csigma_0} and \ref{Thm:par_Csigma} also hold when considering $C^\infty$ perturbations with appropriate decrease rates. More precisely, if we denote
\[ \Spol_{(\infty,k),\ell} = \bigcap_{\sigma \geq 1} \Spol_{(\sigma,k),\ell},\]
for any $k \in \Z \geq 0$, any $T \geq 1$ and any $\ell > 0$, we have the following.

\begin{theoremx}
\label{thm:KAM_smooth}
    Theorems \ref{Thm:par_Csigma_0} and \ref{Thm:par_Csigma} hold for $\sigma = +\infty$.
\end{theoremx}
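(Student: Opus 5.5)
The plan is to deduce the $C^\infty$ statement from the Hölder Theorems \ref{Thm:par_Csigma_0} and \ref{Thm:par_Csigma}, applied for every finite $\sigma\ge 1$ (resp. $\sigma\ge 2$ for the transverse items), and then to use the uniqueness parts of those theorems so that the resulting objects do not depend on $\sigma$.

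\emph{Existence.} If the hypotheses hold with $\sigma=\infty$, then each datum lies in $\Spol_{(\infty,k),\ell'}=\bigcap_{\sigma}\Spol_{(\sigma,k),\ell'}$. Hence for every $\sigma\ge1$ the Hölder theorem gives a $C^\sigma$ asymptotic KAM torus $\varphi_\sigma:\T^n\times I_{T_\sigma}\to\T^n\times\R^{n+2m}$ of the form \eqref{eq:asymp_KAM_torus_form_par0} (resp. \eqref{eq:asymp_KAM_torus_form_par}). The first difficulty is that $T_\sigma$ comes from a quantitative implicit function theorem and may grow with $\sigma$. We would handle this in two steps.
\begin{enumerate}
\item Fix $\sigma_0=1$ and set $\varphi:=\varphi_{1}$ on $I_{T_1}$. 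For $\sigma>1$, $\varphi_\sigma$ is also a $C^1$ torus of the required form on $I_{T_\sigma}$, since $\Spol_{(\sigma,0),\ell'}\subset \Spol_{(1,0),\ell'}$. By the uniqueness statement, $\varphi_\sigma=\varphi$ on $I_{\max(T_1,T_\sigma)}$. So $\varphi^t$ is $C^\sigma$ for $t\ge T_\sigma$.
\item To get $C^\sigma$ regularity and the $t^{\ell'}$ decay on the whole of $I_{T_1}$, we use \eqref{eq:asymptotic_KAM_flow}: $\varphi^t(q)=\psi^{t}_{s,X_H}\circ\varphi^{s}(q-\omega(s-t))$ for $s\ge T_\sigma$, $t\in[T_1,T_\sigma]$.
\end{enumerate}
The flow of the $C^\infty$ field $X_H$ over the compact time interval $[T_1,T_\sigma]$ is $C^\infty$ in space, provided orbits stay in $B$. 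They do, because the $C^1$ torus is close to $\varphi_0$. Consequently $\varphi^t\in C^\sigma$ for every $t$, with $\sup_{t\in[T_1,T_\sigma]}|\varphi^t-\varphi_0|_{C^\sigma}<\infty$. Since decay weights only matter as $t\to\infty$, a bounded time interval does not affect membership in the weighted spaces. Thus $u,v,w\in\Spol_{(\sigma,0),\cdot}^{T_1}$ for all $\sigma$, $\varphi$ is a $C^\infty$ asymptotic KAM torus, and uniqueness passes to the $C^\infty$ class.

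\emph{Transverse statement.} The property of being asymptotically parabolic (Definition \ref{def:ell_hyp_par_trasv_dyn_asym_KAM}) only asks for a $C^1$ conjugacy $S$, a continuous bounded $A$, and the limit \eqref{def:trasv_dyn_limit}. None of these involves $\sigma$. Therefore it suffices to apply Item \ref{thm:Cdeg_transverse} (resp. \ref{thm:C_transverse}) with $\sigma=2$ to the torus $\varphi$. This is legitimate because $\varphi$ restricted to $I_{T_2}$ coincides, by uniqueness, with the torus produced by that theorem. It follows that $\varphi$ is asymptotically parabolic, with $T''$ given by the $\sigma=2$ theorem.

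\emph{Main obstacle.} The delicate point is step 2 of the existence part. One must check that the flow extension preserves the precise structure $(\mathrm{id}+u,v,w)$ and stays in the domain $B$ on $[T_1,T_\sigma]$; if orbits could leave $B$, we would instead simply take $T$ depending on nothing but $\sigma=1$ and accept $\sigma$-dependent finite-time bounds. A second route, if the flow argument fails, is to revisit the implicit function theorem proof and observe that the fixed point is obtained in the $\sigma=1$ ball, with higher norms estimated a posteriori by tame estimates (interpolation) that do not shrink $T$. I would try the flow argument first.
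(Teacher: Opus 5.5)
Your argument is correct and is essentially the paper's proof. You fix a low-regularity base torus, use the uniqueness clause to identify it with $\varphi_\sigma$ on $[T_\sigma,+\infty)$, and carry $C^\sigma$ regularity back to the bounded time interval via \eqref{eq:asymptotic_KAM_flow}; the paper takes $\sigma=2$ as base so the transverse item applies directly to the same torus, while your choice $\sigma=1$ plus one more use of uniqueness against the $\sigma=2$ torus works equally well. Your check of uniform $C^\sigma$ bounds on $[T_1,T_\sigma]$ (needed for the weighted norms) is a point the paper does not address explicitly; only fix the sign in the flow identity, which should read $\varphi^t(q)=\psi^t_{s,X_H}\circ\varphi^s(q+\omega(s-t))$.
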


Notice that although for $C^\infty$ perturbations Theorems \ref{Thm:par_Csigma_0} and \ref{Thm:par_Csigma} yield immediately the existence of $C^\sigma$ asymptotic KAM tori, for $\sigma \geq 1$ arbitrarily large, we cannot deduce right away Theorem \ref{thm:KAM_smooth} since the domain of definition of these asymptotic KAM tori may depend on $\sigma$. The proof of Theorem \ref{thm:KAM_smooth} follows verbatim the proof of the analogous result in \cite{scarcella_KAMST} for the elliptic and hyperbolic settings. For the sake of convinience, we reproduce it here. %

\begin{proof}[Proof of Theorem \ref{thm:KAM_smooth}]
Let us prove that Theorem \ref{Thm:par_Csigma_0} holds for $\sigma = +\infty$. The proof in the case of Theorem \ref{Thm:par_Csigma} is completely analogous.

Notice that, since the first part of Theorem \ref{Thm:par_Csigma_0} holds for $C^\infty$ perturbations with $\sigma = 2$ (satisfying the corresponding hypotheses) and as the assumptions for the second part of Theorem \ref{Thm:par_Csigma_0} imply those in the first part, it suffices to show that the $C^2$ asymptotic KAM torus obtained in the first part of Theorem \ref{Thm:par_Csigma_0} with $\sigma = 2$ is in fact a $C^\infty$ asymptotic KAM torus.

Let $P$ be a $C^\infty$ perturbation satisfying the hypotheses of the first part of Theorem \ref{Thm:par_Csigma_0}, for any $\sigma \geq 2$, and let us denote by $\varphi_\sigma: \T^n \times [T_\sigma, +\infty) \to \T^n \times B$ the $C^\sigma$ asymptotic KAM torus given by the theorem. We may assume WLOG that $\sigma \mapsto T_\sigma$ is a non-decreasing function.

We will now fix $\sigma \geq 2$ and show that $\varphi_2$ is in fact a $C^\sigma$ asympotic KAM torus. For this, it suffices to show that $\varphi_2$ satisfies \eqref{def:cond1_asymKAMtorus} and that $\varphi_2^t$ is of class $C^\sigma$, for any $t \in [T_2, +\infty)$.

By the uniqueness of the asymptotic KAM tori, it follows that
\[ \varphi_{2}\mid_{\T^n \times [T_\sigma, +\infty)} = \varphi_\sigma,\]
and thus $\varphi_2$ satisfies \eqref{def:cond1_asymKAMtorus}. Recalling that $\varphi_2$ satisfies \eqref{eq:asymptotic_KAM_flow}, the equation above yields
\[\varphi_2^t(q) = \Phi^t_{T_\sigma, X_H} \circ \varphi_\sigma^{T_\sigma}(q - \omega (t - T_\sigma)), \qquad \text{ for any } q \in \T^n \text{ and any } t \in [T_2, +\infty). \]

Hence, since $\Phi^t_{T_\sigma, X_H}$ is of class $C^\infty$, for any $t \in [T_2, +\infty)$ fixed, and $\varphi_\sigma^{T_\sigma}$ is of class $C^\sigma$, it follows that $\varphi_2^t$ is of class $C^\sigma$, for any $t \in [T_2, +\infty)$.

As $\sigma \geq 2$ was arbitrary, this shows that $\varphi_2$ is a $C^\infty$ asymptotic KAM torus.
\end{proof}

\subsubsection{Analytic setting}\label{sec:Anal_Set} This section contains the real analytic version of the results established in Section \ref{sec:Holder_Set}. To this end, for some $\sigma >0$, we introduce the following complex domains
\begin{equation*}
    \T_\sigma^n = \{q \in \C^n /\Z^n \hspace{1mm}:\hspace{1mm} |\mathrm{Im}\, q| \le \sigma\}, \quad B_\sigma =\{(p,z) \in \C^{n+2m}\hspace{1mm}:\hspace{1mm} |(p,z)| \le \sigma\}.
\end{equation*}
Given $f:\T^n_\sigma \times B_\sigma \to \C$, we consider the following norm
\begin{equation}\label{def:norm_analy}
    |f|_\sigma = \sup_{(q,p,z) \in \T^n_\sigma \times B_\sigma}|f(q,p,z)|.
\end{equation}
We will use the same notation for functions defined only on $\T^n_\sigma$, as well as for vector-valued and matrix-valued functions.

Below, we provide the definition of analytic asymptotic KAM torus, which is the analytic version of Definition \ref{def:Csigma_KAM_torus}.
Given $\sigma >0$ and $\omega \in \R^n$,  we consider time-dependent vector fields $X^t$ and $X_0^t$ real-analytic on $\T^n_\sigma \times B_\sigma$, for all fixed $t \in I_T$, and a real-analytic embedding $\varphi_0:\T^n_\sigma \to \T^n_\sigma \times B_\sigma$ such that
\begin{equation}
\begin{aligned}\label{def:hyp_asym_KAM_tori_analy}
    &\lim_{t \to +\infty} \left|X^t - X^t_0\right|_{\sigma} = 0,\\
   &X_0 \circ \varphi_0 = \partial_q\varphi_0 \cdot \omega.
\end{aligned}
\end{equation}

\begin{definition}\label{def:analytic_KAM_torus}
    We assume that $(X, X_0, \varphi_0)$, defined on $\T^n_\sigma \times B_\sigma \times I_T$, satisfy~\eqref{def:hyp_asym_KAM_tori_analy}. A family of embeddings $\varphi:\T^n \times I_{T'} \to \T^n \times B$, with $T' \geq T$, is an analytic \emph{asymptotic KAM torus} associated with $(X, X_0, \varphi_0)$, if for some $0<\sigma'< \sigma$
    \begin{align}\label{def:cond1_asymKAMtorus_analy}
            &\lim_{t \to +\infty}\left|\varphi^t -\varphi_0\right|_{\sigma'}=0,\\ \label{def:cond2_asymKAMtorus_analy}
             &X \circ \varphi = \partial_q\varphi \cdot \omega + \partial_t \varphi.
    \end{align}
\end{definition}

 Given $\sigma >0$, $T\ge 1$, and $\ell \ge 0$, to describe the regularity of time-dependent analytic functions decaying polynomially fast in time, we introduce the following Banach spaces
\begin{align}\label{def:S_pol_anal}
\mathscr{S}^{ T}_{\sigma,\ell}
    = \left\{
    f : \T^n_\sigma \times B_\sigma \times I_T \to \C \;\middle|\;
    \begin{aligned}
        &f^t \mbox{ is real-analytic on $\T^n_\sigma \times B_\sigma$ for each }  t \in I_T; \\
        & \sup_{t \in I_T} \bigl(|f^t|_{\sigma}\, t^\ell \bigr) < \infty; \\
        & f \in C(\T^n_\sigma \times B_\sigma \times I_T)
    \end{aligned}
    \right\}
\end{align}
endowed with the norm
\begin{equation}\label{def:norm_S_analy}
    |f|^T_{\sigma, \ell} = \sup_{t \in I_T}|f^t|_{\sigma}t^\ell.
\end{equation}
We will use the same notation for functions defined on $\T^n_\sigma \times I_T$, as well as vector-valued or matrix-valued functions. It will be specified by the context.

\begin{theoremxx}\label{Thm:par_Csigma_0_analy}
    Fix $\sigma > 0$, $\ell >1$ and $l \ge 0$. Let $H_0 : \T^n_\sigma \times B_\sigma \to \C$ be real analytic of the form  \eqref{eq:initial_hamiltonian_par_deg} satisfying $\partial_{(p,z)}^2 H_0 \in \mathscr{S}_{(\sigma,3),0}^{1},$ and let $P : \T^n_\sigma \times B_\sigma \to \C$ be real analytic of the form  \eqref{eq:perturbation_form_par_deg}. Then, for $H := H_0 + P$ the following holds.
\begin{enumerate}
    \item \label{thm:Cdeg_existence_analy} Suppose that
    \begin{equation*}
        a \in \mathscr{S}^{1}_{\sigma,0}, \hspace{2mm} \partial_qa \in \mathscr{S}_{\sigma, \ell+ \dec + 2}^{1}, \hspace{2mm}  b \in \mathscr{S}_{\sigma, \ell}^{1}, \hspace{2mm}  c \in \mathscr{S}_{\sigma, \ell+ \dec + 1}^{ 1}, \hspace{2mm} d \in \mathscr{S}_{\sigma, \ell}^{ 1}.
    \end{equation*}
      Then, there exists an analytic asymptotic KAM torus $\varphi$ associated with $(X_H, X_{H_0}, \varphi_0)$ of the form
    \begin{equation}
        \label{eq:asymp_KAM_torus_form_par0_analy}
    \begin{gathered}
        \varphi:\T^n_{\sigma/2} \times I_T \to \T^n_\sigma \times \C^n \times \C^{2m}, \qquad \varphi^t = (\textup{id}_{\T^n} + u^t, v^t, w^t), \qquad T \geq 0, \\
     u \in \mathscr{S}_{{\sigma \over 2}, \ell-1}^{T}, \quad  v \in \mathscr{S}_{{\sigma \over 2}, \ell+ \dec + 1}^{T}, \quad w \in \mathscr{S}_{{\sigma \over 2}, \ell+ \dec}^{T}.
    \end{gathered}
        \end{equation}
        Moreover, any other analytic asymptotic KAM torus associated with $(X_H, X_{H_0}, \varphi_0)$ of the form \eqref{eq:asymp_KAM_torus_form_par0_analy} coincides with $\varphi$ in the intersection of their domains.
    \medskip
    
    \item \label{thm:Cdeg_transverse_analy}  Suppose that
    \begin{equation}
    \label{eq:decay_pardeg_2_analy}
        a \in \mathscr{S}^{1}_{\sigma,0}, \hspace{2mm}   \partial_qa \in \mathscr{S}_{\sigma, \ell+ \dec + 3}^{ 1}, \hspace{2mm}  b \in \mathscr{S}_{\sigma, \ell}^{ 1}, \hspace{2mm}  c \in \mathscr{S}_{\sigma, \ell+\dec+2}^{ 1}, \hspace{2mm}  d \in \mathscr{S}_{\sigma, \ell+\dec+1}^{ 1}.
    \end{equation}
    Then the analytic asymptotic KAM torus $\varphi$ associated with $(X_H, X_{H_0}, \varphi_0)$ given by the first statement is asymptotically parabolic.
\end{enumerate}
\end{theoremxx}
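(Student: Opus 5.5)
The plan is to transpose the proof of Theorem \ref{Thm:par_Csigma_0} to the analytic category, replacing the Hölder spaces $\mathcal{S}^T_{(\sigma,k),\ell}$ by the weighted spaces $\mathscr{S}^T_{\sigma',\ell}$ on shrinking complex strips. Cauchy estimates will take the place of the derivative losses encoded in the index $k$.

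\emph{Step 1: existence.} I write the unknown embedding as $\varphi^t=(\mathrm{id}+u^t,v^t,w^t)$ and substitute it into \eqref{def:cond2_asymKAMtorus} with $H=H_0+P$ and $P$ as in \eqref{eq:perturbation_form_par_deg}. Since $\Lambda=0$, the leading linear part in the $(p,z)$-directions is $\partial_t+\omega\cdot\partial_q$ together with off-diagonal couplings of order $O(p,z)$. The invariance equation then becomes $\mathcal{F}(u,v,w)=0$, where the linear part is a triangular system of cohomological equations of the form $\partial_t g+\omega\cdot\partial_q g=f$.

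For $f\in\mathscr{S}^T_{\sigma,\ell}$ with $\ell>1$, the equation is solved explicitly along characteristics:
\[
g(q,t)=-\int_t^{\infty}f\big(q+\omega(s-t),s\big)\,ds .
\]
The integral involves no small divisors and preserves the analyticity width: $|g|^T_{\sigma,\ell-1}\le (\ell-1)^{-1}|f|^T_{\sigma,\ell}$. Solving the triangular system in the order $v$, then $w$, then $u$ explains the decay rates $\ell+l+1$, $\ell+l$ and $\ell-1$ in \eqref{eq:asymp_KAM_torus_form_par0_analy}. The nonlinear terms are compositions such as $\partial_q a(q+u,t)$ and $\partial^2_{(p,z)}H_0$ evaluated at $\varphi$. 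Their estimates require $q+u(q,t)$ to lie in $\T^n_\sigma$ whenever $q\in\T^n_{\sigma/2}$. This holds as soon as $|u|_{\sigma/2}<\sigma/2$, which is guaranteed for $T$ large because $u$ decays like $t^{-(\ell-1)}$.

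Derivatives in $q$ are controlled by Cauchy estimates on the nested strip $\T^n_{\sigma/2}\subset\T^n_\sigma$, with a fixed loss of width. Because the domain shrinks only once and no iteration is performed, this single loss is harmless. Taking $T$ large gives the smallness of $D\mathcal{F}-D\mathcal{F}(0)$ and of $\mathcal{F}(0)$ in the weighted norms, since the extra weight $t^{-1}$ gained in each coupling is made small by large $T$. The quantitative Implicit Function Theorem used for Theorem \ref{Thm:par_Csigma_0} then yields a unique fixed point in a ball. Uniqueness on intersections of domains follows from uniqueness in that ball together with \eqref{eq:asymptotic_KAM_flow}, exactly as in the Hölder case.

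\emph{Step 2: asymptotic parabolicity.} I follow the proof of Item \ref{thm:Cdeg_transverse} of Theorem \ref{Thm:par_Csigma_0}. First, I compute $DX_H\circ\varphi$ and use the stronger decay \eqref{eq:decay_pardeg_2_analy} to identify a matrix $A$ of the form \eqref{def:A} with $\Mpar=0$. Next, I seek the conjugacy in the form $S=\mathrm{Id}+\Delta$, where $\Delta$ solves
\[
\partial_t\Delta+\omega\cdot\partial_q\Delta=(DX_H\circ\varphi)(\mathrm{Id}+\Delta)-(\mathrm{Id}+\Delta)A .
\]
This is again a coupled system of cohomological equations solved by the same integral operator. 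The requirement $\sigma\ge2$ from the Hölder statement is unnecessary here, because the derivative of $DX_H$ needed in the estimates is obtained by Cauchy estimates on a further shrunk strip such as $\T^n_{\sigma/4}$. Real-analytic, and in particular $C^1$, dependence of $S$ and the limit \eqref{def:trasv_dyn_limit} then follow from the decay of $\Delta$ together with the explicit polynomial growth of $\Phi_A$ described in Section \ref{sc:analysis_transversal}, combined with the criteria of Section \ref{sc:criteria_asym_dyn}.

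\emph{Main obstacle.} I expect the main difficulty to lie in the linearized operator of Step 2. In the parabolic setting, $\Phi_A$ grows polynomially through the nilpotent blocks $a_{12},a_{13},a_{32}$. The weights must therefore be chosen so that each block of $\Delta$ gains enough powers of $t$ to absorb this growth, and this is where the exponents $l+3$, $l+2$, $l+1$ enter. The analytic versions of these estimates are identical to the Hölder ones, with $|\cdot|_{C^{\sigma+k}}$ replaced by $|\cdot|_{\sigma'}$ on nested strips. So the work reduces to checking that the finitely many width reductions (from $\sigma$ to $\sigma/2$ to $\sigma/4$) are compatible with every composition that occurs.
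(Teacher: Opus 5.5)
Your Step 1 follows the paper's proof of Item \ref{thm:Cdeg_existence_analy}: analytic weighted spaces, the triangular solve $\hat v\to\hat w\to\hat u$ by $\Lo^{-1}$ (no small divisors, no loss of width), one reduction of the strip to $\sigma/2$, and Theorem \ref{thm:QIFT}. One correction is needed there. $\mathcal{F}^T(P,0)=(b,-\partial_qa,J_mc)$ does not become small for large $T$; it is only $O(|P|^T)$. You should take $\rho=2C\bar C r$ and use large $T$ only to control $D_\varphi\mathcal{F}^T(P,\varphi)-D_\varphi\mathcal{F}^T(0,0)$. Your uniqueness argument needs exactly this, since a second torus need not lie in a small ball.

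The genuine gap is in Step 2. You fix $A$ of the form \eqref{def:A} first and then assert that $\Lo\Delta=(DX_H\circ\varphi)(\mathrm{Id}+\Delta)-(\mathrm{Id}+\Delta)A$ has a decaying solution given by the same integral operator. This fails for two reasons.
\begin{itemize}
\item The operator $\Delta\mapsto\Lo\Delta-[A,\Delta]$ is not a small perturbation of $\Lo$, because the blocks $a_{12},a_{13},a_{32}$ are bounded and do not decay.
\item With the natural choice of $A$ (the corresponding blocks of $DX_H\circ\varphi$), the forcing in the $(q,q)$ and $(p,p)$ blocks is $\pm\partial_q\partial_pH\circ\varphi\approx\pm\partial_qb\circ(\mathrm{id}+u)$. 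This decays only like $t^{-\ell}$, because \eqref{eq:decay_pardeg_2_analy} does not strengthen the assumption on $b$.
\end{itemize}
Hence $\Delta_{11},\Delta_{22}\sim t^{1-\ell}$, and the equation for $\Delta_{12}$ contains the source $a_{12}\Delta_{22}-\Delta_{11}a_{12}$, which is not integrable for $1<\ell\le 2$. The extra powers $\dec+1,\dec+2,\dec+3$ on $a,c,d$ do not help here.

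This is a real obstruction, not bookkeeping. Take $n=1$, $H_0=\omega p+\tfrac{1}{2}p^2$, $P=b\,p$, so that $v=0$. Along an orbit the cocycle is $\dot\eta=\begin{pmatrix} r&1\\0&-r\end{pmatrix}\eta$ with $r=\partial_qb\circ\varphi$. Solving explicitly, a conjugacy to $A=\begin{pmatrix}0&1\\0&0\end{pmatrix}$ with $S\to\mathrm{Id}$ exists only if $\int^{\infty}(e^{2\rho(s)}-1)\,ds<\infty$, where $\rho(t)=\int_t^\infty r$. This fails when $r\simeq t^{-\ell}$ has a fixed sign and $\ell\le 2$, for instance $b=t^{-\ell}\beta(q-\omega t)$. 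By contrast, $A=\begin{pmatrix}0&e^{2\rho}\\0&0\end{pmatrix}$ with $S=\mathrm{diag}(e^{-\rho},e^{\rho})$ works.

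So the free blocks of $A$ must be found together with $S$. The paper does this with the frame $\boldsymbol S(\varphi,G)$ of Proposition \ref{prop:precriterion}:
\begin{itemize}
\item Its first block column is $\partial_q\varphi$. Differentiating the invariance equation gives $(DX_H\circ\varphi)\,\partial_q\varphi=\Lo\partial_q\varphi$, so the first column of $A$ vanishes identically, however slowly $\partial_qb$ decays.
\item Symplecticity then kills the rest of the second block row.
\item Only the $2m\times 2m$ equation $\boldsymbol\zeta(H,\varphi,G)=0$ remains. Its linearization $G\mapsto J_m\Lo G$ is inverted by $\Lo^{-1}$, and Corollary \ref{cor:criterion} then gives \eqref{def:trasv_dyn_limit}.
\end{itemize}
For the analytic theorem none of this has to be redone, and this is how the paper proves Item \ref{thm:Cdeg_transverse_analy}. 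By Cauchy estimates on the real domain, data satisfying \eqref{eq:decay_pardeg_2_analy} satisfy the hypotheses of Item \ref{thm:Cdeg_transverse} of Theorem \ref{Thm:par_Csigma_0} for any Hölder exponent $\ge 2$. The restriction of the analytic torus to real $q$ has the form \eqref{eq:asymp_KAM_torus_form_par0}. By the uniqueness in Item \ref{thm:Cdeg_existence}, it therefore coincides with the Hölder torus, which is asymptotically parabolic.
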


\begin{theoremxx}\label{Thm:par_analy}
Fix $\sigma > 0$, $\ell >1$ and $\dec \ge 0$. Let $H_0 : \T^n_\sigma \times B_\sigma \to \C$ be real analytic of the form \eqref{eq:initial_hamiltonian_par} satisfying $\partial_{(p,x,y)}^2 H_0 \in \mathscr{S}_{\sigma,0}^{1}$ and let $P : \T^n_\sigma \times B_\sigma \to \C$ be real analytic of the form \eqref{eq:perturbation_form_par}. Then, for $H:= H_0 + P$ the following holds.
\begin{enumerate}
    \item \label{thm:C_existence_analy} Suppose that
    \begin{equation*}
        a \in \mathscr{S}^{ 1}_{\sigma,0}, \quad \partial_q a \in \mathscr{S}_{\sigma, \ell+ \dec+4}^{1}, \quad b \in \mathscr{S}_{\sigma, \ell}^{1}, \quad  c_1 \in \mathscr{S}_{\sigma, \ell+ \dec+2}^{ 1}, \quad  c_2 \in \mathscr{S}_{\sigma, \ell+ \dec+3}^{ 1}, 
        \end{equation*}
        \begin{equation*}
         d_1 \in \mathscr{S}_{\sigma, \ell-1}^{ 1}, \quad d_2 \in \mathscr{S}_{\sigma, \ell}^{ 1}, \quad d_3 \in \mathscr{S}_{\sigma, \ell+1}^{ 1}.
    \end{equation*}
    Then, there exists an analytic asymptotic KAM torus $\varphi$ associated with $(X_H, X_{H_0}, \varphi_0)$ of the form
    \begin{equation}
        \label{eq:asymp_KAM_torus_form_par_analy}
    \begin{gathered}
        \varphi:\T^n_{\sigma/2} \times I_T \to \T^n_\sigma \times \C^n \times \C^{2m}, \qquad \varphi^t = (\textup{id}_{\T^n} + u^t, v^t, w^t), \qquad T \geq 0, \\
            u \in \mathscr{S}_{{\sigma \over 2}, \ell-1}^{T}, \quad  v \in \mathscr{S}_{{\sigma \over 2}, \ell+ \dec + 3}^{T}, \quad w_x \in \mathscr{S}_{{\sigma \over 2}, \ell+ \dec + 2}^{T} , \quad w_y \in \mathscr{S}_{{\sigma \over 2}, \ell+ \dec + 1}^{T}.
    \end{gathered}
        \end{equation}
        Moreover, any other analytic asymptotic KAM torus associated with $(X_H, X_{H_0}, \varphi_0)$ of the form \eqref{eq:asymp_KAM_torus_form_par_analy} coincides with $\varphi$ in the intersection of their domains.
    \medskip
    
    \item \label{thm:C_transverse_analy} Suppose that
    \begin{equation} \label{eq:Hyp_decay_par_2_analy}
    \begin{aligned}
        a \in \mathscr{S}^{ 1}_{\sigma,0}, \quad \partial_q a &\in \mathscr{S}_{\sigma, \ell+\dec+6}^{1}, \quad b \in \mathscr{S}_{\sigma, \ell}^{1}, \quad  c_1 \in \mathscr{S}_{\sigma, \ell+\dec+4}^{ 1}, \quad  c_2 \in \mathscr{S}_{\sigma, \ell+\dec+5}^{ 1}, \\
         d_1 &\in \mathscr{S}_{\sigma, \ell+\dec+1}^{ 1}, \quad d_2 \in \mathscr{S}_{\sigma, \ell+\dec+2}^{ 1}, \quad d_3 \in \mathscr{S}_{\sigma, \ell+\dec+3}^{ 1}.
        \end{aligned} 
    \end{equation}
    Then the analytic asymptotic KAM torus $\varphi$ associated with $(X_H, X_{H_0}, \varphi_0)$ given by the first statement is asymptotically parabolic.
\end{enumerate}
\end{theoremxx}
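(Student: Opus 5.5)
The plan is to transfer the proof of Theorem \ref{Thm:par_Csigma} to the analytic category, replacing the Hölder spaces $\mathcal{S}^T_{(\sigma,k),\ell}$ by the weighted spaces $\mathscr{S}^T_{\sigma,\ell}$. Derivative losses are handled by Cauchy estimates on a nested family of complex strips. For Item \ref{thm:C_existence_analy}, we look for $\varphi^t=(\mathrm{id}+u^t,v^t,w_x^t,w_y^t)$ and write the invariance equation \eqref{def:cond2_asymKAMtorus_analy} componentwise as a functional equation $\mathcal{F}(P,\varphi)=0$. The unknowns $(u,v,w_x,w_y)$ lie in $\mathscr{S}^T_{\sigma/2,\ell-1}\times\mathscr{S}^T_{\sigma/2,\ell+\dec+3}\times\mathscr{S}^T_{\sigma/2,\ell+\dec+2}\times\mathscr{S}^T_{\sigma/2,\ell+\dec+1}$, and $\mathcal{F}$ takes values in spaces with one extra unit of decay. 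Because the ansatz is linear plus quadratic in $(p,z)$, the equations have the same algebraic structure as in the Hölder case. The linear part in $(p,x,y)$ is triangular, with coupling given by the nilpotent block $J_m\Mpar$: $\dot y$ involves $-\Lambda x$ and $\dot x$ involves $y$.

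The linearization at $\varphi_0$ is then inverted through the cohomological equations $\omega\cdot\partial_q g+\partial_t g=f$. For $f\in\mathscr{S}^T_{\sigma',\ell'+1}$ with $\ell'>0$, the solution is given explicitly by
\[
g(q,t)=-\int_t^{\infty} f(q+\omega(s-t),s)\,ds .
\]
This formula preserves the complex strip, since $q\mapsto q+\omega(s-t)$ is a real translation. It therefore yields $|g|^T_{\sigma',\ell'}\le \ell'^{-1}|f|^T_{\sigma',\ell'+1}$ with no small divisors and no loss of analyticity. The equations are solved in triangular order: first $w_y$, then $w_x$, which loses one more power of $t$ through the integration of $y$, then $v$ and finally $u$. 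This order explains the decay bookkeeping ($w_y$ at $\ell+\dec+1$, $w_x$ at $\ell+\dec+2$, $v$ at $\ell+\dec+3$, $u$ at $\ell-1$ from $b$) and the requirements on $c_1,c_2,d_i,\partial_q a$.

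The nonlinear remainders involve $\partial_q$ of the unknowns and compositions $H\circ\varphi$. Their estimates use Cauchy's inequality on $\T^n_{\sigma/2}\subset\T^n_{\sigma'}\subset\T^n_\sigma$ with a fixed loss of width, together with $\partial^2_{(p,x,y)}H_0\in\mathscr{S}^1_{\sigma,0}$. The quantitative implicit function theorem of the Hölder proof is then applied, with $T$ taken large so that the higher-order terms have small Lipschitz constant. Smallness of $P$ is not needed, because negative powers of $T$ appear in the estimates. Uniqueness follows from the injectivity of the linearized operator in the same weighted spaces.

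For Item \ref{thm:C_transverse_analy}, we invoke the criterion from Section \ref{sc:criteria_asym_dyn}: it suffices to build $S$ close to a model conjugation with $A$ as in \eqref{def:A}. Writing $S=\mathrm{Id}+\Delta$ with a suitable block structure, the conjugacy equation $\partial_t S+\omega\cdot\partial_q S=DX_H\circ\varphi\, S-SA$ is reduced, exactly as in the Hölder proof, to a coupled system of cohomological equations. The twisted equations of the form $\dot g=J_m\Mpar g+f$ along the flow are solved by variation of constants. Since $e^{tJ_m\Mpar}$ grows only linearly in $t$, each such step costs one or two powers of $t$, which accounts for the stronger hypotheses \eqref{eq:Hyp_decay_par_2_analy}. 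The resulting $S$ is analytic in $q$ and $C^1$ in $t$, and the limit \eqref{def:trasv_dyn_limit} follows from the decay of $\Delta$.

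I expect the main obstacle to be the transverse part. The polynomial growth of the parabolic model has to be absorbed by the decay weights in every block of the coupled system. One must check that the Cauchy-estimate losses still leave $\sigma/2$ after finitely many steps; since no iteration occurs, this only requires fixing finitely many intermediate widths in advance. One must also check that the block structure of $A$, namely $a_{12},a_{13},a_{32}$, can absorb the non-decaying terms coming from $\partial_q$ of $\omega\cdot p$ and from $\partial^2 H_0$.
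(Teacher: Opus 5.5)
For Item~\eqref{thm:C_existence_analy} your strategy is the paper's. You set up the functional $X_{H_0+P}\circ\varphi-\Lo\varphi$ on analytic weighted spaces; these must be the $\mathscr{U}$-type spaces that also control $\Lo u,\Lo v,\Lo w_x,\Lo w_y$. You work on a ball of width $\sigma/2$ whose radius grows with $T$ (the paper takes $T^{(\ell-1)/2}$), invert through the loss-free solution of $\Lo g=f$, and apply the quantitative implicit function theorem for large $T$.

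However, the linear structure you describe is wrong, and your solving order cannot be carried out. Since $\Mpar=\mathrm{diag}(\Lambda,0)$, you have $J_m\Mpar=\begin{pmatrix}0&0\\-\Lambda&0\end{pmatrix}$, so $\dot x$ has no linear $y$-term (the $d_3\,y$ contribution is perturbative). The linearization \eqref{proof:Csigma_def_par_DF_1} is triangular in the order $\hat v\to\hat w_x\to\hat w_y\to\hat u$:
\begin{itemize}
\item $-\Lo\hat v=g_2$ is decoupled;
\item the $\hat w_x$-equation involves only $(\overline m_2)_0\hat v$;
\item the $\hat w_y$-equation involves $(\overline m_1)_0\hat v$ and $\Lambda\hat w_x$.
\end{itemize}
So it is $w_y$ that loses a power, through $\Lambda w_x$. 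This is what produces the rates $\ell+\dec+3$, $\ell+\dec+2$, $\ell+\dec+1$ for $v,w_x,w_y$, and what dictates the hypotheses on $\partial_q a$, $c_2$, $c_1$. In your order the $w_y$-equation cannot be solved first, and your mechanism would make $w_x$ decay \emph{slower} than $w_y$, contradicting the rates you list.

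Also, injectivity of $D_\varphi\mathcal F^T(0,0)$ alone does not give uniqueness. You need injectivity of $\mathcal F^T(P,\cdot)$ on a ball containing the competing torus. This holds for large $T$, since $D_\varphi\mathcal F^T(P,\cdot)$ stays uniformly close to $D_\varphi\mathcal F^T(0,0)$ there. You then propagate the coincidence to the common domain by \eqref{eq:asymptotic_KAM_flow}.

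For Item~\eqref{thm:C_transverse_analy} you propose to redo the transverse construction analytically, but what you write is not yet a proof. The decisive point is why $A$ can be taken of the form \eqref{def:A}, with vanishing first block column and second block row. You leave this as an expected obstacle. In the Hölder proof it is automatic from the symplectic ansatz $\boldsymbol S(\varphi,G)$ of Proposition~\ref{prop:precriterion}, which reduces everything to $\boldsymbol\zeta(H,\varphi,G)=\Mpar$. Its linearization $G\mapsto G^\top\Mpar+\Mpar G+J_m\Lo G$ is solved in the order $G_2\to G_1\to G_3$, with block-dependent weights $\mathcal D$, $\mathfrak D$. A uniform variation-of-constants bound would cost two extra powers, since $e^{t\,\mathrm{ad}_{J_m\Mpar}}$ grows quadratically, and you have not done this bookkeeping.

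More importantly, you are missing the idea that none of this needs redoing, and this is how the paper proceeds:
\begin{enumerate}
\item On a real ball strictly inside $B_\sigma$, Cauchy estimates show that the analytic hypotheses \eqref{eq:Hyp_decay_par_2_analy} imply the Hölder hypotheses \eqref{eq:Hyp_decay_par_2} of Theorem~\ref{Thm:par_Csigma} for every $\sigma'\ge 2$.
\item The real trace of the analytic torus from Item~\eqref{thm:C_existence_analy} is then a $C^{\sigma'}$ asymptotic KAM torus of the form \eqref{eq:asymp_KAM_torus_form_par}.
\item By the uniqueness part of Item~\eqref{thm:C_existence} of Theorem~\ref{Thm:par_Csigma}, it coincides on a common tail with the torus that Item~\eqref{thm:C_transverse} of that theorem proves to be asymptotically parabolic.
\end{enumerate}
Since Definition~\ref{def:ell_hyp_par_trasv_dyn_asym_KAM} only involves real $q$ and some tail $[T'',+\infty)$, Item~\eqref{thm:C_transverse_analy} follows at once.
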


\begin{remark}
It follows from the proofs of Theorems \ref{Thm:par_Csigma_0}, \ref{Thm:par_Csigma}, \ref{Thm:par_Csigma_0_analy} and \ref{Thm:par_analy} that we have precise control over the regularity of the conjugating map $S$ and the linear cocycle $A$ in Definition \ref{def:ell_hyp_par_trasv_dyn_asym_KAM}. These objects are constructed in each proof, with the help of the Implicit Function Theorem, in order to determine the asymptotic behaviour of the transverse dynamics.

Since our main goal was to establish the asymptotic convergence, when restricted to the normal coordinates, of the solutions of the linearized cocycle associated with the perturbed system along the asymptotic KAM torus to those of the cocycle $A$ (which exhibits parabolic behaviour in the present setting), namely, Equation \eqref{def:trasv_dyn_limit}, we did not explicitly include these regularity properties in the statements of our theorems.

However, it follows directly from the proofs that, in Theorems \ref{Thm:par_Csigma_0} and \ref{Thm:par_Csigma}, the maps $A$ and $S$ are of class $C^{\sigma-1}$ with respect to the variables $(q, p, z)$ (while in Theorems \ref{Thm:par_Csigma_0_analy} and \ref{Thm:par_analy} they are analytic) and of class $C^1$ with respect to $t$.
\end{remark}

\section{Analysis of Transverse Dynamics}
\label{sc:analysis_transversal}

In this section, we study the solutions of the time-dependent linear system~\eqref{eq:Trasv_dyn_AA}.
Given $T\ge 1$ and letting $\xi = (\xi_1, \xi_2, \xi_3, \xi_4) \in \R^n \times \R^n \times \R^m \times \R^m$, we rewrite~\eqref{eq:Trasv_dyn_AA} as
\begin{equation}
    \begin{cases}\label{eq:Trasv_dyn_A_2}
        \partial_t \xi_1 (t) = a^t_{12}(q+\omega t) \xi_2(t) +  a^{t}_{13}(q+\omega t) \begin{pmatrix} \xi_3(t) \\ \xi_4(t) \end{pmatrix},\\
        \partial_t \xi_2(t) = 0,\\
        \begin{pmatrix}\partial_t \xi_3 (t) \\ \partial_t \xi_4(t) \end{pmatrix} = a^{t}_{32}(q+\omega t) \xi_2(t) + J_m \Mpar \begin{pmatrix}\xi_3(t) \\ \xi_4(t) \end{pmatrix},
    \end{cases}
\end{equation}
for $(q,t) \in \T^n \times I_{T}$, where we refer to~\eqref{def:J} for the definition of $J_m$, and to~\eqref{def:A} for the terms $a^t_{12}, a^t_{13}$, and $a^t_{32}$.

Throughout this section, we denote by $C(\cdot)$ a generic positive constant depending on the parameter in brackets and by
\begin{equation}\label{not:proj_Pixy}
    \Pi_x = (\mathrm{Id}_m \quad 0_{m}), \, \Pi_y = (0_m \quad  \mathrm{Id}_m) \in \mathcal{M}_{m \times 2m}(\R)
\end{equation}
the projections onto the first and last $m$ rows.

Each solution $\xi(t;q,t_0)$ of the above system depends on the parameters $q\in\T^n$ and $t_0\in I_{T}$, and the initial condition $\xi(t_0;q)$ depends on $q\in\T^n$. To avoid cumbersome notation, throughout this section we omit these dependencies and simply write $\xi(t)$ and $\xi(t_0)$ whenever the meaning is clear from the context.

This section is divided into two parts. First, in Section \ref{sec:A_par_deg}, we consider the degenerate case 
\begin{equation*}
    \Mpar = \begin{pmatrix} 0 & 0 \\ 0 & 0
    \end{pmatrix}
\end{equation*}
which correspond to $\Lambda_1,\dots, \Lambda_m=0$ (see~\eqref{Ms} for the definition of $\Mpar$). Then, in Section \ref{sec:A_par_non_deg}, we analyze the general case in which $\Lambda_1,\dots, \Lambda_m \ge 0$.

\subsection{Case $\Lambda_1,\dots, \Lambda_m=0$}\label{sec:A_par_deg}
We rewrite the system~\eqref{eq:Trasv_dyn_A_2} as
\begin{equation}
    \begin{cases}\label{eq:Trasv_dyn_A_2_par_deg}
        \partial_t \xi_1 (t) = a^t_{12}(q+\omega  t) \xi_2(t) +  a^{t}_{13}(q+\omega t ) \begin{pmatrix} \xi_3(t) \\ \xi_4(t) \end{pmatrix},\\
        \partial_t \xi_2(t) = 0,\\
        \begin{pmatrix}\partial_t \xi_3 (t) \\ \partial_t \xi_4(t) \end{pmatrix} = a^{t}_{32}(q+\omega t) \xi_2(t).
    \end{cases}
\end{equation}
for all $(q,t) \in \T^n \times I_T$. In the following proposition, we provide growth estimates for the solutions of the system~\eqref{eq:Trasv_dyn_A_2_par_deg}. In Proposition \ref{prop:par_deg_1:1_asym_sol} below, we assume that $a_{32}^t$ converge in time to an autonomous matrix $a^\infty_{32}$. We analyze when there is an asymptotic correspondence between the transversal solutions of~\eqref{eq:Trasv_dyn_A_2_par_deg} and those of the limiting system.
\begin{proposition}
\label{prop:trasv_dyn_par_deg_case} 
    We assume that the non-autonomous linear system~\eqref{eq:Trasv_dyn_A_2_par_deg} satisfies
    \begin{equation*}
        a_{12}, \, a_{13}, \, a_{32} \in C^0(\T^n \times I_{T}), \qquad \sup_{t \in I_{T}}|a_{12}^t|_{C^0}, \, \sup_{t \in I_{T}}|a_{13}^t|_{C^0}, \, \sup_{t \in I_{T}}|a_{32}^t|_{C^0},< \infty.
    \end{equation*}
    Then, for every fixed $(q,t_0) \in \T^n \times I_{T}$ and any initial condition $\xi(t_0) = (\xi_1(t_0),\xi_2(t_0),\xi_3(t_0),\xi_4(t_0))$ there exists a unique solution $\xi(t) = (\xi_1(t),\xi_2(t),\xi_3(t),\xi_4(t))$ and a positive constant $C$ depending on $|\xi(t_0)|$, $\sup_{t \in I_{T}}|a_{12}^t|_{C^0}, \, \sup_{t \in I_{T}}|a_{13}^t|_{C^0}, \, \sup_{t \in I_{T}}|a_{32}^t|_{C^0}$, $n$, $m$ and $t_0$ such that 
    \begin{equation*}
        |\xi_1(t)| \le C t^2, \quad |\xi_2(t)| \le C,  \quad |\xi_3(t)|, \,|\xi_4(t)| \le C t,
    \end{equation*}
    for all $t \in I_{T}$.    
\end{proposition}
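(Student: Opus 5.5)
The system \eqref{eq:Trasv_dyn_A_2_par_deg} is triangular (nilpotent structure), so the plan is to integrate it explicitly from the bottom up, in the order $\xi_2 \to (\xi_3,\xi_4) \to \xi_1$. Existence and uniqueness of the solution for each fixed $(q,t_0)$ come first. For fixed $q$, the coefficients $t \mapsto a_{ij}^t(q+\omega t)$ are continuous on $I_T$, because $a_{ij} \in C^0(\T^n \times I_T)$ and $t\mapsto (q+\omega t, t)$ is continuous. The system is therefore linear with continuous coefficients, and standard linear ODE theory gives a unique global solution on $I_T$ for any initial datum at $t_0$. In fact the explicit integration below produces this solution directly.

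Next I integrate. The second equation gives $\xi_2(t) = \xi_2(t_0)$ for all $t$, so $|\xi_2(t)| \le |\xi(t_0)|$. Substituting this into the third block yields
\[
\begin{pmatrix}\xi_3(t)\\ \xi_4(t)\end{pmatrix} = \begin{pmatrix}\xi_3(t_0)\\ \xi_4(t_0)\end{pmatrix} + \int_{t_0}^t a_{32}^s(q+\omega s)\, ds\; \xi_2(t_0).
\]
Setting $M = \max\{\sup_t|a_{12}^t|_{C^0}, \sup_t|a_{13}^t|_{C^0}, \sup_t|a_{32}^t|_{C^0}\}$ and letting $c(n,m)$ absorb the constants from the matrix-norm convention (maximum of entries), I get
\[
|\xi_{3,4}(t)| \le |\xi(t_0)| + c\,M|\xi(t_0)|\,|t-t_0|.
\]
Finally,
\[
\xi_1(t) = \xi_1(t_0) + \int_{t_0}^t \Big(a_{12}^s\,\xi_2(t_0) + a_{13}^s\,(\xi_3,\xi_4)(s)\Big)\,ds,
\]
which gives
\[
|\xi_1(t)| \le |\xi(t_0)|\big(1 + cM|t-t_0| + cM|t-t_0| + c^2M^2|t-t_0|^2/2\big).
\]

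The last step converts these bounds, which are polynomial in $|t-t_0|$, into bounds in powers of $t$. Since $t, t_0 \ge T \ge 1$, we have $|t-t_0| \le t + t_0 \le (1+t_0)t$ and $1 \le t \le t^2$. This gives $|\xi_{3,4}(t)| \le C t$ and $|\xi_1(t)| \le C t^2$, with $C$ depending on $|\xi(t_0)|$, $M$, $n$, $m$ and $t_0$, exactly as stated. If $t < t_0$ lies in $I_T$, the same integrals over $[t,t_0]$ give the same bounds, since $|t-t_0| \le t_0 \le t_0 t$.

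I do not expect a genuine obstacle. The only point needing care is the bookkeeping of constants: the dependence on $t_0$ enters only through $|t-t_0| \le (1+t_0)t$, and the dimensional constants come from the max-entry matrix norm, which bounds operator norms only up to factors of $n$ and $m$.
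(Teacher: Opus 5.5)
Your proposal is correct and follows the same approach as the paper, whose proof consists only of the remark that the estimates follow by direct integration of the triangular system. You have written out that integration explicitly, in the order $\xi_2 \to (\xi_3,\xi_4) \to \xi_1$, and you also check existence/uniqueness and the conversion $|t-t_0|\le(1+t_0)t$ for $t\ge 1$, which the paper leaves implicit.
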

\begin{proof}
    The result follows from direct integration of~\eqref{eq:Trasv_dyn_A_2_par_deg}.
\end{proof}
\begin{remark}
    In Theorem \ref{Thm:par_Csigma_0}, the unperturbed Hamiltonian $H_0$ in~\eqref{eq:initial_hamiltonian_par_deg} has an invariant torus in $p=z=0$ supporting quasiperiodic solutions. If we consider solutions of~\eqref{eq:Trasv_dyn_A_2_par_deg} with initial condition $\xi(t_0)$ such that $\xi_2(t_0) =0$
    which consists in studying the transverse dynamics in a neighborhood of the invariant torus corresponding to $p=0$, we obtain the following degenerate transverse motions
    \begin{equation*}
        \xi_3(t) = \xi_3(t_0), \qquad \xi_4(t) = \xi_4(t_0).
    \end{equation*}
\end{remark}
We now consider the following autonomous system for all $(q,t) \in \T^n \times I_T$
\begin{equation}
    \begin{cases}\label{eq:Trasv_dyn_A_2_par_deg_syst_infty}
        \partial_t \xi^\infty_1 (t) = a^\infty_{12}(q+\omega t) \xi^\infty_2(t) +  a^\infty_{13}(q+\omega t) \begin{pmatrix} \xi^\infty_3(t) \\ \xi^\infty_4(t) \end{pmatrix},\\
        \partial_t \xi^\infty_2(t) = 0,\\
        \begin{pmatrix}\partial_t \xi^\infty_3 (t) \\ \partial_t \xi^\infty_4(t) \end{pmatrix} = a^\infty_{32}(q+\omega t) \xi^\infty_2(t) ,
    \end{cases}
\end{equation}
 where $a^\infty_{12}:\T^n \to \mathcal{M}_n(\R)$, $a^\infty_{13} :\T^n \to \mathcal{M}_{n\times 2m}(\R)$, and  $a^\infty_{32}:\T^n \to \mathcal{M}_{2m\times n}(\R)$.

\begin{proposition}\label{prop:par_deg_1:1_asym_sol}
    We consider the systems~\eqref{eq:Trasv_dyn_A_2_par_deg} and~\eqref{eq:Trasv_dyn_A_2_par_deg_syst_infty} and we assume that 
    \begin{equation}\label{hyp:limit_aut_sys_par_deg}
        \int_{T}^{+\infty} |a_{32}^s - a_{32}^\infty|_{C^0} \, ds < \infty.
    \end{equation}
     Then, for every fixed $q\in\T^n$, $t_0\in I_{T}$ and $\xi_2^0 \in \R^n$ there exists a one-to-one correspondence between the sets $$\{ \pi_z\xi^\infty \mid \xi^\infty \text{ is a solution  of ~\eqref{eq:Trasv_dyn_A_2_par_deg_syst_infty}} \text{ with } \xi_2^\infty(t_0) = \xi_2^0\},$$ $$\{ \pi_z\xi \mid \xi \text{ is a solution  of ~\eqref{eq:Trasv_dyn_A_2_par_deg}} \text{ with } \xi_2(t_0) = \xi_2^0\},$$ by the relation 
       \begin{equation*}
        \lim_{t \to +\infty}|\pi_z\xi^\infty(t)-\pi_z\xi(t)|=0.
    \end{equation*}
\end{proposition}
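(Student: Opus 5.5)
Since $\partial_t\xi_2=0$ in both~\eqref{eq:Trasv_dyn_A_2_par_deg} and~\eqref{eq:Trasv_dyn_A_2_par_deg_syst_infty}, prescribing $\xi_2(t_0)=\xi^\infty_2(t_0)=\xi_2^0$ gives $\xi_2\equiv\xi_2^\infty\equiv\xi_2^0$ for all $t\in I_T$. The $z$-components therefore solve equations that do not involve $\xi_1$:
\begin{equation*}
\pi_z\xi(t)=\pi_z\xi(t_0)+\int_{t_0}^t a_{32}^s(q+\omega s)\,ds\,\xi_2^0,\qquad
\pi_z\xi^\infty(t)=\pi_z\xi^\infty(t_0)+\int_{t_0}^t a_{32}^\infty(q+\omega s)\,ds\,\xi_2^0 .
\end{equation*}
The first component $\xi_1$ plays no role in the correspondence. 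Each set in the statement is thus an affine family parametrized by $\pi_z\xi(t_0)\in\R^{2m}$ (resp. $\pi_z\xi^\infty(t_0)$), with $q,t_0,\xi_2^0$ fixed.

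We build the correspondence explicitly. Subtracting the two formulas gives
\begin{equation*}
\pi_z\xi(t)-\pi_z\xi^\infty(t)=\pi_z\xi(t_0)-\pi_z\xi^\infty(t_0)+\int_{t_0}^t \big(a_{32}^s-a_{32}^\infty\big)(q+\omega s)\,ds\,\xi_2^0 .
\end{equation*}
By~\eqref{hyp:limit_aut_sys_par_deg} and the bound $|(a_{32}^s-a^\infty_{32})(q+\omega s)|\le|a_{32}^s-a_{32}^\infty|_{C^0}$, the integrand is absolutely integrable on $[t_0,+\infty)$. Hence the vector
\begin{equation*}
L:=\int_{t_0}^{+\infty}\big(a_{32}^s-a_{32}^\infty\big)(q+\omega s)\,ds\,\xi_2^0
\end{equation*}
exists and is finite, with tail $\int_t^{+\infty}$ tending to $0$ as $t\to+\infty$. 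The difference above therefore converges to $\pi_z\xi(t_0)-\pi_z\xi^\infty(t_0)+L$.

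The correspondence is defined by $\pi_z\xi^\infty(t_0)=\pi_z\xi(t_0)+L$. This is a bijection of $\R^{2m}$ (a translation), and for paired solutions the difference $\pi_z\xi(t)-\pi_z\xi^\infty(t)$ equals minus the tail integral, so it tends to $0$. Uniqueness of the partner follows because any other choice of initial datum changes the limit by a nonzero constant, so the difference no longer tends to $0$. Solutions of both linear systems are unique and globally defined by continuity and boundedness of the coefficients, as in Proposition~\ref{prop:trasv_dyn_par_deg_case}. The only point needing care is the convergence of the improper integral, which is exactly what hypothesis~\eqref{hyp:limit_aut_sys_par_deg} provides.
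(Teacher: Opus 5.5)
Your proposal is correct and follows essentially the same route as the paper: since $\xi_2\equiv\xi_2^0$, you integrate the $z$-equations directly, use the integrability hypothesis to make $L=\int_{t_0}^{+\infty}(a_{32}^s-a_{32}^\infty)(q+\omega s)\,ds\,\xi_2^0$ converge, and pair solutions through the translation $\pi_z\xi(t_0)=\pi_z\xi^\infty(t_0)-L$, which is exactly the paper's criterion. Your integrand, evaluated at $q+\omega s$, is the correct one; the paper's display writes $q+\omega t$ inside the $ds$-integral, which is a harmless typo.
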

\begin{proof}
    We introduce the following notation
    \begin{equation*}
        \zeta(t) = \left(\xi_3(t),  \xi_4(t)\right)^\top, \qquad \zeta_\pm^\infty(t) = \left(\xi^\infty_3(t),  \xi^\infty_4(t)\right)^\top.
    \end{equation*}
    Thanks to~\eqref{hyp:limit_aut_sys_par_deg} we have that $$\int_{t_0}^{+\infty}\left(a_{32}^s(q+\omega t) - a_{32}^\infty(q+\omega t)\right) \xi_2(t_0) \, ds$$ is well defined. 
    Remembering that we are assuming $\xi_2(t_0) = \xi_2^\infty(t_0)$, by integration, we have that the difference of the solutions of the last equation of systems~\eqref{eq:Trasv_dyn_A_2_par_deg} and~\eqref{eq:Trasv_dyn_A_2_par_deg_syst_infty} is given by
    \begin{align*}
        \zeta(t) - \zeta^\infty(t) &= \zeta(t_0) - \zeta^\infty(t_0) + \int_{t_0}^{+\infty}\left(a_{32}^s(q+\omega t) - a_{32}^\infty(q+\omega t)\right) \xi_2(t_0) \, ds\\
        &- \int_{t}^{+\infty}\left(a_{32}^s(q+\omega t) - a_{32}^\infty(q+\omega t)\right) \xi_2(t_0) \, ds.
    \end{align*}
    Finally, using~\eqref{hyp:limit_aut_sys_par_deg} we have that 
    \begin{equation*}
        \lim_{t \to +\infty}|\zeta(t) - \zeta^\infty(t)|=0 \quad \mbox{if and only if} \quad \zeta(t_0) = \zeta^\infty(t_0) - \int_{t_0}^{+\infty}\left(a_{32}^s(q+\omega t) - a_{32}^\infty(q+\omega t)\right) \xi_2(t_0) \, ds.
    \end{equation*}
    This concludes the proof of this Proposition. 
\end{proof}

\subsection{Case $\Lambda_1,\dots, \Lambda_m\ge0$}\label{sec:A_par_non_deg}
We rewrite the system~\eqref{eq:Trasv_dyn_A_2} as
\begin{equation}
    \begin{cases}\label{eq:Trasv_dyn_A_2_par}
        \partial_t \xi_1 (t) = a^t_{12}(q+\omega  t) \xi_2(t) +  a^{t}_{13}(q+\omega t ) \begin{pmatrix} \xi_3(t) \\ \xi_4(t) \end{pmatrix},\\
        \partial_t \xi_2(t) = 0,\\
        \partial_t \xi_3 (t)  = \Pi_x a^{t}_{32}(q+\omega t) \xi_2(t), \\
        \partial_t \xi_4(t) = \Pi_y a^{t}_{32}(q+\omega t) \xi_2(t) - \Lambda \xi_3(t),
    \end{cases}
\end{equation}
for all $(q,t) \in \T^n \times I_T$, where, we refer to~\eqref{not:proj_Pixy} for the definition of the projections $\Pi_x$ and $\Pi_y$.  As in Section \ref{sec:A_par_deg}, we prove two results that are the counterparts of Propositions~\ref{prop:trasv_dyn_par_deg_case} and~\ref{prop:par_deg_1:1_asym_sol} for the present setting. The first concerns growth estimates for the solutions of~\eqref{eq:Trasv_dyn_A_2_par_deg}, whereas the second establishes their asymptotic correspondence with the solutions of a suitable limiting autonomous system.
\begin{proposition}
\label{prop:trasv_dyn_par}
    We assume that the non-autonomous linear system~\eqref{eq:Trasv_dyn_A_2_par} satisfies
    \begin{equation*}
        a_{12}, \, a_{13}, \, a_{32} \in C^0(\T^n \times I_{T}), \qquad \sup_{t \in I_{T}}|a_{12}^t|_{C^0}, \, \sup_{t \in I_{T}}|a_{13}^t|_{C^0}, \, \sup_{t \in I_{T}}|a_{32}^t|_{C^0},< \infty.
    \end{equation*}
    Then, for every fixed $(q,t_0) \in \T^n \times I_{T}$ and any initial condition $\xi(t_0) = (\xi_1(t_0),\xi_2(t_0),\xi_3(t_0),\xi_4(t_0))$ there exists a unique solution $\xi(t) = (\xi_1(t),\xi_2(t),\xi_3(t),\xi_4(t))$ and a positive constant $C$ depending on $\sup_{t \in I_{T}}|a_{12}^t|_{C^0}, \, \sup_{t \in I_{T}}|a_{13}^t|_{C^0}, \, \sup_{t \in I_{T}}|a_{32}^t|_{C^0}$, $n$, $m$, $\Lambda$ and $t_0$ such that 
    \begin{equation*}
        |\xi_1(t)| \le C t^3, \quad |\xi_2(t)| \le C,  \quad |\xi_3(t)|\le Ct, \quad |\xi_4(t)| \le C t^2,
    \end{equation*}
    for all $t \in I_{T}$.    
\end{proposition}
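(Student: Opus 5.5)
The plan is to integrate the system \eqref{eq:Trasv_dyn_A_2_par} directly, equation by equation, in the triangular order in which it is written, exactly as in Proposition \ref{prop:trasv_dyn_par_deg_case}. First, existence and uniqueness of the solution for a given initial condition $\xi(t_0)$ follows from standard linear ODE theory. The coefficients $t \mapsto a_{ij}^t(q+\omega t)$ are continuous in $t$, since $a_{ij} \in C^0(\T^n \times I_T)$ and $q+\omega t$ depends continuously on $t$. A linear system with continuous coefficients has a unique global solution on $I_T$.

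For the growth estimates I will proceed as follows. Set $K = \max\{\sup_t|a_{12}^t|_{C^0}, \sup_t|a_{13}^t|_{C^0}, \sup_t|a_{32}^t|_{C^0}\}$, up to dimensional constants depending on $n,m$.
\begin{itemize}
\item[(i)] The second equation gives $\xi_2(t) = \xi_2(t_0)$, hence $|\xi_2(t)| \le |\xi(t_0)|$.
\item[(ii)] Integrating the third equation,
\begin{equation*}
\xi_3(t) = \xi_3(t_0) + \int_{t_0}^t \Pi_x a_{32}^s(q+\omega s)\xi_2(t_0)\,ds,
\end{equation*}
so $|\xi_3(t)| \le |\xi_3(t_0)| + K|\xi_2(t_0)|(t-t_0) \le C t$, using $t \ge T \ge 1$ to absorb constants into a multiple of $t$.
\item[(iii)] The fourth equation gives
\begin{equation*}
\xi_4(t) = \xi_4(t_0) + \int_{t_0}^t \big(\Pi_y a_{32}^s(q+\omega s)\xi_2(t_0) - \Lambda\xi_3(s)\big)\,ds.
\end{equation*}
Inserting the bound from (ii), the integrand is $O(1 + s)$, so $|\xi_4(t)| \le C t^2$. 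Here the constant also depends on $|\Lambda|$.
\item[(iv)] The first equation yields
\begin{equation*}
\xi_1(t) = \xi_1(t_0) + \int_{t_0}^t \Big(a_{12}^s\xi_2(t_0) + a_{13}^s\big(\xi_3(s),\xi_4(s)\big)^\top\Big)ds.
\end{equation*}
The integrand is bounded by $C(1 + s + s^2)$, which gives $|\xi_1(t)| \le C t^3$.
\end{itemize}

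There is no real obstacle here. The only point requiring care is bookkeeping of the constants, namely that at each step one may write $1 \le t$ and $t^k - t_0^k \le t^k$ on $I_T$, so that lower powers are absorbed into the top power. The constant then depends on $|\xi(t_0)|$, $K$, $\Lambda$, $n$, $m$ and $t_0$, as in the statement. Note that the dependence on $|\xi(t_0)|$, though omitted in the statement, is implicit by linearity.
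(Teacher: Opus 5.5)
Your proposal is correct and matches the paper's proof, which consists only of the remark that the bounds follow from direct integration of \eqref{eq:Trasv_dyn_A_2_par}. Your triangular order ($\xi_2$, then $\xi_3$, then $\xi_4$, then $\xi_1$) is exactly that argument, and you are right that $C$ must also depend on $|\xi(t_0)|$.

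One small addition: for $t \in [T, t_0)$, the same integral formulas give bounds with constants depending on $t_0$. This backward-time case is what the dependence on $t_0$ in the statement covers.
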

\begin{proof}
    The result follows from direct integration of~\eqref{eq:Trasv_dyn_A_2_par}.
\end{proof}
\begin{remark}
In this case, when we study solution of~\eqref{eq:Trasv_dyn_A_2_par} with initial condition $\xi(t_0)$ such that $\xi_2(t_0) =0$, we obtain the following transverse parabolic dynamics
\begin{equation*}
    \xi_3(t) = \xi_3(t_0), \qquad \xi_4(t) = \xi_4(t_0) - \Lambda \xi_3(t_0)(t-t_0).
\end{equation*}
\end{remark}

We consider the following autonomous system for all $(q,t) \in \T^n \times I_T$
\begin{equation}
    \begin{cases}\label{eq:Trasv_dyn_A_2_par_syst_infty}
        \partial_t \xi^\infty_1 (t) = a^\infty_{12}(q+\omega t) \xi^\infty_2(t) +  a^\infty_{13}(q+\omega t) \begin{pmatrix} \xi^\infty_3(t) \\ \xi^\infty_4(t) \end{pmatrix},\\
        \partial_t \xi^\infty_2(t) = 0,\\
        \partial_t \xi^\infty_3 (t)  = \Pi_x a^{\infty}_{32}(q+\omega t) \xi^\infty_2(t), \\
        \partial_t \xi^\infty_4(t) = \Pi_y a^{\infty}_{32}(q+\omega t) \xi^\infty_2(t) - \Lambda \xi^\infty_3(t)
    \end{cases}
\end{equation}
 where $a^\infty_{12}:\T^n \to \mathcal{M}_n(\R)$, $a^\infty_{13} :\T^n \to \mathcal{M}_{n\times 2m}(\R)$, and  $a^\infty_{32}:\T^n \to \mathcal{M}_{2m\times n}(\R)$. 
 \begin{proposition}\label{prop:par_1:1_asym_sol}
    We consider the systems~\eqref{eq:Trasv_dyn_A_2_par} and~\eqref{eq:Trasv_dyn_A_2_par_syst_infty} and we assume that 
    \begin{equation}\label{hyp:limit_aut_sys_par}
        \int_{T}^{+\infty} (s - T) |\Pi_x\left(a_{32}^s - a_{32}^\infty\right)|_{C^0} \, ds < \infty, \quad \int_{T}^{+\infty} |\Pi_y\left(a_{32}^s - a_{32}^\infty\right)|_{C^0} \, ds < \infty
    \end{equation}
    Then, for every fixed $q\in\T^n$, $t_0\in I_{T}$ and $\xi_2^0 \in \R^n$ there exists a one-to-one correspondence between the sets $$\{ \pi_z\xi^\infty \mid \xi^\infty \text{ is a solution  of ~\eqref{eq:Trasv_dyn_A_2_par_syst_infty}} \text{ with } \xi_2^\infty(t_0) = \xi_2^0\},$$ $$\{ \pi_z\xi \mid \xi \text{ is a solution  of ~\eqref{eq:Trasv_dyn_A_2_par}} \text{ with } \xi_2(t_0) = \xi_2^0\},$$ by the relation 
       \begin{equation*}
        \lim_{t \to +\infty}|\pi_z\xi^\infty(t)-\pi_z\xi(t)|=0.
    \end{equation*}
\end{proposition}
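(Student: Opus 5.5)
We follow the strategy of Proposition \ref{prop:par_deg_1:1_asym_sol} and integrate the transverse equations of~\eqref{eq:Trasv_dyn_A_2_par} and~\eqref{eq:Trasv_dyn_A_2_par_syst_infty} explicitly. Since $\partial_t\xi_2=\partial_t\xi_2^\infty=0$ and $\xi_2(t_0)=\xi_2^\infty(t_0)=\xi_2^0$, both $\xi_2$ and $\xi_2^\infty$ are constant and equal to $\xi_2^0$. We set $\delta^s(q) = a_{32}^s(q) - a_{32}^\infty(q)$, $\Delta_3(t) = \xi_3(t)-\xi_3^\infty(t)$ and $\Delta_4(t)=\xi_4(t)-\xi_4^\infty(t)$. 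These differences satisfy the closed triangular system
\begin{equation*}
\dot\Delta_3(t) = \Pi_x\delta^t(q+\omega t)\xi_2^0, \qquad \dot\Delta_4(t) = \Pi_y\delta^t(q+\omega t)\xi_2^0 - \Lambda\Delta_3(t).
\end{equation*}
The whole problem therefore reduces to a statement about the initial data $(\Delta_3(t_0),\Delta_4(t_0))$.

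\emph{Step 1: the $\xi_3$-component.} The first hypothesis in~\eqref{hyp:limit_aut_sys_par} gives $\int_T^\infty|\Pi_x\delta^s|_{C^0}\,ds<\infty$, since $s-T\ge 1$ for $s \ge T+1$ and the integrand is bounded on $[T,T+1]$. Hence $I_3(t):=\int_t^{\infty}\Pi_x\delta^s(q+\omega s)\xi_2^0\,ds$ is well defined and tends to $0$. We can then write $\Delta_3(t) = \Delta_3(t_0) + I_3(t_0) - I_3(t)$, which gives two facts: $\Delta_3$ has a limit, and $\Delta_3(t)\to0$ if and only if $\Delta_3(t_0) = -I_3(t_0)$. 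Under this choice $\Delta_3(t) = -I_3(t)$.

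\emph{Step 2: the $\xi_4$-component.} Substituting $\Delta_3=-I_3$, we get
\begin{equation*}
\Delta_4(t) = \Delta_4(t_0) + \int_{t_0}^t \Pi_y\delta^s(q+\omega s)\xi_2^0\,ds + \Lambda\int_{t_0}^t I_3(\tau)\,d\tau.
\end{equation*}
The key point is that $\tau\mapsto I_3(\tau)$ is integrable on $[t_0,\infty)$. By Fubini,
\begin{equation*}
\int_{t_0}^\infty |I_3(\tau)|\,d\tau \le |\xi_2^0|\int_{t_0}^\infty (s-t_0)|\Pi_x\delta^s|_{C^0}\,ds,
\end{equation*}
and the right-hand side is finite by the weighted hypothesis, because $s-t_0\le s-T$. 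The $\Pi_y$-integral converges by the second hypothesis. So all tails converge, and $\Delta_4(t)\to0$ if and only if
\begin{equation*}
\Delta_4(t_0) = -\int_{t_0}^\infty\Pi_y\delta^s\xi_2^0\,ds - \Lambda\int_{t_0}^\infty I_3(\tau)\,d\tau.
\end{equation*}
Conversely, if $\Delta_3(t_0)\ne -I_3(t_0)$, then $\Delta_3$ tends to a nonzero constant, and the $\xi_3$-components already fail to converge to each other. Thus the conditions of Steps 1 and 2 are exactly the conditions for $\pi_z\xi(t)-\pi_z\xi^\infty(t)\to 0$.

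\emph{Step 3: the correspondence.} Given $\xi^\infty$ with $\xi_2^\infty(t_0)=\xi_2^0$, Steps 1 and 2 determine a unique $\pi_z\xi(t_0)$, hence a unique $\pi_z\xi$ (the $\xi_1$-component plays no role in the $z$-equations). The inverse direction is symmetric, since the map between initial data is an affine translation. This gives the bijection. The only delicate point is Step 2: the term $-\Lambda\Delta_3$ integrates the tail $I_3$ a second time. This is exactly why the weight $(s-T)$ appears on $\Pi_x(a_{32}^s-a_{32}^\infty)$, and the Fubini exchange is what closes the argument.
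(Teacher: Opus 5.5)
Your proposal is correct and follows essentially the same route as the paper. Both arguments integrate the triangular system for the differences, fix $\xi_3(t_0)$ through the convergent tail $\int_{t_0}^{\infty}\Pi_x(a_{32}^s-a_{32}^\infty)(q+\omega s)\,\xi_2^0\,ds$, and then absorb the second integration produced by $-\Lambda\Delta_3$ in the $\xi_4$-equation using the weight $(s-T)$; the paper writes this last step as the explicit tail formula with $\int_t^{\infty}(s-t)\Lambda R_x^s\,ds$, which is exactly your Fubini exchange. Two of your details are improvements on the text. Your matching condition $\Delta_3(t_0)=-I_3(t_0)$ has the correct sign, whereas the paper's displayed condition for $\xi_3(t_0)$ has a sign slip. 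Your appeal to boundedness of the continuous coefficients on $[T,T+1]$ makes explicit the unweighted integrability near $s=T$, which the paper takes for granted.
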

\begin{proof}
    For all $t \in I_T$, we introduce the following notation
    \begin{align*}
        &R_x^t(q +\omega t) = \Pi_x (a_{32}^t(q+\omega t) - a_{32}^\infty(q+\omega t))\xi_2(t_0),\\
        &R_y^t(q +\omega t) = \Pi_y\left(a_{32}^t(q+\omega t) - a_{32}^\infty(q+\omega t)\right)\xi_2(t_0)
    \end{align*}
     and we recall that we are assuming $\xi_2(t_0) = \xi_2^\infty(t_0)$. Hypothesis~\eqref{hyp:limit_aut_sys_par} implies
     \begin{equation*}
         \int_{t_0}^{+\infty} |R^s_x|_{C^0} \, ds < \infty.
     \end{equation*}
     Hence,
     as in the proof of Proposition \ref{prop:par_deg_1:1_asym_sol}, one can verify that 
    \begin{align*}
        \lim_{t \to +\infty}|\xi_3(t) - \xi_3^\infty(t)| = 0  \quad \mbox{if and only if} \quad \xi_3(t_0) = \xi_3^\infty(t_0) + \int_{t_0}^{+\infty} R^s_x (q+\omega s) \, ds.
    \end{align*}
    It remains to analyze the last equation of systems~\eqref{eq:Trasv_dyn_A_2_par} and~\eqref{eq:Trasv_dyn_A_2_par_syst_infty} where now $\xi_3(t)$ and $\xi_3^\infty(t)$ are known and satisfy $$\xi_3(t_0) = \xi_3^\infty(t_0) + \int_{t_0}^{+\infty} R^s_x (q+\omega s) \, ds.$$ To this end, we observe that, by hypothesis~\eqref{hyp:limit_aut_sys_par}
    \begin{align*}
        \int_{T}^{+\infty} (s - T) |R^s_x|_{C^0} \, ds < \infty, \quad \int_{T}^{+\infty} |R^s_y|_{C^0} \, ds < \infty
    \end{align*}
and hence, by integration, we have that the difference of the solutions of the last equations of systems~\eqref {eq:Trasv_dyn_A_2_par} and~\eqref{eq:Trasv_dyn_A_2_par_syst_infty} is equal to
\begin{align*}
    \xi_4(t) - \xi_4^\infty(t) &= \xi_4(t_0) - \xi_4^\infty(t_0) + \int_{t_0}^{+\infty} (s-t_0)\Lambda R^s_x(q +\omega s)\, ds + \int_{t_0}^{+\infty} R^s_y(q+\omega s) \, ds\\
    &-\int_{t}^{+\infty} (s-t)\Lambda R^s_x(q +\omega s)\, ds - \int_{t}^{+\infty} R^s_y(q+\omega s) \, ds. 
\end{align*}
    Thanks to the latter and~\eqref{hyp:limit_aut_sys_par} we obtain that 
    \begin{align*}
        &\lim_{t \to +\infty}|\xi_4(t) - \xi_4^\infty(t)| = 0 \qquad \mbox{if and only if}\\
        &\xi_4(t_0) = \xi_4^\infty(t_0) - \int_{t_0}^{+\infty} (s-t_0)\Lambda R^s_x(q +\omega s)\, ds - \int_{t_0}^{+\infty} R^s_y(q+\omega s) \, ds.
    \end{align*}
    This concludes the proof of this proposition. 
\end{proof}

\section{A criterion for asymptotically parabolic transverse dynamics}\label{sc:criteria_asym_dyn}

 To simplify the notation, in the following, given $\omega \in \R^n$ we will denote by $\Lo$ the directional derivative
\begin{equation}
    \label{def:LoO}
 \Lo  = \partial_t + \omega \cdot \partial_q = \partial_t + \sum_{i = 1}^n \omega_i\partial_{q_i}.
\end{equation}
It is not difficult to check that \eqref{eq:conjugated_cocycles} in Definition \ref{def:ell_hyp_par_trasv_dyn_asym_KAM} is equivalent to
\begin{equation}
    \label{eq:conjugated_cocycles_infinitesimal}
    A(q, t) =  S(q, t)^{-1}(DX \circ \varphi^t (q)  S(q, t)- \Lo S(q, t)).
\end{equation}

In what follows, we denote
\begin{equation}\label{def:Proj_Pi_xy}
    \Pi_{xy} = \begin{pmatrix} 0_{2n\times 2m} \\ \mathrm{Id}_{2m}\end{pmatrix}.
\end{equation}    

From \cite{scarcella_KAMST}, we have the following.

\begin{proposition}
\label{prop:precriterion}
Fix $n, m \geq 1$, $T > 0$, $\sigma \geq 2$, $\omega \in \R^n$.  Let $H_0: \T^n \times B \times I_1  \to \R$ given by
\[ H_0(q, p, z, t) = \omega \cdot p +O^2(p, z), \qquad \partial_{(p,z)}^2H_0 \in \mathcal{S}^{ 1}_{(\sigma, 2), 0},\]
where $B \subseteq \R^{n + 2m}$ is an open ball centered at the origin. Let $H: \T^n \times B \times I_1  \to \R$ such that $H^t$ is of class $C^2$, for each $t \in I_T$, and 
\[ \lim_{t \to +\infty} |X_{H}^t - X_{H_0}^t|_{C^1} = 0. \]
Let $\varphi = (\mathrm{id} + u, v, w): \T^n \times I_T \to \T^n \times \R^n \times \R^{2m}$ be a $C^\sigma$ asymptotic KAM torus associated with $(X_H, X_{H_0}, \varphi_0)$ and $G: \T^n \times I_T \to \mathfrak{sp}(2, \R)$ continuous and uniformly bounded.%

Then, the following holds.
\begin{enumerate}
    \item $\boldsymbol{S}(\varphi, G) : \T^n \times I_T \to \mathrm{Sp}(2n + 2m,\R, J)$ given by
    \begin{equation}
\label{eq:S_formula}
\boldsymbol{S}(\varphi, G)  = \left(
\begin{array}{ccc}
\mathrm{Id}_n +\partial_q  u & 0 & 0 \\
\partial_q v & (\mathrm{Id}_n +\partial_q  u )^{-\top} & -(\mathrm{Id}_n +\partial_q  u )^{-\top} \partial_q w^\top J_m e^G\\
\partial_q w & 0 & e^G
\end{array}
\right),
\end{equation}
is a well-defined continuous map.
\item  If $\boldsymbol{S}(\varphi, G)$ is of class $C^1$ then $\boldsymbol{A}(H, \varphi, G): \T^n \times I_T \to \mathfrak{sp}(2n + 2m, \R)$ given by %
\begin{equation}
\label{eq:def_A_Hamiltonian}
\boldsymbol{A}(H, \varphi, G) = \boldsymbol{S}(\varphi, G)^{-1}\big( (DX_H \circ \varphi )\boldsymbol{S}(\varphi, G) - \Lo \boldsymbol{S}(\varphi, G)),
\end{equation}
is a well-defined uniformly bounded continuous map of the form
     \begin{equation}
     \label{eq:form_A}
        \boldsymbol{A}(H, \varphi, G) =\begin{pmatrix} 0_{n \times n} & * & * \\
                                0_{n \times n} & 0_{n \times n} & 0_{n \times 2m} \\
                                0_{2m \times n} & * & J_m \boldsymbol{\zeta}(H, \varphi, G)\end{pmatrix},
    \end{equation}
    where $\boldsymbol{\zeta}(H, \varphi, G): \T^n \times I_{T'} \to \mathrm{Sym}(2m, \R)$ is given by
\begin{equation}
    \label{eq:formula_zeta}
    \boldsymbol{\zeta}(H, \varphi, G) = \Pi_{xy}^\top \boldsymbol{S}(\varphi, G)^\top\big( (D^2H \circ \varphi)  \boldsymbol{S}(\varphi, G) + J \Lo \boldsymbol{S}(\varphi, G)\big) \Pi_{xy}.
\end{equation}
\end{enumerate}
\end{proposition}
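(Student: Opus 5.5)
The plan is a direct computation that exploits the symplectic structure of $\boldsymbol{S}=\boldsymbol{S}(\varphi,G)$. First we check item 1. Since $\varphi$ is a $C^\sigma$ asymptotic KAM torus with $\sigma\ge 2$, we have $|\partial_q u^t|_{C^0}\to 0$. Hence $\mathrm{Id}_n+\partial_q u$ is invertible for $t$ large, and we shrink $T$ if needed. Continuity of $\boldsymbol{S}$ then follows from the continuity of $\partial_q u$, $\partial_q v$, $\partial_q w$ and of $e^G$.

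We next show that $\boldsymbol{S}^\top J\boldsymbol{S}=J$, so that $\boldsymbol{S}$ takes values in $\mathrm{Sp}(2n+2m,\R,J)$. We verify this block by block.
\begin{itemize}
\item The $(3,3)$ block reduces to $(e^G)^\top J_m e^G=J_m$, which holds because $G\in\mathfrak{sp}$.
\item The $(1,1)$ block reduces to the isotropy relation
\[
(\mathrm{Id}+\partial_q u)^\top\partial_q v-\partial_q v^\top(\mathrm{Id}+\partial_q u)+\partial_q w^\top J_m\partial_q w=0 .
\]
\item The mixed blocks vanish because of the correction term $-(\mathrm{Id}+\partial_q u)^{-\top}\partial_q w^\top J_m e^G$.
\end{itemize}
The isotropy relation is the key ingredient. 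It follows from the invariance equation: the pullback $\beta=\varphi^{t*}\Omega$ satisfies $\Lo\beta=0$, because $X_H$ is Hamiltonian and $\varphi$ conjugates $\mathcal L_\omega$ to the flow. Moreover $\beta^t\to\varphi_0^*\Omega=0$ as $t\to\infty$, since $\varphi^t\to\varphi_0$ in $C^1$. Integrating along the characteristics $(q+\omega s,s)$ therefore gives $\beta\equiv 0$.

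For item 2, assume $\boldsymbol{S}$ is $C^1$. Then $\boldsymbol{A}$ is well defined and continuous. Because $\boldsymbol{S}$ is symplectic, $\boldsymbol{S}^{-1}=-J\boldsymbol{S}^\top J$, and $\boldsymbol{S}^{-1}\Lo\boldsymbol{S}\in\mathfrak{sp}$. Together with $DX_H=JD^2H$ this gives
\[
\boldsymbol{A}=J\,\boldsymbol{S}^\top\big((D^2H\circ\varphi)\boldsymbol{S}+J\Lo\boldsymbol{S}\big).
\]
Hence $\boldsymbol{A}\in\mathfrak{sp}$, and its lower-right $2m\times 2m$ block is $J_m\boldsymbol{\zeta}$ with $\boldsymbol{\zeta}$ given by \eqref{eq:formula_zeta}. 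The matrix $\boldsymbol{\zeta}$ is symmetric since it is the corresponding block of $-J\boldsymbol{A}$, which is symmetric.

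The zero blocks come from the first column of $\boldsymbol{S}$. This column equals $\partial_q\varphi$, and differentiating \eqref{def:cond2_asymKAMtorus} in $q$ gives
\[
(DX_H\circ\varphi)\,\partial_q\varphi=\Lo\partial_q\varphi .
\]
So the first column of $\boldsymbol{A}$ vanishes. Because $\boldsymbol{A}$ is Hamiltonian, $J\boldsymbol{A}$ is symmetric, and therefore the second block row (the $p$-row of $\boldsymbol{A}$) also vanishes. This is exactly the form \eqref{eq:form_A}.

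For uniform boundedness, first note that $\boldsymbol{S}$ is bounded. The derivative $D^2H\circ\varphi$ is bounded because $\partial^2_{(p,z)}H_0\in\mathcal{S}^1_{(\sigma,2),0}$, $X_H-X_{H_0}\to0$ in $C^1$, and $\varphi$ stays in a compact set. The term $\Lo\boldsymbol{S}$ involves $\Lo\partial_q u$, $\Lo\partial_q v$, $\Lo\partial_q w$ and $\Lo e^G$. The first three are rewritten using the differentiated invariance equation as $DX_H\circ\varphi\,\partial_q\varphi$, which is bounded. For the last one, $e^{-G}\Lo e^G$ enters $\boldsymbol{A}$ only through the bounded $2m$-block. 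I expect the main obstacle to be controlling this $\Lo G$ contribution, since $\boldsymbol{A}$ need not be bounded without an assumption on $\Lo G$. The first remedy I would try is to add the hypothesis that $\Lo G$ is bounded, or, following \cite{scarcella_KAMST}, to read uniform boundedness as holding whenever $\boldsymbol{S}$ is $C^1$ with bounded $\Lo$-derivative.
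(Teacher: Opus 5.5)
Your proof is correct and follows the natural route; the paper gives no proof here and quotes the statement from Part I. The route has three steps:
\begin{enumerate}
\item $\boldsymbol{S}^\top J\boldsymbol{S}=J$ reduces to the isotropy $(\partial_q\varphi)^\top J\,\partial_q\varphi=0$. You rightly obtain this from the symplecticity of the flow in \eqref{eq:asymptotic_KAM_flow} together with $\varphi^t\to\varphi_0$ in $C^1$.
\item The identity $\boldsymbol{A}=J\boldsymbol{S}^\top\big((D^2H\circ\varphi)\boldsymbol{S}+J\Lo\boldsymbol{S}\big)$ gives $\boldsymbol{A}\in\mathfrak{sp}$, the formula \eqref{eq:formula_zeta}, and the symmetry of $\boldsymbol\zeta$.
\item $\Lo\partial_q\varphi=(DX_H\circ\varphi)\,\partial_q\varphi$ kills the first block column, and symmetry of $J\boldsymbol{A}$ then kills the $p$-row.
\end{enumerate}
A small correction for item 1: you must enlarge $T$, not shrink it. That is, restrict to some $I_{T'}$ with $T'\geq T$, as the $I_{T'}$ in the statement suggests. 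The hypotheses do not give invertibility of $\mathrm{Id}_n+\partial_q u$ on all of $I_T$.

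Your doubt about uniform boundedness is justified. It is a defect of the statement, not a gap in your argument. The column block $(\Lo\boldsymbol{S})\Pi_{xy}$ splits into two parts. The first involves only $\boldsymbol{S}$, $(\mathrm{Id}_n+\partial_q u)^{-1}$, $\Lo\partial_q u$ and $\Lo\partial_q w$, and is bounded by the differentiated invariance equation. The second is $\boldsymbol{S}\,\Pi_{xy}\,e^{-G}\Lo e^{G}$. Consequences:
\begin{itemize}
\item $\Lo G$ enters $\boldsymbol{A}$ only through the term $-e^{-G}\Lo e^{G}$ in the lower-right block. Equivalently, it enters only through $K^\top J_m\Lo K=J_m e^{-G}\Lo e^{G}$ in $\boldsymbol\zeta$.
\item All the $*$ blocks are bounded unconditionally, so $\boldsymbol{A}$ is bounded if and only if $\boldsymbol\zeta$ is.
\end{itemize}
This can fail. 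Take $H=H_0=\omega\cdot p$, $\varphi=\varphi_0$, and $G=\sin(t^2)G_0$ with a constant $0\neq G_0\in\mathfrak{sp}(2m,\R)$. All hypotheses hold and $\boldsymbol{S}=\mathrm{diag}(\mathrm{Id}_n,\mathrm{Id}_n,e^{G})$ is smooth. Yet $\boldsymbol{A}=-\mathrm{diag}(0,0,2t\cos(t^2)G_0)$ is unbounded.

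So an extra assumption such as $\sup|\Lo G|<\infty$ is genuinely needed, as you proposed. In the paper's uses (Corollary \ref{cor:criterion}, Propositions \ref{prop:Cdeg_transverse} and \ref{prop:C_transverse}) this is harmless. There $\boldsymbol\zeta\in\{0_{2m\times 2m},\Mpar\}$ is constant, and in the two propositions $G$ moreover lies in spaces where $\Lo G$ decays.
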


In particular, for $H, \varphi, G$ as in Proposition \ref{prop:precriterion} such that $\partial_q \varphi$ and $G$ are of class $C^1$, the cocycles given by the fundamental solutions of $\boldsymbol{A}(H, \varphi, G), DX_H \circ \varphi : \T^n \times I_T \to \mathcal{M}_{2n + 2n}(\R)$ over $R_\omega: \T^n \times I_T \to \T^n$, where $R_\omega(q, t) = (q + \omega t, t)$, are conjugated by $\boldsymbol{S}(\varphi, G)$.

\begin{remark}
\label{rmk:formula_zeta}
    The map $\boldsymbol{\zeta}(H, \varphi, G)$ given by \eqref{eq:formula_zeta} can be expressed explicitly as 
    \[\zeta = B^\top \partial_p^2 H\circ \varphi B + K^{\top} \partial_p\partial_zH\circ \varphi B + B^{\top} (\partial_p\partial_zH\circ \varphi)^\top K + K^{\top} \partial_z^2 H\circ \varphi K + K^{\top} J_m \Lo K,\]
where $K = \exp(G)$,  $B = -(\mathrm{Id}_{n} +\partial_q  u )^{-T} \partial_q w^\top J_m K$ and $\partial_p\partial_zH\circ \varphi$ stands for the $2m \times n$ matrix having components $\partial_{p_i}\partial_{z_j}H\circ \varphi$ for $1 \le i \le n$ and $1 \le j \le 2m$.
\end{remark}

Proposition \ref{prop:precriterion} and the results in Section \ref{sc:analysis_transversal} yield the following.

\begin{corollary}
\label{cor:criterion}
Let $H_0, H, \varphi = (\mathrm{Id} + u, v, w)$ and $G$ as in Proposition \ref{prop:precriterion}. Suppose that  $\boldsymbol{S}(\varphi, G)$ as in \eqref{eq:S_formula} is of class $C^1$ and let $
    \boldsymbol{\zeta}(H, \varphi, G)$ be given by \eqref{eq:formula_zeta}. %
Then the following holds.
\begin{enumerate}
\item If $\boldsymbol{\zeta}(H, \varphi, G) = 0_{2m \times 2m}$ and 
                \[  w \in \mathcal{S}^{ T}_{(\sigma, 0), \ell + 1}, \qquad G \in \mathcal{S}^{ T'}_{(\sigma - 1, 0), \ell},\]
                     then $\varphi$ is an asymptotically parabolic (degenerate) KAM torus.
\item If $\boldsymbol{\zeta}(H, \varphi, G) = \Mpar$  and 
                \[  w \in \mathcal{S}^{ T}_{(\sigma, 0), \ell + 2}, \qquad G = \begin{pmatrix}
                    G_{xx} & G_{xy} \\ G_{yx} & G_{yy}
                \end{pmatrix}, \qquad G_{xx}, G_{xy}, G_{yx}, G_{yy} \in \R^{m \times m}, \]
                \[ G_{xx}, G_{yy} \in \mathcal{S}^{ T'}_{(\sigma - 1, 0), \ell + 1}, \qquad G_{xy} \in \mathcal{S}^{ T'}_{(\sigma - 1, 0), \ell + 2}, \qquad G_{yx} \in \mathcal{S}^{ T'}_{(\sigma - 1, 0), \ell},\]
         then $\varphi$ is an asymptotically parabolic KAM torus.
\end{enumerate}
\end{corollary}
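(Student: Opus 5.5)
The plan is to take $S = \boldsymbol{S}(\varphi, G)$ and $A = \boldsymbol{A}(H, \varphi, G)$ from Proposition \ref{prop:precriterion} as the conjugating map and the model cocycle in Definition \ref{def:ell_hyp_par_trasv_dyn_asym_KAM}, and then check the three requirements: the form \eqref{def:A}, the bound \eqref{def:cond1_ellhyppar_asymKAMtorus}, and the limit \eqref{def:trasv_dyn_limit}.

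Since $S$ is $C^1$ by assumption, Proposition \ref{prop:precriterion} shows that $A$ is continuous, uniformly bounded and has the form \eqref{eq:form_A} with lower-right block $J_m\boldsymbol{\zeta}$. The hypothesis $\boldsymbol{\zeta}=0$ (respectively $\boldsymbol{\zeta}=\Mpar$) therefore gives exactly the form \eqref{def:A}, and \eqref{def:cond1_ellhyppar_asymKAMtorus} follows. The conjugacy \eqref{eq:conjugated_cocycles} is equivalent to the infinitesimal relation \eqref{eq:conjugated_cocycles_infinitesimal}, which is the definition \eqref{eq:def_A_Hamiltonian} of $A$.

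It remains to prove \eqref{def:trasv_dyn_limit}. By \eqref{eq:conjugated_cocycles},
\[
\pi_z\Phi(t)-\pi_z\Phi_A(t)S_0^{-1} = (\pi_z S(q+\omega t,t)-\pi_z)\,\Phi_A(t)S_0^{-1},
\]
where $S_0 = S(q+\omega t_0,t_0)$. From \eqref{eq:S_formula}, the $z$-rows of $S-\mathrm{Id}$ are $(\partial_q w,\,0,\,e^G-\mathrm{Id})$. We then multiply column blocks by the growth rates of $\Phi_A$ given by Propositions \ref{prop:trasv_dyn_par_deg_case} and \ref{prop:trasv_dyn_par}.

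In the degenerate case, $\xi_1=O(t^2)$ and $\xi_3,\xi_4=O(t)$. Since $\partial_q w = O(t^{-\ell-1})$ (the $C^\sigma$ norm controls $\partial_q$) and $e^G-\mathrm{Id}=O(|G|)=O(t^{-\ell})$, the products are $O(t^{1-\ell})$ and $O(t^{1-\ell})$. Both tend to $0$ because $\ell>1$.

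In the general case, $\xi_1=O(t^3)$, $\xi_3=O(t)$ and $\xi_4=O(t^2)$. The term $\partial_q w\,\xi_1$ is $O(t^{3-\ell-2})$. For $(e^G-\mathrm{Id})(\xi_3,\xi_4)^\top$ we have to track the blocks of $e^G-\mathrm{Id}=G+O(|G|^2)$:
\begin{itemize}
\item the $xx$ block acts on $\xi_3$, giving $O(t^{-\ell})$;
\item the $xy$ block acts on $\xi_4$, giving $t^{-\ell-2}t^2$;
\item the $yx$ block acts on $\xi_3$, giving $t^{-\ell}t$;
\item the $yy$ block acts on $\xi_4$, giving $t^{-\ell-1}t^2$.
\end{itemize}
All of these are $O(t^{1-\ell})$.

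The main obstacle is the quadratic remainder. The $(G^2)_{xy}$ and $(G^2)_{yy}$ blocks contain the product $G_{yx}G_{xy}$ or $G_{yx}G_{xx}$, which are $O(t^{-2\ell-1})$ at worst. Multiplying by $\xi_4=O(t^2)$ gives $O(t^{1-2\ell})$, which still vanishes since $\ell>1$. Higher powers behave similarly. I would write $e^G-\mathrm{Id}=G\int_0^1 e^{sG}ds$ and bound the blocks with the anisotropic weights.

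Finally, $S$ must be invertible, which holds because it is symplectic, and $C^1$, which is given. This yields the corollary for $t\ge T''=\max(T,T')$.
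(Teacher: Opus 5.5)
Your proposal is correct and follows the paper's own proof. Like the paper, you take $S=\boldsymbol{S}(\varphi,G)$ and $A=\boldsymbol{A}(H,\varphi,G)$, read off the form \eqref{def:A} from $\boldsymbol{\zeta}$, and bound $\pi_z(S-\mathrm{Id})\Phi_A S_0^{-1}$ using $\pi_z(S-\mathrm{Id})=(\partial_q w,\,0,\,e^G-\mathrm{Id}_{2m})$ and the growth rates from Propositions \ref{prop:trasv_dyn_par_deg_case} and \ref{prop:trasv_dyn_par}. For the nonlinear part of $e^G-\mathrm{Id}_{2m}$ the paper just cites Lemma \ref{prop:exp_bound_par}, Property \ref{it:bound_expF_par}, which says the block weights $\mathcal{D}(\ell)$ survive taking powers of $G$; your block bookkeeping for $G^2$ is slightly off (e.g.\ $(G^2)_{xy}=G_{xx}G_{xy}+G_{xy}G_{yy}$ contains no $G_{yx}$), but this is harmless, since even the isotropic bound $|e^G-\mathrm{Id}_{2m}-G|_{C^0}=O(t^{-2\ell})$ against the $O(t^2)$ transverse growth already suffices when $\ell>1$.
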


\begin{proof}
    Let  $A = \boldsymbol{A}(H, \varphi, G): \T^n \times I_{T} \to \mathfrak{sp}(2n + 2m, \R)$ as in Proposition \ref{prop:precriterion}, that is,  given by \eqref{eq:def_A_Hamiltonian}. Let us check that, in each case, $A$ and $S$ fulfill the conditions in Definition \ref{def:ell_hyp_par_trasv_dyn_asym_KAM}.
    
    By Proposition \ref{prop:precriterion}, $A$ is of the form \eqref{def:A} and satisfies  \eqref{def:cond1_ellhyppar_asymKAMtorus}. Moreover, by \eqref{eq:def_A_Hamiltonian},  \eqref{eq:conjugated_cocycles} holds. Thus, it suffices to check that in each case the assumptions on $w$ and $G$ imply \eqref{def:trasv_dyn_limit}. 
    
By \eqref{eq:conjugated_cocycles}, we have
    \begin{align*}
    \big|\pi_z & \Phi(t; q,t_0) -  \pi_z \Phi_A(t; q, t_0)S(q + \omega t_0, t_0)^{-1}\big| = \big|\pi_z (S(q + \omega t, t) - I)\Phi_A(t; q, t_0)S(q + \omega t_0, t_0)^{-1}\big| \\
    & \leq \big|(S_{zq}^t(q + \omega t) \quad S_{zp}^t(q + \omega t) \quad S_{zz}^t(q + \omega t) - \mathrm{Id}_{2m})   \Phi_A(t; q, t_0) \big| \big| S(q + \omega t_0, t_0)^{-1} \big|.
    \end{align*}
    
Notice that $S^{-1} = -JS^\top J$ is uniformly bounded. Moreover, by \eqref{eq:S_formula}, $S_{zq} = \partial_q w,$ $S_{zp} = 0$ and $S_{zz} = e^G$. By Propositions \ref{prop:trasv_dyn_par_deg_case} and \ref{prop:trasv_dyn_par}, in each scenario, we can bound the growth rate of the norm of each entry of $\Phi_A(t; q, t_0)$ as $t$ goes to infinity. The result follows by combining these bounds with the assumptions on $w$, $G$, and the bounds for $e^G - \mathrm{Id}_{2m}$ in Lemma \ref{prop:exp_bound_par}.
\end{proof}

\section{Asymptotically Parabolic Degenerate Case}
\label{sec:Proof_Theorem_Par_0}
In this section we prove Theorems \ref{Thm:par_Csigma_0}, \ref{Thm:par_Csigma_0_analy} and Corollary \ref{cor:par_deg}. 

The proof of Theorem A in Part I \cite{scarcella_KAMST}, which considers the asymptotically elliptic scenario, can be carried out, almost verbatim, in the degenerate setting of Theorem \ref{Thm:par_Csigma_0} by replacing $\Omega = \textup{diag}(\Omega_1, \dots, \Omega_m)$ and $\Mell$ by $0_{m \times m}$ and $0_{2m \times m}$, respectively. These degenerate assumptions greatly simplify the analysis of the cohomological equations appearing in the proof of Theorem A in \cite{scarcella_KAMST} and the proof can be easily adapted with very minor modifications. 

To avoid repetition, and since the proof of Theorem \ref{Thm:par_Csigma} will follow a similar structure but will be much more technically involved, we will only sketch the proof of Theorem \ref{Thm:par_Csigma_0}. We will describe in detail the steps to follow (without proving them) and refer to \cite{scarcella_KAMST} for details.

We point out that  Theorem \ref{Thm:par_Csigma} will not imply Theorem \ref{Thm:par_Csigma_0} as the time decay assumptions required to prove Theorem \ref{Thm:par_Csigma} will be stronger. The decay rates in Theorem \ref{Thm:par_Csigma_0} are possible due to the simplified setting $\Lambda_1 = \dots, \Lambda_m = 0.$

Theorem \ref{Thm:par_Csigma_0_analy}, which is simply the version of Theorem \ref{Thm:par_Csigma_0} adapted to real analytic Hamiltonians, can be proved along the same lines as Theorem \ref{Thm:par_Csigma_0}, with some minor modifications. Moreover, since Theorem \ref{Thm:par_Csigma_0} applies under the hypotheses of Theorem \ref{Thm:par_Csigma_0_analy}, it suffices to prove only the first part of Theorem \ref{Thm:par_Csigma_0_analy}. The proof of Corollary \ref{cor:par} is instead obtained by combining Theorem \ref{Thm:par_Csigma_0} with the asymptotic correspondence established in Proposition \ref{prop:par_deg_1:1_asym_sol}. Since the proof is exactly the same as that of Corollary A in Part I \cite{scarcella_KAMST} we will omit it.

For these reasons, except for Subsection \ref{sc:proof_analytic_par}, where we discuss the modifications needed to adapt the proof to the analytic setting, we will focus mostly on Theorem \ref{Thm:par_Csigma_0} and on Hamiltonians with Hölder regularity. 

 The remainder of this section is structured as follows. In Section \ref{sc:functional_setting_par0}, we recall the definition of several Banach spaces of functions with polynomial decay in time that were introduced in Part I \cite{scarcella_KAMST}. In Section \ref{sc:initial_ham_par0} we write the unperturbed Hamiltonian of Theorem \ref{Thm:par_Csigma_0} in a suitable form and set some notations. In Sections \ref{sec:proof_item_1_par_deg_Holder} and \ref{sec:proof_item_2_par_deg_Holder}, we sketch the proof of items \eqref{thm:Cdeg_existence} and \eqref{thm:C_transverse} of Theorem \ref{Thm:par_Csigma_0}, respectively, following the proof of Theorem A in Part I \cite{scarcella_KAMST}. In Section \ref{sc:proof_analytic_par0} we discuss the proof of Theorem \ref{Thm:par_Csigma_0_analy}.

\subsection{Functional setting}
\label{sc:functional_setting_par0}
Let us recall the definition of some of the spaces introduced in Part I \cite{scarcella_KAMST} that will be used in the present work.

\subsubsection{Real-valued and matrix-valued functions with polynomial decay}\label{sc:real_pol_decay} The following spaces will allow us to describe the regularity of the perturbation and the asymptotic KAM tori in the proof of our results. 

Recall that the Banach space $\mathcal{S}^{ T}_{(\sigma, k), \ell}$ is defined in~\eqref{def:S} and the associated norm $|\cdot|^{ T}_{\sigma+k, \ell}$ in~\eqref{def:norm_S}.  Letting $k \ge 1$, we introduce the space
\begin{equation}\label{def: S0ell}
    \mathcal{S}^{T}_{(\sigma,k), (0,\ell)} = \left\{f:\T^n \times B \times I_T \to \R  \hspace{1mm} \left| \hspace{1mm} f \in \mathcal{S}^{ T}_{(\sigma,k), 0}, \hspace{2mm} \partial_q f \in \mathcal{S}^{ T}_{(\sigma, k-1), \ell}\right\}\right.,
\end{equation}
endowed with the norm
\begin{equation}\label{def:norm_S0ell}
    |f|^{ T}_{(\sigma,k), (0,\ell)} = \max\{|f|^{ T}_{\sigma+k, 0}, |\partial_qf|^{ T}_{\sigma+k-1, \ell}\},
\end{equation}

To quantify the regularity of the components of the $C^\sigma$  asymptotic KAM torus, we define 
\begin{equation}\label{def:U}
    \mathcal{U}^{ T}_{\sigma, \omega, \ell} = \left\{u:\T^n \times I_T \to \R^n \hspace{1mm} \left| \hspace{1mm} u \in \mathcal{S}^{T}_{(\sigma,0), \ell}, \hspace{2mm}\Lo u \in \mathcal{S}^{ T}_{(\sigma,0), \ell+1}\right\}\right.,
\end{equation}
endowed with the norm
\begin{equation}\label{def:norm_U}
    |u|^{ T}_{\sigma, \omega, \ell} = \max \left\{|u|^{ T}_{\sigma, \ell}, |\Lo u|^{ T}_{\sigma, \ell +1}\right\}.
\end{equation}

Furthermore, we consider 
\begin{equation}\label{def:W}
    \mathcal{W}^{\mathrm{ell}, T}_{\sigma, \omega, \Omega, \ell} = \left\{w:\T^n \times I_T \to \R^{2m} \hspace{1mm} \left| \hspace{1mm} w \in \mathcal{S}^{ T}_{(\sigma,0), \ell}, \hspace{2mm}\mathcal{L}^{\mathrm{ell}}_{\omega,\Omega} w \in \mathcal{S}^{ T}_{(\sigma,0), \ell+1}\right\}\right.,
\end{equation}
endowed with the norm
\begin{equation}\label{def:norm_W}
    |w|^{ T}_{\sigma, \omega, \Omega, \ell} = \max \left\{|w|^{ T}_{\sigma, \ell}, |\mathcal{L}^{\mathrm{ell}}_{\omega,\Omega} w|^{ T}_{\sigma, \ell +1}\right\}.
\end{equation}

\begin{remark}\label{rmk:extra_regularity_ell}
    If $\sigma \geq 1$ then any $u \in \mathcal{U}^{ T}_{\sigma, \omega, \ell}$ (resp. $w \in \mathcal{W}^{\mathrm{ell}, T}_{\sigma, \omega, \Omega, \ell}$) is of class $C^1$. Moreover, if $\sigma \geq 2$ then $\partial_q u \in  \mathcal{U}^{ T}_{\sigma - 1, \omega, \ell}$ (resp.  $\partial_q w \in \mathcal{W}^{\mathrm{ell}, T}_{\sigma, \omega, \Omega, \ell}$) and, in particular, $\partial_q u$ (resp. $\partial_q w$) is of class $C^1$. See \cite{scarcella_KAMST}.
\end{remark}

\subsubsection{Matrix-valued functions with polynomial decay}
\label{sc:matrix_spaces_ell} 
We also introduce spaces of functions taking values in $\mathcal{M}_{2m}(\R)$. These spaces will be useful for describing the normal coordinates of the asymptotic KAM torus. 

Let
\begin{equation}
\begin{aligned}\label{def:BS_ell_M_Sym}
     \mathcal{M}^{ T}_{\sigma, \ell} &= %
     \mathcal{S}^{ T}_{(\sigma,0), \ell}(\R^{2m \times 2m}), \\
          \mathcal{S}ym^{ T}_{\sigma, \ell} &= \left\{M:\T^n \times I_T \to \textup{Sym}(2m, \R) \hspace{1mm} \left| \hspace{1mm}  M \in \mathcal{M}^{ T}_{\sigma, \ell} \right\}\right.,
    \end{aligned}
\end{equation}      
endowed with the norm,
\[ |M|^{ T}_{\sigma, \ell} := \max_{1 \leq i, j \leq 2m} |M_{ij}|^{ T}_{\sigma, \ell},\]
 and
 \begin{align}\label{def:BS_ell_Sp}
 \mathcal{S}p^{\mathrm{ell}, T}_{\sigma, \omega, \Omega, \ell} %
    &  = \left\{M:\T^n \times I_T \to \mathfrak{sp}(2m, \R) \hspace{1mm} \left| \hspace{1mm} M \in \mathcal{M}^{ T}_{\sigma, \ell}, \quad \mathfrak{L}_{\omega, \Omega}^\mathrm{ell} M \in \mathcal{M}^{ T}_{\sigma, \ell + 1} \right\} \right.,
\end{align}
endowed with the norm
\[ |M|^{ T}_{\sigma, \omega, \Omega, \ell} = \max\left \{ |M|_{\sigma, \ell}^{ T} ,|\mathfrak{L}_{\omega, \Omega}^\mathrm{ell} M|^{ T}_{\sigma, \ell + 1} \right\},\]
where $\textup{Sym}(2m, \R)$ denotes the space of $2m \times 2m$ symmetric matrices and $\mathfrak{sp}(2m, \R)$ denotes the Lie algebra associated with the space of $2m \times 2m$ symplectic matrices $\textup{Sp}(2m, \R),$ namely, 
\[ \mathfrak{sp}(2m, \R) := \{ G \in \mathcal{M}_{2m}(\R) \mid J_m G + G^{\top}J_m = 0 \}. \]

Notice that if $\sigma \geq 1$ then any $M \in  \mathcal{S}p^{\mathrm{ell}, T}_{\sigma, \omega, \Omega, \ell}$ is of class $C^1$. 

\subsubsection{Perturbations and torus embeddings with non-uniform polynomial decay}  \label{sc:perturbation_par0}
In order to describe the space of perturbations considered in Theorem \ref{Thm:par_Csigma_0}, as well as , given $\mathcal{L} = (\ell_1, \ell_2, \ell_3, \ell_4) \in \left(\R_{\ge 0}\right)^4$, $\mathcal{K} = (k_1, k_2, k_3) \in \left(\R_{\ge 0}\right)^3$ and $T\ge 1$, we define
\begin{equation}\label{proof_par_deg_1_def_spaces_P_E}
    \begin{aligned}
    &\mathcal{P}^{\mathrm{par.deg}, T}_{\sigma, \mathcal{L}} = \mathcal{S}^{T}_{(\sigma,2), (0,\ell_1)}(\R) \times \mathcal{S}^{T}_{(\sigma,2), \ell_2}(\R^n) \times \mathcal{S}^{T}_{(\sigma,2), \ell_3}(\R^{2m}) \times \mathcal{S}^{T}_{(\sigma,2), \ell_4}(\R^{2m \times 2m}),\\
    &\mathcal{E}^{\mathrm{par.deg}, T}_{\sigma, \mathcal{K}} = \mathcal{U}^{T}_{\sigma, \omega, k_1} \times \mathcal{U}^{T}_{\sigma, \omega, k_2} \times \mathcal{U}^{ T}_{\sigma, \omega, k_3},
    \end{aligned}
\end{equation}
each endowed with the sum norm associated with the corresponding product space, which we denote by $|\cdot|^{\mathrm{par.deg}, T}_{\sigma, \mathcal{L}}$ and $|\cdot|^{\mathrm{par.deg}, T}_{\sigma, \mathcal{K}}$, respectively.

By an abuse of notation, we denote the elements in the product spaces as $P=(a,b,c,d)\in\mathcal{P}^{\mathrm{par.deg},T}_{\sigma,\mathcal{L}}$ and $\varphi=(u,v,w)\in\mathcal{E}^{\mathrm{par.deg},T}_{\sigma,\mathcal{K}}$
with the same letters of the corresponding perturbation $P$ in~\eqref{eq:perturbation_form_par_deg} and family of torus embeddings $\varphi$ in \eqref{eq:asymp_KAM_torus_form_par0}. 

Given $\ell > 1$ and $\dec \ge 0$, we denote
\begin{equation}
\begin{gathered}\label{def:LK_par_deg}
    \mathcal{L}(\ell, \dec) = (\ell + \dec +2, \ell, \ell +\dec+1, \ell), \quad \mathcal{L}^*(\ell, \dec) = (\ell + \dec +3, \ell, \ell +\dec+2, \ell+1),\\ \mathcal{K}(\ell, \dec) = (\ell-1, \ell+\dec+1, \ell+\dec), \quad \mathcal{K}^*(\ell, \dec)=\mathcal{K}(\ell, \dec+1).
\end{gathered}
\end{equation}

With these notations, the spaces of perturbations associated with Items \eqref{thm:C_existence} and \eqref{thm:C_transverse} of Theorem \ref{Thm:par_Csigma_0} are given by  $\mathcal{P}^{\mathrm{par.deg}, 1}_{\sigma, \Lell(\ell, \dec)}$ and $\mathcal{P}^{\mathrm{par.deg}, 1}_{\sigma, \Lell^*(\ell, \dec)}$, respectively. Moreover, as we shall see in Section \ref{sec:proof_item_1_par_deg_Holder}, for any $P \in  \mathcal{P}^{\mathrm{par.deg}, 0}_{\sigma, \Lell(\ell, \dec)}$ there exists a $C^\sigma$ asymptotic KAM torus in $ \mathcal{E}^{\mathrm{par.deg}, T}_{\sigma, \mathcal{K}(\ell, \dec)}$ associated with $(X_{H_0 + P}, X_{H_0}, \varphi_0)$, for some $T \geq 1$.

\subsection{Initial Hamiltonian in Theorem \ref{Thm:par_Csigma_0} and conventions in notation}\label{sc:initial_ham_par0}
In the following, we fix $\omega \in \R^n$, $\sigma \geq 1$ and $H_0: \T^n \times B \times I_1 \to \R$ of the form \eqref{eq:initial_hamiltonian_par_deg} as in the statement of Theorem \ref{Thm:par_Csigma_0}, where $B \subseteq \R^{n +2m}$ is a ball centred at $0$ and $I_T = [T, + \infty)$, for any $T \geq 1$. For simplicity, we assume that $B = B_{\R^{n + 2m}}(1)$ is the unit ball in $\R^{n + 2m}$.  %

Since all the norms considered in this section concern (products of) function spaces with polynomial decay, for the sake of clarity and to simplify the notation, we omit the superscripts $\mathrm{par.deg}$ from all the norms whenever there is no risk of confusion.

Notice that we can express  $H_0$ in \eqref{eq:initial_hamiltonian_par_deg} as 
\begin{equation}
\label{eq:initial_Hamiltonian_NR}
        H_0(q,p,z,t) = N(p,z) + R(q,p,z,t), 
\end{equation}
where 
\begin{equation*}
\begin{array}{rcl}   N(p,z) &= & \omega \cdot p\\
    R(q,p,z,t) &= &M(q,p,z,t) \cdot p^2 + m(q,z,t) \cdot (p,z) + L(q,z,t) \cdot z^3,
\end{array}
\end{equation*}
and
\begin{equation}
\label{def:abcdMmL_terms}
\begin{aligned}
    M(q,p,z,t) &= \int_0^1 (1 -\tau) \partial_p^2 H_0(q, \tau p, z,t) d\tau ,\\
    m(q,z,t) &= \int_0^1 \partial_p\partial_z H_0(q, 0, \tau z,t) d\tau,\\
    L(q,z,t) &=  {1 \over 2} \int_0^1 (1-\tau)^2 \partial_z^3 H_0(q,0,\tau z, t) d\tau. 
\end{aligned}
\end{equation}
satisfying
\begin{equation}
  \label{def:const_Upsilon_hyp}
|M|^{ 1}_{\sigma + 3, 0}, |m|_{\sigma + 3, 0}^{1}, |L|_{\sigma + 2, 0}^{1} \le|\partial_{(p,z)}^2 H_0|^{ 1}_{\sigma+3, 0} \leq \Upsilon, \end{equation}
for some $\Upsilon > 0$ that we fix for the rest of the section. 

We recall that the trivial embedding $\varphi_0:\T^n \to \T^n \times \R^{n+2m}$ in~\eqref{def:varphi0=(q,0,0)} is an invariant torus for $H_0$ supporting quasiperiodic solutions fo frequecy vector $\omega$.

Finally, for the sake of simplicity, we fix  $\ell > 1$ and $\dec \ge 0$, and denote $\mathcal{L}(\ell, \dec)$, $\mathcal{L}^*(\ell, \dec)$, $\mathcal{K}(\ell, \dec)$, and $\mathcal{K}^*(\ell, \dec)$ simply by $\mathcal{L}$, $\mathcal{L}^*$, $\mathcal{K}$, and $\mathcal{K}^*$, respectively.

\subsection{Proof of Item \eqref{thm:Cdeg_existence} in Theorem \ref{Thm:par_Csigma_0}: Construction of $C^\sigma$ asymptotic KAM tori}\label{sec:proof_item_1_par_deg_Holder} In the following, we will use freely the notations introduced at the beginning of this section and in the previous two subsections. Recall that the space of perturbations associated with Item \eqref{thm:C_existence} of Theorem \ref{Thm:par_Csigma_0} is given by  $\mathcal{P}^{\mathrm{par.deg}, 1}_{\sigma, \Lell}$ (see Section \ref{sc:perturbation_par0}).

Given $r, \rho>0$, we denote by $B_{\mathcal{P}^{\mathrm{par.deg}, T}_{\sigma, \Lell}}(r)  \subseteq  \mathcal{P}^{\mathrm{par.deg}, T}_{\sigma, \Lell}$ and $B_{ \mathcal{E}^{\mathrm{par.deg}, T}_{\sigma, \mathcal{K}}}(\rho) \subseteq \mathcal{E}^{\mathrm{par.deg}, T}_{\sigma, \mathcal{K}}$ the open balls centered at the origin with radius $r$ and $\rho$, respectively.

The following analogous of Proposition 6.1 in Part I \cite{scarcella_KAMST} holds.

\begin{proposition}
\label{prop:C_deg_existence}
There exists $C_0 > 1$ such that for any $r > 0$ there exists $T_0 \geq 1$ satisfying the following. For any $T \geq T_0$, there exists a $C^1$-map
\[ \boldsymbol{\varphi}^T: B_{\mathcal{P}^{\mathrm{par.deg}, T}_{\sigma, \Lell}}(r)  \subseteq  \mathcal{P}^{\mathrm{par.deg}, T}_{\sigma, \Lell}  \to  B_{ \mathcal{E}^{\mathrm{par.deg}, T}_{\sigma, \mathcal{K}}}(C_0r) \subseteq \mathcal{E}^{\mathrm{par.deg}, T}_{\sigma, \mathcal{K}}\]
such that $\boldsymbol{\varphi}^T(P)$ defines an asymptotic KAM torus associated with $(X_{H_0 + P}, X_{H_0}, \varphi_0)$,  for any $P \in  \mathcal{P}^{\mathrm{par.deg}, T}_{\sigma, \Lell} $ with $|P|^T_{\sigma, \Lell} < r$.
\end{proposition}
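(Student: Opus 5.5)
We will recast the invariance equation \eqref{def:cond2_asymKAMtorus} as a zero of a functional $\mathcal{F}(P,\varphi)=0$ between weighted spaces and apply a quantitative Implicit Function Theorem around $(P,\varphi)=(0,0)$, following Proposition 6.1 of Part I with $\Omega$ and $\Mell$ set to zero. Write $\varphi^t=(q+u^t,v^t,w^t)$ and $H=N+R+P$ as in \eqref{eq:initial_Hamiltonian_NR}. Since $N=\omega\cdot p$ and $\Mpar=0$, the invariance equation splits into three components:
\begin{align*}
\Lo u &= \partial_p R\circ\varphi + b\circ(\mathrm{id}+u),\\
\Lo v &= -\partial_q (R+P)\circ\varphi,\\
\Lo w &= J_m\,\partial_z(R+P)\circ\varphi .
\end{align*}
We define $\mathcal{F}(P,\varphi)$ as the difference between the two sides, viewed as a map
\[
\mathcal{P}^{\mathrm{par.deg},T}_{\sigma,\Lell}\times\mathcal{E}^{\mathrm{par.deg},T}_{\sigma,\mathcal{K}}\to \mathcal{S}^T_{(\sigma,0),\ell}\times\mathcal{S}^T_{(\sigma,0),\ell+\dec+2}\times\mathcal{S}^T_{(\sigma,0),\ell+\dec+1}.
\]
The target weights are one more than the components of $\mathcal{K}$, which matches the definition \eqref{def:U}.

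\textbf{Step 1 (well-definedness, $C^1$, mapping property).} Using the structure of $R$ (terms $M\cdot p^2$, $m\cdot(p,z)$, $L\cdot z^3$) and the weights in $\mathcal{L}$, we check term by term that each component lands in the target weight. For instance, in the $v$-equation, $\partial_q(m\cdot(v,w))$ decays like $t^{-(2\ell+2\dec+1)}$, which is faster than $t^{-(\ell+\dec+2)}$ because $\ell>1$. The term $\partial_q a\circ(\mathrm{id}+u)$ decays like $t^{-(\ell+\dec+2)}$ by hypothesis. In the $u$-equation, the term $b$ decays like $t^{-\ell}$ and $\partial_p R\circ\varphi=O(v,w)$ decays even faster. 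In the $w$-equation, $d\cdot w$ decays like $t^{-(2\ell+\dec)}$ and $c$ like $t^{-(\ell+\dec+1)}$. We then prove that composition with $\mathrm{id}+u$ and the Hölder estimates make $\mathcal{F}$ a $C^1$ map, exactly as in Part I. This uses the two extra derivatives ($k=2$) built into $\mathcal{P}$ to differentiate compositions.

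\textbf{Step 2 (linearization).} We have $D_\varphi\mathcal{F}(0,0)\hat\varphi=(\Lo\hat u-\partial_pR\circ\varphi_0\cdot\ldots,\ \Lo\hat v,\ \Lo\hat w-\ldots)$. Because $R=O(p^2,pz,z^3)$, the first-order terms at $\varphi_0$ are $m(q,0,t)$-couplings of $(\hat v,\hat w)$ into the $\hat u$-equation, and a coupling of $\hat v$ into the $\hat w$-equation. The resulting system is triangular. We solve $\Lo\hat v=f_2$ first, then $\hat w$, then $\hat u$. Each step solves $\Lo g=f$ through $g(q,t)=-\int_t^\infty f(q-\omega(t-s),s)\,ds$. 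This loses exactly one power of $t$ and has no small denominators, which gives a bounded inverse
\[
\mathcal{E}^T_{\mathcal{K}}\leftarrow \mathcal{S}^T_{\ell}\times\mathcal{S}^T_{\ell+\dec+2}\times\mathcal{S}^T_{\ell+\dec+1}
\]
with norm independent of $T$. The coupling terms carry a factor $t^{-\dec-1}$ or better relative to the target weight, so they are handled by this triangular structure.

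\textbf{Step 3 (quantitative IFT and the main obstacle).} The main difficulty is that $P$ is not small: $r$ is arbitrary. We handle this as in Part I. Because the weighted norms on $I_T$ gain a factor $T^{-\epsilon}$ whenever a term decays faster than needed, choosing $T_0$ large makes $\mathcal{F}(P,0)$ and the Lipschitz constant of $D_\varphi\mathcal{F}(P,\varphi)-D_\varphi\mathcal{F}(0,0)$ on $B(C_0r)$ small. The delicate case is the linear-in-$P$ term $b$ in the $u$-equation, which is not small. We treat it by absorbing the exact solution $\Lo u_0=b(q,t)$ into the linear solve; it then contributes only a $C_0r$ bound, which fixes $C_0$. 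With these estimates, a contraction or quantitative IFT argument (Part I, Lemma on IFT) gives the $C^1$ map $\boldsymbol{\varphi}^T$ with values in $B(C_0r)$. Uniqueness on that ball follows from the same contraction.
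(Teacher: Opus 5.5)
Your route is the paper's. You apply the quantitative IFT (Theorem~\ref{thm:QIFT}) to $\mathcal{F}^T(P,\varphi)=X_{H_0+P}\circ\varphi-\Lo\varphi$ with this codomain. You invert the triangular operator $D_\varphi\mathcal{F}^T(0,0)$ by three applications of $\Lo^{-1}$, with a bound $\bar C$ independent of $T$. Finally, $\|D_\varphi\mathcal{F}^T(P,\varphi)-D_\varphi\mathcal{F}^T(0,0)\|$ becomes small for large $T$, since each term of the difference carries a factor from $P$ or $\varphi$ and gains a positive power of $T$.

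However, Step~3 contains a false claim. Enlarging $T_0$ does \emph{not} make $\mathcal{F}^T(P,0)$ small, and $b$ is not the only obstruction. Since $X_{H_0}\circ\varphi_0=\Lo\varphi_0$, one has $\mathcal{F}^T(P,0)=(b,\,-\partial_q a,\,J_m c)$. With $\mathcal{L}=(\ell+\dec+2,\ell,\ell+\dec+1,\ell)$, these three terms decay like $t^{-\ell}$, $t^{-(\ell+\dec+2)}$, $t^{-(\ell+\dec+1)}$. These are exactly the weights of the three factors of the codomain, as your own Step~1 records for $\partial_q a$ and $c$. So as $P$ ranges over the radius-$r$ ball of the $T$-dependent norm, no power of $T$ is gained in any component.

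What is actually needed, and what the paper uses, is the mechanism you describe for $b$, applied to all of $\mathcal{F}^T(P,0)$:
\begin{itemize}
\item one only needs $|\mathcal{F}^T(P,0)|^T_{\boldsymbol{\sigma,\ell,\dec}}\le Cr$ with $C$ independent of $T$;
\item setting $\rho=2C\bar C r$ then makes hypothesis~(3) of Theorem~\ref{thm:QIFT} hold for every $T$;
\item $T_0$ is chosen only to enforce hypothesis~(4) on $B(r)\times B(\rho)$.
\end{itemize}
This gives $C_0=2C\bar C$. No re-centering at $u_0=\Lo^{-1}b$ is required, and re-centering would not remove $\partial_q a$ or $c$ anyway.

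There is a smaller inaccuracy in Step~2: the coupling terms do not all gain a factor $t^{-\dec-1}$. The term linear in $\hat v$ in the $\hat w$-equation comes from $\partial_p\partial_z H_0(q,0,0,t)$. It lies in $\mathcal{S}^T_{(\sigma,0),\ell+\dec+1}$, which is exactly the target weight. The $\hat w$-coupling in the $\hat u$-equation gains only $t^{-\dec}$, i.e.\ nothing when $\dec=0$. This does not break the argument: the triangular solve only needs each coupling to be bounded between the relevant spaces (a non-negative gain). But the stated gain is not the reason the inverse is bounded uniformly in $T$.
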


Notice that Item \eqref{thm:C_existence} of Theorem \ref{Thm:par_Csigma_0} follows directly from the proposition above since
\begin{equation*}
    \big|P\big|_{\sigma, \Lell}^{T} \leq |P|_{\sigma, \Lell}^{1}, \qquad \text{ for any } T \geq 1\,  \text{ and any } \, P \in \mathcal{P}^{\mathrm{par.deg}, 1}_{\sigma, \Lell},
\end{equation*}
and by definition of $\mathcal{K}$ the asymptotic KAM torus has the desired decay rates.

Proposition~\ref{prop:C_deg_existence} follows by applying an appropriate version of the the Implicit Function Theorem (see Theorem \ref{thm:QIFT}) to the following functional
\begin{equation}\label{proof:Csigma_def_F_par_deg_1}
    \Function{\mathcal{F}^T}{ \mathcal{P}^{\mathrm{par.deg}, T}_{\sigma,\Lell} \times \mathcal{E}^{\mathrm{par.deg}, T}_{\sigma, \mathcal{K}}}{ \mathcal{S}^{T}_{(\sigma,0), \ell}(\R^n) \times \mathcal{S}^{T}_{(\sigma,0), \ell+\dec+2}(\R^n) \times \mathcal{S}^{T}_{(\sigma,0), \ell+\dec+1}(\R^{2m})}{(P, \varphi)}{X_{H_0 + P} \circ \varphi - \Lo \varphi },
\end{equation}
where $T \geq 1$ and we endow the codomain $\mathcal{S}^{T}_{(\sigma,0), \ell}(\R^n) \times \mathcal{S}^{T}_{(\sigma,0), \ell+\dec+2}(\R^n) \times \mathcal{S}^{T}_{(\sigma,0), \ell+\dec+1}(\R^{2m})$ with the sum norm associated with the product, which we denote by $|\cdot|^T_{\boldsymbol{\sigma, \ell, \dec}}$. Here,  we used bold symbols to differentiate this norm from the norm $|\cdot|^T_{\sigma, \ell}$.

Indeed, with these definitions, any $\varphi \in \mathcal{E}^{\mathrm{par.deg}, T}_{\sigma, \mathcal{K}}$ satisfies \eqref{def:cond1_asymKAMtorus}, while $\mathcal{F}^T(P, \varphi) = 0$ if and only if \eqref{def:cond2_asymKAMtorus} is satisfied. Therefore,
\begin{equation}
\label{eq:KAM_torus_characterization}
    \mathcal{F}^T(P, \varphi) = 0 \quad \mbox{if and only if $\quad \varphi$ defines a $C^\sigma$ asymptotic KAM torus for $H_0 + P$.}
\end{equation}

Analyzing the differential of the functional $\mathcal{F}^T$ with respect to the variable $\varphi=(u,v,w)$ evaluated at $(0,0)$ which we denote by $D_\varphi\mathcal{F}^T(0,0)$, the following analogous of Proposition 6.2 in \cite{scarcella_KAMST} holds. 

\begin{proposition}
\label{prop:F_properties_par_deg}
    The operator $\mathcal{F}^T$ given by \eqref{proof:Csigma_def_F_par_deg_1} satisfies the following.
    \begin{enumerate}
    \item \label{prop:F_well_defined_par_deg} For any $T \geq 1$, there exists a neighbourhood $\mathcal{U}^T \subseteq \mathcal{P}^{\mathrm{par.deg}, T}_{\sigma,\Lell} \times \mathcal{E}^{\mathrm{par.deg}, T}_{\sigma, \mathcal{K}}$ of $(0, 0)$ such that $\mathcal{F}^T\mid_{\mathcal{U}^T}$ and $D_\varphi\mathcal{F}^T\mid_{\mathcal{U}^T}$ are well-defined continuous maps. 
    
    \noindent Moreover, for any $r, \rho \geq 0$ there exists $T_0 \geq 1$ such that, for any $T \geq T_0$,  
    \[B_{\mathcal{P}^{\mathrm{par.deg}, T}_{\sigma,\Lell}}(r) \times B_{\mathcal{E}^{\mathrm{par.deg}, T}_{\sigma, \mathcal{K}}}(\rho) \subseteq \mathcal{U}^T.\]

         \item \label{prop:F_bounded_par_deg}There exists a constant $C$,
     such that for any $r > 0$ and any $T \geq 1$,
\[ \sup \left\{ |\mathcal{F}^T(P, 0)|_{\boldsymbol{\sigma, \ell, \dec}}^T \,\left|\, P \in B_{\mathcal{P}^{\mathrm{par.deg}, T}_{\sigma,\Lell}}(r); \,\, (P, 0) \in \mathcal{U}^T \right\}\right. \leq Cr.\] %

    \item \label{prop:DF_invertible_par_deg} $D_\varphi \mathcal{F}^T(0, 0)$ is invertible, for any $T \geq 1$. Moreover, there exists a constant $\bar C$, independent of $T$, such that $\|D_\varphi \mathcal{F}^T(0, 0)^{-1}\| \leq \bar C$, where $\| \cdot\|$ denotes the operator norm. %

        \item \label{prop:DF_limit_par_deg} are identical to that of  Let $r, \rho > 0$. Then  
        \[\lim_{T \to +\infty} \| D_\varphi\mathcal{F}^T(P, \varphi) - D_\varphi\mathcal{F}^T(0, 0) \| = 0, \qquad \text{uniformly on } \quad B_{\mathcal{P}^{\mathrm{par.deg}, T}_{\sigma,\Lell}}(r) \times B_{\mathcal{E}^{\mathrm{par.deg}, T}_{\sigma, \mathcal{K}}}(\rho),\]%
where $\| \cdot\|$ denotes the operator norm. %
    \end{enumerate}
\end{proposition}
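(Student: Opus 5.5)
The plan is to write $\mathcal{F}^T$ explicitly in components and verify the four items in turn, following Proposition 6.2 of \cite{scarcella_KAMST} with $\Omega$ replaced by $0$. With $H = N + R + P$ and $\varphi = (\mathrm{id}+u, v, w)$, we have $X_H = (\partial_p H, -\partial_q H, J_m\partial_z H)$. Hence
\begin{align*}
\mathcal{F}^T_1 &= \partial_p R\circ\varphi + b\circ(\mathrm{id}+u) - \Lo u,\\
\mathcal{F}^T_2 &= -\partial_q R\circ\varphi - \partial_q P\circ\varphi - \Lo v,\\
\mathcal{F}^T_3 &= J_m\big(\partial_z R\circ\varphi + c\circ(\mathrm{id}+u) + d\circ(\mathrm{id}+u)\, w\big) - \Lo w .
\end{align*}
Since $\partial_p R = 2M\cdot p + m\cdot z + O(p^2)$ and $\partial_z R = m^\top p + 3L\cdot z^2 + O(p^2)$, every nonlinear term is at least quadratic in $(v,w)$ or bilinear in $(v,w)$ and $P$. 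The exceptions are the linear terms $2M v + m w$ in the first component and $m^\top v$ in the third.

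For Items \eqref{prop:F_well_defined_par_deg}--\eqref{prop:F_bounded_par_deg}, I check decay rates term by term using the composition and product estimates in $\mathcal{S}^T$ from Part I. The basic rule is that $|f\circ(\mathrm{id}+u)|^T_{\sigma,\ell} \le C|f|^T_{\sigma,\ell}$ as long as $u$ is $C^\sigma$-small, which holds for $T\ge T_0$ because $u$ decays like $t^{-(\ell-1)}$.

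The rates match the codomain.
\begin{itemize}
\item First component: $b$ has rate $\ell$, and $Mv$, $mw$ have rates $\ell+\dec+1$ and $\ell+\dec$, both $\ge \ell$.
\item Second component: $\partial_q a$ has rate $\ell+\dec+2$; $\partial_q b\cdot v$ has rate $2\ell+\dec+1$, and $\partial_q c\cdot w$, $\partial_q d\cdot w^2$ are similar; $\partial_q R\circ\varphi$ is $O(v^2,vw,w^3)$ and so has rate at least $\min(2\ell+2\dec+1,\, 3\ell+3\dec)\ge \ell+\dec+2$ because $\ell>1$.
\item Third component: $c$ has rate $\ell+\dec+1$, $dw$ has rate $2\ell+\dec$, $m^\top v$ has rate $\ell+\dec+1$, and $L w^2$ has rate $2\ell+2\dec$.
\end{itemize}
Continuity of $\mathcal{F}^T$ and of $D_\varphi\mathcal{F}^T$ comes from the $C^{\sigma+2}$ and $C^{\sigma+3}$ regularity of $P$ and $R$, via the usual loss-free composition lemma (this is why $k=2,3$ in the hypotheses). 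The bound $|\mathcal{F}^T(P,0)|\le Cr$ is immediate, because $\mathcal{F}^T(P,0)=(b,-\partial_q a,J_m c)$.

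For Item \eqref{prop:DF_invertible_par_deg}, the linear operator is
\[
D_\varphi\mathcal{F}^T(0,0)(\hat u,\hat v,\hat w) = \big(2M_0\hat v + m_0\hat w - \Lo\hat u,\; -\Lo\hat v,\; J_m m_0^\top\hat v - \Lo\hat w\big),
\]
where $M_0,m_0$ are $M,m$ evaluated on the torus and are bounded in $t$. Given a right-hand side $(f_1,f_2,f_3)$, I solve the triangular system in order: first $\hat v$, then $\hat w$, then $\hat u$. Each step solves $\Lo g = h$ with $h\in\mathcal{S}^T_{(\sigma,0),k+1}$, using the solution that vanishes at infinity:
\[
g(q,t) = -\int_t^{\infty} h(q-\omega(s-t),s)\,ds .
\]
This gives $|g|^T_{\sigma,k}\le k^{-1}|h|^T_{\sigma,k+1}$ and $\Lo g = h$, so $g\in\mathcal{U}^T_{\sigma,\omega,k}$. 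There are no small divisors here. Counting rates along the chain:
\begin{itemize}
\item $\hat v$ has rate $\ell+\dec+1$, from $f_2$ of rate $\ell+\dec+2$;
\item $\hat w$ has rate $\ell+\dec$, from $f_3 + J_m m_0^\top\hat v$ of rate $\ell+\dec+1$;
\item $\hat u$ has rate $\ell-1$, from $f_1 + 2M_0\hat v + m_0\hat w$ of rate $\ell$.
\end{itemize}
These are exactly the rates $\mathcal{K}$. Uniqueness in the decaying class gives injectivity, and the bounds are independent of $T$ because $k^{-1}$ does not depend on $T$.

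Item \eqref{prop:DF_limit_par_deg} is where I expect the main care to be needed. The difference $D_\varphi\mathcal{F}^T(P,\varphi)-D_\varphi\mathcal{F}^T(0,0)$ consists of multiplication operators whose coefficients are either derivatives of $R$ evaluated at $\varphi$ minus their values at $\varphi_0$, or derivatives of $P$. When such an operator is viewed as a map between the weighted spaces, each coefficient contributes a surplus factor $t^{-\epsilon}$ with $\epsilon>0$. That surplus comes from the decay of $(u,v,w)$ or of $P$, and the factor $\ell>1$ guarantees it is a genuine gain. One subtle case is the term $\partial_q^2 a\circ(\mathrm{id}+u)\,\hat u$ in the second component. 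Its coefficient has rate $\ell+\dec+2$ while $\hat u$ only has rate $\ell-1$, so the product has rate $2\ell+\dec+1 > \ell+\dec+2$, again because $\ell>1$. The surplus gives an operator norm of order $T^{-\epsilon}(r+\rho)$, which tends to $0$ uniformly. The derivatives $\partial_q$ falling on compositions cost one derivative; this is absorbed by the extra regularity built into the spaces $\mathcal{S}_{(\sigma,k)}$ and, for $\hat u$, by the $\mathcal{U}$-norm.
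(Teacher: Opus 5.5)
Your proposal is correct and follows essentially the paper's route. The paper states this result as the analogue of Proposition 6.2 of Part I, and its general-case proof (Lemmas \ref{lemma:F_well_defined_par}--\ref{lem:DF_limit_par}) uses the same four steps as you:
\begin{enumerate}
\item expand $X_{H_0+P}\circ\varphi-\Lo\varphi$ explicitly and count decay rates term by term;
\item observe that $\mathcal{F}^T(P,0)=X_{H_0+P}\circ\varphi_0=(b,-\partial_q a,J_m c)$;
\item invert the triangular system in the order $\hat v\to\hat w\to\hat u$ using $\Lo^{-1}$ (Proposition \ref{prop:Csigma_HEomega});
\item show that every term of the derivative difference has a decay surplus of at least $\ell-1$, giving a bound of order $T^{-(\ell-1)}$.
\end{enumerate}
One slip to fix: the decaying solution of $\Lo g=h$ is $g(q,t)=-\int_t^{\infty}h(q+\omega(s-t),s)\,ds$. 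Your formula with $q-\omega(s-t)$ instead solves $(\partial_t-\omega\cdot\partial_q)g=h$. The estimates you draw from it are unaffected.
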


Assuming Proposition \ref{prop:F_properties_par_deg}, it is easy to show Proposition \ref{prop:C_deg_existence}.

\begin{proof}[Proof of Proposition \ref{prop:C_deg_existence}]
Let $r > 0$ and define $\rho = 2C\bar C r,$ where $C$ and $\bar C$ are constants given by Items \eqref{prop:F_bounded_par_deg}, and \eqref{prop:DF_invertible_par_deg} of Proposition \ref{prop:F_properties_par_deg}.

By Items \eqref{prop:F_well_defined_par_deg} and \eqref{prop:DF_limit_par_deg} of Proposition \ref{prop:F_properties_par_deg} there exists $T_0 \geq 1$ such that, for any $T \geq T_0$,  $B_{\mathcal{P}^{\mathrm{ell}, T}_{\sigma,\Lell}}(r) \times B_{\mathcal{E}^{\mathrm{ell}, T}_{\sigma, \mathcal{K}}}(\rho)$ is contained in the domain of definition of $\mathcal{F}^T$ and 
\[  \| D\mathcal{F}^T(P, \varphi) - D\mathcal{F}^T(0, 0) \| < \frac{1}{2\bar C}, \qquad \text{ for } \quad |P|_{\sigma, \Lell}^T < r, \quad |\varphi|_{\sigma, \omega, \Omega, \mathcal{K}}^T < \rho.\]

Let $T \geq T_0$. By Items \eqref{prop:F_bounded_par_deg} and \eqref{prop:DF_invertible_par_deg} of Proposition \ref{prop:F_properties_par_deg},
\[\|D_\varphi \mathcal{F}^T(0, 0)^{-1}\| \leq \bar C, \qquad \sup \left\{ |\mathcal{F}^T(P, 0)|_{\sigma, \Lell}^T \,\left|\, P \in B_{\mathcal{P}^{\mathrm{ell}, T}_{\sigma,\Lell}}(r)  \right\}\right. \leq \frac{\rho}{2\bar C}.\]

Noticing that $\mathcal{F}^T(0, 0) = 0$ and recalling \eqref{eq:KAM_torus_characterization}, Proposition \ref{prop:C_deg_existence} follows by Theorem \ref{thm:QIFT}. 
\end{proof}

\subsection{Proof of Item \eqref{thm:Cdeg_transverse} in Theorem \ref{Thm:par_Csigma_0}: Existence of asymptotic parabolic degenerate transversal dynamics}\label{sec:proof_item_2_par_deg_Holder}
Recall that the space of perturbation in Item \eqref{thm:Cdeg_transverse} is given by $\mathcal{P}^{\mathrm{par.deg}, 1}_{\sigma, \mathcal{L}^*}$ (see Section \ref{sc:perturbation_par0}). Furthermore, we observe that, for any $T \ge 1$, 
\begin{equation*}
    \mathcal{P}^{\mathrm{par.deg}, T}_{\sigma, \mathcal{L}^*} \subset \mathcal{P}^{\mathrm{par.deg}, T}_{\sigma, \mathcal{L}}, \quad \mbox{and} \hspace{3mm} |P|^T_{\sigma, \mathcal{L}} \le |P|^T_{\sigma, \mathcal{L}^*} \hspace{2mm} \mbox{ for every $P \in \mathcal{P}^{\mathrm{par.deg}, 1}_{\sigma, \mathcal{L}^*}$.}
\end{equation*}
Thus, the hypotheses of Item \eqref{thm:Cdeg_existence} of Theorem \ref{Thm:par_Csigma_0} are satisfied.

In what follows, we fix $r>0$ and let $T_0 \ge 1$ be given by Proposition \ref{prop:C_deg_existence}. By Proposition \ref{prop:C_deg_existence}, for every $T \ge T_0$, there exists a map
\[\boldsymbol{\varphi}^T: B_{\mathcal{P}^{\mathrm{par.deg}, T}_{\sigma, \Lell^*}}(r)  \to \mathcal{E}^{\mathrm{par.deg}, T}_{\sigma, \mathcal{K}^*}\]
for which $\boldsymbol{\varphi}^T(P)$ defines a $C^\sigma$ asymptotic KAM torus associated with $(X_{H_0 + P}, X_{H_0}, \varphi_0)$,  for any $P \in  \mathcal{P}^{\mathrm{par.deg}, T}_{\sigma, \Lell^*} $ with $|P|^T_{\sigma, \Lell^*} < r$.

Recall that the asymptotic KAM torus $\boldsymbol{\varphi}^T(P)$ associated to the perturbation $P$ is said to be \emph{parabolic} if there exist a family of invertible matrices $S:\T^n \times I_{T'} \to \mathcal{M}_{2n+2m}(\R)$, for some $T' \geq T$ as in Definition \ref{def:ell_hyp_par_trasv_dyn_asym_KAM}.  We will construct these invertible matrices using Proposition \ref{prop:precriterion}, which to any continuous map $G: \T^n \times I_T \to \mathfrak{sp}(2m, \R)$ associates a continuous family of symplectic matrices $\boldsymbol{S}(\boldsymbol{\varphi}^T(P), G): \T^n \times I_T \to Sp(2m + 2n, \R, J)$ of the form \eqref{eq:S_formula}. Moreover, it follows from \eqref{eq:S_formula} that if  $\boldsymbol{\varphi}^T(P)$ and $G$ are of class $C^1$ then $\boldsymbol{S}(\boldsymbol{\varphi}^T(P), G)$ is also of class $C^1$.

Therefore, by Corollary \ref{cor:criterion}, in order for $S = \boldsymbol{S}(\boldsymbol{\varphi}^T(P), G)$, given by Proposition \ref{prop:precriterion}, to satisfy the assumptions of Definition \ref{def:ell_hyp_par_trasv_dyn_asym_KAM}, it suffices to assume that $G$ and $\varphi = \boldsymbol{\varphi}^T(P)$ are sufficiently regular and satisfy suitable decay conditions, and to show that the function $\boldsymbol{\zeta}(H_0 + P, \varphi, G)$, defined by \eqref{eq:formula_zeta}, is equal to $0_{2n \times 2n}$, namely,
\[
\Pi_{xy}^\top S^\top\big( (D^2 (H_0 + P) \circ \varphi) S + J \Lo S\big) \Pi_{xy} = 0_{2n \times 2n}
\]
where we refer to~\eqref{def:Proj_Pi_xy} for the definition of the projection $\Pi_{xy}$.

The following analogous of Proposition 6.7 in Part I \cite{scarcella_KAMST} holds.

\begin{proposition}
\label{prop:Cdeg_transverse}
Fix $r > 0$ and let $T_0 \geq 1$ be given by Proposition \ref{prop:C_deg_existence}. There exist $T_1 \geq T_0$ such that, for any $T \geq T_1$,  there exists a $C^1$-map
\[ \boldsymbol{G}^T: B_{\mathcal{P}^{\mathrm{par.deg}, T}_{\sigma, \Lell^*}}(r)    \subseteq  \mathcal{P}^{\mathrm{par.deg}, T}_{\sigma, \Lell^*} \to \mathcal{S}p^{ T}_{\sigma-1, \omega, \ell + \dec}  \]
such that
\[\boldsymbol{\zeta}(H_0 + P, \boldsymbol{\varphi}^T(P),  \boldsymbol{G}^T(P)) =  0_{2m \times 2m}, \qquad \text{for any } P \in B_{\mathcal{P}^{\mathrm{par.deg}, T}_{\sigma, \Lell^*}}(r),\]
where $\boldsymbol{\zeta}$ is given by \eqref{eq:formula_zeta} and  $\boldsymbol{\varphi}^T: B_{\mathcal{P}^{\mathrm{par.deg}, T}_{\sigma, \Lell^*}}(r)   \to \mathcal{E}^{\mathrm{par.deg}, T}_{\sigma, \mathcal{K}^*}$  is given by Proposition \ref{prop:C_deg_existence}.

In particular, by Corollary \ref{cor:criterion}, for any $P \in B_{\mathcal{P}^{\mathrm{par.deg}, T}_{\sigma, \Lell^*}}(r)$, the asymptotic KAM torus $\boldsymbol{\varphi}^T(P)$ is asymptotically parabolic degenerate.
\end{proposition}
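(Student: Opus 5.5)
We will prove Proposition \ref{prop:Cdeg_transverse} by applying the quantitative Implicit Function Theorem (Theorem \ref{thm:QIFT}) to the functional
\[
\mathcal{G}^T(P, G) = \boldsymbol{\zeta}\big(H_0 + P, \boldsymbol{\varphi}^T(P), G\big),
\]
defined on $B_{\mathcal{P}^{\mathrm{par.deg}, T}_{\sigma, \Lell^*}}(r) \times \mathcal{S}p^{T}_{\sigma-1,\omega,\ell+\dec}$, where the latter space is defined as in \eqref{def:BS_ell_Sp} with $\mathfrak{L}^{\mathrm{ell}}_{\omega,\Omega}$ replaced by $\Lo$, since $\Omega = 0$. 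The codomain is $\mathcal{S}ym^{T}_{\sigma-1,\ell+\dec+1}$. By Proposition \ref{prop:C_deg_existence}, $P \mapsto \boldsymbol{\varphi}^T(P)$ is $C^1$ into $\mathcal{E}^{\mathrm{par.deg},T}_{\sigma,\mathcal{K}^*}$, and Remark \ref{rmk:extra_regularity_ell} guarantees that $\partial_q u$, $\partial_q v$ and $\partial_q w$ are $C^1$ and lie in the corresponding $\mathcal{U}$-spaces of regularity $\sigma-1$. Hence $\boldsymbol{S}$ is $C^1$ and $\mathcal{G}^T$ is well defined.

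\textbf{Step 1: structure of $\mathcal{G}^T$.} We expand $\boldsymbol{\zeta}$ using Remark \ref{rmk:formula_zeta} with $K = e^G$ and $B = -(\mathrm{Id} + \partial_q u)^{-\top}\partial_q w^\top J_m K$. Since $\Lambda = 0$, the term $\partial_z^2 H_0 \circ \varphi$ is $O(|w|)$ (the $z^3$ term) plus $O(|v|)$ (the mixed $pz$ terms), and $\partial_z^2 P = d$. This gives
\[
\mathcal{G}^T(P,G) = K^\top J_m \Lo K + K^\top\big(d + \mathcal{R}(\varphi)\big)K + (\text{terms with } B),
\]
where $\mathcal{R}(\varphi) = O(|v| + |w|) \in \mathcal{S}^{T}_{(\sigma-1,0),\ell+\dec+1}$ for $\varphi \in \mathcal{E}_{\mathcal{K}^*}$. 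The $B$-terms involve $\partial_q w \in \mathcal{S}_{\ell+\dec+1}$ and are at least quadratic in $\partial_q w$, or multiplied by $\partial_p\partial_z H$; they therefore decay at least as fast as $t^{-(\ell+\dec+1)}$. This is exactly where the hypothesis $d \in \mathcal{S}_{\ell+\dec+1}$ of \eqref{eq:decay_pardeg_2} and the choice $\mathcal{K}^*$ are used. We conclude $|\mathcal{G}^T(P,0)| \le Cr$ uniformly in $T$.

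\textbf{Step 2: the linearization.} At $(0,0)$ we have $D_G\mathcal{G}^T(0,0)\hat G = J_m \Lo \hat G$, since $\Lo(\mathrm{Id})=0$. For $\hat G \in \mathfrak{sp}(2m)$, $J_m \hat G$ is symmetric, so the operator maps into $\mathcal{S}ym$. Inverting it amounts to solving the cohomological equation $\Lo \hat G = -J_m Y$ for $Y \in \mathcal{S}ym_{\ell+\dec+1}$. The solution is
\[
\hat G(q,t) = \int_t^{\infty} J_m Y(q - \omega(s-t), s)\, ds,
\]
which belongs to $\mathcal{S}_{\ell+\dec}$ since $\ell > 1$, and satisfies $\Lo \hat G \in \mathcal{S}_{\ell+\dec+1}$. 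Thus the inverse is bounded independently of $T$, exactly as in Part I but without the rotation term. Uniqueness holds in the space of decaying $\hat G$.

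\textbf{Step 3: uniform closeness of derivatives.} This is the main obstacle. We must show that $\|D_G\mathcal{G}^T(P,G) - D_G\mathcal{G}^T(0,0)\| \to 0$ as $T \to \infty$, uniformly on balls. The derivative contains terms such as $(\text{d}K)^\top J_m \Lo K + K^\top J_m \Lo(\text{d}K) - J_m\Lo\hat G$, and these require $|e^G - \mathrm{Id}|$ and $\Lo G$ to be small in the right weighted norms. On balls of fixed radius this follows because $|G|^T_{\sigma-1,0} \le T^{-(\ell+\dec)}|G|^T_{\sigma-1,\ell+\dec}$. The delicate point is $\Lo$ acting on $e^G$: we will write $\Lo e^G = \int_0^1 e^{sG}(\Lo G)e^{(1-s)G}\,ds$ and use the Lemma \ref{prop:exp_bound_par} bounds. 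The terms coming from $d$ and $\mathcal{R}$ are handled with the weight shift: multiplying a $t^{-(\ell+\dec+1)}$-decaying coefficient by $\hat G \in \mathcal{S}_{\ell+\dec}$ gains the factor $T^{-(\ell+\dec)}$.

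With Steps 1–3 in place, Theorem \ref{thm:QIFT} yields $T_1$ and the $C^1$ map $\boldsymbol{G}^T$ satisfying $\boldsymbol{\zeta} = 0$. Finally we apply Corollary \ref{cor:criterion}(1). Its hypotheses hold because $w \in \mathcal{S}_{\ell+\dec+1} \subset \mathcal{S}_{\ell'+1}$ and $G \in \mathcal{S}_{\ell+\dec} \subset \mathcal{S}_{\ell'}$, with $\ell' = \ell+\dec$.
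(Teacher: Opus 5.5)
Your proposal is correct and follows the paper's route: you apply Theorem \ref{thm:QIFT} to $\mathcal{G}^T(P,G)=\boldsymbol{\zeta}(H_0+P,\boldsymbol{\varphi}^T(P),G)$ from $\mathcal{S}p^T_{\sigma-1,\omega,\ell+\dec}$ into $\mathcal{S}ym^T_{\sigma-1,\ell+\dec+1}$, invert $D_G\mathcal{G}^T(0,0)=J_m\Lo$ through the scalar cohomological equation $\Lo\chi=g$, obtain a factor $T^{-(\ell+\dec)}$ in the derivative-closeness estimate, and conclude with Item (1) of Corollary \ref{cor:criterion}. There are only small slips: the characteristic in your solution formula should be $q+\omega(s-t)$ rather than $q-\omega(s-t)$, and in this uniform-decay degenerate setting the relevant exponential estimates are those of Lemma \ref{lem:exp_expansion_bounds}, not the block-weighted Lemma \ref{prop:exp_bound_par}.
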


Taking into account the explicit formula for $\boldsymbol{\zeta}$ given in Remark \ref{rmk:formula_zeta}, Proposition~\ref{prop:Cdeg_transverse} follows by applying an appropriate version of the the Implicit Function Theorem (see Theorem \ref{thm:QIFT}) to the following functional
$$\mathcal{G}^T: B_{\mathcal{P}^{\mathrm{par.deg}, T}_{\sigma,\Lell^*}}(r) \times \mathcal{S}p^{ T}_{\sigma-1, \omega, \ell+\dec}   \to  \mathcal{S}ym^{ T}_{\sigma-1, \ell+\dec+1}$$ 
as
\begin{equation}
\label{eq:formula_F_par.deg_2}
    \begin{aligned}
    \mathcal{G}^T(P, G) =&  B^\top \partial_p^2 H\circ \varphi B + K^{\top} \partial_p\partial_zH\circ \varphi B + B^{\top}  \left(\partial_p\partial_zH\circ \varphi\right)^\top K \\
    & + K^{\top} \partial_z^2 H\circ \varphi K + K^{\top} J_m \Lo K,
    \end{aligned}
\end{equation}
where $T \geq 1$, $H = H_0 + P$, $\varphi = \boldsymbol{\varphi}^T(P) = (u, v, w)$, $K = \exp(G)$, and  $B = -(\mathrm{Id}_{2n} +\partial_q  u )^{-\top} (\partial_q w)^\top J_m K$. 

Analyzing the differential of the functional $\mathcal{G}^T$ with respect to $G$ evaluated at $(0,0)$ which we denote by $D_\varphi\mathcal{G}^T(0,0)$, the following analogous of Proposition 6.8 in \cite{scarcella_KAMST} holds.

\begin{proposition}
\label{prop:G_properties_par_deg}
    The operator $\mathcal{G}^T$ given by \eqref{eq:formula_F_par.deg_2} satisfies the following.
    \begin{enumerate}
    \item \label{prop:G_well_defined_par_deg} $\mathcal{G}^T$ and $D_G\mathcal{G}^T$ are well-defined continuous maps, for any $T \geq T_0$. 
    
    \item \label{prop:G_bounded_par_deg} There exists a constant $C$ such that, for any $T \geq T_0$,
\[ \sup \left\{ |\mathcal{G}^T(P, 0)|^T_{\sigma - 1, \ell+\dec+1} \,\left|\, P \in B_{\mathcal{P}^{\mathrm{par.deg}, T}_{\sigma,\Lell^*}}(r) \right\}\right. \leq Cr.\] %

    \item \label{prop:DG_invertible_par_deg} $D_G \mathcal{G}^T(0, 0)$ is invertible, for any $T \geq T_0$. Moreover, there exists a constant $\bar C$, independent of $T$, such that $\|D_G \mathcal{G}^T(0, 0)^{-1}\| < \bar C$. 
        \item \label{prop:DG_limit_par_deg} Let $r, \rho > 0$. Then  
        \[\lim_{T \to +\infty} \| D_G \mathcal{G}^T(P, G) - D_G\mathcal{G}^T(0, 0) \| = 0, \qquad \text{uniformly on } \quad B_{\mathcal{P}^{\mathrm{par.deg}, T}_{\sigma,\Lell^*}}(r) \times B_{\mathcal{S}p^{ T}_{\sigma-1, \omega,  \ell+\dec}}(\rho),\]
where $\| \cdot\|$ denotes the operator norm  and $B_{\mathcal{S}p^{ T}_{\sigma-1, \omega, \ell +\dec}}(\rho) \subset \mathcal{S}p^{ T}_{\sigma-1, \omega,  \ell+\dec}$ stands for a ball of radius $\rho$ centered at the origin.
    \end{enumerate}
\end{proposition}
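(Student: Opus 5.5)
The plan is to verify the four items directly from the explicit formula \eqref{eq:formula_F_par.deg_2}. The invertibility in Item \eqref{prop:DG_invertible_par_deg} reduces to a trivial cohomological equation along the linear flow $(q,t)\mapsto(q+\omega s,t+s)$. The other items are bookkeeping of polynomial decay rates, using the decay of $\varphi=\boldsymbol{\varphi}^T(P)\in\mathcal{E}^{\mathrm{par.deg},T}_{\sigma,\mathcal{K}^*}$ given by Proposition \ref{prop:C_deg_existence}. Throughout I use that $H_0$ has the form \eqref{eq:initial_hamiltonian_par_deg}, so $\partial_z^2H_0(q,0,0,t)=0$ and $\partial_z^2H_0\circ\varphi=O(|v|+|w|)$ in $C^{\sigma}$, with constants controlled by $\Upsilon$ in \eqref{def:const_Upsilon_hyp}.

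\emph{Item \eqref{prop:DG_invertible_par_deg}.} At $P=0$ the uniqueness part of Proposition \ref{prop:C_deg_existence} gives $\boldsymbol{\varphi}^T(0)=0$. Hence $u=v=w=0$ and $B=0$, and $\partial_z^2H\circ\varphi=\partial_z^2H_0(q,0,0,t)=0$. It follows that $\mathcal{G}^T(0,G)=K^\top J_m\Lo K$ with $K=e^G$, and therefore
\[ D_G\mathcal{G}^T(0,0)\hat G = J_m\Lo\hat G .\]
Since $\hat G\in\mathfrak{sp}(2m,\R)$, the matrix $J_m\hat G$ is symmetric, so the target space $\mathcal{S}ym^T_{\sigma-1,\ell+\dec+1}$ is the right one. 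Given $Y\in \mathcal{S}ym^T_{\sigma-1,\ell+\dec+1}$, I will solve $\Lo\hat G=-J_mY$ by
\[ \hat G(q,t)= J_m\int_t^{+\infty} Y(q+\omega(s-t),s)\,ds .\]
The integral converges because $\ell+\dec+1>1$. The solution satisfies $J_m\hat G=-\int_t^\infty Y$, which is symmetric, so $\hat G\in\mathfrak{sp}$. It also satisfies
\[ |\hat G|^T_{\sigma-1,\ell+\dec}\le (\ell+\dec)^{-1}|Y|^T_{\sigma-1,\ell+\dec+1}, \qquad |\Lo\hat G|^T_{\sigma-1,\ell+\dec+1}\le|Y|^T_{\sigma-1,\ell+\dec+1}.\]
Translations in $q$ preserve $C^{\sigma-1}$ norms, which gives these bounds. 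This yields a bound $\bar C$ independent of $T$. Uniqueness of the solution holds because the kernel of $\Lo$ in a space of functions decaying in $t$ is trivial.

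\emph{Items \eqref{prop:G_well_defined_par_deg}--\eqref{prop:G_bounded_par_deg}.} Well-definedness and continuity follow because $G\mapsto e^G$ is analytic on bounded sets of $\mathcal{S}p^T_{\sigma-1,\omega,\ell+\dec}$, and $\Lo e^G=\int_0^1 e^{\tau G}(\Lo G)e^{(1-\tau)G}d\tau$ lies in $\mathcal{M}^T_{\sigma-1,\ell+\dec+1}$. I will also use the standard composition estimates for $\partial^2 H\circ\varphi$ from Part I \cite{scarcella_KAMST}. The loss of one derivative comes from $\partial_q w$ and $\partial_q u$. For Item \eqref{prop:G_bounded_par_deg}, take $K=\mathrm{Id}$. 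Then $B=-(\mathrm{Id}+\partial_qu)^{-\top}\partial_qw^\top J_m$ decays like $t^{-(\ell+\dec+1)}$, since $w\in\mathcal{S}^T_{(\sigma,0),\ell+\dec+1}$ by $\mathcal{K}^*$, and has size $\le C\,C_0r$. The terms $B^\top\partial_p^2H B$ and the cross terms with the bounded matrix $\partial_p\partial_zH\circ\varphi$ are bounded by $Cr\,t^{-(\ell+\dec+1)}$. Here $P$ has no $pz$ component, and the quadratic term carries an extra decaying factor, absorbed using $t\ge T\ge1$. Finally $\partial_z^2H\circ\varphi=\partial_z^2H_0\circ\varphi+d$. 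The first piece is bounded by $C(|v|+|w|)$, of order $r\,t^{-(\ell+\dec+1)}$. The second is bounded by $|d|^T_{\sigma,\ell+\dec+1}\le r$ by the hypothesis \eqref{eq:decay_pardeg_2}.

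\emph{Item \eqref{prop:DG_limit_par_deg}, the main obstacle.} I will write $D_G\mathcal{G}^T(P,G)\hat G-J_m\Lo\hat G$ as a sum of terms, each containing at least one factor that is small in sup norm on $I_T$: $B$, $\partial_qw$, $\partial_z^2H\circ\varphi$, $G$, or $e^G-\mathrm{Id}$. The delicate term is the derivative of $K^\top J_m\Lo K$, namely
\[ (D e^G\hat G)^\top J_m\Lo e^G + e^{G\top}J_m\Lo(De^G\hat G) - J_m\Lo\hat G .\]
For this I will use the integral formula for $D e^G\hat G$ and expand $\Lo$ of it by the product rule. Every term other than $J_m\Lo\hat G$ then carries a factor $G$ or $\Lo G$. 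In the norm $|\cdot|^T_{\sigma-1,\ell+\dec+1}$ these are bounded by $\rho\,T^{-(\ell+\dec)}\,|\hat G|^T_{\sigma-1,\omega,\ell+\dec}$. Similarly, the $B$- and $\partial_z^2H\circ\varphi$-terms multiply $\hat G$, which already decays at rate $\ell+\dec$. The extra decay $t^{-(\ell+\dec+1)}\le T^{-(\ell+\dec+1)}$ of the other factor then gives smallness uniform on the balls. Letting $T\to\infty$ concludes the item, and the checks of regularity loss in $\sigma-1$ follow Proposition 6.8 of \cite{scarcella_KAMST} verbatim.
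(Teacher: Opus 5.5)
Your proposal is correct and takes essentially the paper's route. The paper gives no separate proof here: it defers to Part I's Proposition 6.8 with $\Omega=0$. There, $D_G\mathcal{G}^T(0,0)=J_m\Lo$ is inverted entrywise through the cohomological equation $\Lo\chi=g$, and the other items are the same decay bookkeeping as in Lemmas \ref{lemma:G_well_defined_par}--\ref{lem:DG_limit_par} with $\Lambda=0$. Two small points: $\boldsymbol{\varphi}^T(0)=0$ follows from the uniqueness in Theorem \ref{Thm:par_Csigma_0} (or from the contraction underlying the IFT), not from Proposition \ref{prop:C_deg_existence}; and you are right to take the decay $\ell+\dec+1$ for $d$ from \eqref{eq:decay_pardeg_2} rather than the rate $\ell+1$ listed in $\mathcal{L}^*$, since item 2 needs it.
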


Assuming Proposition \ref{prop:G_properties_par_deg}, the proof of Proposition \ref{prop:Cdeg_transverse} follows the same lines as that of Proposition \ref{prop:F_properties_par_deg},  with Proposition~\ref{prop:G_properties_par_deg} replacing Proposition~\ref{prop:F_properties_par_deg}.

Clearly, Propositions \ref{prop:C_deg_existence} and \ref{prop:Cdeg_transverse} imply Theorem \ref{Thm:par_Csigma_0}. Let us mention that in proving these propositions, more precisely, in analyzing the differential of the functionals $\mathcal{F}^T$ and $\mathcal{G}^T$ the only cohomological equations involved are of the form 
\[  \Lo \chi := (\partial_t + \omega \cdot \partial_q) \chi = g, \qquad  g: \T^n \times I_T \to \R,\]
in the unknowns $\chi: \T^n \times I_T \to \R$ and restricted to appropriate Banach spaces of functions with polynomial decay in time. We refer to \cite{scarcella_KAMST} for additional details.

\subsection{Proof of Theorem \ref{Thm:par_Csigma_0_analy}}
\label{sc:proof_analytic_par0} 
 As mentioned at the beginning of Section \ref{sec:Proof_Theorem_Par_0}, it suffices to prove the first part of Theorem \ref{Thm:par_Csigma_0_analy} (since the second part will follow immediately from Theorem \ref{Thm:par_Csigma_0}). 

Moreover, since the proof of the first part of Theorem \ref{Thm:par_Csigma_0_analy} follows the same lines and structure as that of Item \eqref{thm:Cdeg_existence} in Theorem \ref{Thm:par_Csigma_0}, but considering spaces of analytic instead of Hölder functions, we will only define the spaces and discuss the very minor changes needed to carry out the proof. Let us point out that, for the sake of completeness, the cohomological equations appearing in the proof of Theorem \ref{Thm:par_Csigma_0} are also considered when restricted to the analytic setting in Section \ref{sc:cohom_eqs_par} as well as the analytic counterparts of the results concerning the norms and the Banach spaces appearing in Section \ref{sc:functional_setting_par0} (see Sections \ref{sc:cohom_eqs_par} and \ref{sc:norm_properties_par}).

Recall that the space of real analytic functions with polynomial decay in time $\mathscr{S}^{T}_{\sigma,\ell}$ and its associated norm $    |\cdot |^T_{\sigma, \ell}$ (which as an abuse of notation we denote as the one associated to $\mathcal{S}^{T}_{\sigma,\ell}$) were defined in \eqref{def:S_pol_anal} and \eqref{def:norm_S_analy}, respectively.

Given $\sigma >0$, $\ell\ge 0$ and $T\ge 1$, where $\sigma$ will now denote the width of the complex neighbourhood, we define the counterparts of the spaces $\mathcal{S}^{T}_{(\sigma, \ast), (0,\ell)}$, $\mathcal{U}^{ T}_{\sigma, \omega, \ell}$ and $\mathcal{W}^{\mathrm{ell}, T}_{\sigma, \omega, \Omega, \ell}$, given by \eqref{def: S0ell},~\eqref{def:U} and~\eqref{def:W}, as
\begin{equation}\label{def: S0ell_analy}
    \mathscr{S}^{T}_{\sigma, (0,\ell)} = \left\{f:\T^n_\sigma \times B_\sigma \times I_T \to \C  \hspace{1mm} | \hspace{1mm} f \in \mathscr{S}^{ T}_{\sigma, 0}, \hspace{2mm} \partial_q f \in \mathscr{S}^{ T}_{\sigma, \ell}\right\},
\end{equation}
\begin{equation}\label{def:U_analy}
    \mathscr{U}^{ T}_{\sigma, \omega, \ell} = \left\{u:\T^n_\sigma \times I_T \to \C \hspace{1mm} | \hspace{1mm} u \in \mathscr{S}^{T}_{\sigma, \ell}, \hspace{2mm}\Lo u \in \mathscr{S}^{ T}_{\sigma, \ell+1}\right\},
\end{equation}
\begin{equation}\label{def:W_analy}
    \mathscr{W}^{\mathrm{ell}, T}_{\sigma, \omega, \Omega, \ell} = \left\{w:\T^n_\sigma \times I_T \to \C^{2m} \hspace{1mm} | \hspace{1mm} w \in \mathscr{S}^{ T}_{\sigma, \ell}, \hspace{2mm}\mathcal{L}^{\mathrm{ell}}_{\omega,\Omega} w \in \mathscr{S}^{ T}_{\sigma, \ell+1}\right\},
\end{equation}
endowed with the norms
\begin{equation}\label{def:norm_S0ell_analy}
    |f|^T_{\sigma, (0,\ell)} = \max\{|f|^T_{\sigma, 0}, |\partial_qf|^T_{\sigma, \ell}\},
\end{equation}
\begin{equation}\label{def:norm_U_analy}
    |u|^T_{\sigma, \omega, \ell} = \max \left\{|u|^T_{\sigma, \ell}, |\Lo u|^T_{\sigma, \ell +1}\right\},
\end{equation}
\begin{equation}\label{def:norm_W_analy}
    |w|^{T}_{\sigma, \omega, \Omega, \ell} = \max \left\{|w|^T_{\sigma, \ell}, |\mathcal{L}^{\mathrm{ell}}_{\omega,\Omega} w|^T_{\sigma, \ell +1}\right\},
\end{equation}
respectively.

With these notations, the proof follows the exact same lines as the proof of Item \eqref{thm:Cdeg_existence} in Theorem \ref{Thm:par_Csigma_0} , by defining the counterparts $\mathscr{P}^{\mathrm{ell}, T}_{\sigma, \Lell}$, $\mathscr{E}^{\mathrm{ell}, T}_{\sigma, \mathcal{K}}$ of the spaces $\mathcal{P}^{\mathrm{ell}, T}_{\sigma, \Lell}$, $\mathcal{E}^{\mathrm{ell}, T}_{\sigma, \mathcal{K}}$ in the obvious way and by considering the functional $\mathcal{F}^T$ in an appropriate subset of $\mathscr{P}^{\mathrm{ell}, T}_{\sigma, \Lell} \times \mathscr{E}^{\mathrm{ell}, T}_{\sigma, \mathcal{K}}$, for example $ \mathscr{P}^{\mathrm{ell}, T}_{\sigma,\Lell} \times B_{\mathscr{E}^{\mathrm{ell}, T}_{\sigma/2, \mathcal{K}}}\big(T^{(\ell - 1)/2}\big)$ (compare with Proposition \ref{prop:F_properties_par_deg}). 

Let us mention that the choice of this subset is the only major difference in the proof, as for $\mathcal{F}^T$ to be well-defined we will have to consider a smaller open neighbourhood of the origin in $\mathscr{E}^{\mathrm{ell}, T}_{\sigma, \mathcal{K}}$ with a smaller analiticity width (for example $\sigma/2$).

\section{Asymptotically Parabolic Case}
\label{sec:Proof_Theorem_Par}

In this section we prove Theorems \ref{Thm:par_Csigma}, \ref{Thm:par_analy} and Corollary \ref{cor:par}, which are the general versions of the results proven in Section \ref{sec:Proof_Theorem_Par_0}.

We will focus mostly in the proof of Theorem \ref{Thm:par_Csigma}, which follows the same proof strategy of Theorem \ref{Thm:par_Csigma_0} and relies on a version of the Implicit Function Theorem applied to appropriately defined Banach spaces of time-dependent functions exhibiting polynomial decay in time. By doing this, we will be naturally led to consider the existence of solutions to the following (cohomological) equations
\begin{equation}
\label{eq:coh_eqs_par}
\left\{ \begin{array}{lll}
    \Lo \chi := (\partial_t + \omega \cdot \partial_q) \chi = g, & & g: \T^n \times I_T \to \R,\\
    \mathcal{L}^{\mathrm{par}}_{\omega, \Lambda} \boldsymbol{\chi} :=  J_mM^{\mathrm{par}}_\Lambda \boldsymbol{\chi} - \Lo \boldsymbol{\chi}  = \boldsymbol{g}, & & \boldsymbol{g}: \T^n \times I_T \to \R^{2m}, \\
    \mathfrak{L}^{\mathrm{par}}_{\omega, \Lambda} X := X^{\top}M^{\mathrm{par}}_\Lambda + M^{\mathrm{par}}_\Lambda X + J_m \Lo X =  G, & & G: \T^n \times I_T \to \mathcal{M}_{2m}(\R),
    \end{array} \right.
\end{equation}
in the unknowns $\chi: \T^n \times I_T \to \R$, $\boldsymbol{\chi}: \T^n \times I_T \to \R^{2m}$ and $X: \T^n \times I_T \to \mathcal{M}_{2m}(\R)$ (restricted to appropriate Banach spaces of functions with polynomial decay),  where $\omega \in \R^n$, $\Lambda = \mathrm{diag}(\Lambda_1,\dots,\Lambda_m)$ with $\Lambda_1,\dots,\Lambda_m \in \R_{\geq 0}$, and the matrices $M^{\mathrm{par}}_\Lambda$ and $J_m$ are as in \eqref{Ms} and~\eqref{def:J}, respectively. Recall that  $I_T = [T, + \infty)$, for any $T \geq 1$.

Let mention that, the first equation in \eqref{eq:coh_eqs_par} was already considered in Part I \cite{scarcella_KAMST}. The other two equations will be treated separately in Propositions \ref{prop:L_w_par} and \ref{prop:L_w_Lambda}.

We stress that, even though we will closely follow the proof scheme developed in Part I and that we recalled in Section \ref{sec:Proof_Theorem_Par_0} for the parabolic degenerate case, the form of the matrix $\Mpar$ forces us to consider a more detailed development of the unperturbed Hamiltonian than the one used in Part I and in  Section \ref{sec:Proof_Theorem_Par_0} (see Section \ref{sc:initial_ham_par}), as we are no longer able to treat jointly the terms in the expansion that depend on $z = (x, y)$. In particular, we have to consider different decay rates for the terms in the perturbation (see \eqref{eq:perturbation_form_par}) depending on $x$ and $y$. These differences make it much harder to define an appropriate functional setting to implement the approach described in Section \ref{sec:Proof_Theorem_Par_0}, and make every step of the proof considerably more involved.

 The remainder of this section is structured as follows. In Section \ref{sc:functional_setting_par}, we introduce several Banach spaces of matrix-valued functions with non-uniform polynomial decay in time. In Section \ref{sc:initial_ham_par} we write the unperturbed Hamiltonian of Theorem \ref{Thm:par_Csigma} in a suitable form and set some notations. In Sections \ref{sec:proof_item_1_par_Holder}  and \ref{sec:proof_item_2_par_Holder}, we prove items \eqref{thm:C_existence} and \eqref{thm:C_transverse} of Theorem \ref{Thm:par_Csigma}, respectively, assuming several results concerning the cohomological equations mentioned above which, for the sake of clarity of exposition, we prove later in Section \ref{sc:cohom_eqs_par}. In Section \ref{sc:proof_analytic_par} we prove Theorem \ref{Thm:par_analy}.  Section \ref{sec:proof_cor_par} is dedicated to the proof of Corollary \ref{cor:par}. Finally, Section \ref{sc:norm_properties_par} contains several technical results about the norms and spaces introduced in Section \ref{sc:functional_setting_par}.%

\subsection{Functional setting}\label{sc:functional_setting_par}   In the following, we let $\omega \in \R^n$, $\Lambda = \mathrm{diag}(\Lambda_1,\dots,\Lambda_m)$ with $\Lambda_1,\dots,\Lambda_m \in \R_{\geq 0}$, $\sigma \geq 1$, $k \in \Z_{>0}$, $\ell \geq 0$ and $T \geq 1$, and we use the notations in \eqref{eq:coh_eqs_par} for the operators $\Lo, \mathcal{L}^{\mathrm{par}}_{\omega, \Lambda}$ and $\mathfrak{L}^{\mathrm{par}}_{\omega, \Omega}.$

\subsubsection{Matrix-valued functions with non-uniform polynomial decay} \label{sc:matrix_nonuniform_decay}
 We consider spaces of functions taking values in $\mathcal{M}_{2m}(\R)$ whose components exhibit different time-decay properties. For this purpose, we introduce the following notation.  Given $M \in \mathcal{M}_{2m}(\R)$, we denote
\begin{equation}\label{def:Matrix_block}
    M=\begin{pmatrix} M_1 & M_2 \\ M_3 & M_4 \end{pmatrix},
\end{equation}
with $M_k \in \mathcal{M}_m(\R)$ for $k=1,\dots,4$.

For any $\mathsf{p} \geq 1$ we define
\begin{equation}
\label{eq:general_matrix_space}
     \mathcal{M}^{ T}_{\sigma, \ell}(\mathsf{p}) = \left\{M:\T^n \times I_T \to\mathcal{M}_{\mathsf{p}}(\R) \hspace{1mm} \left| \hspace{1mm}  \forall 1 \leq i, j \leq \mathsf{p}, \quad M_{ij}  \in \mathcal{S}^{ T}_{(\sigma,0), \ell} \right\}\right.,
\end{equation}
endowed with the norm
\[ |M|^{ T}_{\sigma, \ell} := \max_{1 \leq i, j \leq N} |M_{ij}|^{ T}_{\sigma, \ell},\]
where for the sake of simplicity we omit the dependence on $\mathsf{p}$ in the notation for the norm. We will only consider these spaces in the cases $\mathsf{p} = m, 2m$. For this reason, when there is no risk of confusion and it is clear to what space we are referring to, we will omit $\mathsf{p}$ in the notation of these spaces.

The notation introduced in \eqref{def:Matrix_block} extends naturally to functions in the spaces $\mathcal{M}^{ T}_{\sigma, \ell}(\mathsf{p})$.

 Given $\mathcal{D} = (d_1, d_2, d_3, d_4) \in \left(\R_{\ge 0}\right)^4$,  we define 
\begin{equation}\label{def:BS_par_M_Sym}
    \begin{aligned}
        \mathcal{M}^{ T}_{\sigma, \mathcal{D}} &= \left\{M : \T^n \times I_T \to \mathcal{M}_{2m}(\R)\hspace{1mm} \left| \hspace{1mm}  \forall 1\leq k \leq 4,  \quad  M_k  \in \mathcal{M}^{ T}_{\sigma, d_k}(m)\right\}\right. ,\\
         \mathcal{S}ym^{ T}_{\sigma, \mathcal{D}} &= \left\{M:\T^n \times I_T \to \textup{Sym}(2m, \R) \hspace{1mm} \left| \hspace{1mm}  M \in \mathcal{M}^{ T}_{\sigma, \mathcal{D}} \right\}\right.,
    \end{aligned}
\end{equation}
endowed with the norm
\begin{equation}\label{def:norm_M_D}
    |M|^{ T}_{\sigma, \mathcal{D}} := \max_{1 \leq k \leq 4} |M_k|^{ T}_{\sigma, d_k},
\end{equation}
and
\begin{equation}\label{def:BS_par_Sp}
 \mathcal{S}p^{ T}_{\sigma, \omega, \mathcal{D}}   = \left\{M:\T^n \times I_T \to \mathfrak{sp}(2m, \R) \hspace{1mm} \left| \hspace{1mm} M \in \mathcal{M}^{ T}_{\sigma, \mathcal{D}}, \quad \mathcal{L}_{\omega} M \in \mathcal{M}^{ T}_{\sigma, \mathcal{D}+1}  \right\} \right.,
\end{equation}
endowed with the norm
\[ |M|^{ T}_{\sigma, \omega, \mathcal{D}} = \max\left \{ |M|_{\sigma, \mathcal{D}}^{ T} ,|\mathcal{L}_{\omega} M|^{ T}_{\sigma, \mathcal{D}+1} \right\}.\]
where in the latter $\mathcal{D} + 1= (d_1+1, d_2+1, d_3+1, d_4+1)$. 

\begin{remark}
    We point out that the space $\mathcal{S}p^{ T}_{\sigma, \omega, \mathcal{D}} $ in~\eqref{def:BS_par_Sp} is not merely an extension, to the setting where the matrix components exhibit different polynomial decay in time, of the analogous space used in the elliptic case in Part I \cite{scarcella_KAMST}. Indeed, the definition of $\mathcal{S}p^{ T}_{\sigma, \omega, \mathcal{D}} $ involves the linear operator $\Lo$, whereas its elliptic counterpart involves $\LO^\mathrm{ell}$.
    The reason for defining them this way will become clear later.
\end{remark}

\subsubsection{Perturbations with non-uniform polynomial decay on $z = (x,y)$} \label{sc:perturbation_par}
To describe the space of perturbations considered in Theorem \ref{Thm:par_Csigma}, given $\mathcal{L} = (\ell_1, \ell_2, \ell_3, \ell_4, \ell_5, \ell_6, \ell_7) \in \left(\R_{\ge 0}\right)^7$ and $T \ge 1$, we denote the space of perturbations by
\begin{equation}\label{def:Per_space_P_par_Holder}
\begin{aligned}
    \mathcal{P}^{\mathrm{par},T}_{\sigma, \Lell}  & =  \mathcal{S}^{T}_{(\sigma,2), (0,\ell_1)}(\R) \times \mathcal{S}^{T}_{(\sigma,2), \ell_2}(\R^n) \times \mathcal{S}^{T}_{(\sigma,2), \ell_3}(\R^m) \times \mathcal{S}^{T}_{(\sigma,2), \ell_4}(\R^m) \\ & \quad \times \mathcal{S}^{T}_{(\sigma,2), \ell_5}(\R^{m \times m}) \times \mathcal{S}^{T}_{(\sigma,2), \ell_6}(\R^{2m}) \times \mathcal{S}^{T}_{(\sigma,2), \ell_7}(\R^{m \times m})
\end{aligned}
\end{equation}
and we point out that $\mathcal{P}^{\mathrm{par},T}_{\sigma, \Lell} $, endowed with the sum norm associated with the product, which we denote by $|\cdot|^{\mathrm{par}, T}_{\sigma, \mathcal{L}}$, is a Banach space. We refer to Section~\ref{sc:real_pol_decay} for the definition of the spaces $\big(\mathcal{S}^{T}_{(\sigma,k), \ell}, |\cdot|^{ T}_{\sigma + k, \ell}\big)$ and $\big(\mathcal{S}^{T}_{(\sigma,k), (0,\ell)}, |\cdot|^{ T}_{\sigma + k, (0,\ell)}\big)$ appearing in the definition of $\mathcal{P}^{\mathrm{par},T}_{\sigma, \Lell}$. 

By a slight abuse of notation, we write $$P = (a, b, c_1, c_2, d_1, d_2, d_3) \in \mathcal{P}^{\mathrm{par},T}_{\sigma, \Lell},$$ and use the same symbol $P$ to denote the corresponding Hamiltonian defined in~\eqref{eq:perturbation_form_par}, namely,
\[\begin{aligned}
P(q, p, z, t) & = a(q, t) + b(q, t) \cdot p + c_1(q, t) \cdot x + c_2(q,t) \cdot y  \\ & \quad + \frac{1}{2}d_1(q, t)\cdot x^2 + d_2(q,t)\cdot (x,y) + {1 \over 2} d_3(q,t) \cdot y^2.
\end{aligned}
\]

For any $\ell, \dec \ge 0$, we denote
\begin{equation}
    \begin{aligned}
    \label{eq:index_regularity_perturbations_par}
    \mathcal{L}(\ell, \dec) &:=  (\ell + \dec +4, \ell, \ell + \dec +2, \ell+ \dec + 3, \ell - 1, \ell, \ell+1), \\
     \mathcal{L}^*(\ell, \dec)  &:= (\ell + \dec + 6, \ell, \ell + \dec + 4, \ell+\dec + 5, \ell+\dec, \ell+\dec+2, \ell+\dec+3).
    \end{aligned}
\end{equation} 

With these notations, the spaces of perturbations associated with Items Items~\eqref{thm:C_existence} and~\eqref{thm:C_transverse} of Theorem \ref{Thm:par_Csigma} are given by  $\mathcal{P}^{\mathrm{par}, 1}_{\sigma, \Lell(\ell, \dec)}$ and $\mathcal{P}^{\mathrm{par}, 1}_{\sigma, \Lell^*(\ell, \dec)}$, respectively

\subsubsection{Torus embeddings with non-uniform polynomial decay on $z = (x,y)$} \label{sc:embeddings_par}
to describe the space of torus embeddings (to which the asymptotic KAM tori given by the theorem will belong), for any $\mathcal{K} = (k_1, k_2, k_3, k_4) \in \left(\R_{\ge 0}\right)^4$ and $T\ge 1$, we define the following Banach space
\begin{equation*}
    \mathcal{E}^{\mathrm{par},T}_{\sigma, \mathcal{K}} = \mathcal{U}^{T}_{\sigma, \omega, k_1} \times \mathcal{U}^{T}_{\sigma, \omega, k_2} \times \mathcal{U}^{T}_{\sigma, \omega, k_3} \times \mathcal{U}^{T}_{\sigma, \omega, k_4},
\end{equation*}
endowed with the sum norm associated with the product that we denote by $|\cdot|^{\mathrm{par},T}_{\sigma, \mathcal{K}}$.

By a slight abuse of notation, we will use the same symbol for the element $\varphi = (u,v,w_x, w_y) \in \mathcal{E}^{\mathrm{par},T}_{\sigma, \mathcal{K}}$ and the associated family of embedded tori $\varphi : \T^n \times I_T \to \T^n \times \R^n \times \R^n \times \R^n$ defined by $\varphi(q,t) = (q + u(q,t), v(q,t), w_x(q,t), w_y(q,t))$ defining the $C^\sigma$ asymptotic KAM torus that we are looking for. 

Given $k \ge 1$ and $\mathsf{k} \ge 0$, we denote 
\begin{equation}\label{eq:index_regularity_uwx_par} 
    \mathcal{K}(k, \mathsf{k}) := (k-1, k + \mathsf{k} + 3, k + \mathsf{k} + 2, k + \mathsf{k} + 1)
\end{equation}

As we shall see in Section \ref{sec:proof_item_1_par_Holder}, for any $P \in  \mathcal{P}^{\mathrm{par}, 1}_{\sigma, \Lell(\ell, \dec)}$ with $\ell > 1$ and $\dec > 0$ there exists a $C^\sigma$ asymptotic KAM torus in $ \mathcal{E}^{\mathrm{par}, T}_{\sigma, \mathcal{K}(\ell, \dec)}$ associated with $(X_{H_0 + P}, X_{H_0}, \varphi_0)$, for some $T \geq 1$.

\subsection{Initial Hamiltonian in Theorem \ref{Thm:par_Csigma}}\label{sc:initial_ham_par}

For the rest of this section,  we consider the Hamiltonian $H_0:\T^n \times B \times I_1 \to \R$ of the form~\eqref{eq:initial_hamiltonian_par} as in the statement of Theorem \ref{Thm:par_Csigma}. We recall that $I_T=[T, +\infty)$, for a given $T>0$,  and $B \subset \R^{n +2m}$ is an open ball centered at the origin that, for simplicity, we assume to have radius $1$. The two statements of Theorem \ref{Thm:par_Csigma} are proved separately in Sections \ref{sec:proof_item_1_par_Holder} and \ref{sec:proof_item_2_par_Holder}, respectively. We assume $\sigma \ge 1$ in Section \ref{sec:proof_item_1_par_Holder} and $\sigma \ge 2$ in Section \ref{sec:proof_item_2_par_Holder}.

Since all the norms considered in this section concern (products of) function spaces with polynomial decay, for the sake of clarity and to simplify the notation, we omit the superscripts $\mathrm{par}$ from all the norms whenever there is no risk of confusion.

We can rewrite the Hamiltonian $H_0:\T^n \times B \times I_1 \to \R$ in~\eqref{eq:initial_hamiltonian_par} in as
\begin{equation}\label{eq:initial_Hamiltonian_NR_par}
        H_0(q,p,x,y,t) = N(p,x) + R(q,p,x,y,t)
\end{equation}
where 
\begin{align*}
    N(p,x) &= \omega \cdot p + {1 \over 2} \Omega \cdot x^2,\\
    R(q,p,x,y,t) &= M(q,p,x,y,t) \cdot p^2 + m_1(q,x,y,t) \cdot (p,x) + m_2(q,y,t) \cdot (p,y)\\
    &+ L_1(q,x,y,t) \cdot x^3 + L_2(q,y,t) \cdot (x, x, y) + L_3(q,y,t) \cdot (x, y, y) + L_4(q,y,t) \cdot y^3 
\end{align*}
with 
\begin{equation}\label{eq:barMmL_par}
\begin{aligned}
     M(q,p,x,y,t) &= \int_0^1 (1 -\tau) \partial_p^2 H(q, \tau p, x,y,t) d\tau ,\\
    m_1(q,x,y,t) &= \int_0^1 \partial_p\partial_x H(q, 0, \tau x, y,t) d\tau, \\
    m_2(q,x,y,t) &= \int_0^1 \partial_p\partial_y H(q, 0, 0,\tau y,t) d\tau ,\\
    L_1(q,x,y,t) &= {1 \over 2} \int_0^1 (1-\tau)^2 \partial_x^3 H (q,0,\tau x, y, t) d\tau, \\
    L_2(q,y,t) &= {1 \over 2} \int_0^1 \partial_y\partial_x^2 H (q,0,0,\tau y, t) d\tau,\\
    L_3(q,x,y,t) &= \int_0^1 (1-\tau) \partial_y^2\partial_x H (q,0,0,\tau y, t) d\tau, \\
    L_4(q,y,t) &= {1 \over 2} \int_0^1(1-\tau)^2 \partial_y^3 H (q,0,0,\tau y, t) d\tau.
\end{aligned}
\end{equation}
 We recall that the RHS of each identity in~\eqref{eq:barMmL_par} is unchanged when $H_0$ is replaced by $R$.
We stress that, in the latter, $L_2(q,y,t) \cdot (x, x, y)$ and $L_3(q,y,t) \cdot (x, y, y)$ stand for the trilinear forms $L_2(q,y,t)$ and $L_3(q,y,t)$ evaluated in $(x,x,y)$ and $(x,y,y)$, respectively. 

By assumption, we have that $\partial_{(p,x,y)}^2 H_0 \in \mathcal{S}_{(\sigma,3),0}^{1}$. Hence, we fix $\Upsilon>0$ in such a way that 
 \begin{equation}\label{def:const_Upsilon_par}
     |\partial_{(p,x,y)}^2 H_0 |_{\sigma+3, 0}^1 \le \Upsilon
 \end{equation}
and using the definition~\eqref{eq:barMmL_par}, one can verify that  
\begin{equation*}
    |M|^1_{\sigma+3, 0}, \, |m_1|^1_{\sigma+3, 0},  \, |m_2|^1_{\sigma+3, 0},  \, |L_1|^1_{\sigma+2, 0},  \, |L_2|^1_{\sigma+2, 0}, \, |L_3|^1_{\sigma+2, 0},  \, |L_4|^1_{\sigma+2, 0} \le |\partial_{(p,x,y)}^2 H_0 |_{\sigma+3, 0}^1 \le \Upsilon.
\end{equation*}

We recall that the trivial embedding $\varphi_0$ in~\eqref{def:varphi0=(q,0,0)} is an invariant torus for $H_0$ supporting quasiperiodic solutions of frequency vector $\omega$.  

 In order to control the regularity of the higher-order terms of the Hamiltonian~\eqref{eq:initial_Hamiltonian_NR_par}, we have the following
\begin{lemma}\label{lemma:def_bar_M_m_L_par}
    For all $(q,p,z,t) \in \T^n \times B \times I_{1}$, we define
    \begin{align*}
       \overline{M}(q,p,x,y,t) &= \int_0^1 \partial^2_p H(q, \tau p , x,y, t) d \tau, \hspace{10mm} \overline{\overline{M}}(q,p,x,y,t) = \partial_p^2 H(q,p,x,y,t),\\
       \overline{m}_1(q,x,y,t) &=  \partial_p\partial_x H(q, 0 , x,y, t), \hspace{20mm} \overline{m}_2(q,y,t) =  \partial_p\partial_y H(q, 0 , 0,y, t), \\
       \overline{L}_1(q,x,y,t) &= \int_0^1 (1 -\tau)\partial_x^3 H(q, 0 , \tau x, y, t) d \tau, \quad \overline{\overline{L}}_1 (q,x,y,t) = \int_0^1 \partial_x^3 H(q,0, \tau x, y, t)d\tau,\\
       \overline{L}_2(q,y,t) &= {1 \over 2} \partial_y \partial_x^2 H(q,0,0,y,t), \hspace{20mm} \overline{L}_3(q,y,t) = \int_0^1 \partial_y^2\partial_x H(q, 0 , 0, \tau y, t) d \tau, \\
       \overline{\overline{L}}_3(q,y,t) &=\partial_y^2\partial_x H(q, 0 , 0, y, t), \hspace{20mm} \overline{L}_4(q,y,t) = \int_0^1 (1 -\tau)\partial_y^3 H(q, 0 , 0, \tau y, t) d \tau,\\
       \overline{\overline{L}}_4(q,y,t) &= \int_0^1\partial_y^3 H(q, 0 , 0, \tau y, t) d \tau.
    \end{align*}
    Then,
    \begin{equation*}
        \begin{aligned}
            &\overline{M}(q,p,x,y,t)\cdot p = \partial_p\left(M(q,p,x,y,t) \cdot p^2\right), \hspace{10mm} \overline{\overline{M}}(q,p,x,y,t) = \partial_p \left(\overline{M}(q,p,x,y,t)\cdot p\right)\\
            &\overline{m}_1(q,x,y,t)\cdot p = \partial_x\left(m_1(q,x,y,t) \cdot (p,x)\right),\quad \overline{m}_2(q,y,t)\cdot p = \partial_y\left(m_2(q,y,t) \cdot (p,y)\right),\\
            &\overline{L}_1(q,x,y,t)\cdot x^2 = \partial_x\left(L_1(q,x,y,t) \cdot x^3\right), \hspace{8mm}  \overline{\overline{L}}_1(q,x,y,t)\cdot x = \partial_x\left(\overline{L}_1(q,x,y,t) \cdot x^2\right)\\
            &\overline{L}_2(q,y,t)\cdot x^2 = \partial_y\left(L_2(q,y,t) \cdot (x, x, y)\right), \hspace{8mm} \overline{\overline{L}}_3(q,y,t)\cdot x = \partial_y\left(\overline{L}_3(q,y,t) \cdot (x, y)\right)\\
            &\overline{L}_3(q,y,t)\cdot (x, y) = \partial_y\left(L_3(q,y,t) \cdot (x, y, y)\right), \quad  \overline{L}_4(q,y,t)\cdot y^2 = \partial_y\left(L_4(q,y,t) \cdot  y^3\right),\\
            &\overline{\overline{L}}_4(q,y,t)\cdot y = \partial_y\left(\overline{L}_4(q,y,t) \cdot  y^2\right),
        \end{aligned}
    \end{equation*}
    for all $(q,p,x,y,t) \in \T^n \times B \times I_1$. Moreover, letting $\sigma \ge 0$, $T \ge 1$ and an integer $k \ge 1$, if $\partial_{(p,x,y)}^2 R \in \mathcal{S}_{(\sigma,k),0}^{T}$ 
    then $\overline{M} ,\ \overline{\overline{M}} ,\ \overline{m}_1 ,\ \overline{m}_2 \in \mathcal{S}_{(\sigma,k),0}^{T}$, and $\overline{L}_1,\ \overline{\overline{L}}_1 ,\ \overline{L}_2 ,\ \overline{L}_3 ,\ \overline{\overline{L}}_3 ,\ \overline{L}_4 ,\ \overline{\overline{L}}_4  \in \mathcal{S}_{(\sigma,k-1),0}^{T}$ with 
    \begin{equation*}
     \begin{aligned}
        &|\overline{M}|^T_{\sigma+k, 0} ,\ |\overline{\overline{M}}|^T_{\sigma+k, 0} ,\ |\overline{m}_1|^T_{\sigma+k, 0} ,\ |\overline{m}_2|^T_{\sigma+k, 0}  \le |\partial_{(p,z)}^2 H|^T_{\sigma+k, 0},\\
        &|\overline{L}_1|^T_{\sigma+k-1, 0} ,\ |\overline{\overline{L}}_1|^T_{\sigma+k-1, 0} ,\ |\overline{L}_2|^T_{\sigma+k-1, 0} ,\ |\overline{L}_3|^T_{\sigma+k-1, 0} \le |\partial_{(p,z)}^2 H|^T_{\sigma+k, 0},\\
        &|\overline{\overline{L}}_3|^T_{\sigma+k-1, 0} \,  |\overline{L}_4|^T_{\sigma+k-1, 0}  ,\ |\overline{\overline{L}}_4|^T_{\sigma+k-1, 0} \le |\partial_{(p,z)}^2 H|^T_{\sigma+k, 0}.
        \end{aligned}
    \end{equation*}
      In addition, letting $0<\sigma'<\sigma$ and $T \ge 1$, if $\partial_{(p,x,y)}^2 R \in \mathscr{S}_{\sigma,0}^{T}$, then $\overline{M} ,\ \overline{\overline{M}} ,\ \overline{m}_1 ,\ \overline{m}_2 \in \mathscr{S}_{\sigma,0}^{T}$, and $\overline{L}_1,\ \overline{\overline{L}}_1 ,\ \overline{L}_2 ,\ \overline{L}_3 ,\ \overline{\overline{L}}_3 ,\ \overline{L}_4 ,\ \overline{\overline{L}}_4  \in \mathscr{S}_{\sigma',0}^{T}$ with
     \begin{equation*}
     \begin{aligned}
        &|\overline{M}|^T_{\sigma, 0} ,\ |\overline{\overline{M}}|^T_{\sigma, 0} ,\ |\overline{m}_1|^T_{\sigma, 0} ,\ |\overline{m}_2|^T_{\sigma, 0}  \le |\partial_{(p,z)}^2 H|^T_{\sigma, 0},\\
        &|\overline{L}_1|^T_{\sigma', 0} ,\ |\overline{\overline{L}}_1|^T_{\sigma', 0} ,\ |\overline{L}_2|^T_{\sigma', 0} ,\ |\overline{L}_3|^T_{\sigma', 0}, \, |\overline{\overline{L}}_3|^T_{\sigma', 0} \,  |\overline{L}_4|^T_{\sigma', 0}  ,\ |\overline{\overline{L}}_4|^T_{\sigma', 0} \le {1 \over \sigma-\sigma'} |\partial_{(p,z)}^2 H|^T_{\sigma, 0}.
        \end{aligned}
    \end{equation*}
\end{lemma}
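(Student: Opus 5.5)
The plan is to prove the lemma in two steps: first the algebraic identities, then the norm estimates, in both the Hölder and the analytic setting. Throughout we use that the integral expressions in \eqref{eq:barMmL_par} are unchanged when $H$ is replaced by $R$. This holds because $N$ is at most quadratic, so it only contributes constant second derivatives in $(p,x)$, and those are exactly the terms split off into $N$.

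\textbf{Algebraic identities.} Each identity is a one-variable Taylor-remainder computation, in the direction $p$, $x$ or $y$ with the other variables frozen. The idea is to differentiate the Taylor remainder directly rather than differentiate the integral defining the coefficient. For instance, $M\cdot p^2$ is the integral-form second-order Taylor remainder of $p\mapsto H(q,p,x,y,t)$ at $p=0$, after subtracting the zeroth- and first-order terms; the latter are collected in $m_1,m_2$ and in the $x,y$ terms. Hence
\[
\partial_p\big(M\cdot p^2\big)=\partial_pH(q,p,x,y,t)-\partial_pH(q,0,x,y,t)=\int_0^1\partial_p^2H(q,\tau p,x,y,t)\,d\tau\cdot p=\overline M\cdot p.
\]
Differentiating once more gives $\overline{\overline M}=\partial_p^2H$. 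The same argument applies to $m_1,m_2$: for example $m_1\cdot(p,x)=\partial_pH(q,0,x,y,t)\cdot p-\partial_pH(q,0,0,y,t)\cdot p$, whose $x$-derivative is $\partial_p\partial_xH(q,0,x,y,t)\cdot p$. It also applies to $L_1,\dots,L_4$, where one uses the standard formulas
\[
\frac{d}{ds}\int_0^1(1-\tau)^j g(\tau s)\,d\tau\, s^{j+1}=(j+1)\int_0^1(1-\tau)^{j-1}g(\tau s)\,d\tau\, s^j ,
\]
obtained by the change of variables $\tau s=r$. I will check each identity once in this scalar-reduced form and note that the remaining ones are analogous, keeping track of the symmetrization constants in the multilinear forms.

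\textbf{Norm bounds, Hölder setting.} The coefficients $\overline M,\overline{\overline M},\overline m_1,\overline m_2$ are averages over $\tau\in[0,1]$ of $\partial^2_{(p,z)}H$, composed with the linear contractions $(p,x,y)\mapsto(\tau p,x,y)$ or restrictions to subspaces such as $p=0$. Composition with such maps does not increase $C^{\sigma+k}$ norms: derivatives pick up factors $\tau^{|i|}\le 1$, and the Hölder quotients behave likewise because the map is $1$-Lipschitz. Integration in $\tau$ preserves these bounds, which gives $|\cdot|^T_{\sigma+k,0}\le|\partial^2_{(p,z)}H|^T_{\sigma+k,0}$. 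The coefficients $\overline L_i$ and $\overline{\overline L}_i$ involve one more derivative, $\partial^3=\partial\,\partial^2$, so they lose one order of regularity and land in $C^{\sigma+k-1}$ with the same bound. Continuity of the derivatives in $(q,p,z,t)$ follows by dominated convergence under the $\tau$-integral. The weights $(1-\tau)^j$ and the factor $\frac12$ only improve the constants.

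\textbf{Norm bounds, analytic setting.} The same contraction argument gives the sup bound on $\T^n_\sigma\times B_\sigma$ for the second-derivative terms. For the third-derivative terms we apply Cauchy's estimate in one $z$-direction on the shrunken domain $B_{\sigma'}$, which produces the factor $1/(\sigma-\sigma')$.

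The only genuinely fiddly point is the bookkeeping of the multilinear identities, in particular the $L_2$ and $L_3$ terms mixing $x$ and $y$. The analytic constant must also be handled carefully, since the polydisc distance is what yields $1/(\sigma-\sigma')$. I expect no real obstacle beyond this.
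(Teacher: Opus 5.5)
Your proposal is correct and is essentially the argument the paper intends; the paper only says the proof is similar to Lemma 5.1 of Part I. The identities come from differentiating integral Taylor remainders, and the bounds from composing $\partial^2_{(p,z)}H$ with norm-nonincreasing linear maps, averaging in $\tau$, and spending one derivative on the cubic coefficients (one order of Hölder regularity, or one Cauchy estimate in a single $z$-direction giving $1/(\sigma-\sigma')$).

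One slip to fix: the auxiliary formula should read $\frac{d}{ds}\bigl(s^{j+1}\int_0^1(1-\tau)^j g(\tau s)\,d\tau\bigr)=j\,s^j\int_0^1(1-\tau)^{j-1}g(\tau s)\,d\tau$ for $j\ge 1$. With $j+1$ you would get, e.g., $\tfrac32\,\overline{L}_4\cdot y^2$ instead of $\overline{L}_4\cdot y^2$. Also, a computation along rays only yields the radial derivative, so for $n,m>1$ obtain the full gradient identities as you did for $M$, by differentiating the multivariable Taylor remainder.
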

\begin{proof}
    The proof is similar to that of Lemma 5.1 in Part I \cite{scarcella_KAMST}.
\end{proof}

In what follows, we fix $\ell >1$ and $\dec \ge 0$, and for the sake of simplicity, we denote $\mathcal{L}(\ell, \dec)$, $\mathcal{L}^*(\ell, \dec)$ and $\mathcal{K}(\ell, \dec)$ (as in \eqref{eq:index_regularity_perturbations_par} and \eqref{eq:index_regularity_uwx_par} ) simply by $\mathcal{L}$, $\mathcal{L}^*$ and $\mathcal{K}$, respectively.

As mentioned before, we will prove the two statements of Theorem \ref{Thm:par_Csigma} separately in the following two sections. The structure of the proof is similar to that of Section \ref{sec:Proof_Theorem_Par_0}. %

\subsection{Proof of Item \eqref{thm:C_existence} of Theorem \ref{Thm:par_Csigma}: Construction of the $C^\sigma$ asymptotic KAM torus} \label{sec:proof_item_1_par_Holder}
In the following, we will use freely the notations introduced at the beginning of this section and in the previous two subsections.

Given $r, \rho>0$, let $B_{\mathcal{P}^{\mathrm{par},T}_{\sigma, \mathcal{L}}}(r)  \subseteq  \mathcal{P}^{\mathrm{par},T}_{\sigma, \mathcal{L}}$ and $B_{ \mathcal{E}^{\mathrm{par},T}_{\sigma, \mathcal{K}}}(\rho) \subseteq \mathcal{E}^{\mathrm{par},T}_{\sigma, \mathcal{K}}$ denote the balls of radius $r$ and $\rho$, respectively, centered at the origin. In the spirit of the proof of Item \eqref{thm:Cdeg_existence} of Theorem \ref{Thm:par_Csigma_0}, contained in Section \ref{sec:Proof_Theorem_Par_0}, we will show the following.

\begin{proposition}
\label{prop:C_existence}
There exists $C_0 > 1$ such that for any $r > 0$ there exists $T_0 \geq 1$ satisfying the following. For any $T \geq T_0$, there exists a $C^1$-map
\[ \boldsymbol{\varphi}^T: B_{\mathcal{P}^{\mathrm{par},T}_{\sigma, \mathcal{L}}}(r)  \subseteq  \mathcal{P}^{\mathrm{par},T}_{\sigma, \mathcal{L}}  \to  B_{ \mathcal{E}^{\mathrm{par},T}_{\sigma, \mathcal{K}}}(C_0r) \subseteq \mathcal{E}^{\mathrm{par},T}_{\sigma, \mathcal{K}}\]
such that $\boldsymbol{\varphi}^T(P)$ defines an asymptotic KAM torus associated with $(X_{H_0 + P}, X_{H_0}, \varphi_0)$,  for any $P \in  \mathcal{P}^{\mathrm{par},T}_{\sigma, \mathcal{L}} $ with $|P|^T_{\sigma, \mathcal{L}} < r$.
\end{proposition}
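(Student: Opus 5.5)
The plan mirrors the proof of Proposition \ref{prop:C_deg_existence}: we apply the quantitative Implicit Function Theorem (Theorem \ref{thm:QIFT}) to the invariance functional
\[
\mathcal{F}^T(P,\varphi) = X_{H_0+P}\circ\varphi - \Lo\varphi ,
\]
defined on $\mathcal{P}^{\mathrm{par},T}_{\sigma,\mathcal{L}}\times\mathcal{E}^{\mathrm{par},T}_{\sigma,\mathcal{K}}$. Its target is the product
\[
\mathcal{S}^{T}_{(\sigma,0),\ell}(\R^n)\times\mathcal{S}^{T}_{(\sigma,0),\ell+\dec+4}(\R^n)\times\mathcal{S}^{T}_{(\sigma,0),\ell+\dec+3}(\R^m)\times\mathcal{S}^{T}_{(\sigma,0),\ell+\dec+2}(\R^m),
\]
that is, each component decays one power faster than the corresponding entry of $\mathcal{K}=(\ell-1,\ell+\dec+3,\ell+\dec+2,\ell+\dec+1)$. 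As in \eqref{eq:KAM_torus_characterization}, zeros of $\mathcal{F}^T$ are exactly $C^\sigma$ asymptotic KAM tori. It then suffices to establish the four properties of Proposition \ref{prop:F_properties_par_deg} in this setting: well-definedness and continuity on a neighbourhood that, for $T$ large, contains any prescribed product of balls; $|\mathcal{F}^T(P,0)|\le Cr$; invertibility of $D_\varphi\mathcal{F}^T(0,0)$ with $T$-uniform bound; and $D_\varphi\mathcal{F}^T(P,\varphi)\to D_\varphi\mathcal{F}^T(0,0)$ uniformly as $T\to\infty$. With these, the choice $\rho=2C\bar C r$ and the argument of Proposition \ref{prop:C_deg_existence} give the result verbatim.

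For the first two items, we write $\mathcal{F}^T$ componentwise using the decomposition \eqref{eq:initial_Hamiltonian_NR_par} and the remainders of Lemma \ref{lemma:def_bar_M_m_L_par}. The $\dot q$-component is $\omega+\partial_pH\circ\varphi-\omega-\Lo u$. The $\dot p$-component is $-\partial_q H\circ\varphi-\Lo v$. The $\dot x$-component is $\partial_y H\circ\varphi-\Lo w_x$, and the $\dot y$-component is $-\Lambda w_x-\partial_x(R+P)\circ\varphi-\Lo w_y$. At $\varphi=0$ these reduce to $b$, $-\partial_q a$, $c_2$ and $-c_1$, which lie in the target with the decay rates $\ell,\ell+\dec+4,\ell+\dec+3,\ell+\dec+2$ prescribed by $\mathcal{L}$. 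This gives item 2. For item 1 we check term by term, using the algebra and composition estimates for the spaces $\mathcal{S}^T$ from Section \ref{sc:norm_properties_par}, that every product of decaying factors lands in the right space. The $d_i$ terms are the delicate ones. For instance, $d_1 w_x$ in the $\dot y$-equation decays like $t^{-(\ell-1)-(\ell+\dec+2)}$, which beats $t^{-(\ell+\dec+2)}$ since $\ell>1$. Similarly, $d_3 w_y$ in the $\dot x$-equation decays like $t^{-(\ell+1)-(\ell+\dec+1)}$, beating $t^{-(\ell+\dec+3)}$. This explains the choice $(\ell-1,\ell,\ell+1)$ for the $d$'s. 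Quadratic terms in $\varphi$ carry an extra factor $t^{-(\ell-1)}$ or better, which yields item 4 after multiplying by powers of $T^{-(\ell-1)}$.

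The main obstacle is item 3. Differentiating at $(0,0)$ gives a triangular system:
\[
\Lo\hat u = g_1,\qquad \Lo\hat v = -g_2,\qquad \Lo\hat w_x = -g_3,\qquad \Lo\hat w_y + \Lambda\hat w_x = -g_4,
\]
possibly with a term $\partial_p^2H_0(q,0,0,t)\hat v$ in the first equation. We solve it backwards in time with the integral operator $\chi=-\int_t^\infty g(q-\omega(s-t),s)\,ds$, which maps $\mathcal{S}_{\ell+1}$ to $\mathcal{U}_{\ell}$ boundedly, uniformly in $T$, for $\ell>0$. Solving in the order $\hat v,\hat w_x,\hat w_y,\hat u$, the only issue is that the coupling terms must not destroy the decay. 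The term $\Lambda\hat w_x\in\mathcal{S}_{\ell+\dec+2}$ feeds into the $\hat w_y$ equation, whose required decay is exactly $\ell+\dec+2$, so $\hat w_y\in\mathcal{U}_{\ell+\dec+1}$. Likewise $\partial_p^2H_0\,\hat v\in\mathcal{S}_{\ell+\dec+3}\subset\mathcal{S}_{\ell}$ causes no trouble. This is precisely the second cohomological equation in \eqref{eq:coh_eqs_par}, treated in Proposition \ref{prop:L_w_par}, which I would invoke to obtain a bounded inverse $\|D_\varphi\mathcal{F}^T(0,0)^{-1}\|\le\bar C$ independent of $T$. Uniqueness in the class \eqref{eq:asymp_KAM_torus_form_par} follows from the uniqueness part of Theorem \ref{thm:QIFT}, since these integral solutions are the only decaying ones.
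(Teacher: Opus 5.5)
Your proposal is essentially the paper's proof. It uses the same functional $\mathcal{F}^T$ with codomain $\mathcal{R}^T_{\sigma,\mathcal{L}}$, the same four properties (Proposition~\ref{prop:F_properties_par}), and the same inversion order: $\hat v$, then $(\hat w_x,\hat w_y)$ via Proposition~\ref{prop:L_w_par}, then $\hat u$. Two points need correcting.

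\textbf{First: the linearization.} $D_\varphi\mathcal{F}^T(0,0)$ is not the decoupled system you wrote. The $px$ and $py$ terms of $H_0$ add couplings (see \eqref{proof:Csigma_def_par_DF_1}):
\begin{itemize}
\item $(\overline{m}_2)_0\hat v$ in the $\hat w_x$ equation and $-(\overline{m}_1)_0\hat v$ in the $\hat w_y$ equation;
\item $(m_1)_0\hat w_x+(m_2)_0\hat w_y$ in the $\hat u$ equation.
\end{itemize}
Your solving order absorbs these, so invertibility with a $T$-uniform bound still holds. Note, however, that $(\overline{m}_2)_0\hat v\in\mathcal{S}^T_{(\sigma,0),\ell+\dec+3}$ fits the $\hat w_x$ equation with no room to spare. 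This coupling is what forces $v$ to decay one power faster than $w_x$ in $\mathcal{K}$.

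\textbf{Second: where the $d_i$ rates are used.} The rates $(\ell-1,\ell,\ell+1)$ for $d_1,d_2,d_3$ are not needed for well-definedness; the paper remarks that $(0,1,2)$ suffice there. They are needed for item 4. There, the $P$-dependent terms that are linear in the increment must contribute $O(T^{-(\ell-1)})$ to $\|D_\varphi\mathcal{F}^T(P,\varphi_*)-D_\varphi\mathcal{F}^T(0,0)\|$. Examples are $d_1\hat w_x$, $d_3\hat w_y$, $\partial_q b\,\hat u$ and $\partial_q c_1\,\hat u$. Your item-4 sentence mentions only the terms quadratic in $\varphi$. Your item-1 computation is in fact exactly the estimate needed for these linear terms, so the ingredients are there but attached to the wrong item.

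\textbf{Minor points.}
\begin{itemize}
\item The decaying solution of $\Lo\chi=g$ is $\chi(q,t)=-\int_t^{\infty}g(q+\omega(s-t),s)\,ds$; the sign of $\omega$ in your formula is reversed.
\item Theorem~\ref{thm:QIFT} as stated has no uniqueness clause. Uniqueness is not part of this proposition anyway.
\end{itemize}
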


Notice that Item \eqref{thm:C_existence} of Theorem \ref{Thm:par_Csigma} follows directly from the proposition above since
\begin{equation*}
    \big|P\big|_{\sigma, \Lell}^{T} \leq |P|_{\sigma, \Lell}^{1}, \qquad \text{ for any } T \geq 1\,  \text{ and any } \, P \in \mathcal{P}^{\mathrm{par}, 1}_{\sigma, \Lell},
\end{equation*}
and by definition of $\mathcal{K}$ the asymptotic KAM torus has the desired decay rates.

To prove Proposition \ref{prop:C_existence}, we consider the following functional
\begin{equation}\label{proof:Csigma_def_F_par_1}
\Function{\mathcal{F}^T}{ \mathcal{P}^{\mathrm{par},T}_{\sigma,\mathcal{L}} \times \mathcal{E}^{\mathrm{par},T}_{\sigma, \mathcal{K}}}{ \mathcal{R}^{T}_{\sigma, \mathcal{L}}}{(P, \varphi)}{X_{H_0 + P} \circ \varphi - \Lo \varphi },
\end{equation}
where 
\begin{equation}
    \label{def:F_codomain}
\mathcal{R}^{T}_{\sigma, \mathcal{L}} = \mathcal{S}^{T}_{(\sigma,0), \ell}(\R^n) \times \mathcal{S}^{T}_{(\sigma,0), \ell+\dec+4}(\R^n) \times \mathcal{S}^{T}_{(\sigma,0), \ell+\dec+3}(\R^m) \times \mathcal{S}^{T}_{(\sigma,0), \ell+\dec+2}(\R^m),
\end{equation}
which we endow 
with the sum norm associated with the product that we denote $|\cdot|_{\boldsymbol{{\sigma, \ell, \dec}}}^T$, where the parameters are written in bold symbols.

Notice that this functional is defined by the same formula as the one introduced in Section \ref{sec:Proof_Theorem_Par_0} and given in \eqref{proof:Csigma_def_F_par_deg_1}, but with a different unperturbed Hamiltonian $H_0$, and a different codomain of functions with polynomial decay. Moreover, by the arguments given in Section \ref{sec:Proof_Theorem_Par_0} it follows that, for $(P, \varphi)$ in the domain of definition, $\mathcal{F}^T(P, \varphi) = 0 $ if and only if $\varphi$ defines a $C^\sigma$ asymptotic KAM torus for $H_0 + P$.

The following proposition contains a series of properties satisfied by the functional $\mathcal{F}^T$ defined in~\eqref{proof:Csigma_def_F_par_1}. For the rest of this section, we denote by $D_\varphi \mathcal{F}^T$ the differential of the functional $\mathcal{F}^T$ with respect to $\varphi=(u,v,w_x, w_y)$. 

\begin{proposition}
\label{prop:F_properties_par}
    The operator $\mathcal{F}^T$ given by \eqref{proof:Csigma_def_F_par_1} satisfies the following.
    \begin{enumerate}
    \item \label{prop:F_well_defined_par} For any $T \geq 1$, there exists a neighbourhood $\mathcal{U}^T \subseteq \mathcal{P}^{\mathrm{par},T}_{\sigma,\mathcal{L}} \times \mathcal{E}^{\mathrm{par},T}_{\sigma, \mathcal{K}}$ of $(0, 0)$ such that $\mathcal{F}^T\mid_{\mathcal{U}^T}$ and $D_\varphi\mathcal{F}^T\mid_{\mathcal{U}^T}$ are well-defined continuous maps. 
    
    \noindent Moreover, for any $r, \rho \geq 0$ there exists $T_0 \geq 1$ such that, for any $T \geq T_0$,  
    \[B_{\mathcal{P}^{\mathrm{par},T}_{\sigma,\mathcal{L}}}(r) \times B_{\mathcal{E}^{\mathrm{par},T}_{\sigma, \mathcal{K}}}(\rho) \subseteq \mathcal{U}^T.\]

         \item \label{prop:F_bounded_par}There exists a constant $C$,
     such that for any $r > 0$ and any $T \geq 1$,
\[ \sup \left\{ |\mathcal{F}^T(P, 0)|^T_{\boldsymbol{{\sigma, \ell, \dec}}} \,\left|\, P \in B_{\mathcal{P}^{\mathrm{par},T}_{\sigma,\Lell}}(r); \,\, (P, 0) \in \mathcal{U}^T \right\}\right. \leq Cr.\] 

    \item \label{prop:DF_invertible_par} $D_\varphi \mathcal{F}^T(0, 0)$ is invertible, for any $T \geq 1$. Moreover, there exists a constant $\bar C$, independent of $T$, such that $\|D_\varphi \mathcal{F}^T(0, 0)^{-1}\| \leq \bar C$, where $\| \cdot\|$ denotes the operator norm.

        \item \label{prop:DF_limit_par} Let $r, \rho > 0$. Then  
        \[\lim_{T \to +\infty} \| D_\varphi\mathcal{F}^T(P, \varphi) - D_\varphi\mathcal{F}^T(0, 0) \| = 0, \qquad \text{uniformly on } \quad B_{\mathcal{P}^{\mathrm{par},T}_{\sigma,\mathcal{L}}}(r) \times B_{\mathcal{E}^{\mathrm{par},T}_{\sigma, \mathcal{K}}}(\rho),\]
where $\| \cdot\|$ denotes the operator norm. 
    \end{enumerate}
\end{proposition}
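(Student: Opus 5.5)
The plan is to write $\mathcal{F}^T$ out explicitly, exploit the triangular structure of its linearization at $(0,0)$, and prove the four items separately. The only genuinely new input compared with Part I \cite{scarcella_KAMST} is the bookkeeping of the non-uniform decay rates.

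\textbf{Explicit form of $\mathcal{F}^T$.} With $X_H = J\nabla H$ and $H = H_0 + P$, $H_0 = N + R$ as in \eqref{eq:initial_Hamiltonian_NR_par}, the four components of $\mathcal{F}^T(P,\varphi)$ for $\varphi = (u, v, w_x, w_y)$ are
\[
\begin{aligned}
\mathcal{F}_q &= \partial_p R\circ\varphi + b\circ(\mathrm{id}+u) - \Lo u, &\qquad \mathcal{F}_p &= -\partial_q (R + P)\circ\varphi - \Lo v,\\
\mathcal{F}_x &= \partial_y (R+P)\circ\varphi - \Lo w_x, &\qquad \mathcal{F}_y &= -\Lambda w_x - \partial_x (R+P)\circ\varphi - \Lo w_y .
\end{aligned}
\]
The $\omega$ coming from $\partial_p N$ cancels with the $\omega\cdot\partial_q$ contained in the translation part. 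The higher-order terms are written using Lemma \ref{lemma:def_bar_M_m_L_par}, e.g. $\partial_p R\circ \varphi = \overline{M}\circ\varphi\, v + \overline{m}_1\circ\varphi\, w_x + \overline{m}_2 \circ\varphi\, w_y$, and similarly for the other partial derivatives.

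\textbf{Items \eqref{prop:F_well_defined_par} and \eqref{prop:F_bounded_par}.} For Item \eqref{prop:F_well_defined_par}, the only constraint is that $(v, w_x, w_y)(q,t)$ stay in the unit ball $B$. Since these components decay at least like $t^{-(\ell+\dec+1)}$, the ball of radius $\rho$ in $\mathcal{E}^{\mathrm{par},T}_{\sigma,\mathcal{K}}$ maps into $B$ once $\rho T^{-(\ell+\dec+1)} < 1$; this fixes $T_0$. Continuity of $\mathcal{F}^T$ and $D_\varphi\mathcal{F}^T$ then follows from the composition and product estimates in Hölder spaces with polynomial weights (Section \ref{sc:norm_properties_par}). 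The extra two derivatives assumed on $P$ (index $k = 2$) and the three on $\partial^2 H_0$ absorb the loss caused by composing with $\mathrm{id}+u$ and then differentiating in $u$.

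For Item \eqref{prop:F_bounded_par}, note that $\mathcal{F}^T(P,0) = (b,\, -\partial_q a,\, c_2,\, -c_1)$. By the choice of $\mathcal{L}$ in \eqref{eq:index_regularity_perturbations_par}, these components lie in $\mathcal{R}^T_{\sigma,\mathcal{L}}$ with decays $\ell$, $\ell+\dec+4$, $\ell+\dec+3$, $\ell+\dec+2$ respectively, and the bound $Cr$ is immediate.

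\textbf{Item \eqref{prop:DF_invertible_par}: inverting the linearization.} Since $\partial_p H_0(q,0,0,t) = \omega$ for all $q$, and $H_0$ contains no $xy$ or $y^2$ terms, every $q$-derivative and every $\partial_y\partial_{(x,y)}$-derivative of $H_0$ on the torus vanishes. Hence the linearization is
\[
D_\varphi\mathcal{F}^T(0,0)\,\hat\varphi =
\begin{pmatrix}
-\Lo \hat u + M_0\hat v + m_{1,0}\hat w_x + m_{2,0}\hat w_y\\
-\Lo \hat v\\
m_{2,0}^\top \hat v - \Lo \hat w_x\\
-m_{1,0}^\top\hat v - \Lambda \hat w_x - \Lo \hat w_y
\end{pmatrix},
\]
with coefficients $M_0, m_{1,0}, m_{2,0}$ evaluated on $\varphi_0$ and bounded by $\Upsilon$.

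This system is block triangular, and given $(g_q, g_p, g_x, g_y) \in \mathcal{R}^T_{\sigma,\mathcal{L}}$ I would solve it in the order $\hat v$, $\hat w_x$, $\hat w_y$, $\hat u$. Each step uses the Part I result for $\Lo\chi = g$: for $g \in \mathcal{S}^T_{(\sigma,0),k+1}$ with $k>0$, the unique decaying solution $\chi(q,t) = -\int_t^\infty g(q+\omega(s-t), s)\,ds$ lies in $\mathcal{U}^T_{\sigma,\omega,k}$, with norm bounded independently of $T$. Concretely:
\begin{itemize}
\item[] $\hat v$ has decay $\ell+\dec+3$, coming from the right-hand side $g_p$ of decay $\ell+\dec+4$;
\item[] $\hat w_x$ has decay $\ell+\dec+2$, since its right-hand side has decay $\ell+\dec+3$;
\item[] $\hat w_y$ has decay $\ell+\dec+1$, since the term $\Lambda\hat w_x$ has decay $\ell+\dec+2$;
\item[] $\hat u$ has decay $\ell-1$, and this is the step where $\ell>1$ is needed.
\end{itemize}
The composition of these bounded inverses gives $\bar C$ independent of $T$, and uniqueness within these decay classes gives injectivity.

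\textbf{Item \eqref{prop:DF_limit_par}: the expected obstacle.} I expect this to be the main difficulty, because one must check term by term that every entry of $D_\varphi\mathcal{F}^T(P,\varphi) - D_\varphi\mathcal{F}^T(0,0)$ maps the $\mathcal{K}$-decay of its input to at least the required $\mathcal{R}$-decay, with a surplus factor $T^{-\epsilon}$. The terms split into two kinds:
\begin{itemize}
\item[] \emph{terms from $R$}: these are of the form $\partial^3 R\circ\varphi$, or $\partial^2R\circ\varphi - \partial^2 R\circ\varphi_0$, multiplied by $v$, $w$ or $u$, and so gain at least $T^{-(\ell-1)}$;
\item[] \emph{terms from $P$}: for instance $d_1\hat w_x$ in $\mathcal{F}_y$ has decay $(\ell-1) + (\ell+\dec+2)$, exceeding the needed $\ell+\dec+2$ by $\ell-1>0$. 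Likewise $d_2$, $d_3$ and the derivatives of $c_i$, $a$ in the $u$-direction (e.g. $\partial_q^2 a(q+u)\hat u$ in $\mathcal{F}_p$) gain a positive power.
\end{itemize}
The delicate entries are the couplings through which the slow-decaying $\hat u$ (decay $\ell-1$) enters the fast components $\mathcal{F}_p, \mathcal{F}_x, \mathcal{F}_y$. Here the indices $\ell+\dec+4$, $\ell+\dec+2$, $\ell+\dec+3$ for $\partial_q a$, $c_1$, $c_2$ are precisely what makes the count close, and I would verify each such product explicitly. Uniformity on the balls then follows since all surplus exponents are at least $\min(\ell-1,1)>0$.
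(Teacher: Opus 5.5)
Your route is the paper's own (Lemmas \ref{lemma:F_well_defined_par}, \ref{lemma:F_bounded_par}, \ref{lemma:DF_invertible_par}, \ref{lem:DF_limit_par}). You write $\mathcal{F}^T$ explicitly, use $\mathcal{F}^T(P,0)=(b,-\partial_q a,c_2,-c_1)$, invert $D_\varphi\mathcal{F}^T(0,0)$ triangularly in the order $\hat v,\hat w_x,\hat w_y,\hat u$ via Proposition \ref{prop:Csigma_HEomega}, and count decays term by term. Items (1)--(3) are fine up to harmless slips: $\partial_pR\circ\varphi$ contains $m_1\circ\varphi\,w_x+m_2\circ\varphi\,w_y$ rather than $\overline m_1,\overline m_2$, and the $\hat v$-coefficient in the linearization is $\overline M_0=2M_0$.

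The gap is in Item (4), in your treatment of the terms coming from $R$. An entry of $D_\varphi\mathcal{F}^T(P,\varphi_*)-D_\varphi\mathcal{F}^T(0,0)$ sending the input component of decay $k_j$ to the output component of decay $r_i$ is multiplication by a coefficient. That coefficient must decay strictly faster than $t^{-(r_i-k_j)}$. The required excess $r_i-k_j$ is:
\begin{itemize}
\item $\dec+5$, $\dec+4$, $\dec+3$ for $\hat u$ acting on the $p$-, $x$-, $y$-components;
\item $3$ and $2$ for $\hat w_y$, $\hat w_x$ acting on the $p$-component;
\item $2$ for $\hat w_y$ acting on the $x$-component;
\item $1$ for the pairs $(q,\hat u)$, $(p,\hat v)$, $(x,\hat w_x)$, $(y,\hat w_y)$.
\end{itemize}
You write the $R$-coefficients as $\partial^2R\circ\varphi_*-\partial^2R\circ\varphi_0=\int_0^1\partial^3R\,d\tau\cdot(u_*,v_*,w_*)$ and conclude a gain $T^{-(\ell-1)}$. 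That only gives coefficient decay $t^{-(\ell-1)}$, coming from $u_*$. For $\ell<2$ this beats none of the excesses above, and it beats $\dec+5$ only if $\ell>\dec+6$. So, as argued, the $R$-part of the count does not close.

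The missing ingredient is the structure you already used in Item (3), applied at every $q$ and not just along $\varphi_0$. Since $R=O(p^2,pz,z^3)$ has no pure $z^2$ part, $\partial_q\partial_{(q,p,z)}R$ and $\partial_z^2R$ vanish identically on $\{p=0,z=0\}$ for all $q$. You should therefore expand only in $(p,z)$ at the base point $q+u_*$. Then:
\begin{itemize}
\item the $R$-parts of the coefficients in all the entries listed above are $O(|v_*|+|w_*|)$, so they decay at least like $t^{-(\ell+\dec+1)}$;
\item $\partial_q^2R\circ\varphi_*$ is even quadratic in $(v_*,w_*)$;
\item $u_*$ enters alone only through $\partial_p\partial_{(p,z)}R\circ\varphi_*-\partial_p\partial_{(p,z)}R\circ\varphi_0$, which acts in the entries $(q,\hat v)$, $(q,\hat w)$, $(x,\hat v)$, $(y,\hat v)$, where the required excess is $\le 0$.
\end{itemize}
This is exactly what the paper's explicit $I_1,\dots,I_4$ display, e.g.\ $\partial_q m_2\circ\varphi_{*,qy}\cdot(w_{y*},u)$ in $I_1$, $\partial_q^2M\circ\varphi_*\cdot(v_*,v_*,u)$ in $I_2$, and $2\overline L_4\circ\varphi_{*,qy}\cdot(w_{y*},w_y)$ in $I_3$. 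With this structure every entry has surplus at least $\ell-1$, which gives the bound $C/T^{\ell-1}$ and hence Item (4).
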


The proof of Proposition \ref{prop:C_existence} is a straightforward consequence of Proposition \ref{prop:F_properties_par} and the quantitative version of the Implicit Function Theorem provided by Theorem \ref{thm:QIFT}. It is similar to the proof of Proposition \ref{prop:C_deg_existence} presented in Section \ref{sec:proof_item_1_par_deg_Holder}. For this reason, it is omitted.

The rest of this section is dedicated to the proof of Proposition \ref{prop:F_properties_par}, %
which is divided in the following four lemmas (Lemmas \ref{lemma:F_well_defined_par}, \ref{lemma:F_bounded_par}, \ref{lemma:DF_invertible_par} and \ref{lem:DF_limit_par}) each addressing one of the properties appearing in the statement.

    \begin{remark}
        We point out that we can prove Lemmas \ref{lemma:F_well_defined_par}, \ref{lemma:F_bounded_par}, and \ref{lemma:DF_invertible_par} under the slightly weaker decay assumption 
        $d_1 \in \mathcal{S}_{(\sigma, 2), 0}^{ 1}, \, d_2 \in \mathcal{S}_{(\sigma, 2), 1}^{ 1}, \, d_3 \in \mathcal{S}_{(\sigma, 2), 2}^{ 1}$ instead of $d_1 \in \mathcal{S}_{(\sigma, 2), \ell-1}^{ 1}, \, d_2 \in \mathcal{S}_{(\sigma, 2), \ell}^{ 1}, \, d_3 \in \mathcal{S}_{(\sigma, 2), \ell+1}^{ 1}$. We need to assume this stronger decay assumption in order to prove Lemma \ref{lem:DF_limit_par}.
    \end{remark}

\begin{lemma}\label{lemma:F_well_defined_par}
   Let $T \geq 1$. The functional $\mathcal{F}^T$ given by \eqref{proof:Csigma_def_F_par_1} and its differential $D_\varphi \mathcal{F}^T$ are well-defined continuous maps on $$\mathcal{U}^T =  \mathcal{P}^{\mathrm{par},T}_{\sigma,\mathcal{L}} \times B_{\mathcal{E}^{\mathrm{par},T}_{\sigma, \mathcal{K}}}(T^{\ell-1}).$$
\end{lemma}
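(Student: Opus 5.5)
The plan is to write $\mathcal{F}^T(P,\varphi)$ out componentwise and check, term by term, that each component lands in the corresponding factor of $\mathcal{R}^T_{\sigma,\mathcal{L}}$ defined in \eqref{def:F_codomain}. With $H = N + R + P$ as in \eqref{eq:initial_Hamiltonian_NR_par}, we have $X_H = J\nabla H$. Using the identities of Lemma \ref{lemma:def_bar_M_m_L_par}, the components of $X_H\circ\varphi - \Lo\varphi$ are, schematically:
\begin{itemize}
\item the $q$-component: $\omega + \partial_p R\circ\varphi + b\circ\varphi - \omega - \Lo u$;
\item the $p$-component: $-\partial_q(R+P)\circ\varphi - \Lo v$;
\item the $x$-component: $\partial_y(R+P)\circ\varphi - \Lo w_x$;
\item the $y$-component: $-\Lambda w_x - \partial_x(R+P)\circ\varphi - \Lo w_y$.
\end{itemize}
Each derivative of $R$ is a sum of monomials in $(v,w_x,w_y)$, with coefficients $\overline{M},\overline m_i,\overline L_i,\dots$ evaluated along $\varphi$. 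Each derivative of $P$ is a sum of monomials in $(v,w_x,w_y)$, with coefficients $a,b,c_i,d_i$ (or their $q$-derivatives) evaluated at $q+u$.

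The first ingredient is a composition estimate, which I take from Part I \cite{scarcella_KAMST}, reproved in Section \ref{sc:norm_properties_par}. If $f\in\mathcal{S}^T_{(\sigma,k),\ell'}$ with $k\ge 1$ and $\varphi$ has $C^\sigma$ norm bounded and image inside $\T^n\times B$, then $f\circ\varphi\in\mathcal{S}^T_{(\sigma,0),\ell'}$. Its norm is bounded by $C(1+|\varphi|)^\sigma|f|$, and Hölder products multiply decays, $|fg|_{\sigma,\ell_1+\ell_2}\le C|f|_{\sigma,\ell_1}|g|_{\sigma,\ell_2}$. The restriction $|\varphi|^T_{\sigma,\mathcal{K}}<T^{\ell-1}$ is what guarantees the image condition. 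Since $u,v,w_x,w_y$ decay at least like $t^{-(\ell-1)}$, we get $\sup_{t\ge T}|\varphi^t-\varphi_0|_{C^0}\le |\varphi|^T_{\sigma,\mathcal K}T^{-(\ell-1)}<1$, so $(v,w)$ stays in the unit ball $B$ and the compositions make sense.

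The second ingredient is the decay bookkeeping. For $\mathcal{K}=(\ell-1,\ell+\dec+3,\ell+\dec+2,\ell+\dec+1)$ I check each monomial against the target weights $(\ell,\ell+\dec+4,\ell+\dec+3,\ell+\dec+2)$.
\begin{itemize}
\item The $\Lo$ terms lie exactly in the targets, by the definition of $\mathcal{U}^T_{\sigma,\omega,k}$.
\item In the $q$-component, $\overline{M}\cdot v$ and $\overline m_1 \cdot w_x$, $\overline m_2\cdot w_y$ decay at least like $t^{-(\ell+\dec+1)}$, which is at least $t^{-\ell}$. The term $b$ is in $t^{-\ell}$ by assumption, and the $d$-terms carry no $p$.
\item In the $p$-component, $\partial_q a\in t^{-(\ell+\dec+4)}$. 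The quadratic terms in $(v,w)$ decay at least like $t^{-2(\ell+\dec+1)}$. The terms $\partial_q c_1\cdot w_x$ and $\partial_q d_1 \cdot w_x^2$ also suffice, because $\ell>1$.
\item In the $x$-component, the dangerous terms are those linear in $w$ or $v$ with $O(1)$ coefficients, namely $m_2\cdot v$ and $L$-terms. The $L$-terms are quadratic, so they are fine. For $\overline m_2\cdot v$, the decay $\ell+\dec+3$ matches the target. The terms $c_2$, $d_2\cdot w_x$ and $d_3\cdot w_y$ match thanks to the weights in $\mathcal{L}$; for example, $d_3\cdot w_y$ decays like $t^{-(\ell+1+\ell+\dec+1)}$.
\item In the $y$-component, $\Lambda w_x$ has decay $\ell+\dec+2$, which is exactly the target. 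This explains the shift of one power between $w_x$ and $w_y$. The remaining terms are handled similarly, with $d_1\cdot w_x$ giving $\ell-1+\ell+\dec+2$.
\end{itemize}

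Continuity of $\mathcal{F}^T$, and the formula and continuity of $D_\varphi\mathcal{F}^T$, then follow from the standard fact that composition $f\mapsto f\circ\varphi$ is $C^1$ in $\varphi$ when $f$ has one extra derivative. Here $D_\varphi(f\circ\varphi)[\hat\varphi]=(Df\circ\varphi)\hat\varphi$, and this is where the extra regularity $k=2$ in $\mathcal{P}$ and $k=3$ for $H_0$ is consumed. The derivative is bounded by the same products, with $\hat\varphi$ in place of one factor of $\varphi$, so the same decay count applies.

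I expect the main obstacle to be this exhaustive decay accounting. In particular, I need to make sure that every term of $D_\varphi\mathcal{F}^T$ arising from an $O(1)$ coefficient multiplying a slowly decaying direction (for instance $\partial_q$ of coefficients times $\hat u$, which decays only like $t^{-(\ell-1)}$) is multiplied by enough powers of $(v,w)$ or by a perturbation coefficient to reach the target. When a term only just falls short, I plan to spend the slack $T^{\ell-1}$ from the size of the neighbourhood, using $|\varphi|\,t^{-(\ell-1)}<1$.
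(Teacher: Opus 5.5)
Your proposal is correct and follows the paper's proof step by step. The radius $T^{\ell-1}$ gives the $C^0$ bounds on $(u,v,w_x,w_y)$ that make $X_{H_0+P}\circ\varphi$ well defined; $\mathcal{F}^T$ is expanded componentwise as in \eqref{eq:terms_F_par_1} via Lemma \ref{lemma:def_bar_M_m_L_par}; each term is placed in its factor of $\mathcal{R}^T_{\sigma,\mathcal{L}}$ using Proposition \ref{prop:Csigma_prop_norms_pol}; and the regularity of $\mathcal{F}^T$ and $D_\varphi\mathcal{F}^T$ is read off from the explicit formula.

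One correction to your bookkeeping concerns the $p$-component, whose target decay is $t^{-(\ell+\dec+4)}$. The blanket bound $t^{-2(\ell+\dec+1)}$ you give for quadratic terms does not reach this target when $\ell+\dec<2$. The terms still check out, because the only $z$-quadratic part of $H_0$, namely $\tfrac12\Lambda\cdot x^2$, is $q$-independent. Hence every quadratic term in $\partial_q(R+P)\circ\varphi$ either contains $v$, giving decay at least $2\ell+2\dec+4$, or carries a coefficient $\partial_q d_i$, giving decay $3\ell+2\dec+3$. So every term closes with the exact weights and $\ell>1$, and the slack $T^{\ell-1}$ is never needed. That slack could not repair a genuine deficit in a decay exponent anyway, since it only bounds the size of $\varphi$ and does not improve any decay rate.
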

\begin{proof}
    We fix $(P, \varphi) \in \mathcal{P}^{\mathrm{par},T}_{\sigma,\mathcal{L}}\times B_{\mathcal{E}^{\mathrm{par},T}_{\sigma, \mathcal{K}}}(T^{\ell-1})$ and recalling the definition of the norm $|\cdot|^T_{\sigma, \omega, \ell}$ in~\eqref{def:norm_U}, we have that 
    \begin{equation*}
        |u|_{C^0} < 1, \quad |v|_{C^0} < {1 \over T^{4 + \dec}}, \quad |w_x|_{C^0} < {1 \over T^{3 +\dec}}, \quad |w_y|_{C^0} < {1 \over T^{2 + \dec}}
    \end{equation*}
    and hence the composition $X_{H_0 + P} \circ \varphi$ (and therefore $\mathcal{F}^T(P, \varphi)$) is well-defined.

    We observe that we can explicitly write the functional $\mathcal{F}^T$ as
    \begin{equation*}
    \mathcal{F}^T  = \begin{pmatrix}\partial_p H \circ \varphi - \omega -\Lo u\\
    -\partial_q H\circ \varphi - \Lo v\\
    \partial_y H \circ \varphi  -\Lo w_x\\
    -\partial_x H\circ \varphi - \Lo w_y\\\end{pmatrix}
\end{equation*}
with 
\begin{equation}
\begin{aligned}\label{eq:terms_F_par_1}
    \partial_p H \circ \varphi - \omega -\Lo u &= b\circ \varphi_q + \overline{M} \circ \varphi \cdot  v + m_1 \circ \varphi_{qxy}\cdot w_x + m_2 \circ \varphi_{qy}\cdot w_y - \Lo u,\\
    -\partial_q H\circ \varphi - \Lo v &= -\partial_q a \circ \varphi_q - \partial_q b\circ \varphi_q\cdot v -  \partial_q c_1\circ \varphi_q \cdot w_x -  \partial_q c_2\circ \varphi_q \cdot w_y\\
    &- {1 \over 2}\partial_q d_1\circ \varphi_q \cdot (w_x)^2 - \partial_q d_2\circ \varphi_q \cdot (w_x, w_y) - {1 \over 2}\partial_q d_3\circ \varphi_q \cdot (w_y)^2\\
    &- \partial_q M\circ \varphi
    \cdot (v)^2 - \partial_q m_1 \circ \varphi_{qxy}\cdot (v,w_x) - \partial_q m_2 \circ \varphi_{qy}\cdot (v,w_y)\\
    &- \partial_q L_1 \circ \varphi_{qxy} \cdot (w_x)^3 - \partial_q L_2 \circ \varphi_{qy} \cdot (w_x, w_x, w_y) \\
    &- \partial_q L_3 \circ \varphi_{qy} \cdot (w_x, w_y, w_y) - \partial_q L_4 \circ \varphi_{qy} \cdot (w_y)^3 - \Lo v \\ 
    \partial_y H\circ \varphi - \Lo w_x &= c_2 \circ \varphi_q + d_2  \circ \varphi_q \cdot w_x + d_3  \circ \varphi_q \cdot w_y + \partial_y M  \circ \varphi \cdot (v)^2 \\
    &+ \partial_y m_1 \circ \varphi_{qxy}\cdot (w_x, v) + \overline{m}_2 \circ \varphi_{qy} \cdot v + \partial_y L_1 \circ \varphi_{qxy}\cdot (w_x)^3\\
    &+\overline{L}_2 \circ \varphi_{qy}\cdot (w_x)^2 + \overline{L}_3\circ \varphi_{qy} \cdot (w_x, w_y) + \overline{L}_4\circ \varphi_{qy} \cdot (w_y)^2 - \Lo w_x,\\
    -\partial_x H\circ \varphi - \Lo w_y &= - c_1 \circ \varphi_q - d_1\circ \varphi_q\cdot w_x - d_2 \circ \varphi_q \cdot w_y - \partial_x M \circ \varphi \cdot (v)^2\\
    &- \overline{m}_1 \circ \varphi_{qxy} \cdot v - \overline{L}_1 \circ \varphi_{qxy} \cdot (w_x)^2 -  2 L_2\circ \varphi_{qy}\cdot (w_x, w_y)\\
    &-L_3\circ \varphi_{qy} \cdot (w_y)^2 - \Lo w_y - \Lambda \cdot w_x. 
    \end{aligned}
\end{equation}
where $\overline{M}, \, \overline{m}_1, \, \overline{m}_2, \, \overline{L}_1, \, \overline{L}_2, \, \overline{L}_3, \, \overline{L}_4$ are defined in Lemma \ref{lemma:def_bar_M_m_L_par}. 
We recall that we denote the projections of $\varphi$ onto the $q$, $p$, $x$, and $y$ components by indices. Furthermore, we recall that we are adopting the convention that $\circ \varphi$ is used as a shorthand for $\circ (\varphi(q,t), t)$.

Using the properties of Proposition \ref{prop:Csigma_prop_norms_pol}, it is easy to check that the image of $\mathcal{F}^T$ is contained in $\mathcal{R}^T_{\sigma, \mathcal{L}} = \mathcal{S}^{T}_{(\sigma,0), \ell}(\R^n) \times \mathcal{S}^{T}_{(\sigma,0), \ell+\dec+4}(\R^n) \times \mathcal{S}^{T}_{(\sigma,0), \ell+\dec+3}(\R^m) \times \mathcal{S}^{T}_{(\sigma,0), \ell+\dec+2}(\R^m)$. 

Concerning the regularity of $\mathcal{F}^T$ and $D_\varphi \mathcal{F}^T$ it is a straightforward consequence of~\eqref{eq:terms_F_par_1} and the regularity assumptions on the perturbative terms and the components of the $C^\sigma$ asymptotic KAM torus we are looking for  $P=(a,b,c_1,c_2,d_1,d_2,d_3) \in \mathcal{P}^{\mathrm{par},T}_{\sigma,\mathcal{L}}$ and $\varphi = (u,v,w_x, w_y) \in \mathcal{E}^{\mathrm{par},T}_{\sigma, \mathcal{K}}$.
\end{proof}

\begin{lemma}
    \label{lemma:F_bounded_par}
   There exists a constant $C$, such that for any $r > 0$ and any $T \geq 1$,
\[ \sup \left\{ |\mathcal{F}^T(P, 0)|^T_{\boldsymbol{\sigma, \ell, \dec}} \mid P \in \mathcal{P}^{\mathrm{par},T}_{\sigma,\mathcal{L}}, \, |P|_{\sigma, \Lell}^T < r \right\} < Cr .\]
\end{lemma}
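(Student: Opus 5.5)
Setting $\varphi = 0$ in \eqref{eq:terms_F_par_1}, every term containing $v$, $w_x$, $w_y$ or $\Lo$ of a component of $\varphi$ vanishes. Since $\varphi_q(q,t) = q$, all compositions $\circ\varphi_q$ reduce to evaluation at $(q,t)$, and only the zeroth-order parts of the perturbation remain:
\[
\mathcal{F}^T(P,0) = \big( b,\; -\partial_q a,\; c_2,\; -c_1 \big),
\]
where the first component is $b\circ\varphi_q$, the second is $-\partial_q a\circ\varphi_q$, and so on. All contributions of $H_0$ to this expression have been absorbed: $\omega$ is cancelled in the first line, and the terms $M$, $m_i$, $L_i$ are at least linear in $(v,w_x,w_y)$. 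I would state this explicitly and double-check the third and fourth lines against \eqref{eq:terms_F_par_1}: the term $\Lambda\cdot w_x$ vanishes, and $\overline{m}_2\cdot v$ and $\overline{L}_j$ are multiplied by components of $\varphi$.

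Next I compare decay rates with the codomain $\mathcal{R}^T_{\sigma,\mathcal{L}}$ in \eqref{def:F_codomain}, using $\mathcal{L} = (\ell+\dec+4,\ell,\ell+\dec+2,\ell+\dec+3,\ell-1,\ell,\ell+1)$.
\begin{itemize}
\item $b\in\mathcal{S}^T_{(\sigma,2),\ell}$, so $|b|^T_{\sigma,\ell}\le |b|^T_{\sigma+2,\ell}$.
\item $\partial_q a\in\mathcal{S}^T_{(\sigma,1),\ell+\dec+4}$, so $|\partial_q a|^T_{\sigma,\ell+\dec+4}\le|a|^T_{(\sigma,2),(0,\ell+\dec+4)}$ by \eqref{def:norm_S0ell}.
\item $c_2\in\mathcal{S}^T_{(\sigma,2),\ell+\dec+3}$, which matches the third factor.
\item $c_1\in\mathcal{S}^T_{(\sigma,2),\ell+\dec+2}$, which matches the fourth factor.
\end{itemize}
In each case the $C^\sigma$ norm is bounded by the $C^{\sigma+k}$ norm, and the time weight $t^{\ell_i}$ is the same on both sides. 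Summing gives
\[
|\mathcal{F}^T(P,0)|^T_{\boldsymbol{\sigma,\ell,\dec}} \le C\,|P|^T_{\sigma,\mathcal{L}} < Cr .
\]
Here $C$ depends only on the convention for norms of vector-valued functions (the maximum over components, possibly times a dimensional factor $n$ or $m$). In particular, $C$ is independent of $T$ and $r$.

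The only real point to verify is the componentwise matching of the decay exponents with the codomain. This is exactly what the definitions of $\mathcal{L}$ and $\mathcal{R}^T_{\sigma,\mathcal{L}}$ are set up to give, so I expect no genuine obstacle. Well-definedness holds because $(P,0)\in\mathcal{U}^T$ by Lemma \ref{lemma:F_well_defined_par}.
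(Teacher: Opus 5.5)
Your proof is correct and follows essentially the same route as the paper. The paper simply observes that $\mathcal{F}^T(P,0)$ is $X_{H_0+P}$ evaluated on the trivial torus (with $\omega$ cancelled in the first component). Your explicit identification $\mathcal{F}^T(P,0)=(b,-\partial_q a,c_2,-c_1)$ and the componentwise matching of decay exponents with $\mathcal{R}^T_{\sigma,\mathcal{L}}$ spell out what the paper leaves implicit.

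One small point: a pointwise bound $|\mathcal{F}^T(P,0)|^T_{\boldsymbol{\sigma,\ell,\dec}}\le C|P|^T_{\sigma,\mathcal{L}}<Cr$ only gives a supremum $\le Cr$. To get the strict inequality in the statement, enlarge the constant, for example replace $C$ by $2C$.
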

\begin{proof}
Since $\mathcal{F}^T(P, 0) = X_{H_0 + P} \circ \varphi_0$ (where $\varphi_0$ is the trivial embedding \eqref{def:varphi0=(q,0,0)}), for any $P \in \mathcal{P}^{\mathrm{par}, T}_{\sigma, \Lell}$, it follows that there exists a constant $C$, independent of $T$, such that
\[ \sup \left\{ |\mathcal{F}^T(P, 0)|_{\boldsymbol{\sigma, \ell, \dec}}^T \mid P \in \mathcal{P}^{\mathrm{par}, T}_{\sigma,\Lell}, \, |P|_{\sigma, \Lell}^T < r \right\} < Cr .\]
\end{proof}

The differential of  $\mathcal{F}^T$ with respect to $\varphi = (u, v, w_x, w_y)$ evaluated at $(0,0)$ is equal to 
\begin{equation}\label{proof:Csigma_def_par_DF_1}
\Function{D_{\varphi}\mathcal{F}^T(0,0)}{\mathcal{E}^{\mathrm{par},T}_{\sigma,\mathcal{K}}}{\mathcal{R}^T_{\sigma, \mathcal{L}}}{(\hat u, \hat v, \hat w_x, \hat w_y) }{\begin{pmatrix} \overline{M}_0 \cdot \hat v + (m_1)_0 \cdot \hat w_x + (m_2)_0 \cdot \hat w_y - \Lo \hat u\\
     -\Lo \hat v\\
     (\overline{m}_2)_0\cdot \hat v - \Lo \hat w_x\\
     -(\overline{m}_1)_0\cdot \hat v - \Lo \hat w_y - \Lambda \cdot \hat w_x
    \end{pmatrix}},
\end{equation}
where $\mathcal{R}^T_{\sigma, \mathcal{L}}$ is given by \eqref{def:F_codomain} and we use the notations introduced in~\eqref{def:f0} (for the subscript $0$), ~\eqref{eq:barMmL_par} and Lemma~\ref{lemma:def_bar_M_m_L_par}.%

We recall that $\Upsilon$ is the positive constant defined by~\eqref{def:const_Upsilon_par}, and that $C(\cdot)$ denotes a generic positive constant depending on the parameter in brackets.
\begin{lemma}\label{lemma:DF_invertible_par}
    The linear operator $D_{\varphi} \mathcal{F}^T(0,0)$ given by ~\eqref{proof:Csigma_def_par_DF_1} is invertible. Moreover, there exits a constant constant $\bar C$ depending on $\sigma$, $\ell$, $\dec$, $\Upsilon$, and $\Omega_i$ for $i=1,\dots m$, such that
    \[ \| D_{\varphi} \mathcal{F}^T(0,0)^{-1}\| < \bar C(\sigma, \ell, \Omega, \Upsilon),\]
    where $\| \cdot \|$ denotes the operator norm.
\end{lemma}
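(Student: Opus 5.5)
We will invert $D_{\varphi}\mathcal{F}^T(0,0)$ explicitly, exploiting the triangular structure of \eqref{proof:Csigma_def_par_DF_1}. Fix $(g_1,g_2,g_3,g_4)\in\mathcal{R}^T_{\sigma,\mathcal{L}}$, so that $g_1\in\mathcal{S}^T_{(\sigma,0),\ell}$, $g_2\in\mathcal{S}^T_{(\sigma,0),\ell+\dec+4}$, $g_3\in\mathcal{S}^T_{(\sigma,0),\ell+\dec+3}$ and $g_4\in\mathcal{S}^T_{(\sigma,0),\ell+\dec+2}$. We must solve the system
\[
\begin{aligned}
-\Lo\hat v &= g_2,\\
-\Lo\hat w_x &= g_3-(\overline m_2)_0\hat v,\\
-\Lo\hat w_y &= g_4+(\overline m_1)_0\hat v+\Lambda\hat w_x,\\
-\Lo\hat u &= g_1-\overline M_0\hat v-(m_1)_0\hat w_x-(m_2)_0\hat w_y,
\end{aligned}
\]
in this order. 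Each step is a scalar cohomological equation $\Lo\chi=g$, the first equation in \eqref{eq:coh_eqs_par}, which was treated in Part I \cite{scarcella_KAMST}. We use the result from there: if $g\in\mathcal{S}^T_{(\sigma,0),k+1}$ with $k>0$, then $\chi=-\int_t^{\infty}g(q+\omega(s-t),s)\,ds$ is the unique solution in $\mathcal{U}^T_{\sigma,\omega,k}$ (uniqueness holds because the solution vanishes at infinity), and
\[
|\chi|^T_{\sigma,\omega,k}\le C(\sigma,k)\,|g|^T_{\sigma,k+1}.
\]
The constant does not depend on $T$, since $\int_t^\infty s^{-k-1}ds=t^{-k}/k$ and $\Lo\chi=g$.

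The decay bookkeeping goes as follows. First, $g_2$ decays with rate $\ell+\dec+4$, so $\hat v\in\mathcal{U}^T_{\sigma,\omega,\ell+\dec+3}$, which matches $k_2$. Next, $(\overline m_2)_0\in\mathcal{S}^T_{(\sigma,3),0}$ by Lemma \ref{lemma:def_bar_M_m_L_par}, and the product estimates of Proposition \ref{prop:Csigma_prop_norms_pol} give $(\overline m_2)_0\hat v\in\mathcal{S}^T_{(\sigma,0),\ell+\dec+3}$. Hence the right-hand side of the second equation decays with rate $\ell+\dec+3$ and $\hat w_x\in\mathcal{U}^T_{\sigma,\omega,\ell+\dec+2}$. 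In the third equation, $\Lambda\hat w_x$ decays with rate $\ell+\dec+2$, and this is the slowest of the three terms, so $\hat w_y\in\mathcal{U}^T_{\sigma,\omega,\ell+\dec+1}$. Finally, the right-hand side of the fourth equation is controlled by $g_1$ (rate $\ell$) and by terms with faster decay, so $\hat u\in\mathcal{U}^T_{\sigma,\omega,\ell-1}$; here we need $\ell-1>0$, which holds because $\ell>1$. These rates reproduce exactly $\mathcal{K}=(\ell-1,\ell+\dec+3,\ell+\dec+2,\ell+\dec+1)$.

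Injectivity follows from the same triangular structure. If the image is zero, the uniqueness of decaying solutions of $\Lo\chi=g$ in each $\mathcal{U}$-space forces $\hat v=0$, then $\hat w_x=0$, then $\hat w_y=0$, then $\hat u=0$. Surjectivity is the construction above.

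The operator-norm bound comes from chaining the four estimates. The constants depend on $\sigma$, $\ell$, $\dec$, $\Upsilon$ (through the norms of $(\overline m_i)_0$, $(m_i)_0$ and $\overline M_0$) and on $|\Lambda|$. The only point that needs care is that the product estimates cost no loss in the factor $t^{\ell}$. This holds because the coefficients are bounded uniformly in $t$ (weight $0$), so that $|fg|^T_{\sigma,k}\le C|f|^T_{\sigma,0}|g|^T_{\sigma,k}$ uniformly for $T\ge1$. We expect the main obstacle to be this precise bookkeeping through the coupling $\Lambda\hat w_x$. It is the reason $w_x$ needs one order more decay than $w_y$, and why $c_2$ and $c_1$ are assumed to decay with the rates given in $\mathcal{L}$.
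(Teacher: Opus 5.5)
Your proof is correct and follows the paper's argument. The paper also solves the triangular system in the order $\hat v$, then $(\hat w_x,\hat w_y)$, then $\hat u$, and chains the resulting estimates. The $(\hat w_x,\hat w_y)$ step is packaged there as Proposition \ref{prop:L_w_par}, which is itself two successive applications of the $\Lo$-inversion of Proposition \ref{prop:Csigma_HEomega}. Your remark that the constant depends on $\Lambda$ rather than on $\Omega$ is also consistent with what the paper's proof actually gives.
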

\begin{proof}
    Given $g = (g_1, g_2, g_3, g_4) \in  \mathcal{S}^{T}_{(\sigma,0), \ell}(\R^n) \times \mathcal{S}^{T}_{(\sigma,0), \ell+\dec+4}(\R^n) \times \mathcal{S}^{T}_{(\sigma,0), \ell+\dec+3}(\R^m) \times \mathcal{S}^{T}_{(\sigma,0), \ell+\dec+2}(\R^m)$ the proof reduces to finding the unique solution %
    $(\hat u, \hat v, \hat w_x, \hat w_y) \in \mathcal{E}^{\mathrm{par},T}_{\sigma,\mathcal{K}}$ of $D_{\varphi}\mathcal{F}^T(0,0)(\hat u, \hat v, \hat w_x, \hat w_y) = (g_1, g_2, g_3, g_4)$, which corresponds to solving the system 
    \begin{equation}
    \begin{cases}\label{proof:lemma_Csigma_invDF_eq_par}
        &\overline{M}_0 \cdot \hat v + (m_1)_0 \cdot \hat w_x + (m_2)_0 \cdot \hat w_y - \Lo \hat u = g_1,\\ 
    &-\Lo \hat v=g_2,\\ 
    &  -(\overline m_1, \overline m_2)_0 \cdot  \hat v +\mathcal{L}^{\mathrm{par}}_{\omega, \Lambda}(\hat w_x, \hat w_y) = (g_3, g_4),
    \end{cases}
    \end{equation}
    where we rewrote the third and fourth lines in the formula for $D_{\varphi}\mathcal{F}^T(0,0)$ given by \eqref{proof:Csigma_def_par_DF_1} using the operator $\mathcal{L}^{\mathrm{par}}_{\omega, \Lambda}$ in \eqref{eq:coh_eqs_par}.
    
    Thanks to Proposition \ref{prop:Csigma_HEomega},  a unique solution $\hat v \in \mathcal{U}^{ T}_{\sigma, \omega, \ell +\dec +3}$ to the second equation of~\eqref{proof:lemma_Csigma_invDF_eq_par} exists and
    \begin{equation}\label{proof:lemma_Inv_par_1_est_v}
        |\hat v|^T_{\sigma, \omega, \ell+\dec+3} \le C(\ell, \dec) |g_2|^T_{\sigma, \ell +\dec+4},
    \end{equation}
    where we refer to~\eqref{def:U} and~\eqref{def:norm_U} for the definition of the space $\mathcal{U}^{ T}_{\sigma, \omega, \ell +\dec +3}$ and the norm $|\cdot|^T_{\sigma, \omega, \ell}$, respectively. 
    In the third line of~\eqref{proof:lemma_Csigma_invDF_eq_par}, $\hat v$ is known. Hence, by Proposition \ref{prop:L_w_par} there exists a unique solution $(\hat w_x, \hat w_y) \in \mathcal{U}^{ T}_{\sigma, \omega, \ell +\dec +2} \times \mathcal{U}^{ T}_{\sigma, \omega, \ell +\dec +1}$ to the third line of of~\eqref{proof:lemma_Csigma_invDF_eq_par}. Moreover, recalling the bounds for $\hat v$ and $(\overline m_1)_0,  (\overline m_2)_0$ in  \eqref{proof:lemma_Inv_par_1_est_v} and Lemma \ref{lemma:def_bar_M_m_L_par}, by Proposition \ref{prop:L_w_par} and the norm properties in Proposition \ref{prop:Csigma_prop_norms_pol}, it follows that
    \begin{equation}
    \label{proof:lemma_Inv_par_1_est_w_xy}
    |\hat w_x|^T_{\sigma, \omega, \ell+\dec+2}, |\hat w_y|^T_{\sigma, \omega, \ell+\dec+1}  \leq C(\sigma, \ell,\dec, \Lambda, \Upsilon) \left(|g_4|^T_{\sigma, \ell +\dec+2} + |g_3|^T_{\sigma, \ell +\dec+3} + |g_2|^T_{\sigma, \ell + \dec+4}\right).
    \end{equation}

    Finally, we can solve the first equation of~\eqref{proof:lemma_Csigma_invDF_eq_par} where $\hat v$, $\hat w_x$ and $\hat w_y$ are known. Thanks to Proposition \ref{prop:Csigma_HEomega} a unique solution $\hat u \in \mathcal{U}^{ T}_{\sigma, \omega, \ell -1}$ of the first equation of~\eqref{proof:lemma_Csigma_invDF_eq_par} exists and satisfies 
    \begin{align*}
        |\hat u|^T_{\sigma, \omega, \ell-1} &\le C(\ell,\dec) |g_1  - \overline{M}_0 \cdot \hat v - (m_1)_0 \cdot \hat w_x - (m_2)_0 \cdot \hat w_y|^T_{\sigma, \ell}  \\
        &\le C(\sigma, \ell, \dec, \Upsilon) \left(|g_1|^T_{\sigma, \ell}  + |\hat v|^T_{\sigma, \ell +\dec+3} + |\hat w_x|^T_{\sigma, \ell +\dec+2} + |\hat w_y|^T_{\sigma, \ell +\dec+1}\right)\\
        &\le C(\sigma, \ell, \dec,\Lambda, \Upsilon) \left(|g_1|^T_{\sigma, \ell}  + |g_2|^T_{\sigma, \ell +\dec+4} + |g_3|^T_{\sigma, \ell +\dec+3} + |g_4|^T_{\sigma, \ell +\dec+2}\right),
    \end{align*}
    where we used Proposition \ref{prop:Csigma_prop_norms_pol}, Lemma \ref{lemma:def_bar_M_m_L_par}, Definition~\eqref{def:norm_U},~\eqref{proof:lemma_Inv_par_1_est_v},%
    and~\eqref{proof:lemma_Inv_par_1_est_w_xy}. This concludes the proof of this lemma. 
    \end{proof}

\begin{lemma}
    \label{lem:DF_limit_par}
    Let $P = (a, b, c_1, c_2, d_1, d_2, d_3) \in  \mathcal{P}^{\mathrm{par},T}_{\sigma,\mathcal{L}}$ and $\varphi_* = (u_*,v_*,w_{x*}, w_{y*}) \in \mathcal{E}^{\mathrm{par},T}_{\sigma, \mathcal{K}}$ with $| \varphi_*|_{\sigma, \mathcal{K}}^T < T^{\ell-1}$. Then there exists a constant $C$, depending continuously on $\Upsilon, |P|^T_{\sigma, \mathcal{L}},$ and $| \varphi_*|_{\sigma, \mathcal{K}}^T$, such that the operator
    \[  D_\varphi \mathcal{F}^T(P, \varphi_*) - D_\varphi \mathcal{F}^T(0, 0):   \mathcal{E}^{\mathrm{par},T}_{\sigma, \mathcal{L}} \to \mathcal{R}^T_{\sigma, \mathcal{L}}, \]
   where $\mathcal{R}^T_{\sigma, \mathcal{L}}$ is given by \eqref{def:F_codomain}, satisfies
\[\| D\mathcal{F}^T(0, 0) -  D\mathcal{F}^T(P, \varphi_*) \| \leq \frac{C}{T^{\ell - 1}},\]
where $\| \cdot\|$ denotes the operator norm. 

In particular, Item \eqref{prop:DF_limit_par} of Proposition \ref{prop:F_properties_par} holds.
\end{lemma}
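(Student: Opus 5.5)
We compute $D_\varphi\mathcal{F}^T(P,\varphi_*)$ explicitly by differentiating the four components in \eqref{eq:terms_F_par_1} with respect to $(u,v,w_x,w_y)$ in the direction $\hat\varphi=(\hat u,\hat v,\hat w_x,\hat w_y)$. We then subtract \eqref{proof:Csigma_def_par_DF_1}. The terms $-\Lo\hat u$, $-\Lo\hat v$, $-\Lo\hat w_x$, $-\Lo\hat w_y$ and $-\Lambda\hat w_x$ cancel exactly. What remains is a finite sum of terms of three kinds:
\begin{enumerate}
\item[(i)] perturbative coefficients ($b,\partial_q a,c_i,d_i$ and their $q$-derivatives) composed with $\varphi_{*q}$, multiplied by components of $\hat\varphi$ and possibly of $\varphi_*$;
\item[(ii)] coefficients of $H_0$ ($\overline M,m_i,\overline m_i,L_i,\dots$ and their derivatives) evaluated along $\varphi_*$, multiplied by at least one component of $\varphi_*$ and one of $\hat\varphi$;
\item[(iii)] differences such as $\overline M\circ\varphi_*-\overline M_0$ and $(m_1)\circ\varphi_*-(m_1)_0$.
\end{enumerate}
We bound terms of type (iii) by the mean value theorem. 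They gain a factor $|u_*|+|v_*|+|w_{x*}|+|w_{y*}|$ in $C^\sigma$. Here we use the $C^{\sigma+3}$ bounds of Lemma \ref{lemma:def_bar_M_m_L_par} and the constant $\Upsilon$, which give the one extra derivative needed.

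The bookkeeping goes component by component, with weights fixed by $\mathcal{K}$ and by the codomain $\mathcal{R}^T_{\sigma,\mathcal{L}}$. The input weights are $\hat u\sim t^{-(\ell-1)}$, $\hat v\sim t^{-(\ell+\dec+3)}$, $\hat w_x\sim t^{-(\ell+\dec+2)}$ and $\hat w_y\sim t^{-(\ell+\dec+1)}$. The output weights are $\ell$, $\ell+\dec+4$, $\ell+\dec+3$ and $\ell+\dec+2$. For each term we use the product and composition estimates of Proposition \ref{prop:Csigma_prop_norms_pol}, which give $|fg|_{\sigma,\alpha+\beta}\lesssim|f|_{\sigma,\alpha}|g|_{\sigma,\beta}$ and control $|f\circ\varphi_*|$ by $|f|_{\sigma+1}$ times a polynomial in $|\varphi_*|$. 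For each term we check that its total decay exceeds the target exponent by at least $\ell-1$.

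Three examples illustrate the check.
\begin{itemize}
\item In the fourth component, $d_1\circ\varphi_{*q}\cdot\hat w_x$ has decay $(\ell-1)+(\ell+\dec+2)$. This exceeds $\ell+\dec+2$ by exactly $\ell-1$. This term is why the assumption $d_1\in\mathcal{S}_{\ell-1}$ (and similarly for $d_2,d_3$) is needed, as the Remark preceding the lemma indicates.
\item In the second component, $\partial_q a\circ\varphi_{*q}$ differentiated in $u$ gives $\partial_q^2a\cdot\hat u$, with decay $(\ell+\dec+4)+(\ell-1)$. Here the $C^{\sigma+1}$ regularity in $\mathcal{S}_{(\sigma,1),\cdot}$ supplies the derivative.
\item Terms such as $\partial_q m_1\circ\varphi_*\cdot(\hat v,w_{x*})$ gain the decay of $w_{x*}$, so they gain at least $\ell+\dec+1\ge\ell-1$.
\end{itemize}
Type (iii) terms in the first component, for example $(\overline M\circ\varphi_*-\overline M_0)\hat v$, gain at least the decay $\ell-1$ of $u_*$.

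Summing up, each component of $(D_\varphi\mathcal{F}^T(P,\varphi_*)-D_\varphi\mathcal{F}^T(0,0))\hat\varphi$ has norm at most $C\,T^{-(\ell-1)}|\hat\varphi|^T_{\sigma,\mathcal{K}}$, because $\sup_{t\ge T}t^{-\delta}=T^{-\delta}$. The constant $C$ is polynomial in $\Upsilon$, $|P|^T_{\sigma,\mathcal{L}}$ and $|\varphi_*|^T_{\sigma,\mathcal{K}}$. The hypothesis $|\varphi_*|<T^{\ell-1}$ keeps $\varphi_*$ inside $B$ and makes the compositions well defined, as in Lemma \ref{lemma:F_well_defined_par}. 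Uniformity on balls of radius $r,\rho$ then gives Item \eqref{prop:DF_limit_par} of Proposition \ref{prop:F_properties_par}.

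The main obstacle is the exhaustive decay check for the cross terms in the $y$-equation (third component) and the $x$-equation (fourth component). Among these are terms like $d_2\circ\varphi_{*q}\cdot\hat w_y$ and $\overline L_2\circ\varphi_*\cdot(w_{x*},\hat w_x)$, where the gap between input and output weights is smallest. I would tabulate every term, each with its exponent gain, and verify that the minimum gain is $\ell-1$.
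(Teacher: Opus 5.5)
Your proposal is correct and follows the paper's route. Like the paper, you write $(D_\varphi\mathcal{F}^T(0,0)-D_\varphi\mathcal{F}^T(P,\varphi_*))\hat\varphi=(I_1,I_2,I_3,I_4)$ explicitly, show via the product and composition estimates that the components lie in $\mathcal{S}^T_{(\sigma,0),2\ell-1}$, $\mathcal{S}^T_{(\sigma,0),2\ell+\dec+3}$, $\mathcal{S}^T_{(\sigma,0),2\ell+\dec+2}$ and $\mathcal{S}^T_{(\sigma,0),2\ell+\dec+1}$ (a gain of exactly $\ell-1$ over the codomain weights), and convert that gain into the factor $T^{-(\ell-1)}$.

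One correction: the composition estimate in Proposition \ref{prop:Csigma_prop_norms_pol} needs only $|f|_{\sigma}$, not $|f|_{\sigma+1}$; the extra derivative is used only in your mean-value treatment of the type (iii) differences. This matters, since for terms such as $\partial_q^2 a\circ\varphi_{*q}\,\hat u$ or $\partial_q^2 d_1\circ\varphi_{*q}\cdot(w_{x*},w_{x*},\hat u)$ no $C^{\sigma+1}$ bound is available under the hypotheses.
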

\begin{proof}
Let $\varphi \in \mathcal{E}^{\mathrm{par},T}_{\sigma,\mathcal{K}}$ and we observe that 
\begin{equation*}
    \left(D_\varphi\mathcal{F}^T(0,0) - D_\varphi\mathcal{F}^T(P, \varphi_*)\right)\varphi = \begin{pmatrix}
        I_1 & I_2 & I_3 & I_4
    \end{pmatrix}^\top,
\end{equation*}
with
\begin{align*}
    I_1 &= -\partial_q b \circ \varphi_{*, q} u - \partial_q \overline{M} \circ \varphi_* \cdot (v_*, u) - \partial_q m_1 \circ \varphi_{*,qxy} \cdot (w_{x*}, u) - \partial_q m_2 \circ \varphi_{*,qy} \cdot (w_{y*}, u) \\
    &-\left(\overline{M}\circ \varphi_* - \overline{M}_0\right)v - \partial_p \overline{M}\circ \varphi_*\cdot (v_*, v) - \partial_x M\circ \varphi_* \cdot (v_*, w_x) - \left(m_1 \circ \varphi_{*, qxy} - (m_1)_0\right)w_x \\
    &- \partial_x m_1 \circ \varphi_{*, qxy} \cdot (w_{x*}, w_x) - \partial_y M \circ \varphi_* \cdot (v_*. w_y) - \left(m_2 \circ \varphi_{*,qy} - (m_2)_0\right) w_y\\
    &- \partial_y m_1 \circ \varphi_{*,qxy}\cdot (w_{x*}, w_y) - \partial_y m_2 \circ \varphi_{*, qy}\cdot (w_{*y}, w_y),
    \end{align*}
    \begin{align*}
    I_2 &= \partial_q^2 a \circ \varphi_{*, q} u + \partial_q^2 b\circ \varphi_{*, q} \cdot (v_*, u) + \partial_q^2 c_1 \circ \varphi_{*, q}\cdot (w_{x*}, u) + \partial_q^2 c_2 \circ \varphi_{*, q}\cdot (w_{y*}, u)\\
    &+ {1 \over 2}\partial_q^2 d_1 \circ \varphi_{*, q} \cdot (w_{x*}, w_{x*}, u) + \partial_q^2 d_2 \circ \varphi_{*, q} \cdot (w_{x*}, w_{y*}, u) + {1 \over 2}\partial_q^2 d_3 \circ \varphi_{*, q} \cdot (w_{y*}, w_{y*}, u)\\
    &+\partial_q^2 M \circ \varphi_*\cdot(v_*, v_*, u) + \partial_q^2 m_1 \circ \varphi_{*, qxy} \cdot (v_*, w_{x*}, u) + \partial^2_q m_2 \circ \varphi_{*, qy} \cdot (v_*, w_{y*}, u)\\
    &+ \partial_q^2 L_1 \circ \varphi_{*, qxy}\cdot (w_{x*}, w_{x*}, w_{x*}, u) +  \partial_q^2 L_2 \circ \varphi_{*, qy}\cdot (w_{x*}, w_{x*}, w_{y*}, u)\\
    &+ \partial_q^2 L_3 \circ \varphi_{*, qy}\cdot (w_{x*}, w_{y*}, w_{y*}, u) +  \partial_q^2 L_4 \circ \varphi_{*, qy}\cdot (w_{y*}, w_{y*}, w_{y*}, u)\\
    &+ \partial_q b \circ \varphi_{*, q} v + 2\partial_q M \circ \varphi_* \cdot (v_*, v) + \partial_p \partial_q M \circ \varphi_* \cdot (v_*, v_*, v) + \partial_q m_1 \circ \varphi_{*, qxy}\cdot (w_{x*}, v) \\
    &+ \partial_q m_2 \circ \varphi_{*, qy}\cdot (w_{y*}, v) + \partial_q c_1 \circ \varphi_{*, q} w_x + \partial_q d_1 \circ \varphi_{*, q} \cdot (w_{x*}, w_x) + \partial_q d_2 \circ \varphi_{*, q} \cdot (w_{y*}, w_x)   \\
    &+ \partial_x \partial_q M \circ \varphi_* \cdot (v_*, v_*, w_x) + \partial_x \partial_q m_1 \circ \varphi_{*, qxy}\cdot (v_*, w_{x*}, w_x) + \partial_q m_1 \circ \varphi_{*, qxy}\cdot (v_*, w_x)\\
    &+ 3\partial_q L_1\circ \varphi_{*, qxy}\cdot (w_{x*}, w_{x*}, w_x) + \partial_x \partial_q L_1\circ \varphi_{*, qxy} \cdot (w_{x*}, w_{x*}, w_{x*}, w_x)\\
    &+2 \partial_q L_2 \circ \varphi_{*, qy}\cdot (w_{x*}, w_{y*}, w_x) + \partial_q L_3 \circ \varphi_{*, qy}\cdot (w_{y*}, w_{y*}, w_{x}) + \partial_q c_2 \circ \varphi_{*q} w_y\\
    &+ \partial_q d_2 \circ \varphi_{*, q} \cdot (w_{x*}, w_y) + \partial_q d_3 \circ \varphi_{*, q} \cdot (w_{y*}, w_y) + \partial_y \partial_q M \circ \varphi_* \cdot (v_*, v_*, w_y) \\
    &+ \partial_y \partial_q m_1 \circ \varphi_{*, qxy}\cdot (v_*, w_{x*}, w_y) + \partial_q m_2 \circ \varphi_{*, qy}\cdot (v_*, w_y) + \partial_y \partial_q m_2 \circ \varphi_{*, qy} \cdot (v_*, w_{y*}, w_y)\\
    &+ \partial_y\partial_q L_1\circ \varphi_{*, qxy}\cdot (w_{x*}, w_{x*}, w_{x*}, w_y) +  \partial_y\partial_q L_2\circ \varphi_{*, qy}\cdot (w_{x*}, w_{x*}, w_{y*}, w_y)\\
    &+\partial_q L_2\circ \varphi_{*, qy}\cdot (w_{x*}, w_{x*}, w_y) + \partial_y\partial_q L_3\circ \varphi_{*, qy}\cdot (w_{x*}, w_{y*}, w_{y*}, w_y)\\
    &+2\partial_q L_3\circ \varphi_{*, qy}\cdot (w_{x*}, w_{y*}, w_y) + \partial_y\partial_q L_4\circ \varphi_{*, qy}\cdot (w_{y*}, w_{y*}, w_{y*}, w_y)\\
    &+ 3\partial_q L_4\circ \varphi_{*, qy}\cdot (w_{y*}, w_{y*}, w_y),
    \end{align*}
    \begin{align*}
    I_3 &= - \partial_q c_2\circ\varphi_{*, q} u - \partial_q d_2 \circ\varphi_{*, q} \cdot (w_{x*}, u) - \partial_q d_3 \circ\varphi_{*, q} \cdot (w_{y*}, u) - \partial_q \partial_y M\circ \varphi_*\cdot (v_*, v_*, u) \\
    &- \partial_q \partial_ym_1\circ\varphi_{*, qxy}\cdot (w_{x*}, v_*, u) - \partial_q \overline{m}_2\circ\varphi_{*, qy}\cdot (v_*, u) - \partial_q \partial_yL_1\circ \varphi_{*, qxy}\cdot (w_{x*}, w_{x*}, w_{x*}, u)\\
    &-\partial_q \overline{L}_2\circ \varphi_{*, qy}\cdot (w_{x*}, w_{x*}, u) -\partial_q \overline{L}_3\circ \varphi_{*, qy}\cdot (w_{x*}, w_{y*}, u) -\partial_q \overline{L}_4\circ \varphi_{*, qy}\cdot (w_{y*}, w_{y*}, u)\\
    &-\partial_p \partial_y M\circ\varphi_* \cdot (v_*, v_*, v) - 2 \partial_y M\circ\varphi_* \cdot (v_*, v) -  \partial_y m_1\circ\varphi_{*, qxy} \cdot (w_{x*}, v)\\
    &- \left(\overline{m}_2\circ \varphi_{*, qy} - \left(\overline{m}_2\right)_0\right)v - d_2 \circ\varphi_{*, q} w_{x} - \partial_x\partial_y M \circ\varphi_* \cdot (v_*, v_*, w_x) \\
    &- \partial_x\partial_y m_1 \circ\varphi_{*, qxy} \cdot (w_{x*}, v_*, w_x) - \partial_y m_1 \circ\varphi_{*, qxy} \cdot (v_*, w_x) \\
    &- \partial_x \partial_y L_1 \circ \varphi_{*, qxy} \cdot (w_{x*}, w_{x*}, w_{x*}, w_x) - 3 \partial_y L_1 \circ \varphi_{*, qxy} \cdot (w_{x*}, w_{x*}, w_x) \\
    &- 2  \overline{L}_2 \circ \varphi_{*, qy} \cdot (w_{x*}, w_x) - \overline{L}_3 \circ \varphi_{*, qy} \cdot (w_{y*}, w_x) - d_3 \circ\varphi_{*, q} w_{y} - \partial^2_y M \circ\varphi_* \cdot (v_*, v_*, w_y)\\
    &- \partial^2_y m_1 \circ\varphi_{*, qxy} \cdot (w_{x*}, v_*, w_y) - \partial_y \overline{m}_2 \circ\varphi_{*, qy} \cdot (v_*, w_y) - \partial^2_y L_1 \circ \varphi_{*, qxy} \cdot (w_{x*}, w_{x*}, w_{x*}, w_y)\\
    &- \partial_y \overline{L}_2 \circ \varphi_{*, qy} \cdot (w_{x*}, w_{x*}, w_y) - \partial_y \overline{L}_3 \circ \varphi_{*, qy} \cdot (w_{x*}, w_{y*}, w_y)  - \overline{L}_3 \circ \varphi_{*, qy} \cdot (w_{x*}, w_y)\\
    &- \partial_y \overline{L}_4 \circ \varphi_{*, qy} \cdot (w_{y*}, w_{y*}, w_y) - 2 \overline{L}_4 \circ \varphi_{*, qy} \cdot (w_{y*}, w_y),
\end{align*}
\begin{align*}
    I_4 &= \partial_q c_1\circ\varphi_{*, q} u + \partial_q d_1 \circ\varphi_{*, q} \cdot (w_{x*}, u) + \partial_q d_2 \circ\varphi_{*, q} \cdot (w_{y*}, u) + \partial_q \partial_x M\circ \varphi_*\cdot (v_*, v_*, u) \\
    &+ \partial_q \overline{m}_1\circ\varphi_{*, qxy}\cdot (v_*, u) +\partial_q \overline{L}_1\circ \varphi_{*, qxy}\cdot (w_{x*}, w_{x*}, u) +2 \partial_q L_2\circ \varphi_{*, qy}\cdot (w_{x*}, w_{y*}, u) \\
    &+\partial_q L_3\circ \varphi_{*, qy}\cdot (w_{y*}, w_{y*}, u)+\partial_p \partial_x M\circ\varphi_* \cdot (v_*, v_*, v) + 2 \partial_x M\circ\varphi_* \cdot (v_*, v) \\
    &+  \left(\overline{m}_1\circ\varphi_{*, qxy}  - \left(\overline{m}_1\right)_0\right)v + d_1 \circ\varphi_{*, q} w_x + \partial^2_x M \circ\varphi_* \cdot (v_*, v_*, w_x) + \partial_x\overline{m}_1 \circ\varphi_{*, qxy} \cdot (v_*, w_x) \\
    &+ \partial_x \overline{L}_1 \circ \varphi_{*, qxy} \cdot (w_{x*}, w_{x*}, w_x) + 2\overline{L}_1 \circ \varphi_{*, qxy} \cdot (w_{x*}, w_x) + 2L_2 \circ \varphi_{*, qy} \cdot (w_{y*}, w_x)\\
    &+ d_2 \circ\varphi_{*, q} w_{y} + \partial_y\partial_x M \circ\varphi_* \cdot (v_*, v_*, w_y) + \partial_y \overline{m}_1 \circ\varphi_{*, qxy} \cdot (v_*, w_y)\\
    &+\partial_y \overline{L}_1 \circ \varphi_{*, qxy} \cdot (w_{x*}, w_{x*}, w_y) + 2  \partial_y L_2 \circ \varphi_{*, qy} \cdot (w_{x*}, w_{y*}, w_y) + 2  L_2 \circ \varphi_{*, qy} \cdot (w_{x*}, w_y)\\
    &+ \partial_y L_3 \circ \varphi_{*, qy} \cdot (w_{y*}, w_{y*}, w_y) + 2\partial_y L_3 \circ \varphi_{*, qy} \cdot (w_{y*}, w_y),
    \end{align*}
where we used the notation introduced in~\eqref{def:f0} and we denoted by a subscript the projection of $\varphi_*$ on the $q, p, x$ and $y$.

Similarly to the proof of Lemma \ref{lem:DF_limit_par}, using Proposition \ref{prop:Csigma_prop_norms_pol} and Lemma \ref{lemma:def_bar_M_m_L_par}, one can prove that   
\begin{equation*}
    I_1 \in \mathcal{S}^{ T}_{(\sigma, 0), 2\ell - 1}, \quad I_2 \in \mathcal{S}^{ T}_{(\sigma, 0), 2\ell +\dec+ 3}, \quad I_3 \in \mathcal{S}^{ T}_{(\sigma, 0), 2\ell +\dec+ 2}, \quad I_4 \in \mathcal{S}^{ T}_{(\sigma, 0), 2\ell +\dec + 1}.
\end{equation*}
Moreover, there exists a positive constant $C$ depending continuously on $|P|^T_{\sigma, \mathcal{L}}$ and $|\varphi_*|^T_{\sigma, \mathcal{K}}$ such that 
\begin{equation*}
    |I_1|^T_{\sigma, 2\ell-1}, \, |I_2|^T_{\sigma, 2\ell+\dec +3}, \,|I_3|^T_{\sigma, 2\ell+\dec +2}, \,|I_4|^T_{\sigma, 2\ell+\dec +1} \le C|\varphi|^T_{\sigma, \mathcal{K}},
\end{equation*}
and hence 
\begin{align*}
    |\left(D_\varphi\mathcal{F}^T(0,0) - D_\varphi\mathcal{F}^T(P, \varphi_*)\right)\varphi|_{\boldsymbol{{\sigma, \ell, \dec}}}^T &\le   |I_1|^T_{\sigma, \ell} +   |I_2|^T_{\sigma, \ell+\dec+4} +   |I_3|^T_{\sigma, \ell+\dec+3} +   |I_4|^T_{\sigma, \ell+\dec+2}\\
    &\le {1 \over T^{\ell-1}}\left(|I_1|^T_{\sigma, 2\ell-1} +   |I_2|^T_{\sigma, 2\ell+\dec+3} +   |I_3|^T_{\sigma, 2\ell+\dec+2} +   |I_4|^T_{\sigma, 2\ell+\dec+1}\right)\\
    &\le {C \over T^{\ell-1}}|\varphi|^T_{\sigma, \mathcal{K}}. 
\end{align*}
\end{proof}

\subsection{Proof of Item \eqref{thm:C_transverse} of Theorem \ref{Thm:par_Csigma}: Existence of asymptotic parabolic transversal dynamic}\label{sec:proof_item_2_par_Holder} 

In this section, we prove that the asymptotic KAM torus obtained in Item \eqref{thm:C_existence} of Theorem \ref{Thm:par_Csigma} is asymptotically parabolic under slightly stronger assumptions on the regularity (more precisely, on $\sigma$) and on the decay rate of the perturbations (see \eqref{eq:Hyp_decay_par_2}).

In the following, we continue using the notation and parameters from the previous subsections, with the only exception that we now assume $\sigma \geq 2$.

Recall that the perturbations considered in Item \eqref{thm:C_transverse} of Theorem \ref{Thm:par_Csigma} can be identified with the space $\mathcal{P}^{\mathrm{par}, 1}_{\sigma, \mathcal{L}^*}$ (see Section \ref{sc:perturbation_par}). Moreover, for any $T \geq 1$, we have $\mathcal{P}^{\mathrm{par}, 1}_{\sigma, \mathcal{L}^*} \subseteq \mathcal{P}^{\mathrm{par}, 1}_{\sigma, \mathcal{L}(\ell, \dec + 2)}$, where $\mathcal{L}(\ell, \dec + 2) \in \left(\R_{\ge 0}\right)^7$ is given by \eqref{eq:index_regularity_perturbations_par}, and
\[  |P|^T_{\sigma, \Lell(\ell, \dec + 2)}\leq |P|^T_{\sigma, \Lell^*} , \qquad \text{ for any }P \in \mathcal{P}^{\mathrm{par}, T}_{\sigma, \Lell^*}.\] 

Thus, denoting $\mathcal{K}^* = \mathcal{K}(\ell, \dec+2)$ where $\mathcal{K}(\ell, \dec+2)$ is defined in~\eqref{eq:index_regularity_uwx_par}, by Proposition \ref{prop:C_existence}, for any $r>0$ there exists $T_0 \ge 1$ such that, for any $T\ge T_0$, there exists a $C^1$-map
\begin{equation*}
    \boldsymbol{\varphi}^T: B_{\mathcal{P}^{\mathrm{par},T}_{\sigma, \mathcal{L}^*}}(r)    \to  \mathcal{E}^{\mathrm{par},T}_{\sigma, \mathcal{K}^*}
\end{equation*}
for which $\boldsymbol{\varphi}^T(P)$ defines a $C^\sigma$ asymptotic KAM torus  associated with $(X_{H_0 + P}, X_{H_0}, \varphi_0)$, for any $P \in \mathcal{P}^{\mathrm{par}, 1}_{\sigma, \mathcal{L}^*}$ with $|P|^T_{\sigma, \mathcal{L}^*} < r$.

 In this section, we will deal with matrices having components exhibiting different time-decay properties. For this reason, we define 
\begin{align}\label{eq:index_regularity_G_par} 
    \mathcal{D}(\ell + \dec)  &:= (\ell + \dec+1 , \ell + \dec+2, \ell + \dec, \ell + \dec +1),\\ \label{eq:index_regularity_G_par_Sym} 
    \mathfrak{D}(\ell + \dec)  &:= (\ell + \dec +1 , \ell + \dec+2, \ell + \dec+2, \ell + \dec+3).
\end{align}
and, for simplicity, we denote  $\mathcal{D}=\mathcal{D}(\ell + \dec)$ and $\mathfrak{D} = \mathfrak{D}(\ell + \dec)$.

In the spirit of the proof of Item \eqref{thm:C_transverse} of Theorem \ref{Thm:par_Csigma_0}, contained in Section \ref{sec:proof_item_2_par_deg_Holder}, we will show the following.

\begin{proposition}
\label{prop:C_transverse}
Fix $r > 0$ and let $T_0 \geq 1$ be given by Proposition \ref{prop:C_existence}. There exist $T_1 \geq T_0$ such that, for any $T \geq T_1$,  there exists a $C^1$-map
\[ \boldsymbol{G}^T: B_{\mathcal{P}^{\mathrm{par}, T}_{\sigma, \Lell^*}}(r)    \subseteq  \mathcal{P}^{\mathrm{par}, T}_{\sigma, \Lell^*} \to \mathcal{S}p^{ T}_{\sigma-1, \omega, \mathcal{D}}   \]
for which $\boldsymbol{S}(\boldsymbol{\varphi}^T(P),  \boldsymbol{G}^T(P))$ given by \eqref{eq:S_formula} is of class $C^1$ and
\begin{equation*}
\label{eq:IFT_condition_par}
\boldsymbol{\zeta}(H_0 + P, \boldsymbol{\varphi}^T(P),  \boldsymbol{G}^T(P)) =  \Mpar, \qquad \text{for any } P \in B_{\mathcal{P}^{\mathrm{par}, T}_{\sigma, \Lell^*}}(r),
\end{equation*}
where $\boldsymbol{\zeta}$ is given by \eqref{eq:formula_zeta} and  $\boldsymbol{\varphi}^T: B_{\mathcal{P}^{\mathrm{par}, T}_{\sigma, \Lell^*}}(r)   \to \mathcal{E}^{\mathrm{par}, T}_{\sigma, \mathcal{K}^*}$  is given by Proposition \ref{prop:C_existence}.

In particular, by Corollary \ref{cor:criterion}, for any $P \in B_{\mathcal{P}^{\mathrm{par}, T}_{\sigma, \Lell^*}}(r)$, the asymptotic KAM torus $\boldsymbol{\varphi}^T(P)$ is asymptotically parabolic.
\end{proposition}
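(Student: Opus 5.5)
We follow the scheme of Section \ref{sec:proof_item_2_par_deg_Holder}. We apply the quantitative Implicit Function Theorem (Theorem \ref{thm:QIFT}) to the functional
\[
\mathcal{G}^T: B_{\mathcal{P}^{\mathrm{par},T}_{\sigma,\Lell^*}}(r) \times \mathcal{S}p^{T}_{\sigma-1,\omega,\mathcal{D}} \to \mathcal{S}ym^{T}_{\sigma-1,\mathfrak{D}},
\qquad \mathcal{G}^T(P,G) = \boldsymbol{\zeta}(H_0+P,\boldsymbol{\varphi}^T(P),G) - \Mpar .
\]
Here $\boldsymbol{\zeta}$ is written explicitly through Remark \ref{rmk:formula_zeta}, with $K=\exp(G)$ and $B=-(\mathrm{Id}+\partial_q u)^{-\top}(\partial_q w)^\top J_m K$. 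A zero of $\mathcal{G}^T$ gives $\boldsymbol{\zeta}=\Mpar$. Since $\sigma\ge 2$, Remark \ref{rmk:extra_regularity_ell} gives that $\partial_q\varphi$ is $C^1$, and elements of $\mathcal{S}p^T_{\sigma-1,\omega,\mathcal{D}}$ are $C^1$, so $\boldsymbol{S}$ is $C^1$. The decay indices $\mathcal{D}=(\ell+\dec+1,\ell+\dec+2,\ell+\dec,\ell+\dec+1)$ are at least the ones required in item (2) of Corollary \ref{cor:criterion} for $(G_{xx},G_{xy},G_{yx},G_{yy})$. Also $w\in \mathcal{S}^T_{(\sigma,0),\ell+\dec+3}\subset\mathcal{S}^T_{(\sigma,0),\ell+2}$ by the choice $\mathcal{K}^*=\mathcal{K}(\ell,\dec+2)$. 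Hence the conclusion follows from that corollary once the zero is found.

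We then establish the four analogues of Proposition \ref{prop:G_properties_par_deg}.

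\emph{Well-definedness and continuity.} We expand $\mathcal{G}^T$ using the decomposition of $H_0$ in Section \ref{sc:initial_ham_par} and Lemma \ref{lemma:def_bar_M_m_L_par}. We then check blockwise, with Proposition \ref{prop:Csigma_prop_norms_pol}, that every term lies in $\mathcal{S}ym^T_{\sigma-1,\mathfrak{D}}$. One derivative is lost through $\partial_q\varphi$, which is why the regularity is $\sigma-1$. The quadratic term $\partial_x^2 H\circ\varphi$ produces the block $\Lambda+d_1+\dots$, and the constant $\Lambda$ cancels against $\Mpar$. The $y$-blocks must decay faster than the $x$-blocks, and this is what dictates the stronger hypotheses \eqref{eq:Hyp_decay_par_2} on $d_1,d_2,d_3$ and on $c_i$, $\partial_q a$ (through $w$).

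\emph{Bound on $\mathcal{G}^T(P,0)$.} Using $\boldsymbol{\varphi}^T(P)=O(r)$ in $\mathcal{E}^{\mathrm{par},T}_{\sigma,\mathcal{K}^*}$ and the $\mathcal{L}^*$-decay of $P$, we obtain $|\mathcal{G}^T(P,0)|^T_{\sigma-1,\mathfrak{D}}\le Cr$.

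\emph{Invertibility of the linearization.} We compute $D_G\mathcal{G}^T(0,0)\hat G = \hat G^\top \Mpar + \Mpar\hat G + J_m\Lo\hat G = \mathfrak{L}^{\mathrm{par}}_{\omega,\Lambda}\hat G$. Inverting this operator from $\mathcal{S}ym^T_{\sigma-1,\mathfrak{D}}$ onto $\mathcal{S}p^T_{\sigma-1,\omega,\mathcal{D}}$, with bounds uniform in $T$, is the content of Proposition \ref{prop:L_w_Lambda}, which we invoke.

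\emph{Smallness of the variation of the differential.} We show $\|D_G\mathcal{G}^T(P,G)-D_G\mathcal{G}^T(0,0)\|\le C T^{-\ell+1}$ or similar. The approach is the same as in Lemma \ref{lem:DF_limit_par}: each difference term carries an extra factor of $w$, $\partial_q u$, $G$, or a perturbation coefficient, and such a factor gains at least $T^{-(\ell-1)}$ in every block.

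With these four properties, $\boldsymbol{G}^T$ is produced exactly as in the proof of Proposition \ref{prop:C_deg_existence}. It is $C^1$ in $P$ because $\boldsymbol{\varphi}^T$ is $C^1$.

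\textbf{Main obstacle.} The main obstacle is the block bookkeeping in the invertibility step. Writing $\hat G$ in $\mathfrak{sp}$ as $\hat G = J_m S$ with $S$ symmetric, the equation for $\mathfrak{L}^{\mathrm{par}}_{\omega,\Lambda}\hat G = F$ couples the blocks through $\Lambda$ in a triangular way. We first solve the $\Lo$-equation for the block with the fastest-decaying right-hand side, then substitute it into the next block. Each integration loses exactly one power of $t$, and this loss must match $\mathcal{D}$ versus $\mathfrak{D}$; the indices in \eqref{eq:index_regularity_G_par}--\eqref{eq:index_regularity_G_par_Sym} appear designed for exactly this. We also need to make sure that the symmetry constraint on $F$ is compatible with the $\mathfrak{sp}$ structure of $\hat G$, so that the solution is unique. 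Finally, the cross terms $K^\top\partial_p\partial_z H\,B$ must decay fast enough in the $y$-rows, which uses $w_x,w_y$ in $\mathcal{K}^*$.
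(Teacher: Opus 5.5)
Your proposal is correct and follows essentially the same route as the paper: you apply the quantitative Implicit Function Theorem to $\mathcal{G}^T(P,G)=\boldsymbol{\zeta}(H_0+P,\boldsymbol{\varphi}^T(P),G)-\Mpar$ from $\mathcal{S}p^T_{\sigma-1,\omega,\mathcal{D}}$ to $\mathcal{S}ym^T_{\sigma-1,\mathfrak{D}}$, and you invert the linearization $\mathfrak{L}^{\mathrm{par}}_{\omega,\Lambda}$ through the triangular system of Proposition \ref{prop:L_w_Lambda}. The one difference is where the block bookkeeping sits. The paper places it in the variation estimate (Lemma \ref{lem:DG_limit_par}), not in the invertibility step: it writes $\mathcal{G}^T(P,G)=e^{G^\top}\beta(P)e^G+e^{G^\top}\Mpar e^G+e^{G^\top}J_m\Lo e^G-\Mpar$ and uses the block-weighted estimates of Lemma \ref{prop:exp_bound_par} to place the difference in $\mathfrak{D}(2\ell+2\dec+1)$, which gives the sharper rate $T^{-(\ell+\dec+1)}$ instead of your heuristic $T^{-(\ell-1)}$.
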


We refer the reader to Section \ref{sc:matrix_nonuniform_decay} for the definition of the space $\big( \mathcal{S}p^{ T}_{\sigma-1, \omega, \mathcal{D}}, |\cdot|^T_{\sigma - 1, \omega, \mathcal{D}} \big)$.

Notice that Item \eqref{thm:C_transverse} of Theorem \ref{Thm:par_Csigma} follows directly from the proposition above, since
\begin{equation*}
    \big|P\big|_{\sigma, \Lell^*}^{T} \leq |P|_{\sigma, \Lell^*}^{1}, \qquad \text{ for any } T \geq 1\,  \text{ and any } \, P \in \mathcal{P}^{\mathrm{par}, 1}_{\sigma, \Lell^*}, 
\end{equation*}
and, by definition of $\mathcal{K}^*$, the asymptotic KAM torus has the desired decay rates.

In the following, we fix $r > 0$ and let $T_0 \geq 0$ be given by Proposition \ref{prop:C_existence}. Moreover, for any $T \geq T_0$, we will denote by $ \boldsymbol{\varphi}^T: B_{\mathcal{P}^{\mathrm{par},T}_{\sigma, \mathcal{L}^*}}(r)   \to  \mathcal{E}^{\mathrm{par},T}_{\sigma, \mathcal{K}^*}$ the map given by Proposition \ref{prop:C_existence}. 

To prove Proposition \ref{prop:C_transverse}, we consider the functional
\begin{equation*}
    \mathcal{G}^T: B_{\mathcal{P}^{\mathrm{par}, T}_{\sigma,\Lell^*}}(r) \times \mathcal{S}p^{ T}_{\sigma-1, \omega, \mathcal{D}}   \to  \mathcal{S}ym^{ T}_{\sigma-1, \mathfrak{D}}
\end{equation*}
as
\begin{equation}
\label{eq:formula_G_par_2}
    \begin{aligned}
    \mathcal{G}^T(P, G) =&  B^\top \partial_p^2 H\circ \varphi B + K^{\top} \partial_p\partial_zH\circ \varphi B + B^{\top} \left(\partial_p\partial_zH\circ \varphi\right)^\top K \\
    & + K^{\top} \partial_z^2 H\circ \varphi K + K^{\top} J_m \Lo K - \Mpar,
    \end{aligned}
\end{equation}
where $z=(x,y) \in \R^m \times \R^m$, $H = H_0 + P$, $\varphi = \boldsymbol{\varphi}^T(P) = (u, v, w_x, w_y)$, $K = \exp(G)$, $\Mpar$ is defined by~\eqref{Ms}, $B = -(\mathrm{Id}_{n} +\partial_q  u )^{-\top} \left(\left(\partial_q w_x\right)^\top, \left(\partial_q w_y\right)^\top\right) J_m K$ and $\partial_p\partial_zH\circ \varphi$ stands for the $2m \times n$ matrix having coomponents $\partial_{p_i}\partial_{z_j}H\circ \varphi$ for $1 \le i \le n$ and $1 \le j \le 2m$. We refer the reader to Section \ref{sc:matrix_nonuniform_decay} for the definition of the space $\big(\mathcal{S}ym^{ T}_{\sigma-1, \mathfrak{D}}, |\cdot|^T_{\sigma - 1, \mathfrak{D}}  \big)$ taking values in the set of $2m \times 2m$ symmetric matrices. 

We point out that in the definition of the matrix $B$, the element $\left(\left(\partial_q w_x\right)^\top, \left(\partial_q w_y\right)^\top\right)$ stands for the $n \times 2m$ matrix defined by blocks by the $n \times m$ matrices given by $\left(\partial_q w_x\right)^\top$ and $\left(\partial_q w_y\right)^\top$. In the definition of the functional $\mathcal{G}^T$ in~\eqref{eq:formula_G_par_2}, for the sake of notational simplicity, we use the compact notation $z=(x,y)$ for the transverse variables $(x,y)$. %

Notice that this functional is defined by subtracting $\Mpar$ in the functional \eqref{eq:formula_F_par.deg_2} introduced in Section \ref{sec:proof_item_2_par_deg_Holder} and by considering different domain and codomain. Moreover, by the same arguments provided in Section \ref{sec:proof_item_2_par_deg_Holder}, it follows that 
\begin{equation}
\label{eq:parabolic_characterization}
    \text{ If }  \quad \mathcal{G}^T(P, \varphi) = 0 \quad \text{ then } \quad  \boldsymbol{\varphi}^T(P) \text{ is an asymptotically parabolic KAM torus.}
\end{equation}

The following proposition contains a series of properties satisfied by the functional $\mathcal{G}^T$. For the rest of this section, we denote by $D_G \mathcal{G}^T$ the differential of $\mathcal{G}^T$ with respect to the matrix $G$.

\begin{proposition}
\label{prop:G_properties_par}
    The operator $\mathcal{G}^T$ given by \eqref{eq:formula_G_par_2} satisfies the following.
    \begin{enumerate}
    \item \label{prop:G_well_defined_par} $\mathcal{G}^T$ and $D_G\mathcal{G}^T$ are well-defined continuous maps, for any $T \geq T_0$. 
    
    \item \label{prop:G_bounded_par} There exists a constant $C$ such that, for any $T \geq T_0$,
\[ \sup \left\{ |\mathcal{G}^T(P, 0)|^T_{\sigma - 1, \mathfrak{D}} \,\left|\, P \in B_{\mathcal{P}^{\mathrm{par}, T}_{\sigma,\Lell^*}}(r) \right\}\right. \leq Cr.\] 

    \item \label{prop:DG_invertible_par} $D_G \mathcal{G}^T(0, 0)$ is invertible, for any $T \geq T_0$. Moreover, there exists a constant $\bar C$, independent of $T$, such that $\|D_G \mathcal{G}^T(0, 0)^{-1}\| < \bar C$. 
        \item \label{prop:DG_limit_par} Let $r, \rho > 0$. Then  
        \[\lim_{T \to +\infty} \| D_G \mathcal{G}^T(P, G) - D_G\mathcal{G}^T(0, 0) \| = 0, \qquad \text{uniformly on } \quad B_{\mathcal{P}^{\mathrm{par}, T}_{\sigma,\Lell^*}}(r) \times B_{\mathcal{S}p^{ T}_{\sigma-1, \omega, \mathcal{D}}}(\rho),\]
where $\| \cdot\|$ denotes the operator norm  and $B_{\mathcal{S}p^{ T}_{\sigma-1, \omega, \mathcal{D}}}(\rho) \subset \mathcal{S}p^{ T}_{\sigma-1, \omega, \mathcal{D}}$ stands for a ball of radius $\rho$ centered at the origin.
    \end{enumerate}
\end{proposition}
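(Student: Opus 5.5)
The plan mirrors the proof of Proposition \ref{prop:F_properties_par}: split the statement into four lemmas, one per item, and prove each by expanding $\mathcal{G}^T$ termwise and tracking the block-wise decay rates with the norm properties of Proposition \ref{prop:Csigma_prop_norms_pol}.

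\emph{Items (1) and (2): well-posedness and the bound at $G=0$.}
\begin{enumerate}
\item For well-posedness, take $G\in\mathcal{S}p^{T}_{\sigma-1,\omega,\mathcal{D}}$ and write $K=e^G=\mathrm{Id}+O(G)$. The block decay of $e^G-\mathrm{Id}$ follows from Lemma \ref{prop:exp_bound_par}, since products of blocks with rates in $\mathcal{D}$ decay at least as fast as the corresponding blocks.
\item Since $\sigma\ge 2$, Remark \ref{rmk:extra_regularity_ell} shows that $\partial_q u,\partial_q w_x,\partial_q w_y$ are $C^{\sigma-1}$ with $\Lo$-derivatives. Hence $B$ lies in $\mathcal{S}^T_{(\sigma-1,0),\cdot}$, with $x$- and $y$-columns decaying like $t^{-(\ell+\dec+3)}$ and $t^{-(\ell+\dec+4)}$ (because $\mathcal{K}^*=\mathcal{K}(\ell,\dec+2)$).
\item The term $K^\top J_m\Lo K$ is controlled because $\Lo G\in\mathcal{M}_{\sigma-1,\mathcal{D}+1}$.
\item Symmetry of $\mathcal{G}^T(P,G)$ follows from the symplectic structure, exactly as in Proposition \ref{prop:precriterion}.
\item For item (2), set $G=0$, so $K=\mathrm{Id}$. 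Then $\mathcal{G}^T(P,0)$ consists of $\partial_z^2(H_0+P)\circ\varphi-\Mpar$ plus terms containing $B$.
\item The $d_1,d_2,d_3$ blocks decay at the rates $\ell+\dec+1,\ell+\dec+2,\ell+\dec+3$ prescribed by $\mathfrak{D}$, given the hypotheses \eqref{eq:Hyp_decay_par_2}. The $H_0$ contributions are at least linear in $(v,w)$, so they decay faster. The resulting bound is $Cr$, using $|\varphi|\le C_0 r$ from Proposition \ref{prop:C_existence}.
\end{enumerate}

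\emph{Item (3): invertibility at $(0,0)$, the main step.}
At $(P,G)=(0,0)$ we have $\varphi=0$, $B=0$ and $\partial_z^2H_0\circ\varphi_0=\Mpar$. Linearizing $K^\top\Mpar K+K^\top J_m\Lo K$ in $G$ gives
\[
D_G\mathcal{G}^T(0,0)\hat G=\hat G^\top\Mpar+\Mpar\hat G+J_m\Lo\hat G=\mathfrak{L}^{\mathrm{par}}_{\omega,\Lambda}\hat G .
\]
The claim therefore reduces to Proposition \ref{prop:L_w_Lambda}: for every symmetric $\mathcal{Y}\in\mathcal{S}ym^T_{\sigma-1,\mathfrak{D}}$ there is a unique $\hat G\in\mathcal{S}p^T_{\sigma-1,\omega,\mathcal{D}}$ with $\mathfrak{L}^{\mathrm{par}}_{\omega,\Lambda}\hat G=\mathcal{Y}$ and a $T$-independent bound. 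To see this concretely, write $\hat G=J_m S$ with $S$ symmetric, which is equivalent to $\hat G\in\mathfrak{sp}$. The equation becomes a triangular system of scalar equations $\Lo\chi=g$ in the blocks. Solve it in order:
\begin{enumerate}
\item first the block forced only by $\mathcal{Y}_4$;
\item then the off-diagonal blocks, which pick up $\Lambda$ times the already-solved block;
\item finally the last block.
\end{enumerate}
At each stage invert with Proposition \ref{prop:Csigma_HEomega}. Each inversion loses one power of $t$, which explains the staircase of rates from $\mathfrak{D}$ down to $\mathcal{D}$.

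I expect the main obstacle to be this bookkeeping. One has to check that each $\Lambda$-coupling term has enough decay for the next $\Lo$-inversion, that $\ell+\dec>1$ keeps the integrals convergent, and that the symmetry and $\mathfrak{sp}$ constraints remain compatible.

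\emph{Item (4): convergence of the differential.}
Compute $D_G\mathcal{G}^T(P,G)-D_G\mathcal{G}^T(0,0)$ termwise. Every term carries at least one extra factor among $\partial_z^2P\circ\varphi$, $\partial_z^2H_0\circ\varphi-\Mpar$, $B$, or $G$ (the last through $De^G-\mathrm{Id}$ and $G\Lo G$). Each such factor brings an additional decay of at least $t^{-(\ell+\dec)}\le T^{-(\ell-1)}$ beyond the rates in $\mathfrak{D}$. Hence, as in Lemma \ref{lem:DF_limit_par}, the operator norm is at most $C/T^{\ell-1}$, uniformly on the given balls. The point to watch is the $J_m\Lo$ term, where the extra factor is $\Lo G$ with rate $\mathcal{D}+1$; this is still sufficient.
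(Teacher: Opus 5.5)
Your plan is correct and follows the paper's proof: the same four-lemma split, item (3) reduced via $D_G\mathcal{G}^T(0,0)=\mathfrak{L}^{\mathrm{par}}_{\omega,\Lambda}$ to the triangular system of Proposition \ref{prop:L_w_Lambda}, and item (4) proved by a termwise bilinear estimate of the difference of differentials. The difference is in item (4). The paper rewrites $\mathcal{G}^T(P,G)=e^{G^\top}\beta(P)e^{G}+e^{G^\top}\Mpar e^{G}+e^{G^\top}J_m\Lo e^{G}-\Mpar$ and carries out the block-by-block decay bookkeeping in Lemma \ref{prop:exp_bound_par}; this, not item (3), is where most of the work lies. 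That yields the sharper bound $C_*/T^{\ell+\dec+1}$, whereas your $C/T^{\ell-1}$ is cruder but still suffices.
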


Similarly to the proof of Proposition \ref{prop:Cdeg_transverse}, one can prove Proposition \ref{prop:C_transverse} as a direct consequence of Proposition \ref{prop:G_properties_par} and the quantitative version of the Implicit Function Theorem given by Theorem \ref{thm:QIFT}. The rest of this section is dedicated to the proof of Proposition \ref{prop:G_properties_par} %
which is divided into the following four lemmas (Lemmas \ref{lemma:G_well_defined_par}, \ref{lemma:G_bounded_par}, \ref{lemma:DG_invertible_par} and \ref{lem:DG_limit_par}), each devoted to proving one of the properties stated in the proposition.

\begin{lemma}\label{lemma:G_well_defined_par}
       Let $T \geq T_0$. The functional $\mathcal{G}^T$ given by \eqref{eq:formula_G_par_2} and its partial derivative $D_G\mathcal{G}^T$ are well-defined continuous maps.
\end{lemma}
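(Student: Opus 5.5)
The plan is to show that, for $(P,G)\in B_{\mathcal{P}^{\mathrm{par},T}_{\sigma,\Lell^*}}(r)\times \mathcal{S}p^{T}_{\sigma-1,\omega,\mathcal{D}}$, every term in \eqref{eq:formula_G_par_2} is a symmetric matrix whose blocks decay at the rates prescribed by $\mathfrak{D}=(\ell+\dec+1,\ell+\dec+2,\ell+\dec+2,\ell+\dec+3)$. Continuity of $\mathcal{G}^T$ and $D_G\mathcal{G}^T$ should then follow from the algebra and composition properties of Proposition \ref{prop:Csigma_prop_norms_pol}, exactly as in Lemma \ref{lemma:F_well_defined_par}.

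\emph{Symmetry.} By Proposition \ref{prop:precriterion}, $\boldsymbol{\zeta}$ takes values in $\mathrm{Sym}(2m,\R)$ whenever $G\in\mathfrak{sp}(2m,\R)$, and $\Mpar$ is symmetric. Hence $\mathcal{G}^T(P,G)$ is symmetric. Since $\sigma\ge 2$, Remark \ref{rmk:extra_regularity_ell} shows that $\partial_q u$ and $\partial_q w_x,\partial_q w_y$ are $C^1$ and lie in $\mathcal{U}^T_{\sigma-1,\omega,\cdot}$, with the decay rates of $\mathcal{K}^*=\mathcal{K}(\ell,\dec+2)$. Because $|\partial_q u|_{C^0}$ is small for $T\ge T_0$, $(\mathrm{Id}_n+\partial_q u)^{-\top}$ is well defined through a Neumann series and is uniformly bounded.

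\emph{Decay of $K$.} Write $K=\exp(G)=\mathrm{Id}_{2m}+\tilde K$. Using the bounds for $e^G-\mathrm{Id}_{2m}$ (Lemma \ref{prop:exp_bound_par}), $\tilde K$ has block decay comparable to $\mathcal{D}$. Also $\Lo K=\int_0^1 e^{sG}(\Lo G)e^{(1-s)G}\,ds$ has block decay $\mathcal{D}+1$.

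\emph{Block-by-block estimate.} Expand $\partial_z^2H\circ\varphi$ as $\mathrm{diag}(\Lambda,0)$ plus the perturbative blocks $d_1,d_2,d_3\circ\varphi_q$ plus $O(w,v)$ terms coming from $L_i$ and $m_i$. Then subtract $\Mpar$; its $(1,1)$ block $\tfrac12\Lambda$ versus $\Lambda$ is a normalization issue to fix consistently with \eqref{eq:initial_hamiltonian_par}. The most delicate part is the structure $K^\top\Mpar K-\Mpar+K^\top J_m\Lo K$: the cross terms $\tilde K^\top \Mpar+\Mpar\tilde K$ only involve the $x$-rows and $x$-columns of $\tilde K$, so they inherit its $(1,\cdot)$ and $(\cdot,1)$ block decays. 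I will check that these are compatible with $\mathfrak{D}$ block by block; this mirrors the linearization $\mathfrak{L}^{\mathrm{par}}_{\omega,\Lambda}$.

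The remaining terms are estimated as follows:
\begin{itemize}
\item The $B$-terms are quadratic or bilinear in $\partial_q w$, which decays at least like $t^{-(\ell+\dec+3)}$, so their decay is more than sufficient.
\item The perturbative $d_i$ have decay $\mathcal{L}^*$, namely $\ell+\dec,\ \ell+\dec+2,\ \ell+\dec+3$, which is the decay needed.
\item The terms involving $m$ and $L$ are products with $v,w$ and therefore decay faster.
\end{itemize}

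The differential $D_G\mathcal{G}^T(P,G)\hat G$ is obtained by differentiating the exponential through the same integral formula. It is linear in $\hat G$ and $\Lo\hat G$, so the same block bookkeeping bounds it, and continuity in $(P,G)$ follows from continuity of products and compositions in these spaces.

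I expect the main obstacle to be matching the nonuniform block decays exactly. In particular, for the $(1,1)$ and $(2,1)$ blocks of $K^\top J_m\Lo K$ and $K^\top\Mpar K$, the block permutation induced by $J_m$ mixes the rates of $\mathcal{D}$ and $\mathcal{D}+1$. I will first verify these on the linear part $\hat G\mapsto \hat G^\top\Mpar+\Mpar\hat G+J_m\Lo\hat G$, and then handle the quadratic remainders, which gain an extra factor of $t^{-(\ell+\dec)}$.
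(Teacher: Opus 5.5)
Your plan follows the paper's proof: check symmetry, then do block-by-block decay bookkeeping. That uses \eqref{eq:der_order_2_Ham_par}, $B_1\in\mathcal{M}^T_{\sigma-1,\ell+\dec+3}$, $B_2\in\mathcal{M}^T_{\sigma-1,\ell+\dec+4}$, $K-\mathrm{Id}_{2m}\in\mathcal{M}^T_{\sigma-1,\mathcal{D}}$ and $\Lo K\in\mathcal{M}^T_{\sigma-1,\mathcal{D}+1}$. Getting symmetry from Proposition \ref{prop:precriterion}, rather than the paper's direct check that $K^\top J_m\Lo K$ is symmetric via $K^\top J_m=J_mK^{-1}$, is fine. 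However, this lemma consists precisely of matching rates, and two of your matches are wrong as stated.

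\emph{The $d_1$ rate.} The $(1,1)$ entry of $\mathfrak{D}$ is $\ell+\dec+1$. The term $d_1\circ\varphi_q$ enters the $(1,1)$ block of $K^\top(\partial_z^2H\circ\varphi-\Mpar)K$ with coefficient $\mathrm{Id}_m$ at leading order, so nothing else can absorb it. The rate $\ell+\dec$ that you read off $\mathcal{L}^*$ in \eqref{eq:index_regularity_perturbations_par} is one power short. With it, $\mathcal{G}^T$ does not map into $\mathcal{S}ym^T_{\sigma-1,\mathfrak{D}}$. You must instead use $d_1\in\mathcal{S}^1_{(\sigma,2),\ell+\dec+1}$ from \eqref{eq:Hyp_decay_par_2}, which is what the argument really needs.

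\emph{No normalization issue.} By \eqref{Ms} the $(1,1)$ block of $\Mpar$ is $\Lambda$, and $\partial_x^2(\tfrac12\Lambda\cdot x^2)=\Lambda$. Hence $\partial_z^2H\circ\varphi-\Mpar$ has no constant part. Record this as an exact cancellation: any mismatch would leave a non-decaying $(1,1)$ block and the lemma would fail.

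Two smaller points.

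The mixed terms $K^\top(\partial_p\partial_zH\circ\varphi)B$ and their transposes are linear, not bilinear, in $\partial_q w$. Their coefficients $\overline{m}_1,\overline{m}_2$ do not decay. They still fit, because the rates $\ell+\dec+3$ and $\ell+\dec+4$ of $B_1,B_2$ dominate $\mathfrak{D}$ blockwise. Likewise $\overline{\overline{L}}_4\circ\varphi\cdot w_y$ in the $(2,2)$ block has exactly the borderline rate $\ell+\dec+3$, not a faster one.

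The obstacle you anticipate does not occur; write $d=\ell+\dec$. $J_m$ maps block rates $\mathcal{D}+1=(d+2,d+3,d+1,d+2)$ exactly onto $\mathfrak{D}=(d+1,d+2,d+2,d+3)$. The term $\tilde K^\top\Mpar+\Mpar\tilde K$ has rates $(d+1,d+2,d+2)$ with vanishing $(2,2)$ block. So each piece lies in $\mathfrak{D}$ separately, with no cancellation needed. One condition applies: when you multiply by $K^\top$ or $K$, use the decay $d+2$ of the off-diagonal block $K_2=\tilde K_2$, not just boundedness of $K$. Otherwise, for instance, $K_2^\top Y_1$ would only decay at rate $d+1$ in the $(2,1)$ block. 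The paper's regrouping around $G^\top\Mpar+\Mpar G+J_m\Lo G$ is a convenience here, not a necessity.
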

\begin{proof}
    First, we verify that $\mathcal{G}^T$ takes values in $\mathcal{S}ym^{ T}_{\sigma-1, \mathfrak{D}}$. Let us verify that $\mathcal{G}^T$ takes values in $\mathcal{S}ym^{ T}_{\sigma-1, \ell+\dec+1}.$ First, we check that, for any $P, G$, the matrix $\mathcal{G}^T(P, G)(q, t)$ given by \eqref{eq:formula_G_par_2} is symmetric. In the following, if there is no risk of confusion, we omit writing the variables $P, G, q, t$. By \eqref{eq:formula_G_par_2}, it suffices to check that $K^{\top} J_m \Lo K$ is symmetric.
  
By taking derivatives with respect to $q$ and $t$ in $K(q, t) K(q, t)^{-1} = \mathrm{Id}_{2m}$, it easily follows that 
\[ (\Lo K)  K^{-1} = -K (\Lo K^{-1}).\]
Hence, recalling that $K^{\top} J_m = J_m K^{-1}$, we have
\begin{align*}
 (K^{\top} J_m \Lo K)^{\top} & = - (\Lo K^{\top})J_m K = - J_m(\Lo K^{-1}) K = J_mK^{-1} \Lo K = K^{\top} J_m \Lo K,
\end{align*} 
and thus $\mathcal{G}^T(P, G)(q, t)$ is symmetric. 
    We need to show that $\mathcal{G}^T(P, G) \in\mathcal{M}^{ T}_{\sigma-1, \mathfrak{D}}$ (see~\eqref{def:BS_par_M_Sym} for the definition of $\mathcal{M}^{ T}_{\sigma-1, \mathfrak{D}}$). We observe that 
    \begin{equation}
\begin{aligned}\label{eq:der_order_2_Ham_par}
    \partial_p^2 H & = \overline{\overline{M}},\\
    \partial_p\partial_x H &= \partial_x \overline{M}\cdot p + \overline{m}_1,\\
    \partial_p\partial_y H &= \partial_y \overline{M}\cdot p + \partial_y m_1 \cdot x +  \overline{m}_2,\\
    \partial_x^2 H &= \Lambda + d_1 + \partial_x^2 M \cdot p^2 + \partial_x \overline{m}_1 \cdot p + \overline{\overline{L}}_1\cdot x + 2 L_2 \cdot y,\\
    \partial_y\partial_x H &= d_2 + \partial_y\partial_x M \cdot p^2 + \partial_y \overline{m}_1 \cdot p + \partial_y\overline{L}_1\cdot x^2 + 2 \overline{L}_2 \cdot x + \overline{L}_3\cdot y,\\
    \partial_y^2 H &= d_3 + \partial_y^2 M \cdot p^2 + \partial^2_y m_1 \cdot (x,p) + \partial_y \overline{m}_2
\cdot p + \partial_y^2 L_1 \cdot x^3 + \partial_y \overline{L}_2 \cdot x^2\\
&+\overline{\overline{L}}_3\cdot x +  \overline{\overline{L}}_4 \cdot y,
\end{aligned}
\end{equation}
where we recall that $\Lambda =\mathrm{diag}\left(\Lambda_1, \dots, \Lambda_m\right)$ and $\overline{M}$, $\overline{\overline{M}}$, $\overline{m}_1$, $\overline{m}_2$, $\overline{L}_1$, $\overline{L}_2$, $\overline{L}_3$, $\overline{L}_4$, $\overline{\overline{L}}_1$, $\overline{\overline{L}}_3$ and $\overline{\overline{L}}_4$ are defined in Lemma \ref{lemma:def_bar_M_m_L_par}. 

We point out that, under the hypotheses~\eqref{eq:Hyp_decay_par_2}, Proposition \ref{prop:C_existence} provide the existence of a $C^\sigma$ asymptotic KAM torus $\varphi = \boldsymbol{\varphi}^T(P) = (u, v, w_x, w_y)$ satisfying
\begin{equation}\label{uvwxwy_decay_par_trasv}
        u \in \mathcal{S}_{(\sigma, 0), \ell-1}^{T}, \quad  v \in \mathcal{S}_{(\sigma, 0), \ell+ \dec + 5}^{T}, \quad w_x \in \mathcal{S}_{(\sigma, 0), \ell+ \dec + 4}^{T} , \quad w_y \in \mathcal{S}_{(\sigma, 0), \ell+ \dec + 3}^{T}.
    \end{equation}
where we refer to~\eqref{def:S} for the definition of the space $\mathcal{S}^{ T}_{(\sigma, 0), \ell}$.

We recall that $B=(\mathrm{Id}_{2n} +\partial_q  u )^{-\top} \left(\left(\partial_q w_x\right)^\top, \left(\partial_q w_y\right)^\top\right) J_m K$ is a $n \times 2m$ matrix. We rewrite $B$ using the block notation as $B=\left(B_1, B_2\right)$ where $B_1$ and $B_2$ are $n \times m$ matrices. Using~\eqref{uvwxwy_decay_par_trasv}, a straightforward computation shows that 
\begin{equation}\label{B1B2_decay}
    B_1 \in \mathcal{M}^{ T}_{\sigma-1, \ell + \dec +3}, \quad B_2 \in \mathcal{M}^{ T}_{\sigma-1, \ell + \dec +4}, 
\end{equation}
we refer to~\eqref{def:BS_ell_M_Sym} for the definition of the space $\mathcal{M}^{ T}_{\sigma-1, \ell + \dec +2}$.
 Moreover, recalling that $K = \exp(G)$ for each $G \in \mathcal{S}p^{ T}_{\sigma-1, \omega, \mathcal{D}} $, we have that 
 \begin{equation}\label{K_decay_par}
     K - \mathrm{Id}_{2m} - G = G^2 \int_0^1(1 - s)\exp(sG)ds \in \mathcal{M}_{\sigma - 1, \mathcal{D}(2\ell + 2\dec)}^{T},
\end{equation}
where we refer to~\eqref{eq:index_regularity_G_par} for the definition of $\mathcal{D}(2\ell+2\dec)$ and to~\eqref{def:BS_par_M_Sym} for the one of $\mathcal{M}_{\sigma - 1, \mathcal{D}(2\ell + 2\dec)}^{T}$.

In what follows, we shall use the block decomposition
\begin{equation*}
    K=\begin{pmatrix} K_1 & K_2 \\ K_3 & K_4 \end{pmatrix}
\end{equation*}
introduced by~\eqref{def:Matrix_block}. Furthermore, for any $G \in \mathcal{S}p^{ T}_{\sigma-1, \omega, \mathcal{D}}$, we can write 
\begin{equation}\label{G_decay_par}
    G=\begin{pmatrix} G_1 & G_2 \\ G_3 & -G_1^\top \end{pmatrix} \qquad   \begin{array}{ll}  G_1 : \T^n \times I_T \to \mathcal{M}_m(\R), \quad G_2, G_3 : \T^n \times I_T \to  \textup{Sym}(m, \R), \\ G_1 \in \mathcal{M}^{T}_{\sigma-1, \ell+\dec+1}, \,G_2 \in \mathcal{M}^{T}_{\sigma-1, \ell+\dec+2}, \, G_3 \in \mathcal{M}^{T}_{\sigma-1, \ell+\dec} \end{array} 
\end{equation}

We want to verify that each element in the RHS of~\eqref{eq:formula_G_par_2} belongs to $\mathcal{M}^{ T}_{\sigma-1, \mathfrak{D}}$. To this end, using the notation introduced above, we observe that  
\begin{align*}
    B^\top \partial_p^2 H\circ \varphi B &= \begin{pmatrix}  B_1^\top \partial_p^2 H\circ \varphi B_1 & B_1^\top \partial_p^2 H\circ \varphi B_2 \\[0.2cm] B_2^\top \partial_p^2 H\circ \varphi B_1 & B_2^\top \partial_p^2 H\circ \varphi B_2\end{pmatrix},\\
    K^{\top} \partial_p\partial_zH\circ \varphi B &= \begin{pmatrix} K_1^\top \partial_p \partial_x H \circ \varphi B_1 + K_3^\top \partial_p \partial_y H \circ \varphi B_1 & K_1^\top \partial_p \partial_x H \circ \varphi B_2 + K_3^\top \partial_p \partial_y H \circ \varphi B_2 \\[0.2cm] K_2^\top \partial_p \partial_x H \circ \varphi B_1 + K_4^\top \partial_p \partial_y H \circ \varphi B_1 & K_2^\top \partial_p \partial_x H \circ \varphi B_2 + K_4^\top \partial_p \partial_y H \circ \varphi B_2\end{pmatrix}.
\end{align*}
Hence, using~\eqref{eq:der_order_2_Ham_par},~\eqref{B1B2_decay} and~\eqref{K_decay_par}, we prove that $B^\top \partial_p^2 H\circ \varphi B, \, K^{\top} \partial_p\partial_zH\circ \varphi B \in \mathcal{M}^{ T}_{\sigma-1, \mathfrak{D}}$. Similarly, one can see that $B^{\top} \left(\partial_p\partial_zH\circ \varphi\right)^\top K \in \mathcal{M}^{ T}_{\sigma-1, \mathfrak{D}}$. It remains to verify that $K^{\top} \partial_z^2 H\circ \varphi K + K^{\top} J_m \Lo K - \Mpar \in \mathcal{M}^{ T}_{\sigma-1, \mathfrak{D}}$. To this end, we observe that 
\begin{align*}
    &K^\top \partial_z^2 H\circ \varphi K   + K^\top J_m \Lo K - \Mpar \\
    &= K^\top \left(\partial_z^2 H\circ \varphi - \Mpar\right)K \\
    &+K^\top \left[(K - \mathrm{Id}_{2m})^\top \Mpar + \Mpar(K - \mathrm{Id}_{2m}) + J_m \Lo (K - \mathrm{Id}_{2m})\right]\\
    &-(K - \mathrm{Id}_{2m})^\top(K - \mathrm{Id}_{2m})^\top \Mpar.
\end{align*}
On the one hand, using~\eqref{eq:der_order_2_Ham_par},~\eqref{uvwxwy_decay_par_trasv}, and noticing that $K - \mathrm{Id}_{2m} \in \mathcal{M}^{ T}_{\sigma-1, \mathcal{D}}$, one can verify that $ K^\top \left(\partial_z^2 H\circ \varphi - \Mpar\right)K, \, (K - \mathrm{Id}_{2m})^\top(K - \mathrm{Id}_{2m})^\top \Mpar \in \mathcal{M}^{ T}_{\sigma-1, \mathfrak{D}}$. On the other hand, thanks to~\eqref{G_decay_par} and remembering the definition~\eqref{def:BS_par_Sp}, one can prove that 
\begin{equation*}
    G^\top \Mpar + \Mpar G + J_m \Lo G = \begin{pmatrix}  G_1^\top \Lambda + \Lambda G_1 + \Lo G_3 & \Lambda G_2 - \Lo G_1^\top \\ G_2 \Lambda - \Lo G_1 & -\Lo G_2 \end{pmatrix} \in \mathcal{M}^{ T}_{\sigma-1, \mathfrak{D}}.
\end{equation*}
Combining the latter with~\eqref{K_decay_par}, we prove that $K^\top \Big[(K - \mathrm{Id}_{2m})^\top \Mpar + \Mpar(K - \mathrm{Id}_{2m}) + J_m \Lo (K - \mathrm{Id}_{2m})\Big] \in \mathcal{M}^{ T}_{\sigma-1, \mathfrak{D}}$. This concludes the proof that $\mathcal{G}^T(P, G) \in\mathcal{M}^{ T}_{\sigma-1, \mathfrak{D}}$.
The proof that $\mathcal{G}^T$ and $D_G\mathcal{G}^T$ are continuous maps is a straightforward consequence of~\eqref{eq:formula_G_par_2} together with the regularity assumptions on the perturbative terms  $P=(a,b,c_1, c_2,d_1, d_2, d_3) \in \mathcal{P}^{\mathrm{par}, T}_{\sigma,\mathcal{L}^*}$, the components of the $C^\sigma$ asymptotic KAM torus $\varphi =  \boldsymbol{\varphi}^T(P) = (u,v,w_x, w_y) \in \mathcal{E}^{\mathrm{par}, T}_{\sigma, \mathcal{K}^*}$ and the matrix $G \in \mathcal{S}p^{ T}_{\sigma-1, \omega, \mathcal{D}}$.
\end{proof}

\begin{lemma}\label{lemma:G_bounded_par}
There exists a constant $C$, such that for any $T \geq T_0$,
\[ \sup \left\{ |\mathcal{G}^T(P, 0)|^T_{\sigma - 1, \mathfrak{D}} \,\left|\, P \in B_{\mathcal{P}^{\mathrm{par}, T}_{\sigma,\Lell^*}}(r) \right\}\right. \leq Cr.\]
\end{lemma}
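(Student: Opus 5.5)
At $G=0$ we have $K=\mathrm{Id}_{2m}$ and $\Lo K=0$. The functional then reduces to
\[
\mathcal{G}^T(P,0) = B^\top (\partial_p^2 H\circ\varphi) B + \partial_p\partial_z H\circ\varphi\, B + B^\top(\partial_p\partial_z H\circ\varphi)^\top + \partial_z^2 H\circ\varphi - \Mpar ,
\]
where $\varphi=\boldsymbol{\varphi}^T(P)$ and $B=-(\mathrm{Id}_n+\partial_q u)^{-\top}\big((\partial_q w_x)^\top,(\partial_q w_y)^\top\big)J_m$.

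The plan is to bound each term block by block in the norm $|\cdot|^T_{\sigma-1,\mathfrak{D}}$. I would do this by a linear bound in $|P|^T_{\sigma,\Lell^*}$ and in $|\varphi|^T_{\sigma,\mathcal{K}^*}$. Then I would use Proposition~\ref{prop:C_existence}, which gives $|\varphi|^T_{\sigma,\mathcal{K}^*}\le C_0 r$ for $|P|^T_{\sigma,\Lell^*}<r$. Constants will be controlled using $\Upsilon$, Lemma~\ref{lemma:def_bar_M_m_L_par} and the norm properties of Proposition~\ref{prop:Csigma_prop_norms_pol}.

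First, from \eqref{uvwxwy_decay_par_trasv} we have $\partial_q w_x\in\mathcal{S}^T_{(\sigma-1,0),\ell+\dec+4}$ and $\partial_q w_y\in\mathcal{S}^T_{(\sigma-1,0),\ell+\dec+3}$. Also $(\mathrm{Id}+\partial_q u)^{-\top}$ is bounded in $C^{\sigma-1}$ uniformly, since $|\partial_q u|\le C r T^{1-\ell}$ is small for $T\ge T_0$, enlarging $T_0$ if needed. Because $J_m$ swaps the $x$ and $y$ blocks, this gives $B_1$ of order $t^{-(\ell+\dec+3)}$ and $B_2$ of order $t^{-(\ell+\dec+4)}$, as in \eqref{B1B2_decay}, with norms at most $Cr$.

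The quadratic term $B^\top\partial_p^2H\circ\varphi B$ is then $O(r^2 t^{-2\ell-2\dec-6})$ blockwise. Using $r$ fixed (or $r^2\le r\cdot r$ absorbed in $C$), this sits well inside $\mathfrak{D}=(\ell+\dec+1,\ell+\dec+2,\ell+\dec+2,\ell+\dec+3)$.

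For the cross terms I use \eqref{eq:der_order_2_Ham_par}. The factor $\partial_p\partial_xH\circ\varphi$ is bounded, and $\partial_p\partial_yH\circ\varphi$ is bounded. Multiplying by $B_1$ or $B_2$ gives decay at least $t^{-(\ell+\dec+3)}$ in every block, which is enough for all entries of $\mathfrak{D}$.

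The main term is $\partial_z^2H\circ\varphi-\Mpar$. By \eqref{eq:der_order_2_Ham_par}, its $xx$-block is
\[
d_1+\partial_x^2M\cdot v^2+\partial_x\overline m_1\cdot v+\overline{\overline{L}}_1\cdot w_x+2L_2\cdot w_y .
\]
Here $d_1\in\mathcal{S}_{\ell+\dec+1}$ by \eqref{eq:Hyp_decay_par_2}, and the remaining terms decay at least like $t^{-(\ell+\dec+3)}$ from \eqref{uvwxwy_decay_par_trasv}. So the $xx$-block matches $\mathfrak{D}_1=\ell+\dec+1$.

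The $xy$-blocks contain $d_2\in\mathcal{S}_{\ell+\dec+2}$ plus terms linear in $v$, $w_x$, $w_y$, all decaying at least like $t^{-(\ell+\dec+3)}$. This matches $\mathfrak{D}_2=\mathfrak{D}_3=\ell+\dec+2$.

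The $yy$-block is the delicate one, since $\mathfrak{D}_4=\ell+\dec+3$. It contains $d_3\in\mathcal{S}_{\ell+\dec+3}$, together with $\overline{\overline{L}}_3\cdot w_x$ of order $t^{-(\ell+\dec+4)}$ and $\overline{\overline{L}}_4\cdot w_y$ of order exactly $t^{-(\ell+\dec+3)}$. These just suffice, so the check goes through.

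Each coefficient is evaluated along $\varphi$. I will control the $C^{\sigma-1}$ norms of the compositions through Proposition~\ref{prop:Csigma_prop_norms_pol}, using $|u|_{C^\sigma}$ bounded and $v,w$ small.

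The main obstacle is this bookkeeping of the $yy$-block and of the mixed blocks, where the decay rates are tight. All the bounds are linear in $|P|$ and $|\varphi|\le C_0r$, except for the quadratic pieces, which are bounded by $C(r)\,r$. Summing the blocks gives $|\mathcal{G}^T(P,0)|^T_{\sigma-1,\mathfrak{D}}\le Cr$ with $C$ independent of $T\ge T_0$, because all the norms $|\cdot|^T$ are nonincreasing in $T$.
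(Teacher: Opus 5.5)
Your proposal is correct and is essentially the straightforward block-by-block computation that the paper omits. It reuses the bookkeeping from the proof of Lemma~\ref{lemma:G_well_defined_par} (via \eqref{eq:der_order_2_Ham_par} and \eqref{B1B2_decay}) at $G=0$, together with the bound $|\boldsymbol{\varphi}^T(P)|^T_{\sigma,\mathcal{K}^*}\le C_0 r$ from Proposition~\ref{prop:C_existence}.

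You were right to take $d_1\in\mathcal{S}^1_{(\sigma,2),\ell+\dec+1}$ from \eqref{eq:Hyp_decay_par_2}. The fifth entry of $\mathcal{L}^*$ is written as $\ell+\dec$ in \eqref{eq:index_regularity_perturbations_par}, and with that value the lemma fails. For $P=\tfrac12 d_1\cdot x^2$ one has $\boldsymbol{\varphi}^T(P)=0$ and $\mathcal{G}^T(P,0)=\mathrm{diag}(d_1,0)$, which need not lie in the class $\ell+\dec+1$ required by $\mathfrak{D}$. So that entry must be read as $\ell+\dec+1$, as in the theorem statement.
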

\begin{proof}
The proof is similar to that of Lemma \ref{lemma:F_bounded_par}. It consists of a straightforward computation. For this reason, it is omitted. 
\end{proof}

The differential of $\mathcal{G}^T$ with respect to $G$ evaluated at $(0,0)$ is equal to 
\begin{equation}\label{eq:diff_form_G_par_2}
\Function{D_{G}\mathcal{G}(0,0)}{\mathcal{S}p^{ T}_{\sigma-1, \omega, \mathcal{D}}}{ \mathcal{S}ym^{ T}_{\sigma-1, \mathfrak{D}}}{G}{ \mathfrak{L}^{\mathrm{par}}_{\omega, \Lambda} (G) = G^{\top}\Mpar + \Mpar G + J_m \Lo G},
\end{equation}
We recall that, throughout this work, $C(\cdot)$ stands for a generic positive constant depending on the parameter in brackets. 

\begin{lemma}\label{lemma:DG_invertible_par}
     Let $T \geq T_0$. The linear operator $D_G \mathcal{G}^T(0,0)$ given by \eqref{eq:diff_form_G_par_2} is invertible. Moreover, there exits a constant $\bar C$, depending only on $\ell$, $\dec$ and $\Lambda$, such that
    \[ \| D_{G} \mathcal{G}^T(0,0)^{-1}\| < \bar C(\ell, \dec, \Lambda),\]
    where $\| \cdot \|$ denotes the operator norm.
\end{lemma}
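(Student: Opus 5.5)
We will prove the lemma by solving the equation $\mathfrak{L}^{\mathrm{par}}_{\omega, \Lambda}(G) = F$ explicitly, block by block, for an arbitrary $F \in \mathcal{S}ym^{T}_{\sigma-1, \mathfrak{D}}$. Each block will reduce to a scalar cohomological equation $\Lo \chi = g$, which we solve with Proposition \ref{prop:Csigma_HEomega}. This is presumably the content of Proposition \ref{prop:L_w_Lambda}, announced earlier. Throughout we abbreviate $\ell' = \ell+\dec > 1$.

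\textbf{Block form.} Write $F = \begin{pmatrix} F_1 & F_2 \\ F_2^\top & F_4\end{pmatrix}$ with $F_1, F_4$ symmetric, and
\[
F_1 \in \mathcal{M}^T_{\sigma-1,\ell'+1}, \qquad F_2 \in \mathcal{M}^T_{\sigma-1,\ell'+2}, \qquad F_4 \in \mathcal{M}^T_{\sigma-1,\ell'+3}.
\]
Write $G \in \mathfrak{sp}(2m,\R)$ as in \eqref{G_decay_par}. By the block computation already carried out in the proof of Lemma \ref{lemma:G_well_defined_par}, the equation $\mathfrak{L}^{\mathrm{par}}_{\omega,\Lambda}(G) = F$ is equivalent to the triangular system
\[
-\Lo G_2 = F_4, \qquad \Lo G_1 = G_2\Lambda - F_2^\top, \qquad \Lo G_3 = F_1 - G_1^\top\Lambda - \Lambda G_1 .
\]
The $(1,2)$ block equation $\Lambda G_2 - \Lo G_1^\top = F_2$ is the transpose of the $(2,1)$ block equation once $G_2$ is symmetric. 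It therefore imposes no extra condition, which is why the system is determined.

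\textbf{Solving the system in order.} First, Proposition \ref{prop:Csigma_HEomega} gives a unique $G_2$ with decay $\ell'+2$ and $\Lo G_2 = -F_4$ of decay $\ell'+3$. Since the solution operator (integration along $q+\omega s$ from $t$ to $+\infty$) commutes with transposition, $G_2$ is symmetric. Second, the right-hand side $G_2\Lambda - F_2^\top$ has decay $\ell'+2$, so we obtain a unique $G_1$ of decay $\ell'+1$, with $\Lo G_1$ of decay $\ell'+2$. Third, $F_1 - G_1^\top\Lambda - \Lambda G_1$ is symmetric with decay $\ell'+1$, so we obtain a unique symmetric $G_3$ of decay $\ell'$, with $\Lo G_3$ of decay $\ell'+1$.

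These decays are exactly $\mathcal{D}$ and $\mathcal{D}+1$. Hence $G \in \mathcal{S}p^T_{\sigma-1,\omega,\mathcal{D}}$, where membership in $\mathfrak{sp}$ follows from the symmetry of $G_2$ and $G_3$. The $C^{\sigma-1}$ regularity in $q$ is preserved at each step by Proposition \ref{prop:Csigma_HEomega}.

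\textbf{Uniqueness and the bound.} Uniqueness follows because $\Lo\chi = 0$ has no nonzero solution decaying in $t$: such a solution is constant along the lines $(q+\omega s, t+s)$, so it must vanish. Chaining the three estimates, each with a constant depending only on the decay exponent (hence on $\ell,\dec$), and using $|\Lambda|$ in the second and third steps, gives
\[
|G|^T_{\sigma-1,\omega,\mathcal{D}} \le \bar C(\ell,\dec,\Lambda)\,|F|^T_{\sigma-1,\mathfrak{D}},
\]
uniformly in $T$.

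\textbf{Main obstacle.} The only delicate point is bookkeeping: the non-uniform decay indices $\mathcal{D}$ and $\mathfrak{D}$ must match exactly through the coupling terms $G_2\Lambda$ and $G_1^\top\Lambda + \Lambda G_1$. Each inversion of $\Lo$ costs one power of $t$, and the triangular order $G_2 \to G_1 \to G_3$ is what makes the exponents close. We also need the standing requirement that every exponent passed to Proposition \ref{prop:Csigma_HEomega} exceeds $1$; the smallest is $\ell'+1$, and this holds since $\ell > 1$.
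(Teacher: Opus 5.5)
Your proposal is correct and follows essentially the paper's argument. The paper deduces the lemma from Proposition \ref{prop:L_w_Lambda}, whose proof solves the same triangular block system $\Lo G_2 = Z$, then $G_2\Lambda - \Lo G_1 = Y^\top$, then $G_1^\top\Lambda + \Lambda G_1 + \Lo G_3 = X$, by three successive applications of Proposition \ref{prop:Csigma_HEomega}; your explicit checks that $G_2, G_3$ come out symmetric (so $G\in\mathfrak{sp}(2m,\R)$) and that the $(1,2)$ block equation is redundant are details the paper leaves implicit.
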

\begin{proof}
This follows directly from Proposition \ref{prop:L_w_Lambda}. 
\end{proof} 

\begin{lemma}\label{lem:DG_limit_par}
       Let $P \in  \mathcal{P}^{\mathrm{par}, T}_{\sigma, \Lell^*}$ and $ G_* \in \mathcal{S}p^{T}_{\sigma - 1, \omega, 
    \mathcal{D}}$ with $|P|_{\sigma, \Lell^*}^T < r$. Then there exists a constant $C_*$, depending continuously on $\Upsilon, |P|^T_{\sigma, \Lell^*},$ and $| G_*|^T_{\sigma - 1, \omega, \mathcal{D}}$, such that the operator
    \[  D_G\mathcal{G}^T(P, G_*) - D_G\mathcal{G}^T(0, 0):   \mathcal{S}p^{T}_{\sigma - 1, \omega, 
    \mathcal{D}} \to  \mathcal{S}ym^{ T}_{\sigma - 1, \mathfrak{D}} \]
    satisfies
\begin{equation*}
\label{eq:DG_difference_bound_par}
\| D\mathcal{G}^T(0, 0) -  D\mathcal{G}^T(P, G_*) \| \leq \frac{C_*}{T^{\ell + \dec + 1}},
\end{equation*}
where $\| \cdot\|$ denotes the operator norm. 

In particular, Item \eqref{prop:DG_limit_par} of Proposition \ref{prop:G_properties_par} holds.
\end{lemma}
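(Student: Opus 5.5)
We will compute the difference of differentials explicitly and estimate it term by term in the weighted norms, in the same way as Lemma \ref{lem:DF_limit_par}. Write $K_* = \exp(G_*)$ and $\varphi = \boldsymbol{\varphi}^T(P)$, with associated matrix $B_* = -(\mathrm{Id}+\partial_q u)^{-\top}((\partial_q w_x)^\top,(\partial_q w_y)^\top)J_m K_*$. Set $\Delta K = D\exp(G_*)\hat G$, which is linear in $\hat G$, and let $\Delta B$ be the corresponding variation of $B$. Differentiating \eqref{eq:formula_G_par_2} in $G$ at $(P,G_*)$ gives a sum of terms in which exactly one factor is $\Delta K$, $\Delta B$ or $\Lo\Delta K$. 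Subtracting $\mathfrak{L}^{\mathrm{par}}_{\omega,\Lambda}(\hat G) = \hat G^\top\Mpar+\Mpar\hat G+J_m\Lo\hat G$, we will group the difference into the following pieces.

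\textbf{Group (i).} These are the terms containing $B_*$ or $\Delta B$: $B^\top\partial_p^2H\circ\varphi\,B$, $K^\top\partial_p\partial_zH\circ\varphi\,B$ and its transpose. By \eqref{B1B2_decay} these carry the decay of $\partial_q w_x,\partial_q w_y$, namely $\ell+\dec+3$ and $\ell+\dec+4$.

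\textbf{Group (ii).} These are the terms $\Delta K^\top(\partial_z^2H\circ\varphi-\Mpar)K_* + K_*^\top(\partial_z^2H\circ\varphi-\Mpar)\Delta K$. By \eqref{eq:der_order_2_Ham_par}, $\partial_z^2H\circ\varphi-\Mpar$ consists of $d_1,d_2,d_3$ (which decay at rates $\ell+\dec$ to $\ell+\dec+3$ under $\mathcal{L}^*$) plus terms that are at least linear in $v,w_x,w_y$.

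\textbf{Group (iii).} These are the quadratic corrections in $G_*$:
\[ (K_*-\mathrm{Id})^\top\Mpar\Delta K + \Delta K^\top\Mpar(K_*-\mathrm{Id}) + (K_*-\mathrm{Id})^\top J_m\Lo\Delta K + \Delta K^\top J_m\Lo(K_*-\mathrm{Id}), \]
together with $(\Delta K - \hat G)^\top\Mpar + \Mpar(\Delta K-\hat G) + J_m\Lo(\Delta K-\hat G)$. Here we use $\Delta K-\hat G = O(G_*\hat G)$ and the corresponding expression for its $\Lo$-derivative, obtained from the integral formula for $D\exp$ as in \eqref{K_decay_par}.

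For each group we will show membership in $\mathcal{M}^T_{\sigma-1,\mathfrak{D}+(\ell+\dec+1)}$ with norm at most $C_*|\hat G|^T_{\sigma-1,\omega,\mathcal{D}}$. The estimate then follows because $|f|^T_{\sigma-1,\mathfrak{D}}\le T^{-(\ell+\dec+1)}|f|^T_{\sigma-1,\mathfrak{D}+(\ell+\dec+1)}$. The tools are Proposition \ref{prop:Csigma_prop_norms_pol} (products add decay exponents; compositions are controlled), Lemma \ref{lemma:def_bar_M_m_L_par}, and the bounds \eqref{uvwxwy_decay_par_trasv}. The constant depends continuously on $\Upsilon$, $|P|$ and $|G_*|$ because $\boldsymbol{\varphi}^T(P)$ lies in a ball of radius $C_0 r$.

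The main obstacle is the bookkeeping of the non-uniform block decays. Every product must land in the correct block rate $\mathfrak{D}=(\ell+\dec+1,\ell+\dec+2,\ell+\dec+2,\ell+\dec+3)$ with the additional gain $\ell+\dec+1$. This is delicate in group (iii): $G_3$ decays only at rate $\ell+\dec$ and $\Mpar$ mixes the $x$-blocks, so products such as $(G_*)_3^\top\Lambda\hat G_1$ have to be checked block by block. We will do this using the symplectic structure \eqref{G_decay_par} together with the fact that $\Lo\hat G\in\mathcal{M}^T_{\sigma-1,\mathcal{D}+1}$. The $\Lo\Delta K$ term is handled by writing $\Lo$ of the integral formula for $D\exp$ and distributing $\Lo$ over products, so that each term contains exactly one factor $\Lo G_*$ or $\Lo\hat G$ and gains one extra power of $t$. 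For the lowest blocks we expect the gain to be exactly $\ell+\dec+1$, coming from the decay of the $G_*$ factor; this is where the precise exponent in the statement is produced. Finally, Item \eqref{prop:DG_limit_par} of Proposition \ref{prop:G_properties_par} follows immediately, since $C_*$ is uniform on the balls considered and $T^{-(\ell+\dec+1)}\to0$.
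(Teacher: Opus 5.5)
Your proposal is correct and is essentially the paper's argument. The paper also differentiates via $D\exp(G_*)\hat G=e^{G_*}\mathcal{A}(G_*,\hat G)$ and splits the quadratic part exactly as in your group (iii); the only cosmetic difference is that it first absorbs your groups (i) and (ii) into one $G$-independent matrix $\beta(P)\in\mathcal{M}^T_{\sigma-1,\mathfrak{D}}$ (possible since $B$ is linear in $K$), writes $\mathcal{G}^T(P,G)=e^{G^\top}\beta(P)e^{G}+e^{G^\top}\Mpar e^{G}+e^{G^\top}J_m\Lo e^{G}-\Mpar$, and then bounds every term in $\mathfrak{D}(2\ell+2\dec+1)$ with the block estimates of Lemma~\ref{prop:exp_bound_par}.

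One caveat concerns group (ii): you need $d_1$ to decay at rate $\ell+\dec+1$, as in \eqref{eq:Hyp_decay_par_2}, not $\ell+\dec$ as the fifth entry of $\mathcal{L}^*$ (which you quoted) suggests. With rate $\ell+\dec$ the block $\hat G_1^\top d_1$ only reaches decay $2\ell+2\dec+1$, so you would get $T^{-(\ell+\dec)}$ instead of $T^{-(\ell+\dec+1)}$. Worse, $\mathcal{G}^T(P,0)$ would not even lie in $\mathcal{S}ym^T_{\sigma-1,\mathfrak{D}}$.
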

\begin{proof}
     We can rewrite $\mathcal{G}^T$ as
     \begin{equation*}
         \mathcal{G}^T(P, G) =  e^{G^\top}\beta(P)e^G + e^{G^\top}\Mpar e^G +  e^{G^\top} J_m \Lo  e^{G} - \Mpar,
     \end{equation*}
     where, using the notation~\eqref{def:Matrix_block}, we have that
     \begin{equation}\label{eq:prop_matrix_beta}
         \beta(P)=\begin{pmatrix} \beta_1(P) & \beta_2(P) \\ \beta_3(P) & \beta_4(P) \end{pmatrix} \qquad   \mbox{with \quad $\beta(P) \in \mathcal{M}^{T}_{\sigma-1, \mathfrak{D}}$}.
     \end{equation}
      Recalling the definition of $\mathcal{A}(\cdot, \cdot)$ in~\eqref{eq:adjoint_formula}, we have that 
     \begin{align}
        \label{eq:DG_difference_par}
        (D_G \mathcal{G}^T(P, G_*) & - D_G^T \mathcal{G}(0, 0))G  = \mathcal{A}(G_*, G)^\top e^{G^\top_*}\beta(P)e^{G_*} + e^{G^\top_*}\beta(P)e^{G_*}\mathcal{A}(G_*, G) \nonumber\\
        & + (\mathcal{A}(G_*, G)^\top e^{G^\top_*}\Mpar e^{G_*} - G^\top \Mpar) +  (e^{G^\top_*}\Mpar e^{G_*}\mathcal{A}(G_*, G) - \Mpar G)\\
        & +  \mathcal{A}(G_*, G)^\top e^{G^\top_*} J_m \Lo  e^{G_*} +  (e^{G^\top_*}  J_m \Lo  (e^{G_*} \mathcal{A}(G_*, G)) - J_m \Lo G).\nonumber
    \end{align}
    Using  Proposition \ref{prop:Csigma_prop_norms_matrix} and Lemmas \ref{lem:exp_expansion_bounds}, \ref{prop:exp_bound_par}, a straightforward calculation shows that
    \begin{equation}
    \label{eq:better_decay_DG_par}
    (D_G \mathcal{G}(P, G_*)  - D_G \mathcal{G}(0, 0))G \in \mathcal{S}ym^{T}_{\sigma - 1, \mathfrak{D}(2\ell +2\dec+1)},
    \end{equation}
where $\mathfrak{D}(\ell+\dec)$ is defined in~\eqref{eq:index_regularity_G_par_Sym}. Moreover, its norm is bounded by $C_*|G|^T_{\sigma - 1, \omega, \mathcal{D}}$, where $C_*$ is a constant depending continuously on $\sigma, \ell, \dec, m, n, \Upsilon, \Lambda, |P|^T_{\sigma, \mathcal{L}^*},$ and $|G_*|^T_{\sigma - 1, \omega, \mathcal{D}}$. For this purpose, we need to estimate each term in the RHS of~\eqref{eq:better_decay_DG_par}. In the following, we denote simply by $C$ constants appearing in the inequalities and that depend only on $\sigma, \ell, \dec, m, \Lambda$.

By to Property \ref{it:boound_expFBexpF_par} of Lemma \ref{prop:exp_bound_par}, we have that $e^{G^\top_*}\beta(P)e^{G_*} \in \mathcal{M}^{T}_{\sigma-1, \mathfrak{D}}$ with 
\begin{equation*}
     |e^{G^\top_*}\beta(P)e^{G_*}|^T_{\sigma-1, \mathfrak{D}} \le  |\beta(P)|^T_{\sigma-1, \mathfrak{D}} \left(1 + Ce^{C|G_*|_{\sigma-1,  0}^T}|G_*|_{\sigma-1, \omega, \mathcal{D}}^T  \right).
\end{equation*}
Thanks to the latter, Property \ref{it:bound_BA_par} of Lemma \ref{prop:exp_bound_par} implies that 
\begin{align*}
    &|\mathcal{A}(G_*, G)^\top e^{G^\top_*}\beta(P)e^{G_*}|^T_{\sigma-1, \mathfrak{D}(2\ell+2\dec+1)}\\
     &\qquad \le C|e^{G^\top_*}\beta(P)e^{G_*}|^T_{\sigma-1, \mathfrak{D}}\left(1 + e^{C|G_*|_{\sigma-1,  0}^T}|G_*|_{\sigma-1,\omega,  \mathcal{D}}^T \right) |G|^T_{\sigma-1,\omega,  \mathcal{D}},\\
     &\qquad \le C|\beta(P)|^T_{\sigma-1, \mathfrak{D}}\left(1 +  e^{C|G_*|_{\sigma-1,  0}^T} \left(|G_*|_{\sigma-1,\omega,  \mathcal{D}}^T + \left(|G_*|_{\sigma-1,\omega,  \mathcal{D}}^T \right)^2\right)\right) |G|^T_{\sigma-1,\omega,  \mathcal{D}}.
\end{align*}
The term $e^{G^\top_*}\beta(P)e^{G_*}\mathcal{A}(G_*, G)$ can be treated similarly.

We observe that 
\begin{equation}
\begin{aligned}\label{eq:stima_di_cuore_2_par}
    e^{G^\top_*}\Mpar e^{G_*}\mathcal{A}(G_*, G) - \Mpar G &= ( e^{G_*^\top }- \mathrm{Id}_{2m})\Mpar e^{G_*}\mathcal{A}(G_*, G)\\
    &+ \Mpar ( e^{G_*} - \mathrm{Id}_{2m})\mathcal{A}(G_*, G) + \Mpar \left(\mathcal{A}(G_*, G) - G\right).
\end{aligned}
\end{equation}
On the one hand, using Property \ref{it:bound_A-G_par} of Lemma \ref{prop:exp_bound_par}, we have that 
\begin{align*}
    &|\Mpar \left(\mathcal{A}(G_*, G) - G\right)|^T_{\sigma-1, \mathfrak{D}(2\ell +2\dec +1)}\\
    &\qquad\le Ce^{C|G_*|_{\sigma-1, 0}^T} |G_*|_{\sigma-1, \omega,  \mathcal{D}}^T |G|^T_{\sigma-1, \omega, \mathcal{D}} .
\end{align*}
On the other hand, by Property \ref{prop:crescita_indici_norm_ell} and \ref{prop:bound_norm_product_matrices} of Lemma \ref{prop:Csigma_prop_norms_matrix}, and Property \ref{it:bound_expF} of Lemma \ref{lem:exp_expansion_bounds} a straightforward computation shows that 
\begin{equation*}
    ( e^{G_*^\top }- \mathrm{Id}_{2m})\Mpar e^{G_*}, \,  \Mpar ( e^{G_*} - \mathrm{Id}_{2m})\in \mathcal{M}^{ T}_{\sigma-1, \mathcal{D}}
\end{equation*}
and hence, using Properties \ref{it:bound_expF_par} and \ref{it:bound_MA_par} of Lemma \ref{prop:exp_bound_par}, and Property \ref{it:bound_expF} of Lemma \ref{lem:exp_expansion_bounds}, we obtain that 
\begin{align*}
    &|( e^{G_*^\top }- \mathrm{Id}_{2m})\Mpar e^{G_*} \mathcal{A}(G_*, G)|^T_{\sigma-1, \mathfrak{D}(2\ell + 2\dec +1)} \\
    &\qquad \le C|( e^{G_*^\top }- \mathrm{Id}_{2m})\Mpar e^{G_*}|^T_{\sigma-1, \mathcal{D}}\left(1 + e^{C|G_*|_{\sigma-1, 0}^T} |G_*|_{\sigma-1, \omega, \mathcal{D}}^T\right)|G|^T_{\sigma-1, \omega, \mathcal{D}},\\
    &\qquad \le C e^{C|G_*|_{\sigma-1, 0}^T}\left( |G_*|_{\sigma-1,\omega,  \mathcal{D}}^T + \left(|G_*|_{\sigma-1, \omega, \mathcal{D}}^T \right)^2 \right)|G|^T_{\sigma-1,\omega,  \mathcal{D}},\\
    &|\Mpar ( e^{G_*} - \mathrm{Id}_{2m})\mathcal{A}(G_*,G)|^T_{\sigma-1, \mathfrak{D}(2\ell + 2\dec +1)} \\
    &\qquad\le C|\Mpar ( e^{G_*} - \mathrm{Id}_{2m})|^T_{\sigma-1, \mathcal{D}}\left(1 + e^{C|G_*|_{\sigma-1, 0}^T} |G_*|_{\sigma-1,\omega,  \mathcal{D}}^T \right)|G|^T_{\sigma-1,\omega,  \mathcal{D}},\\
    &\qquad\le  C e^{C|G_*|_{\sigma-1, 0}^T}\left( |G_*|_{\sigma-1,\omega,  \mathcal{D}}^T + \left(|G_*|_{\sigma-1, \omega, \mathcal{D}}^T \right)^2 \right)|G|^T_{\sigma-1,\omega,  \mathcal{D}}.
\end{align*}
   Thanks to the triangular inequality and the above estimates, by~\eqref{eq:stima_di_cuore_2_par}, we prove that 
   \begin{align*}
       &|e^{G^\top_*}\Mpar e^{G_*}\mathcal{A}(G_*, G) - \Mpar G|^T_{\sigma-1, \mathfrak{D}(2\ell +2\dec+1)}\\
       &\qquad \le  Ce^{C|G_*|_{\sigma-1, 0}^T}\bigg[ |G_*|_{\sigma-1,\omega,  \mathcal{D}}^T  + \left(|G_*|_{\sigma-1,\omega,  \mathcal{D}}^T \right)^2 \bigg]|G|^T_{\sigma-1, \omega, \mathcal{D}}.
   \end{align*}
The term $\mathcal{A}(G_*, G)^\top e^{G^\top_*}\Mpar e^{G_*} - G^\top \Mpar$ can be treated similarly. 

We write $\mathcal{A}(G_*, G)^\top e^{G^\top_*} J_m \Lo  e^{G_*}$ as
\begin{equation*}
    \mathcal{A}(G_*, G)^\top e^{G^\top_*} J_m \Lo  e^{G_*} = \mathcal{A}(G_*, G)^\top (e^{G^\top_*}-\mathrm{Id}_{2m}) J_m \Lo  e^{G_*} + \mathcal{A}(G_*, G)^\top  J_m \Lo  e^{G_*}
\end{equation*}
and using Properties \ref{it:bound_expF_par}, \ref{it:bound_NJA(F,G)_par} and \ref{it:bound_L(expG)_par} of Lemma \ref{prop:exp_bound_par}, and Property \ref{it:bound_expF} of Lemma \ref{lem:exp_expansion_bounds} we can verify that 
\begin{align*}
   &|\mathcal{A}(G_*, G)^\top e^{G^\top_*} J_m \Lo  e^{G_*}|^T_{\sigma -1, \mathfrak{D}(2\ell + 2 \dec+1)} \\
   &\qquad \le    C e^{C|G_*|_{\sigma-1, 0}^T}|\Lo e^{G_*}|^T_{\sigma-1, \mathcal{D}(\ell + \dec+1)}\left( 1 +|G_*|_{\sigma-1,\omega,  \mathcal{D}}^T\right)|G|^T_{\sigma-1, \omega, \mathcal{D}}\\
   &\qquad \le  C e^{C|G_*|_{\sigma-1, 0}^T}|\Lo {G_*}|^T_{\sigma-1, \mathcal{D}(\ell +\dec+1)}\left( 1 +|G_*|_{\sigma-1,\omega,  \mathcal{D}}^T + \left(|G_*|_{\sigma-1,\omega,  \mathcal{D}}^T\right)^2\right)|G|^T_{\sigma-1, \omega, \mathcal{D}}\\
   &\qquad \le  C e^{C|G_*|_{\sigma-1, 0}^T}| {G_*}|^T_{\sigma-1, \omega, \mathcal{D}}\left( 1 +|G_*|_{\sigma-1,\omega,  \mathcal{D}}^T + \left(|G_*|_{\sigma-1,\omega,  \mathcal{D}}^T\right)^2\right)|G|^T_{\sigma-1, \omega, \mathcal{D}},
\end{align*}
where $\mathcal{D}(\ell +\dec+1)$ is defined in~\eqref{eq:index_regularity_G_par}.

Finally,  we have that 
\begin{align*}
    e^{G^\top_*}  J_m \Lo  (e^{G_*} \mathcal{A}(G_*, G)) - J_m \Lo G &= \left(e^{G^\top_*} - \mathrm{Id}_{2m}\right)  J_m \Lo  (e^{G_*} \mathcal{A}(G_*, G))  +  J_m \Lo  (e^{G_*}) \mathcal{A}(G_*, G) \\
    &+ J_m  (e^{G_*} - \mathrm{Id}_{2m})\Lo \left(\mathcal{A}(G_*, G)\right) + J_m \Lo \left(\mathcal{A}(G_*, G) - G\right).
\end{align*}
We need to provide an upper bound for each term on the RHS of the latter. For this purpose, using Property \ref{it:bound_JLomega(A-G)} of Lemma \ref{prop:exp_bound_par}, we have that 
\begin{align*}
    &|J_m \Lo \left(\mathcal{A}(G_*, G) - G\right)|^T_{\sigma-1, \mathfrak{D}(2\ell +2\dec+1)}\\
    &\qquad \le C e^{C|G_*|_{\sigma-1, 0}^T} \left(1 + |G_*|_{\sigma-1, \mathcal{D}}^T\right) \bigg[|G|_{\sigma-1, \mathcal{D}}^T|\Lo G_*|_{\sigma-1, \mathcal{D}(2\ell +2\dec +1)}^T + |\Lo G|_{\sigma-1, \mathcal{D}(2\ell +2\dec +1)}^T|G_*|_{\sigma-1, \mathcal{D}}^T \bigg]\\
    &\qquad \le C e^{C|G_*|_{\sigma-1, 0}^T} \left(|G_*|_{\sigma-1, \omega, \mathcal{D}}^T + \left(|G_*|_{\sigma-1, \omega, \mathcal{D}}^T\right)^2\right) |G|_{\sigma-1, \omega, \mathcal{D}}^T.
\end{align*}    
By Properties \ref{it:bound_expF_par} and \ref{it:bound_JMLomegaA_par} of Lemma \ref{prop:exp_bound_par}, we obtain that 
\begin{align*}
    &|J_m  (e^{G_*} - \mathrm{Id}_{2m})\Lo \left(\mathcal{A}(G_*, G)\right)|^T_{\sigma-1, \mathfrak{D}(2\ell +2\dec+1)}\\
    &\qquad \le C |e^{G_*} - \mathrm{Id}_{2m}|^T_{\sigma-1, \mathcal{D}}|\Lo G|_{\sigma-1, \mathcal{D}(\ell+\dec+1)}^T\\
            &\qquad + C e^{C|G_*|_{\sigma-1, 0}^T}|e^{G_*} - \mathrm{Id}_{2m}|^T_{\sigma-1, \mathcal{D}}\left(1 + |G_*|_{\sigma-1, \mathcal{D}}^T\right) \bigg[|G|_{\sigma-1, \mathcal{D}}^T|\Lo G_*|_{\sigma-1, \mathcal{D}(\ell+\dec+1)}^T \\
            &\qquad + |\Lo G|_{\sigma-1, \mathcal{D}(\ell+\dec+1)}^T|G_*|_{\sigma-1, \mathcal{D}}^T\bigg] \\
            &\qquad \le C e^{C|G_*|_{\sigma-1, 0}^T}\left(|G_*|^T_{\sigma-1, \omega, \mathcal{D}} + \left(|G_*|^T_{\sigma-1, \omega, \mathcal{D}}\right)^2 + \left(|G_*|^T_{\sigma-1, \omega, \mathcal{D}}\right)^3\right) |G|^T_{\sigma-1, \omega, \mathcal{D}}.
\end{align*}

and using Properties \ref{it:bound_NJA(F,G)_par} and \ref{it:bound_L(expG)_par} of Lemma \ref{prop:exp_bound_par}, we have that 
\begin{align*}
    &|J_m \Lo  (e^{G_*}) \mathcal{A}(G_*, G)|^T_{\sigma-1, \mathfrak{D}(2\ell +2\dec+1)}\\
    &\qquad \le C|\Lo  (e^{G_*})|^T_{\sigma-1, \mathcal{D}(\ell +  \dec +1)}\left(1 + e^{C|G_*|_{\sigma-1, 0}^T}|G_*|_{\sigma-1, \mathcal{D}}^T \right) |G|^T_{\sigma-1, \omega, \mathcal{D}}\\
    &\qquad \le Ce^{C|G_*|_{\sigma-1, 0}^T}\left(|G_*|^T_{\sigma-1, \omega, \mathcal{D}} + \left( |G_*|^T_{\sigma-1, \omega, \mathcal{D}}\right)^2 + \left( |G_*|^T_{\sigma-1, \omega, \mathcal{D}}\right)^3 \right) |G|^T_{\sigma-1, \omega, \mathcal{D}}.\\
\end{align*}
It remains to analyze
\begin{align*}
    \left(e^{G^\top_*} - \mathrm{Id}_{2m}\right)  J_m \Lo  (e^{G_*} \mathcal{A}(G_*, G))  &= \left(e^{G^\top_*} - \mathrm{Id}_{2m}\right)  J_m \Lo  (e^{G_*}) \mathcal{A}(G_*, G) \\
    &+ \left(e^{G^\top_*} - \mathrm{Id}_{2m}\right)  J_m   e^{G_*}\Lo( \mathcal{A}(G_*, G)).  
\end{align*}
On the one hand, thanks to Property \ref{it:bound_exp_Lomega_exp_par} of Lemma \ref{prop:exp_bound_par}, we know that 
\begin{equation*}
    (e^{F^\top} - \mathrm{Id}_{2m})J_m \Lo (e^F) \in \mathcal{M}^{ T}_{\sigma-1, \mathfrak{D}(2\ell + 2\dec+1)} \subset \mathcal{M}^{ T}_{\sigma-1, \mathfrak{D}(\ell+\dec+1)}
\end{equation*}
and hence, using Properties \ref{it:bound_BA_par} and \ref{it:bound_exp_Lomega_exp_par} of Lemma \ref{prop:exp_bound_par}
\begin{align*}
    &|\left(e^{G^\top_*} - \mathrm{Id}_{2m}\right)  J_m \Lo  (e^{G_*}) \mathcal{A}(G_*, G)|^T_{\sigma-1, \mathfrak{D}(2\ell+2\dec+1)} \\
    &\qquad \le C |\left(e^{G^\top_*} - \mathrm{Id}_{2m}\right)  J_m \Lo  (e^{G_*})|^T_{\sigma-1, \mathfrak{D}(\ell+\dec+1)}\left(1 + e^{C|G_*|_{\sigma-1, 0}^T}|G_*|_{\sigma-1, \omega, \mathcal{D}}^T\right)|G|^T_{\sigma-1, \omega, \mathcal{D}}\\
    &\qquad \le C |\left(e^{G^\top_*} - \mathrm{Id}_{2m}\right)  J_m \Lo  (e^{G_*})|^T_{\sigma-1, \mathfrak{D}(2\ell+2\dec+1)}\left(1 + e^{C|G_*|_{\sigma-1, 0}^T}|G_*|_{\sigma-1, \omega, \mathcal{D}}^T\right)|G|^T_{\sigma-1, \omega, \mathcal{D}}\\
    &\qquad \le Ce^{C|G_*|_{\sigma-1, 0}^T} \left( \left(|G_*|_{\sigma-1, \omega, \mathcal{D}}^T\right)^2 + \left(|G_*|_{\sigma-1, \omega, \mathcal{D}}^T\right)^3 + \left(|G_*|_{\sigma-1, \omega, \mathcal{D}}^T\right)^4\right)|G|^T_{\sigma-1, \omega, \mathcal{D}}.
\end{align*}
On the other hand, by Property \ref{it:bound_expFtopJexpF} of Lemma \ref{prop:exp_bound_par}, we have that $(e^{G_*^\top} - \mathrm{Id}_{2m})J_m e^{G_*} \in \mathcal{M}^{ T}_{\sigma-1, \mathfrak{D}(\ell + \dec -1)}$ and combining it with Property \ref{it:bound_RL_omegaA(F,G)} of Lemma \ref{prop:exp_bound_par}, we obtain 
\begin{align*}
    &|\left(e^{G^\top_*} - \mathrm{Id}_{2m}\right)  J_m   e^{G_*}\Lo( \mathcal{A}(G_*, G))|^T_{\sigma-1, \mathfrak{D}(2\ell+2\dec+1)} \\
    &\qquad \le C \left|\left(e^{G^\top_*} - \mathrm{Id}_{2m}\right)  J_m   e^{G_*}\right|^T_{\sigma-1, \mathfrak{D}(\ell+\dec-1)}|\Lo G|_{\sigma-1, \mathcal{D}(\ell+\dec+1)}^T\\
    &\qquad + C e^{C|G_*|_{\sigma-1, 0}^T}\left|\left(e^{G^\top_*} - \mathrm{Id}_{2m}\right)  J_m   e^{G_*}\right|^T_{\sigma-1, \mathfrak{D}(\ell+\dec-1)}\left(1 + |G_*|_{\sigma-1, \mathcal{D}}^T\right) \bigg[|G|_{\sigma-1, \mathcal{D}}^T|\Lo G_*|_{\sigma-1, \mathcal{D}(\ell+\dec+1)}^T \\
    &\qquad + |\Lo G|_{\sigma-1, \mathcal{D}(\ell+\dec+1)}^T|G_*|_{\sigma-1, \mathcal{D}}^T\bigg]\\
    &\qquad \le Ce^{C|G_*|_{\sigma-1, 0}^T}\left(|G_*|^T_{\sigma-1, \omega, \mathcal{D}} + \left( |G_*|^T_{\sigma-1, \omega, \mathcal{D}}\right)^2 + \left( |G_*|^T_{\sigma-1, \omega, \mathcal{D}}\right)^3 \right) |G|^T_{\sigma-1, \omega, \mathcal{D}}.
\end{align*}

Therefore, there exists $C_*$ depending continuously on $\sigma, \ell, \dec, m, n, \Upsilon, \Lambda, |P|^T_{\sigma, \Lell^*},$ and $|G_*|^T_{\sigma - 1, \omega, \mathcal{D}}$ such that
\begin{align*}
 \big|(D_G\mathcal{G}^T(0,0) - D_G\mathcal{G}^T(P, G_*))G\big|^T_{\sigma - 1, \mathfrak{D}} & \leq  \frac{1}{T^{\ell +\dec+ 1}}\big|(D_G\mathcal{G}^T(0,0) - D_G\mathcal{G}^T(P, G_*))G\big|_{\sigma - 1, \mathfrak{D}(2\ell+2\dec+1)}  \\
 & \leq \frac{C_*}{T^{\ell +\dec + 1}} |G|^T_{\sigma - 1, \omega, \mathcal{D}}.
\end{align*}

\end{proof}

\begin{remark}
We point out that it is possible to prove the existence of a family of matrices $S^t$ satisfying~\eqref{eq:conjugated_cocycles} and $$\lim_{t \to +\infty}\left|S^t - \mathrm{Id}_{2n+2m}\right|_{C^0}=0$$ under the slightly weaker decay assumption
    \begin{equation*}
    \begin{aligned}
    a \in \mathcal{S}^{ 1}_{(\sigma, 2),0}, \quad \partial_q a &\in \mathcal{S}_{(\sigma, 1), \ell+\dec+5}^{1}, \quad b \in \mathcal{S}_{(\sigma, 2), \ell}^{1}, \quad  c_1 \in \mathcal{S}_{(\sigma, 2), \ell+\dec+3}^{ 1}, \quad  c_2 \in \mathcal{S}_{(\sigma, 2), \ell+\dec+4}^{ 1}, \\
         d_1 &\in \mathcal{S}_{(\sigma, 2), \ell+\dec}^{ 1}, \quad d_2 \in \mathcal{S}_{(\sigma, 2), \ell+\dec+1}^{ 1}, \quad d_3 \in \mathcal{S}_{(\sigma, 2), \ell+\dec+2}^{ 1}
         \end{aligned}
    \end{equation*}
    instead of~\eqref{eq:Hyp_decay_par_2}. However, in order to prove also~\eqref{def:trasv_dyn_limit}, the stronger decay assumption~\eqref{eq:Hyp_decay_par_2} is required (see Corollary \ref{cor:criterion}).
\end{remark}

\subsection{Proof of Theorem \ref{Thm:par_analy}}
\label{sc:proof_analytic_par} 
The proof of Theorem \ref{Thm:par_analy} is completely analogous to that of Theorem \ref{Thm:par_Csigma_0_analy}. Indeed, using the same arguments as in Section \ref{sc:proof_analytic_par0}, it suffices to prove the first part of Theorem \ref{Thm:par_analy}. This can be done by following step by step the proof of Item \eqref{thm:C_existence} of Theorem \ref{Thm:par_Csigma}, with the only modification that the domain of the functional $\mathcal{F}^T$ must be adapted as in Section \ref{sc:proof_analytic_par0}, and by considering the spaces of real analytic functions introduced there instead of the spaces of Hölder functions.

\subsection{Proof of Corollary \ref{cor:par}}\label{sec:proof_cor_par}
We recall that the component $a_{32}$ of the matrix $A$ in~\eqref{def:A} in terms of the asymptotic KAM torus $\varphi=(\mathrm{id}_{\T^n} + u,v,w)  = (\mathrm{id}_{\T^n} + u,v,w_x, w_y) $ and the components of the matrix $S$ (see~\eqref{eq:def_A_Hamiltonian}) has the following form
  \begin{equation*}
     a_{32} = e^{G^\top}
\left(\partial_z\partial_pH\circ\varphi\right)
(\mathrm{id}_{\T^n}+\partial_qu)^{-\top}- e^{G^\top} J_m\partial_qw
(\mathrm{id}_{\T^n}+\partial_qu)^{-1}
\left(\partial_p^2H\circ\varphi\right)
(\mathrm{id}_{\T^n}+\partial_qu)^{-\top}
 \end{equation*}
 where for $\ell >3$, $\dec \ge 0$ and $T'' \ge 1$
  \begin{equation}\label{cor_proof:decay_uvwG_par}
     u \in \mathcal{S}_{(\sigma, 0), \ell-1}^{T''}, \hspace{2mm}  v \in \mathcal{S}_{(\sigma, 0), \ell+ \dec + 5}^{T''}, \hspace{2mm} w_x \in \mathcal{S}_{(\sigma, 0), \ell+ \dec+4}^{T''}, \hspace{2mm} w_y \in \mathcal{S}_{(\sigma, 0), \ell+ \dec+3}^{T''}, \hspace{2mm} G \in \mathcal{S}_{(\sigma, 0), \mathcal{D}(\ell +\dec)}^{T''}
 \end{equation}
where we refer to~\eqref{eq:index_regularity_G_par} for the definition of $\mathcal{D}(\ell +\dec)$. Notice that here we assume $\ell >3$ instead of $\ell >1$.

The proof of Corollary \ref{cor:par} is an immediate consequence of 
Proposition \ref{prop:par_1:1_asym_sol},~\eqref{eq:conjugated_cocycles}, and~\eqref{def:trasv_dyn_limit}. We only need to verify whether the hypothesis~\eqref{hyp:limit_aut_sys_par} is satisfied. In this setting, it consists of proving that 
\begin{equation}\label{proof_cor:cond_to_verify_par}
\int_{T}^{+\infty} (s - T) |\Pi_x a_{32}^s - \partial_x\partial_p H^\infty\circ \varphi_0|_{C^0} \, ds < \infty, \quad \int_{T}^{+\infty} |\Pi_y a_{32}^s - \partial_y\partial_p H^\infty\circ \varphi_0|_{C^0} \, ds < \infty       
\end{equation}
where $\varphi_0$ is the trivial embedding in~\eqref{def:varphi0=(q,0,0)} and $\Pi_x$ and $\Pi_y$ the projections defined in~\eqref{not:proj_Pixy}. %
It follows that
 \begin{align*}
        &e^{G^\top}
\left(\partial_z\partial_pH\circ\varphi\right)
(\mathrm{id}_{\T^n}+\partial_qu)^{-\top} - \partial_z\partial_pH^\infty\circ\varphi_0  \\
&\qquad \qquad =\left(\partial_z\partial_pH\right)_0 - \left(\partial_z\partial_pH^\infty\right)_0 + \int_0^1 \partial_q\partial_z\partial_pH(\mathrm{id}_{\T^n} + \tau u, \tau v, \tau w) d \tau \, u\\
        &\qquad \qquad + \int_0^1 \partial_p\partial_z\partial_pH(\mathrm{id}_{\T^n} + \tau u, \tau v, \tau w) d \tau \, v +  \int_0^1 \partial_z\partial_z\partial_pH(\mathrm{id}_{\T^n} + \tau u, \tau v, \tau w) d \tau \, w  \\
&\qquad \qquad +  {G^\top}\int_0^1e^{\tau {G^\top}}\,d\tau \,\left(\partial_z\partial_pH\circ\varphi\right) - \partial_z\partial_pH\circ\varphi \int_0^1 (\mathrm{id}_{\T^n}+\tau\partial_qu)^{-\top}(\partial_qu)^{\top}(\mathrm{id}_{\T^n}+\tau\partial_qu)^{-\top}\,d\tau\\
&\qquad\qquad - {G^\top}\int_0^1e^{\tau {G^\top}}\,d\tau \,\left(\partial_z\partial_pH\circ\varphi\right)\int_0^1 (\mathrm{id}_{\T^n}+\tau\partial_qu)^{-\top}(\partial_qu)^{\top}(\mathrm{id}_{\T^n}+\tau\partial_qu)^{-\top}\,d\tau.
    \end{align*}
    where we used the notation (subscript $0$) introduced in ~\eqref{def:f0}. 
Noticing that 
\begin{equation*}
    \int_0^1 \partial_q\partial_x\partial_pH(\mathrm{id}_{\T^n} + \tau u, \tau v, \tau w) d \tau \, u, \, \int_0^1 \partial_q\partial_y\partial_pH(\mathrm{id}_{\T^n} + \tau u, \tau v, \tau w) d \tau \, u \in \mathcal{S}^{ T^{''}}_{(\sigma, 0), \ell-1}
\end{equation*}
and by~\eqref{cor_proof:decay_uvwG_par}, hypothesis~\eqref{hyp:cor_par_decay} and the properties contained in Section \ref{sc:norm_properties_par}, we prove that 
\begin{equation}\label{proof_cor_par:decay_prop_1}
    \begin{gathered}
       \scalebox{0.97}{$e^{G^\top}
\left(\partial_z\partial_pH\circ\varphi\right)
(\mathrm{id}_{\T^n}+\partial_qu)^{-\top} - \partial_z\partial_pH^\infty\circ\varphi_0 
 - \left(\left(\partial_z\partial_pH\right)_0  - \left(\partial_z\partial_pH^\infty\right)_0\right) \in \mathcal{S}_{(\sigma, 0), \ell-1}^{T''},$}\\
\scalebox{0.95}{$\displaystyle \int_{T''}^{+\infty} (s - T^{''})|\left(\partial_x\partial_pH\right)_0  - \left(\partial_x\partial_pH^\infty\right)_0|_{C^0} ds < \infty, \quad \int_{T''}^{+\infty} |\left(\partial_y\partial_pH\right)_0  - \left(\partial_y\partial_pH^\infty\right)_0|_{C^0} ds < \infty.$}
    \end{gathered}
\end{equation}
Similarly, we have 
\begin{equation}
\begin{aligned}\label{proof_cor_par:decay_prop_2}
    &\Pi_xe^{G^\top} J_m(\partial_qw)^\top
(\mathrm{id}_{\T^n}+\partial_qu)^{-1}
\left(\partial_p^2H\circ\varphi\right)
(\mathrm{id}_{\T^n}+\partial_qu)^{-\top} \in \mathcal{S}_{(\sigma, 0), \ell+\dec+2}^{T''},\\
&\Pi_ye^{G^\top} J_m(\partial_qw)^\top
(\mathrm{id}_{\T^n}+\partial_qu)^{-1}
\left(\partial_p^2H\circ\varphi\right)
(\mathrm{id}_{\T^n}+\partial_qu)^{-\top} \in \mathcal{S}_{(\sigma, 0), \ell+\dec+3}^{T''}.
\end{aligned}
\end{equation}
Combining~\eqref{proof_cor_par:decay_prop_1} and~\eqref{proof_cor_par:decay_prop_2} and remembering that $\ell >3$, we prove~\eqref{proof_cor:cond_to_verify_par} and we conclude the proof of this corollary. 

\subsection{Cohomological equations}\label{sc:cohom_eqs_par}
In this section, we study in detail the cohomological equations appearing in the proof of Theorems \ref{Thm:par_Csigma} and \ref{Thm:par_analy}, namely, the three equations in \eqref{eq:coh_eqs_par} restricted to the Hölder setting (see Sections \ref{sec:proof_item_1_par_Holder}  and \ref{sec:proof_item_2_par_Holder} and the first two equations in \eqref{eq:coh_eqs_par} in the analytic setting (see Section \ref{sc:proof_analytic_par}), respectively.

The first equation was already treated in Section Part I \cite{scarcella_KAMST}.

\begin{proposition}[Proposition 6.13 in \cite{scarcella_KAMST}]\label{prop:Csigma_HEomega} 
       Given $\sigma \ge 0$, $T \ge 1$, and $\ell >0$,  the operator
    \begin{align*}
        \Lo : \mathcal{U}^{ T}_{\sigma, \omega, \ell} \longrightarrow \mathcal{S}^{ T}_{(\sigma, 0), \ell +1}(\R^n)
    \end{align*}
    is well-defined, invertible and satisfies
    \[ \| \Lo ^{-1}\| \leq C(\ell),\]
    for some $C > 0$ depending only on $\ell$.
    
   In addition, if $\sigma \ge 2$, then, for any $u \in \mathcal{U}^{ T}_{\sigma, \omega, \ell}$
    \begin{equation}\label{eq:prop_dist_Lomega_U}
        \partial_q\left(\Lo u\right) = \Lo \partial_q  u .
    \end{equation}
    Moreover, given $\sigma > 0$, $T \ge 1$, and $\ell >0$, the same well-definedness, invertibility, and estimate hold if one replaces the Banach spaces $\mathcal{U}^{ T}_{\sigma, \omega, \ell}$ and $\mathcal{S}^{ T}_{(\sigma, 0), \ell +1}$ by $\mathscr{U}^{ T}_{\sigma, \omega, \ell}$ and $\mathscr{S}^{ T}_{\sigma, \ell +1}$, respectively. Moreover, in this case, the equality~\eqref{eq:prop_dist_Lomega_U} holds for any $\sigma>0$ and $u \in \mathscr{U}^{ T}_{\sigma, \omega, \ell}$.
\end{proposition}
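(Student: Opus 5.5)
The plan is to write the solution explicitly by integrating along characteristics backwards from $+\infty$. Given $g \in \mathcal{S}^{T}_{(\sigma,0),\ell+1}$, define
\[
u(q,t) = -\int_t^{+\infty} g(q + \omega(s-t), s)\, ds .
\]
Since $|g^s|_{C^0} \le |g|^T_{\sigma,\ell+1} s^{-\ell-1}$ and $\ell>0$, the integral converges absolutely and uniformly in $q$. Moreover,
\[
|u^t|_{C^0} \le \frac{|g|^T_{\sigma,\ell+1}}{\ell\, t^{\ell}} .
\]
Along a characteristic, $s \mapsto u(q+\omega s, s) = -\int_s^\infty g(q+\omega r, r)\,dr$, and differentiating in $s$ gives $\Lo u = g$. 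This holds in the classical sense whenever $g$ is continuous in $(q,t)$, which is part of the definition of $\mathcal{S}^T_{(\sigma,0),\ell+1}$. Continuity of $u$ in $(q,t)$ follows from dominated convergence.

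For the Hölder norm, the key observation is that for fixed $s,t$ the map $q \mapsto g(q+\omega(s-t),s)$ is a translate of $g^s$. Translation is an isometry of $C^\sigma(\T^n)$, so $|g(\cdot+\omega(s-t),s)|_{C^\sigma} = |g^s|_{C^\sigma}$. Minkowski's inequality for the Bochner/Riemann integral in the $C^\sigma$ norm then yields
\[
|u^t|_{C^\sigma} \le \int_t^\infty |g^s|_{C^\sigma}\,ds \le \frac{1}{\ell}\, |g|^T_{\sigma,\ell+1}\, t^{-\ell}.
\]
For non-integer $\sigma$, I apply this to derivatives up to order $[\sigma]$, which commute with the integral by uniform convergence, and to the Hölder seminorm via the pointwise difference quotient. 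Hence $u \in \mathcal{U}^T_{\sigma,\omega,\ell}$ with $|u|^T_{\sigma,\omega,\ell} \le \max(1/\ell,1)\,|g|^T_{\sigma,\ell+1}$, so $C(\ell)=\max\{1,1/\ell\}$.

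Well-definedness of $\Lo$ on $\mathcal{U}^T_{\sigma,\omega,\ell}$ is immediate from the definition of that space, and boundedness is built into the norm. For injectivity, suppose $\Lo u=0$ with $u\in\mathcal{U}^T_{\sigma,\omega,\ell}$. Then $u$ is constant along the characteristics $s\mapsto(q+\omega s,s)$. Since $|u^s|_{C^0}\to0$ as $s\to\infty$, we get $u\equiv0$. Together with the explicit solution this gives invertibility, and the inverse is exactly the integral operator above. This uniqueness step is also where the decay requirement $\ell>0$ enters.

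For \eqref{eq:prop_dist_Lomega_U} with $\sigma\ge2$, I use that $u$ is $C^1$ in $(q,t)$ and $\partial_q u$ is $C^1$ in $q$. I would verify the identity by differentiating the integral formula: $\partial_q u = -\int_t^\infty \partial_q g(q+\omega(s-t),s)\,ds$, and this is the same construction applied to $\partial_q g$. Applying $\Lo$ along characteristics therefore gives $\partial_q g = \partial_q(\Lo u)$. An alternative is to check it distributionally, since $\partial_q$ commutes with $\partial_t+\omega\cdot\partial_q$.

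The analytic case is identical. The sup norm on $\T^n_\sigma$ is invariant under real translations $q\mapsto q+\omega(s-t)$, and holomorphy in $q$ is preserved by locally uniform integrals, so no loss of width occurs. The main (mild) obstacle is justifying the commutation of derivatives with the improper integral and the Hölder estimate under integration; both reduce to uniform convergence provided by the $s^{-\ell-1}$ integrable bound.
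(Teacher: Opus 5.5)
Your proposal is correct and follows essentially the route the paper relies on (its proof is deferred to Part I). That route is the explicit inverse $u(q,t)=-\int_t^{+\infty}g(q+\omega(s-t),s)\,ds$, together with invariance of the $C^\sigma$ and analytic sup norms under real translations, which gives $|u|^T_{\sigma,\ell}\le \ell^{-1}|g|^T_{\sigma,\ell+1}$, and uniqueness from the decay of $u$ along characteristics.

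One detail in the commutation step should be made explicit. Applying your construction to $\partial_q g$ requires $\partial_q g$ to be jointly continuous in $(q,t)$. This is not part of the definition of $\mathcal{S}^T_{(\sigma,0),\ell+1}$, but it follows from the continuity of $g$ and the uniform $C^\sigma$ bounds for $\sigma>1$, by interpolating $|g^{t'}-g^{t}|_{C^1}$ between $C^0$ and $C^2$.
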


We refer the reader to Part I \cite{scarcella_KAMST} for a proof of the proposition above.

\begin{proposition}\label{prop:L_w_par}
       Given $\sigma \ge 1$, $T \ge 1$, and $\ell>0$,  the operator
    \begin{align*}
         \mathcal{L}^{\mathrm{par}}_{\omega, \Lambda} : \mathcal{U}^{ T}_{\sigma, \omega, \ell + 1} \times  \mathcal{U}^{ T}_{\sigma, \omega, \ell}\longrightarrow \mathcal{S}^{ T}_{(\sigma, 0), \ell +2}(\R^m) \times \mathcal{S}^{ T}_{(\sigma, 0), \ell +1}(\R^m),
    \end{align*}
    as in \eqref{eq:coh_eqs_par}, is well-defined, invertible, and satisfies
    \[ \left \| ({\mathcal{L}^{\mathrm{par}}_{\omega, \Lambda}})^{-1} \right\| \leq C(\ell),\]
    for some $C > 0$ depending only on  $\ell$. 
    
    Moreover, the same well-definedness, invertibility, and estimate hold if one replaces the Banach spaces $\mathcal{U}^{ T}_{\sigma, \omega, \ell}$ and $\mathcal{S}^{ T}_{(\sigma, 0), \ell +1}$ by $\mathscr{U}^{ T}_{\sigma, \omega, \ell}$ and $\mathscr{S}^{ T}_{\sigma, \ell +1}$, respectively.
\end{proposition}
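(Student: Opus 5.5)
The plan is to exploit the triangular structure of $\mathcal{L}^{\mathrm{par}}_{\omega, \Lambda}$. Writing $\boldsymbol{\chi} = (\chi_x, \chi_y)$ and $\boldsymbol{g} = (g_x, g_y)$, we have $J_m \Mpar = \begin{pmatrix} 0 & 0 \\ -\Lambda & 0 \end{pmatrix}$. The equation $\mathcal{L}^{\mathrm{par}}_{\omega, \Lambda}\boldsymbol{\chi} = \boldsymbol{g}$ is therefore equivalent to the decoupled-then-coupled system
\[
-\Lo \chi_x = g_x, \qquad -\Lo \chi_y - \Lambda \chi_x = g_y .
\]
Everything then reduces to applying Proposition \ref{prop:Csigma_HEomega} twice, componentwise in $\R^m$, with shifted decay exponents.

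\emph{Well-definedness.} Let $(\chi_x,\chi_y) \in \mathcal{U}^{T}_{\sigma,\omega,\ell+1}\times \mathcal{U}^{T}_{\sigma,\omega,\ell}$. By definition of $\mathcal{U}$, we have $\Lo\chi_x \in \mathcal{S}^{T}_{(\sigma,0),\ell+2}$ and $\Lo\chi_y \in \mathcal{S}^{T}_{(\sigma,0),\ell+1}$. Moreover, $\Lambda\chi_x \in \mathcal{S}^{T}_{(\sigma,0),\ell+1}$ with $|\Lambda \chi_x|^T_{\sigma,\ell+1} \le |\Lambda|\,|\chi_x|^T_{\sigma,\ell+1}$, since $t^{\ell+1}\le t^{\ell+1}$ trivially on $I_T$. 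This holds even though $\chi_x$ actually decays like $t^{-(\ell+1)}$, so no loss occurs. Hence the image lies in $\mathcal{S}^{T}_{(\sigma,0),\ell+2}\times\mathcal{S}^{T}_{(\sigma,0),\ell+1}$, and the operator is bounded.

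\emph{Invertibility.} Given $(g_x,g_y)$ in the target space, we first set $\chi_x = -\Lo^{-1} g_x$. Proposition \ref{prop:Csigma_HEomega} is applied with exponent $\ell+1>0$, which gives a unique $\chi_x\in\mathcal{U}^{T}_{\sigma,\omega,\ell+1}$ with $|\chi_x|^T_{\sigma,\omega,\ell+1}\le C(\ell+1)|g_x|^T_{\sigma,\ell+2}$. Then $h := g_y+\Lambda\chi_x\in \mathcal{S}^{T}_{(\sigma,0),\ell+1}$, because $\chi_x \in \mathcal{S}^T_{(\sigma,0),\ell+1}$. Applying Proposition \ref{prop:Csigma_HEomega} again with exponent $\ell$ gives a unique $\chi_y=-\Lo^{-1}h\in\mathcal{U}^{T}_{\sigma,\omega,\ell}$, with
\[
|\chi_y|^T_{\sigma,\omega,\ell}\le C(\ell)\bigl(|g_y|^T_{\sigma,\ell+1}+|\Lambda|\,C(\ell+1)|g_x|^T_{\sigma,\ell+2}\bigr).
\]
Uniqueness of $(\chi_x,\chi_y)$ follows from the injectivity of $\Lo$ at each step. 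Combining the two bounds yields the operator norm estimate for $(\mathcal{L}^{\mathrm{par}}_{\omega,\Lambda})^{-1}$, with a constant depending on $\ell$ and, through the coupling term, on $\Lambda$. I would note in the proof that the dependence on $\Lambda$ is harmless, since $\Lambda$ is fixed throughout, and that this is consistent with the constants $C(\sigma,\ell,\dec,\Lambda,\Upsilon)$ used in Lemma \ref{lemma:DF_invertible_par}.

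\emph{Analytic case.} The analytic statement is identical. Proposition \ref{prop:Csigma_HEomega} also holds on $\mathscr{U}^{T}_{\sigma,\omega,\ell}\to\mathscr{S}^{T}_{\sigma,\ell+1}$, and multiplication by the constant matrix $\Lambda$ preserves the spaces $\mathscr{S}^{T}_{\sigma,\ell+1}$.

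The only point requiring care, rather than a genuine obstacle, is the bookkeeping of decay exponents. The coupling term $\Lambda\chi_x$ must land in the space of $g_y$, and this is exactly why $\chi_x$ is required to decay one order faster than $\chi_y$ (the space $\mathcal{U}_{\ell+1}$ versus $\mathcal{U}_\ell$), and $g_x$ one order faster than $g_y$.
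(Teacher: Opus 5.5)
Your proposal is correct and follows the paper's proof exactly: rewrite $\mathcal{L}^{\mathrm{par}}_{\omega,\Lambda}\boldsymbol{\chi}=\boldsymbol{g}$ as the triangular system $-\Lo\chi_x=g_x$, $-\Lo\chi_y-\Lambda\chi_x=g_y$, then apply Proposition \ref{prop:Csigma_HEomega} twice, first with exponent $\ell+1$ and then with exponent $\ell$. You are also right that the coupling term $\Lambda\chi_x$ makes the inverse's norm depend on $\Lambda$, so the stated constant $C(\ell)$ should be read as $C(\ell,\Lambda)$; this matches how the bound is actually used in Lemma \ref{lemma:DF_invertible_par}.
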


\begin{proof}
Let $(g_x, g_y) \in\mathcal{S}^{ T}_{(\sigma, 0), \ell +2}(\R^m) \times \mathcal{S}^{ T}_{(\sigma, 0), \ell +1}(\R^m).$ Notice that the equation 
\[  \mathcal{L}^{\mathrm{par}}_{\omega, \Lambda}(w_x, w_y) = (g_x, g_y)\]
is equivalent to the system
\[ \left\{ \begin{array}{l} 
-\Lo  w_x = g_x,\\
 -\Lo w_y - \Lambda \cdot w_x = g_y.
 \end{array}\right. \]

The result follows by applying twice Proposition \ref{prop:Csigma_HEomega}.
\end{proof}

Let us recall that $\mathcal{D} = \mathcal{D}(\ell + \dec)$ and $\mathfrak{D} = \mathfrak{D}(\ell + \dec)$ are given by \eqref{eq:index_regularity_G_par} and \eqref{eq:index_regularity_G_par_Sym}, respectively
\begin{proposition}\label{prop:L_w_Lambda} 
    Given $\sigma \ge 1$, $T\ge 1$, and $\ell >0$, the following operator
    \begin{equation*}
        \mathfrak{L}^{\mathrm{par}}_{\omega, \Lambda} : \mathcal{S}p^{ T}_{\sigma, \omega, \mathcal{D}} \longrightarrow \mathcal{S}ym^{ T}_{\sigma, \mathfrak{D}}
    \end{equation*}
    is well-defined and invertible. Moreover, 
    \[
\| (\mathfrak{L}^{\mathrm{par}}_{\omega, \Lambda})^{-1} \| \leq C(\ell, \dec, \Lambda),\]
    for some $C > 0$ depending only on $\ell$, $\dec$ and $\Lambda$.
\end{proposition}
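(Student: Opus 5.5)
We write $G\in\mathfrak{sp}(2m,\R)$ in blocks as in \eqref{G_decay_par}, namely $G=\begin{pmatrix} G_1 & G_2\\ G_3 & -G_1^\top\end{pmatrix}$ with $G_2,G_3$ symmetric. The plan is to reduce the equation $\mathfrak{L}^{\mathrm{par}}_{\omega,\Lambda}G=F$ to a triangular system of scalar cohomological equations $\Lo\chi=g$, and to solve each of them with Proposition \ref{prop:Csigma_HEomega}, applied entrywise.

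The computation carried out in the proof of Lemma \ref{lemma:G_well_defined_par} already gives
\[
\mathfrak{L}^{\mathrm{par}}_{\omega,\Lambda}G=\begin{pmatrix} G_1^\top\Lambda+\Lambda G_1+\Lo G_3 & \Lambda G_2-\Lo G_1^\top\\ G_2\Lambda-\Lo G_1 & -\Lo G_2\end{pmatrix}.
\]
This matrix is symmetric, so well-definedness reduces to checking the decay of each block. From $G\in\mathcal{S}p^T_{\sigma,\omega,\mathcal{D}}$ we have $G_1\in\mathcal{M}_{\ell+\dec+1}$ with $\Lo G_1\in\mathcal{M}_{\ell+\dec+2}$, $G_2\in\mathcal{M}_{\ell+\dec+2}$ with $\Lo G_2\in\mathcal{M}_{\ell+\dec+3}$, and $G_3\in\mathcal{M}_{\ell+\dec}$ with $\Lo G_3\in\mathcal{M}_{\ell+\dec+1}$. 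The $(1,1)$ block therefore lies in $\mathcal{M}_{\ell+\dec+1}$, the off-diagonal blocks lie in $\mathcal{M}_{\ell+\dec+2}$, and the $(2,2)$ block lies in $\mathcal{M}_{\ell+\dec+3}$. This is exactly $\mathfrak{D}$, with norm control $C(\Lambda)|G|^T_{\sigma,\omega,\mathcal{D}}$.

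For invertibility, take $F=\begin{pmatrix}F_1&F_2\\F_2^\top&F_4\end{pmatrix}\in\mathcal{S}ym^T_{\sigma,\mathfrak{D}}$ and solve in three steps.
\begin{enumerate}
\item Solve $-\Lo G_2=F_4$. Since $F_4\in\mathcal{M}_{\ell+\dec+3}$, Proposition \ref{prop:Csigma_HEomega} (with index $\ell+\dec+2>0$) gives a unique $G_2\in\mathcal{U}_{\ell+\dec+2}$. Uniqueness forces $G_2$ to be symmetric, because $G_2^\top$ solves the same equation.
\item Solve $\Lo G_1=G_2\Lambda-F_2^\top$. The right-hand side lies in $\mathcal{M}_{\ell+\dec+2}$, so we obtain $G_1\in\mathcal{U}_{\ell+\dec+1}$. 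Transposing this equation shows that the $(1,2)$ block equation is then satisfied automatically.
\item Solve $\Lo G_3=F_1-G_1^\top\Lambda-\Lambda G_1$. The right-hand side is symmetric and lies in $\mathcal{M}_{\ell+\dec+1}$, so we obtain a symmetric $G_3\in\mathcal{U}_{\ell+\dec}$.
\end{enumerate}
The resulting $G$ belongs to $\mathfrak{sp}(2m,\R)$ and to $\mathcal{S}p^T_{\sigma,\omega,\mathcal{D}}$.

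Chaining the bounds $\|\Lo^{-1}\|\le C(\cdot)$ from the three steps gives $|G|^T_{\sigma,\omega,\mathcal{D}}\le C(\ell,\dec,\Lambda)|F|^T_{\sigma,\mathfrak{D}}$. Uniqueness follows by running the same triangular argument with $F=0$: each step in turn forces $G_2=0$, then $G_1=0$, then $G_3=0$.

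I expect the main obstacle to be bookkeeping rather than anything conceptual. The decay indices must match at every step, in particular the shift by one between the spaces $\mathcal{U}_k$ and $\mathcal{S}_{k+1}$ in Proposition \ref{prop:Csigma_HEomega}, and the index $\ell+\dec$ used in step 3 must be positive. One also has to check that the symmetry constraints make the system neither over-determined nor under-determined. The two off-diagonal block equations are transposes of one another, so there are exactly as many independent equations as unknowns in $\mathfrak{sp}(2m,\R)$.
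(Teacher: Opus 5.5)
Your proposal is correct and follows essentially the same route as the paper: both reduce $\mathfrak{L}^{\mathrm{par}}_{\omega,\Lambda}G=F$ to the triangular block system and solve it entrywise with Proposition \ref{prop:Csigma_HEomega}, first for $G_2$, then $G_1$, then $G_3$, chaining the resulting estimates. Your extra checks fill in details the paper leaves implicit: that $G_2$ and $G_3$ are symmetric by uniqueness, and that the $(1,2)$ block equation is the transpose of the $(2,1)$ one, so the constructed $G$ lies in $\mathfrak{sp}(2m,\R)$.
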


\begin{proof}
     Let $g \in \mathcal{S}ym^{ T}_{\sigma, \mathfrak{D}}$. Hence, it can be written as
\[ g = \begin{pmatrix}
X & Y \\ Y^{\top} & -Z
\end{pmatrix},  \qquad  \begin{array}{ll}  Y : \T^n \times I_T \to \mathcal{M}_m(\R), \quad X, Z : \T^n \times I_T \to  \textup{Sym}(m, \R), \\
X \in \mathcal{M}^{T}_{\sigma, \ell+\dec+1}, \,Y \in \mathcal{M}^{T}_{\sigma, \ell+\dec+2}, \, Z \in \mathcal{M}^{T}_{\sigma, \ell+\dec +3}  \end{array} \]
we refer to~\eqref{def:BS_ell_M_Sym} for the definition of the space $\mathcal{M}^{ T}_{\sigma, \ell + \dec}$.
    We also recall that, if  $G \in \mathcal{S}p^{ T}_{\sigma, \omega, \mathcal{D}}$, then it can be written as in~\eqref{G_decay_par}.
The proof reduces to finding the unique solution $G \in \mathcal{S}p^{ T}_{\sigma, \omega, \mathcal{D}}$ of the equation $G^{\top}\Mpar + \Mpar G + J_m \Lo G =g$, which, in the above notation, reads
\begin{equation}\label{eq:HE_par_tras}
    \left\{ \begin{array}{llll}
G_1^{\top}\Lambda + \Lambda G_1 + \Lo G_3 = X, \\
G_2 \Lambda  - \Lo G_1= Y^\top, \\
 \Lo G_2 = Z.
 \end{array} \right.
 \end{equation}
From this point on, the strategy of the proof is similar to that of Lemma~\ref{lemma:DF_invertible_par}. For this reason, some parts of the argument, especially the computations, are only sketched.

 We observe that the last equation of system~\eqref{eq:HE_par_tras} can be rewritten as a system of $m^2$ decoupled equations
 \begin{equation*}
    \Lo (G_2)_{ij} = Z_{ij}
 \end{equation*}
for $1 \le i,j \le m$. Hence, thanks to Proposition \ref{prop:Csigma_HEomega}, there exists a unique solution $G_2$ of the last equation of system~\eqref{eq:HE_par_tras} such that $G_2 \in \mathcal{M}^{T}_{\sigma, \ell+\dec+2}$, and $\Lo G_2 \in \mathcal{M}^{T}_{\sigma, \ell+\dec+3}$, and satisfying
\begin{equation}\label{proof:lemma_Inv_par_1_est_G2}
   \max\left\{|G_2|^T_{\sigma, \ell + \dec + 2}, \, |\Lo G_2|^T_{\sigma, \ell + \dec + 3}\right\}  \le C(\ell, \dec) |Z|^T_{\sigma, \ell + \dec + 3}.
\end{equation}
We solve the other two equations of~\eqref{eq:HE_par_tras} analogously. Now, in the second equation of~\eqref{eq:HE_par_tras}, $G_2$ is known. Hence, applying Proposition   \ref{prop:Csigma_HEomega} as in the previous case, we prove the existence of a unique solution $G_1$ satisfying $G_1 \in \mathcal{M}^{T}_{\sigma, \ell+\dec+1}$, and $\Lo G_1 \in \mathcal{M}^{T}_{\sigma, \ell+\dec+2}$.Moreover, using the properties contained in Proposition \ref{prop:Csigma_prop_norms_matrix} and~\eqref{proof:lemma_Inv_par_1_est_G2}, we obtain that 
\begin{equation}\label{proof:lemma_Inv_par_1_est_G1}
   \max\left\{|G_1|^T_{\sigma, \ell + \dec+1}, \, |\Lo G_1|^T_{\sigma, \ell + \dec + 2}\right\}  \le C(\ell, \dec, \Lambda) \left(|Y|^T_{\sigma, \ell + \dec + 2} + |Z|^T_{\sigma, \ell + \dec + 3}\right).
\end{equation}
Finally, we can solve the first equation of~\eqref{eq:HE_par_tras} where $G_1$ and $G_2$ are known. Similarly, one can prove the existence of a unique solution $G_3$ of the first equation of~\eqref{eq:HE_par_tras} satisfying $G_3 \in \mathcal{M}^{T}_{\sigma, \ell+\dec}$, and $\Lo G_3 \in \mathcal{M}^{T}_{\sigma, \ell+\dec+1}$, and  
\begin{equation*}
   \max\left\{|G_3|^T_{\sigma, \ell + \dec}, \, |\Lo G_3|^T_{\sigma, \ell + \dec+1}\right\}  \le C(\ell, \dec, \Lambda) \left(|X|^T_{\sigma, \ell + \dec+1} + |Y|^T_{\sigma, \ell + \dec + 2} + |Z|^T_{\sigma, \ell + \dec + 3}\right).
\end{equation*}
    This concludes the proof of this lemma. 
\end{proof}

\subsection{Norm properties}
\label{sc:norm_properties_par}
In this section, we recall/prove several properties for the norms of the Banach spaces introduced in Sections \ref{sc:functional_setting_par} and \ref{sc:proof_analytic_par}.

\begin{proposition}\label{prop:Csigma_prop_norms_pol}
    Given $\sigma$, $\ell$, $d \ge 0$, $T\ge 1$ and a non-negative integer $k$, for all $f \in \mathcal{S}^{T}_{(\sigma, k), \ell}$ and $g \in \mathcal{S}^{ T}_{(\sigma, k), d}$ we have the following properties.
\begin{enumerate}
\item For all $s\ge 0$ and  $\beta \in \N^{2(n + m)}$, if $|\beta| + s \le \sigma + k$, then  
\begin{equation*}
\left|\partial^{\beta}_{(q,p,z)} f \right|^T_{s, \ell} \le C |f|^T_{\sigma +k, \ell}
\end{equation*}
\item For all $\ell' \ge 0$, if $f \in \mathcal{S}^{T}_{(\sigma, k), \ell+\ell'}$ then $|f|^T_{\sigma+k, \ell}  \le T^{-\ell'} |f|^T_{\sigma+k, \ell+\ell'}$,
\item $|fg|^T_{\sigma+k, \ell+d} \le C(\sigma, k)\left(|f|^T_{0,\ell}|g|^T_{\sigma+k,d} + |f|^T_{\sigma+k,\ell}|g|^T_{0,d}\right)$. 
\item We consider $\sigma \ge 1$,  and we assume that $g:\T^n \times B \times I_T \to \T^n \times B$.  %
Letting $\tilde g: \T^n \times B \times I_T \to \T^n \times B \times I_T$ such that $\tilde g(q,p,z,t) = (g(q,p,z,t), t)$, 
then $f \circ  \tilde g \in \mathcal{S}^{ T}_{\sigma+k, \ell}$ and 
\begin{equation*}
|f \circ \tilde g|^T_{\sigma+k, \ell} \le C(\sigma, k) \left(|f|^T_{\sigma+k,\ell}\left(|\partial_{(q,p,z)} g|^T_{0,d}\right)^\sigma + |f|^T_{1,\ell}|\partial_{(q,p,z)}  g|^T_{\sigma+k-1,d} +  |f|^T_{0, \ell}  \right).
 \end{equation*}
\end{enumerate}
\end{proposition}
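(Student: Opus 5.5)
The plan is to prove the four properties one at a time, working with the weighted norm $|f|^T_{s,\ell}=\sup_{t\in I_T}|f^t|_{C^s}t^\ell$. Each property should reduce to a classical fact about Hölder norms on $\T^n\times B$, applied at a fixed time $t$, after which we multiply by $t^\ell$ and take the supremum over $t\in I_T$. Since the weights are powers of $t$ and do not depend on the space variables, every estimate that is pointwise in $t$ passes to the weighted norms with the same constants.

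The first two properties are immediate. For Item 1, recall that for $|\beta|+s\le\sigma+k$ the classical inequality is $|\partial^\beta f^t|_{C^s}\le C|f^t|_{C^{\sigma+k}}$. Multiplying by $t^\ell$ and taking the supremum gives the claim. For Item 2, note that $t^\ell=t^{\ell+\ell'}t^{-\ell'}\le T^{-\ell'}t^{\ell+\ell'}$ for every $t\ge T$. Hence $|f^t|_{C^{\sigma+k}}t^\ell\le T^{-\ell'}|f^t|_{C^{\sigma+k}}t^{\ell+\ell'}$, and taking the supremum over $t\in I_T$ concludes.

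For Item 3 we use the standard tame product estimate in Hölder spaces,
\[|f^tg^t|_{C^{r}}\le C(r)\bigl(|f^t|_{C^0}|g^t|_{C^{r}}+|f^t|_{C^{r}}|g^t|_{C^0}\bigr),\qquad r=\sigma+k.\]
We multiply both sides by $t^{\ell+d}=t^\ell t^d$ and distribute one weight to each factor. Bounding each product $|f^t|_{C^0}t^\ell\,|g^t|_{C^r}t^d$ by $|f|^T_{0,\ell}|g|^T_{r,d}$, and symmetrically for the other product, yields the stated inequality. The continuity requirements in the definition of $\mathcal{S}^T$ are preserved under products, so $fg$ lies in the right space.

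Item 4 is the only step needing care, and it relies on the tame composition estimate (de la Llave–Obaya type). For $\sigma\ge1$ and a map $g^t$ with values in the domain of $f^t$, that estimate reads
\[|f^t\circ g^t|_{C^r}\le C(r)\bigl(|f^t|_{C^r}|Dg^t|_{C^0}^{\,r}+|f^t|_{C^1}|Dg^t|_{C^{r-1}}+|f^t|_{C^0}\bigr).\]
The statement writes the exponent as $\sigma$; for $k>0$ one keeps the exponent exactly as the classical estimate gives it.

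The main obstacle is the bookkeeping of the weights. The left-hand side carries only the weight $t^\ell$, and this weight should be placed entirely on the $f$-factors. The derivatives of $g$ then need to be bounded uniformly in $t$, not with decay. Since $d\ge0$ and $t\ge T\ge1$, we have $t^{-d}\le1$, so $|Dg^t|_{C^s}\le t^{-d}|\partial g|^T_{s,d}\le|\partial g|^T_{s,d}$. This absorbs the $g$-factors into the norms $|\partial g|^T_{\cdot,d}$ as written, and multiplying by $t^\ell$ and taking the supremum gives the bound. Measurability and continuity in $t$ of $f\circ\tilde g$ follow from the continuity assumptions in the definition of $\mathcal{S}^T$, which completes the membership $f\circ\tilde g\in\mathcal{S}^T_{\sigma+k,\ell}$.
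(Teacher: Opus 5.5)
Your proposal is correct and follows essentially the same route as the standard argument to which the paper defers (\cite{Sca25}): apply the fixed-time Hölder estimates (derivative bounds, and the tame product and composition estimates from the appendix) to $f^t,g^t$, multiply by the time weight, and take suprema, putting the whole weight $t^\ell$ on the $f$-factors in Item 4 and using $t^{-d}\le 1$ for the $g$-factors. Your caveat on the exponent in Item 4 is also right: for $k\ge 1$ the estimate only holds with $(|\partial_{(q,p,z)} g|^T_{0,d})^{\sigma+k}$, and the printed version fails for $f=t^{-\ell}\cos(2\pi q_1)$, $g(q,p,z,t)=(Nq,p,z)$ with $N$ a large integer; this is harmless in the paper, since there $|\partial_{(q,p,z)} g|^T_{0,0}$ is bounded and the extra power can be absorbed into the constant.
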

\begin{proof}
    We refer to~\cite{Sca25} for the proof. 
\end{proof}

 As a direct consequence of Proposition \ref{prop:Csigma_prop_norms_pol}, we have the following.

\begin{proposition}\label{prop:Csigma_prop_norms_matrix}
    Let $\sigma, \ell, d\ge 0$ and $T\ge 1$. For any $F \in  \mathcal{M}^{ T}_{\sigma, \ell}$ and $G \in  \mathcal{M}^{ T}_{\sigma, d}$ we have the following properties.
\begin{enumerate}
\item For all $s\ge 0$ and  $\beta \in \N^{2(n + m)}$, if $|\beta| + s \le \sigma $, then  
\begin{equation*}
\left|\partial^{\beta}_{(q,p,z)} F \right|^T_{s, \ell} \le C |F|^T_{\sigma , \ell}
\end{equation*}
\item \label{prop:crescita_indici_norm_ell} For all $\ell' \ge 0$, if $F \in \mathcal{M}^{ T}_{\sigma, \ell+\ell'}$ then $|F|^T_{\sigma, \ell}  \le T^{-\ell'} |F|^T_{\sigma, \ell+\ell'}$, 
\item \label{prop:bound_norm_product_matrices} $|FG|^T_{\sigma, \ell+d} \le C(\sigma)\left(|F|^T_{0,\ell}|G|^T_{\sigma,d} + |F|^T_{\sigma,\ell}|G|^T_{0,d}\right)$. %
\end{enumerate}
\end{proposition}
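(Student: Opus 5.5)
The plan is to derive each item entrywise from Proposition \ref{prop:Csigma_prop_norms_pol}. Everything will follow from the definition of the matrix spaces in \eqref{eq:general_matrix_space}, where the norm is the maximum of the component norms. We fix $F \in \mathcal{M}^{T}_{\sigma,\ell}$ and $G \in \mathcal{M}^{T}_{\sigma,d}$. Each entry $F_{ij}$ lies in $\mathcal{S}^{T}_{(\sigma,0),\ell}$ and each $G_{ij}$ lies in $\mathcal{S}^{T}_{(\sigma,0),d}$, so Proposition \ref{prop:Csigma_prop_norms_pol} applies with $k=0$.

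For the first item we apply the first property of Proposition \ref{prop:Csigma_prop_norms_pol} to each $F_{ij}$:
\[
|\partial^\beta F_{ij}|^T_{s,\ell} \le C |F_{ij}|^T_{\sigma,\ell} \le C |F|^T_{\sigma,\ell},
\]
and then take the maximum over $i,j$. Here the derivatives only act in $q$, since the matrices depend on $(q,t)$. For Property \ref{prop:crescita_indici_norm_ell} we use the second property entrywise, namely $|F_{ij}|^T_{\sigma,\ell} \le T^{-\ell'}|F_{ij}|^T_{\sigma,\ell+\ell'}$, and again maximize. This can also be checked directly from $t^{\ell} = t^{\ell+\ell'}t^{-\ell'} \le t^{\ell+\ell'}T^{-\ell'}$ for $t \ge T$.

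For Property \ref{prop:bound_norm_product_matrices} we write the product as a finite sum, $(FG)_{ij} = \sum_{k=1}^{\mathsf p} F_{ik}G_{kj}$. We use the triangle inequality for $|\cdot|^T_{\sigma,\ell+d}$ and then the third property of Proposition \ref{prop:Csigma_prop_norms_pol} on each product $F_{ik}G_{kj}$. This gives
\[
|(FG)_{ij}|^T_{\sigma,\ell+d} \le \mathsf{p}\, C(\sigma)\bigl(|F|^T_{0,\ell}|G|^T_{\sigma,d} + |F|^T_{\sigma,\ell}|G|^T_{0,d}\bigr).
\]
The factor $\mathsf p$ is $m$ or $2m$, and we absorb it into the constant, which then depends only on $\sigma$ and the fixed dimensions.

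There is no real obstacle here. The only points to watch are bookkeeping. First, the constant has to absorb the matrix dimension. Second, the max-norm is not submultiplicative without that dimensional factor.
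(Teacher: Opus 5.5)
Your proposal is correct and is essentially the paper's argument: the paper states this result as a direct consequence of Proposition \ref{prop:Csigma_prop_norms_pol}, applied entrywise exactly as you do. Your observation that the constant in the product estimate has to absorb the matrix dimension is consistent with how the paper later uses this bound, where it writes the constant as $C(\sigma,m)$ (for instance in Lemma \ref{lemma:Prod_matrx_diff_decay}).
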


Recall that the derivative of the exponential operator $\exp: \mathcal{M}_{2m}(\R) \to \mathcal{M}_{2m}(\R)$ at $F \in \mathcal{M}_{2m}$ and evaluated at $G \in  \mathcal{M}_{2m}$ is given by
\begin{equation}
\label{eq:derivative_exp}
D \exp(F)G = e^{F} \mathcal{A}(F, G),
\end{equation}
where
\begin{equation}
\label{eq:adjoint_formula}
\mathcal{A}(F, G) := \int_0^1e^{-sF}Ge^{sF}ds.
\end{equation}
Similarly, given a differentiable map $s \mapsto G(s)$, the derivative of the map $s \mapsto e^{G(s)}$ is given by
\begin{equation}
\label{eq:derivative_exp_parameter}
\frac{d }{ds} \exp (F(s)) = e^{F(s)}\mathcal{A}(F(s), F'(s )).
\end{equation}

The following properties follow easily from the definitions together with Proposition \ref{prop:Csigma_prop_norms_matrix}.

\begin{lemma}
\label{lem:exp_expansion_bounds}
      Let $\sigma, \ell, d \ge 0$ and $T\ge 1$. For any $B \in \mathcal{M}^{ T}_{\sigma, 0},$ $F \in  \mathcal{M}^{ T}_{\sigma, \ell},$ and $G \in  \mathcal{M}^{ T}_{\sigma, d},$ the following statements hold.

      \begin{enumerate}
          \item \label{it:bound_expF} $|e^B|_{\sigma, 0}^T \leq e^{C(\sigma)|B|_{\sigma, 0}^T}.$
          \item \label{it:bound_expF-id} $|e^F - \mathrm{Id}_{2m}|_{\sigma, \ell}^T \leq  |F|_{\sigma, \ell}^Te^{C(\sigma)|F|_{\sigma, 0}^T}$.       
          \item \label{it:bound_expF-id-F} $|e^F - \mathrm{Id}_{2m} - F|_{\sigma, 2\ell}^T \leq (|F|_{\sigma, \ell}^T)^2e^{C(\sigma)|F|_{\sigma, 0}^T}$.
          \item \label{it:bound_adjoint} $|\mathcal{A}(B, G)|_{\sigma, d}^T \leq C(\sigma)|G|_{\sigma, d}^Te^{C(\sigma)|B|_{\sigma, 0}^T}$.
          \item \label{it:bound_adjoint-G} $|\mathcal{A}(F, G) - G|_{\sigma, \ell + d}^T \leq C(\sigma)|F|_{\sigma, \ell}^T |G|_{\sigma, d}^T e^{C(\sigma)|F|_{\sigma, 0}^T}.$
          \item \label{it:bound_operator_L} If $\Lo B \in \mathcal{M}_{\sigma, \ell}^{ T}$ then $|\Lo (e^B)|_{\sigma, \ell}^T \leq C(\sigma) e^{C(\sigma)|B|^T_{\sigma, 0}}|\Lo B|_{\sigma, \ell}^T$.
          \item \label{it:bound_operator_LA} If $\Lo B \in \mathcal{M}_{\sigma, 1}^{T}$ and $\Lo G \in \mathcal{M}_{\sigma, d + 1}^{T}$ then $$|\Lo (\mathcal{A}(B, G))|_{\sigma, d + 1}^T \leq C(\sigma) e^{C(\sigma)|B|^T_{\sigma, 0}}(|\Lo B|_{\sigma, 1}^T|G|_{\sigma, d}^T + |\Lo G|_{\sigma, d + 1}^T).$$
          \item \label{it:bound_operator_LA-G} If $\Lo F \in \mathcal{M}_{\sigma, \ell + 1}^{T}$ and $\Lo G \in \mathcal{M}_{\sigma, d + 1}^{T}$ then $$|\Lo (\mathcal{A}(F, G) - G)|_{\sigma, \ell + d + 1}^{T} \leq C(\sigma) e^{C(\sigma)|Fe|^T_{\sigma, 0}}(|\Lo F|_{\sigma, \ell + 1}^T|G|_{\sigma, d}^T + |F|_{\sigma, \ell}^T |\Lo G|_{\sigma, d + 1}^T).$$
      \end{enumerate}
      
\end{lemma}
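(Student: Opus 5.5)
The eight estimates of Lemma \ref{lem:exp_expansion_bounds} all come from one mechanism: explicit integral or series representations, combined with the product estimate in Property \ref{prop:bound_norm_product_matrices} and the index monotonicity in Property \ref{prop:crescita_indici_norm_ell} of Proposition \ref{prop:Csigma_prop_norms_matrix}. The basic tool is an estimate for powers. Iterating Property \ref{prop:bound_norm_product_matrices}, with the $C^0$ norm being submultiplicative up to a dimensional constant, gives
\[
|B^k|^T_{\sigma,0}\le k\,C(\sigma)^{k}\bigl(|B|^T_{\sigma,0}\bigr)^{k}.
\]
The factor $k$ appears because a derivative of total order up to $\sigma$ hits at most $\sigma$ factors, while the remaining factors are bounded in $C^0$. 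The factor $k$ is absorbed after enlarging $C$. For $F\in\mathcal{M}^T_{\sigma,\ell}$ the same argument gives $|F^k|^T_{\sigma,k\ell}\le kC^{k}|F|^T_{\sigma,\ell}(|F|^T_{\sigma,0})^{k-1}$. Here we note that $t^{\ell}\ge 1$ for $t \geq T \geq 1$, so $|F|^T_{\sigma,0}\le |F|^T_{\sigma,\ell}$.

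Items (1)--(3) follow by summing the exponential series. For (2), write $e^F-\mathrm{Id}=\sum_{k\ge1}F^k/k!$ and bound each term in the $\ell$-weighted norm. We use $|F^k|^T_{\sigma,\ell}\le |F^k|^T_{\sigma,k\ell}$, valid since $k\ell\ge\ell$ and $T\ge1$, so that one factor carries the full weight $\ell$ and the others carry weight $0$. For (3), start the sum at $k=2$ and put the weight $\ell$ on two of the factors. For (4) and (5), write $e^{-sF}Ge^{sF}$ and expand $\mathcal{A}(F,G)-G=\int_0^1(e^{-sF}Ge^{sF}-G)\,ds$ as
\[
\int_0^1\big[(e^{-sF}-\mathrm{Id})Ge^{sF}+G(e^{sF}-\mathrm{Id})\big]ds .
\]
We then apply (1), (2) and the product rule, uniformly in $s\in[0,1]$.

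Items (6)--(8) use the derivation property of $\Lo$, which holds since $\Lo$ is a first-order operator, together with the formula \eqref{eq:derivative_exp_parameter} along the flow direction. This gives
\[
\Lo(e^B)=e^B\mathcal{A}(B,\Lo B),
\]
and (6) follows from (1) and (4). For (7), differentiate under the integral:
\[
\Lo\bigl(e^{-sB}Ge^{sB}\bigr)=\Lo(e^{-sB})\,Ge^{sB}+e^{-sB}(\Lo G)e^{sB}+e^{-sB}G\,\Lo(e^{sB}).
\]
Here $\Lo(e^{\pm sB})$ is again expressed through $\mathcal{A}(\pm sB,\pm s\Lo B)$. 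The weights add as $1+d$. Item (8) is the same computation applied to the decomposition used for (5): the terms in which $\Lo$ falls on $e^{\pm sF}-\mathrm{Id}$ produce $|\Lo F|_{\ell+1}|G|_d$, and those in which it falls on $G$ produce $|F|_\ell|\Lo G|_{d+1}$.

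The main obstacle is regularity bookkeeping rather than decay. The functions are only $C^\sigma$ in $q$ and the quantity $\Lo e^B$ must be justified, so we interpret $\Lo$ as acting on the $C^1$ functions guaranteed by Remark \ref{rmk:extra_regularity_ell}, and we check that the series and integrals converge in $\mathcal{S}^T_{(\sigma,0),\cdot}$. Term-by-term differentiation of the series is legitimate because it converges absolutely in that norm. The constants stay of the form $C(\sigma)e^{C(\sigma)|\cdot|_{\sigma,0}}$ because every power estimate contributes at most $C^k/k!$ multiplied by a polynomial in $k$.
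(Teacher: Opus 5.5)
Your route is the one the paper intends when it says these bounds ``follow easily from the definitions together with Proposition~\ref{prop:Csigma_prop_norms_matrix}''. That means the exponential series with the weighted product estimate for (1)--(3), and the splitting $e^{-sF}Ge^{sF}-G=(e^{-sF}-\mathrm{Id}_{2m})Ge^{sF}+G(e^{sF}-\mathrm{Id}_{2m})$ for (5). It also means $\Lo(e^B)=e^B\mathcal{A}(B,\Lo B)$ from \eqref{eq:derivative_exp_parameter}, with differentiation under the integral, for (6)--(8). Your weight bookkeeping in (7) and (8) is right, with one proviso. In the term $(e^{-sF}-\mathrm{Id}_{2m})G\,\Lo(e^{sF})$, the undifferentiated factor $e^{-sF}-\mathrm{Id}_{2m}$ must be measured with weight $0$. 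Otherwise an extra factor $|F|^T_{\sigma,\ell}$ appears, and the bound is no longer of the stated form.

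One displayed estimate is false. The bound $|F^k|^T_{\sigma,k\ell}\le kC^k|F|^T_{\sigma,\ell}(|F|^T_{\sigma,0})^{k-1}$ fails for $F=t^{-\ell}\mathrm{Id}_{2m}$ with $\ell>0$ and $k\ge2$. There the left side equals $1$, while the right side is $kC^kT^{-\ell(k-1)}\to0$ as $T\to\infty$; the inequality $|F|^T_{\sigma,0}\le|F|^T_{\sigma,\ell}$ points the wrong way to rescue it. With weight $\ell$ on every factor you only get $kC^k(|F|^T_{\sigma,\ell})^k$. So going through $|F^k|^T_{\sigma,\ell}\le|F^k|^T_{\sigma,k\ell}$ would leave $|F|^T_{\sigma,\ell}$, not $|F|^T_{\sigma,0}$, in the exponent of (2).

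The fix is to drop that detour and apply the product estimate directly with weights $(\ell,0,\dots,0)$. This gives $|F^k|^T_{\sigma,\ell}\le kC^k|F|^T_{\sigma,\ell}(|F|^T_{\sigma,0})^{k-1}$. Since the $k=1$ term carries no constant, you obtain (2) exactly as stated. With weights $(\ell,\ell,0,\dots,0)$ the same argument gives (3), but only up to a prefactor $C(\sigma,m)$ coming from the $k=2$ term. That prefactor cannot be avoided: for $m\ge2$, take $F=\epsilon t^{-\ell}E$ with $E$ the constant $2m\times 2m$ matrix of ones, and let $\epsilon\to0$. Then the left side of (3) is at least $m\epsilon^2$, while the right side tends to $\epsilon^2$. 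So (3) should be read with that constant.

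Finally, for $\sigma<1$ the $C^1$ regularity you invoke is not available. Read $\Lo$ instead as the derivative along $s\mapsto(q+\omega s,t+s)$, where \eqref{eq:derivative_exp_parameter} applies verbatim.
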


\begin{proposition}\label{prop:properties_analy_pol}
Given $\sigma>0$, $\ell, \, d \ge 0$ and $T\ge 1$,  for all $f \in \mathscr{S}^{ T}_{\sigma, \ell}$ and $g \in \mathscr{S}^{ T}_{\sigma, d}$, we have the following properties    
\begin{enumerate}
    \item For all $0<\sigma'<\sigma$ and $\beta \in \N^{2(n+m)}$, then
    \begin{equation*}
        \left|\partial^{\beta}_{(q,p,z)} f \right|^T_{\sigma', \ell} \le {C \over (\sigma-\sigma')^{|\beta|}} |f|^T_{\sigma, \ell}.
    \end{equation*}
    \item For all $\ell'\ge 0$, if $f \in \mathscr{S}^{ T}_{\sigma, \ell+\ell'}$ then $|f|^T_{\sigma, \ell} \le T^{-\ell'}|f|^T_{\sigma, \ell+\ell'}$.
    \item $|fg|^T_{\sigma, \ell +d} \le |f|^T_{\sigma, \ell}|g|^T_{\sigma, d}$. In particular, if $d >0$ then $|fg|_{\sigma, \ell}^T \to 0$ as $T\to +\infty$.
    \item For all $0<\sigma'<\sigma$, we consider $g:\T^n_{\sigma'} \times B_{\sigma'} \times I_T \to \T^n_\sigma \times B_\sigma$ such that $g \in \mathscr{S}^{ T}_{\sigma',0}$. Letting $\tilde g: \T^n_{\sigma'} \times B_{\sigma'} \times I_T \to \T^n_\sigma \times B_\sigma \times I_T$ such that $\tilde g(q,p,z,t) = (g(q,p,z,t), t)$, 
then $f \circ  \tilde g \in \mathscr{S}^{ T}_{\sigma', \ell}$ and 
\begin{equation*}
    |f\circ \tilde g|^T_{\sigma', \ell} \le |f|^T_{\sigma, \ell}.
\end{equation*}
\end{enumerate}
\end{proposition}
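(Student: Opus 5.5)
We treat the four items of Proposition \ref{prop:properties_analy_pol} separately. Each is an elementary estimate for sup norms of holomorphic functions on the complex domains $\T^n_\sigma \times B_\sigma$, applied for each fixed $t \in I_T$. We then multiply by $t^\ell$ and take the supremum over $t \in I_T$. Continuity in $t$ (required by the definition \eqref{def:S_pol_anal}) is preserved in every operation, since derivatives, products and compositions of jointly continuous families that are holomorphic in the space variables remain jointly continuous.

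\emph{Item 1} is Cauchy's estimate. For a point in $\T^n_{\sigma'} \times B_{\sigma'}$, the closed polydisc of radius $\sigma-\sigma'$ around it in each coordinate lies in $\T^n_\sigma \times B_\sigma$. This uses that $B_\sigma$ is defined with the sup norm $|(p,z)|\le\sigma$; if a Euclidean norm is intended instead, one shrinks the radius by a dimensional factor, which is absorbed into $C$. The Cauchy integral formula then gives
\[ |\partial^\beta f^t(\zeta)| \le \beta!\,(\sigma-\sigma')^{-|\beta|}|f^t|_\sigma . \]
Multiplying by $t^\ell$ and taking the supremum over $t$ gives the claim with $C = \beta!$, up to the dimensional factor.

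\emph{Item 2} is immediate: $|f^t|_\sigma t^\ell = |f^t|_\sigma t^{\ell+\ell'}t^{-\ell'} \le T^{-\ell'}|f|^T_{\sigma,\ell+\ell'}$ for $t\ge T$.

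\emph{Item 3} follows from submultiplicativity of the sup norm:
\[ |f^tg^t|_\sigma t^{\ell+d} \le \bigl(|f^t|_\sigma t^\ell\bigr)\bigl(|g^t|_\sigma t^d\bigr). \]
The "in particular" statement follows by combining this with Item 2:
\[ |fg|^T_{\sigma,\ell} \le T^{-d}|fg|^T_{\sigma,\ell+d} \le T^{-d}|f|^T_{\sigma,\ell}|g|^T_{\sigma,d}. \]
This is to be read for fixed $f,g$ whose norms stay bounded as $T$ grows; since the norms are suprema over the shrinking sets $I_T$, they are nonincreasing in $T$, so the bound tends to $0$.

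\emph{Item 4}: by hypothesis $g$ maps $\T^n_{\sigma'}\times B_{\sigma'}$ into $\T^n_\sigma\times B_\sigma$. So $f^t\circ g^t$ is holomorphic, being a composition of holomorphic maps, and its sup over the smaller domain is bounded by the sup of $f^t$ over the range. Hence $|f^t\circ g^t|_{\sigma'} \le |f^t|_\sigma$, and multiplying by $t^\ell$ concludes. Real-analyticity is preserved because both $f^t$ and $g^t$ are real on real arguments.

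The only mildly delicate point is the domain bookkeeping in Item 1, namely that the Cauchy polydisc fits inside the domain, including the periodic direction in $\T^n_\sigma$. This is handled by lifting to $\C^n$, where the condition $|\mathrm{Im}\,q|\le\sigma$ is preserved under shifts of radius $\sigma-\sigma'$. The remaining items are routine.
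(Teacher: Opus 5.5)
Your proof is correct. The paper states this proposition without proof and treats it as standard, and your argument is the standard justification it relies on: Cauchy estimates on polydiscs of radius comparable to $\sigma-\sigma'$ for Item 1, submultiplicativity of the sup norm together with Item 2 and the monotonicity of $|\cdot|^T$ in $T$ for Item 3, and the trivial sup bound over the image for Item 4.
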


In what follows, we analyze some properties of matrices $G \in \mathcal{M}^{ T}_{\sigma, \mathcal{D}}$ whose components exhibit a prescribed decay in time. For this reason, given $d >1$, for the rest of this section, we define
\begin{equation}\label{def:vector_D_diff_decay}
    \mathcal{D}(d) = (d+1, d+2, d, d+1).
\end{equation}
The following lemma provides a control on the product of matrices in the space  $\mathcal{M}^{ T}_{\sigma, \mathcal{D}(d)}$.

\begin{lemma}\label{lemma:Prod_matrx_diff_decay}     Let $\sigma \ge 0$, $d >1$ and $\mathcal{D}(d)$ as in~\eqref{def:vector_D_diff_decay}. For any $k \ge 1$, and $G \in \mathcal{M}^{ T}_{\sigma, \mathcal{D}(d)}$, there exists a positive constant $C=C(\sigma, m)$, depending only on $\sigma$ and $m$, such that  
    \begin{equation*}
        |G^k|_{\sigma, \mathcal{D}(d)}^T \le C|G|_{\sigma, \mathcal{D}(d)}^T \left(|G|_{\sigma, 0}^T\right)^{k-1}.
    \end{equation*}
\end{lemma}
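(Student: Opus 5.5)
The plan is to turn the non-uniform block decay into uniform decay by conjugating with a time-dependent diagonal scaling. After that, the estimate reduces to submultiplicativity of a suitable matrix Hölder norm. I flag one point at the outset. With the max-entry norm used in \eqref{def:norm_M_D} and the bare $C^\sigma$ product estimate of Proposition \ref{prop:Csigma_prop_norms_matrix}, iterating a product bound produces constants growing like $C(\sigma)^{k}$. For instance, the all-ones $2m\times 2m$ matrix $E$ satisfies $E^k=(2m)^{k-1}E$. So a single $C(\sigma,m)$ valid for all $k$ can only hold if $|\cdot|^T_{\sigma,0}$ is taken in a normalization that is submultiplicative with constant $1$. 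I will fix such a normalization first and carry out the argument in it. This is harmless for the later uses of the lemma (exponential series), where only summability matters.

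\emph{Step 1 (normalization).} For matrix-valued $F(\cdot,t)$ I use the operator-type norm $\|F\|_{\sigma}:=\max_i\sum_j \|F_{ij}\|_{C^\sigma}$. Here the scalar $C^\sigma$ norm is weighted (derivatives of order $j$ divided by $j!$, and the Hölder seminorm split accordingly) so that $\|fg\|_{C^\sigma}\le\|f\|_{C^\sigma}\|g\|_{C^\sigma}$ holds with constant $1$. Then $\|FG\|_\sigma\le\|F\|_\sigma\|G\|_\sigma$. This norm is equivalent to the max-entry norm with constants depending only on $\sigma,m$. I take $|G|^T_{\sigma,0}$ in the statement to mean $\sup_t\|G^t\|_\sigma$.

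\emph{Step 2 (diagonal scaling).} Write $\mathcal{D}(d)=(d+1,d+2,d,d+1)$ as block weights $w_{ij}=d+1+(j-i)$ for $i,j\in\{1,2\}$. Set $\Delta_t=\mathrm{diag}(t^{-1}\mathrm{Id}_m,\,t^{-2}\mathrm{Id}_m)$ and $\widehat G=\Delta_t G\Delta_t^{-1}$. Block $(i,j)$ of $\widehat G$ is then $t^{j-i}G_{ij}$, so that $t^{w_{ij}}G_{ij}=t^{d+1}\widehat G_{ij}$. Hence $|G|^T_{\sigma,\mathcal{D}(d)}\simeq\sup_t t^{d+1}\|\widehat G^t\|_\sigma$. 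The scaling is constant in $q$, so it commutes with $C^\sigma$ norms in $q$. The key algebraic fact is $\widehat{G^k}=\widehat G^{\,k}$, since the conjugations telescope.

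\emph{Step 3 (estimate).} By Step 1, $t^{d+1}\|\widehat G^{\,k}\|_\sigma\le t^{d+1}\|\widehat G\|_\sigma\,\|\widehat G\|_\sigma^{k-1}$. The first factor is bounded by $C|G|^T_{\sigma,\mathcal{D}(d)}$, so it remains to bound $\|\widehat G\|_\sigma$ by $|G|^T_{\sigma,0}$. The blocks $G_{11},G_{22}$ are unchanged and $t^{-1}G_{21}$ is smaller than $G_{21}$; the only delicate block is $tG_{12}$.

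\emph{Main obstacle.} I expect the hard part to be controlling the block $tG_{12}$ by $|G|^T_{\sigma,0}$ alone. The estimate available directly is $\|tG_{12}\|\le t^{-(d+1)}|G|^T_{\sigma,\mathcal{D}(d)}$, which is bounded since $d>1$ and $t\ge T\ge1$, but it is not in terms of $|G|^T_{\sigma,0}$. To avoid this I will peel factors asymmetrically, never letting a scaled block stand alone. I expand $(G^k)_{ij}$ as a sum over block paths $i=i_0\to i_1\to\dots\to i_k=j$. Along any path the exponents $(i_{r+1}-i_r)$ telescope to $j-i$, so at most one net upward step beyond $j-i$ has to be compensated. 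The compensating power of $t$ is assigned to the single factor carrying the full weight $|G|^T_{\sigma,\mathcal{D}(d)}$, and the remaining $k-1$ factors are bounded by unscaled $\|G\|_\sigma$.

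To keep $k$-independence, I organize this sum as the product $\widetilde G\,G^{k-1}$. Here $\widetilde G$ is the matrix whose blocks are weighted by $t^{w_{ij}}$, so that $\|\widetilde G\|_\sigma\le C|G|^T_{\sigma,\mathcal{D}(d)}$. A short induction on $k$ shows that the block weights of $G\cdot G^{k-1}$ are dominated by those of the first factor. This uses that $w_{ij}$ is minimal on the $(2,1)$ block and that $\min_r w_{i r}+0\le w_{ij}$ up to a bounded power of $t^{-1}$. The induction yields the claim with $C=C(\sigma,m)$ coming only from norm equivalence.
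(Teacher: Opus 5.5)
Your preliminary remark is correct: with the max-entry norms, the all-ones matrix rules out a $k$-independent constant. The paper's own induction on $G^{k+1}=G^kG$, with constants tracked, also compounds them into something like $C(\sigma,m)^k$, and that is all the exponential series in Lemma \ref{prop:exp_bound_par} uses. The gap is your final step, and it cannot be repaired, because the $k$-uniform bound it aims at is false in your normalization as well.

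The claim that the block weights of $G\cdot G^{k-1}$ are dominated by those of the first factor fails, since $w_{11}=d+1<w_{12}=d+2$ and $w_{21}=d<w_{22}=d+1$. On the path $1\to1\to\cdots\to1\to2$ the extra decay comes only from the last step. In $\widetilde G\,G^{k-1}$ you would therefore need the $\mathcal{D}(d)$-decay of $G^{k-1}$, not $\|G\|_\sigma^{k-1}$. The missing factor is $t$, which is unbounded, not a bounded power of $t^{-1}$.

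Here is a counterexample to the target itself. Take $m=1$ and $G^t=t^{-(d+1)}\begin{pmatrix}1&t^{-1}\\0&1\end{pmatrix}$, constant in $q$. Then $|G|^T_{\sigma,\mathcal{D}(d)}$ is of order one, and your $|G|^T_{\sigma,0}$ equals $T^{-(d+1)}(1+T^{-1})$. Since $(G^t)^k=t^{-(d+1)k}\begin{pmatrix}1&k\,t^{-1}\\0&1\end{pmatrix}$, the $(1,2)$ block gives $|G^k|^T_{\sigma,\mathcal{D}(d)}\geq c\,k\,T^{-(d+1)(k-1)}$. The right-hand side is $O\bigl(C\,T^{-(d+1)(k-1)}\bigr)$ uniformly in $k$ once $T\geq k$, because $(1+T^{-1})^{k-1}\leq e$. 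The single decaying factor can sit in any of $k$ positions, and this combinatorial factor $k$ cannot be absorbed by $(|G|^T_{\sigma,0})^{k-1}$ in any normalization with $\|\mathrm{Id}\|=1$.

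What is true, and sufficient, is the estimate with $C(\sigma,m)^{k}$ in place of $C$. Your path expansion proves this directly once you drop $k$-uniformity. Every block path from $i$ to $j$ contains a step of weight at least $w_{ij}$ (your check of this is correct). Put the weighted norm on that factor and $|\cdot|^T_{\sigma,0}$ on the others, then sum over the $2^{k-1}$ block paths; no diagonal scaling or renormalization is needed. Alternatively, follow the paper's block induction on $G^{k+1}=G^kG$. Take the decay from $G_2$ in the products $P_kG_2$ and $R_kG_2$, and from the inductive bound on $G^k$ in the other six block products, accepting that the constant grows geometrically in $k$.
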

\begin{proof}
    We prove this lemma by induction on $k$. The statement is trivial for $k=1$. We assume that it holds for some $k\ge 1$. We then prove it for $k+1$. Throughout this proof, for the sake of simplicity, we denote $\mathcal{D}(d)$ simply by $\mathcal {D}$.    
    Using the notation in~\eqref{def:Matrix_block}, we can write the matrix $G \in \mathcal{M}^{ T}_{\sigma, \mathcal{D}}$ as
    \begin{equation*}
        G = \begin{pmatrix}
G_1 & G_2 \\ G_3 & G_4
\end{pmatrix},  \qquad  \begin{array}{ll}  G_j : \T^n \times I_T \to \mathcal{M}_m(\R), \qquad \mbox{for any $1 \le j \le 4$ }, \\
G_1, \, G_4 \in \mathcal{M}^{ T}_{\sigma, d+1}, \quad G_2 \in \mathcal{M}^{ T}_{\sigma, d+2}, \quad G_3 \in \mathcal{M}^{ T}_{\sigma, d} \end{array}
    \end{equation*}
we refer to~\eqref{def:BS_ell_M_Sym} for the definition of the space $\mathcal{M}^{ T}_{\sigma, d}$. We introduce the following notation in order to characterize the iterate $G^k$, which, by the inductive hypothesis, satisfies
\begin{equation*}
        G^k = \begin{pmatrix}
P_k & Q_k \\ R_k & S_k
\end{pmatrix},  \qquad  \begin{array}{ll}  P_k, \, Q_k, \, R_k, \, S_k : \T^n \times I_T \to \mathcal{M}_m(\R),  \\
P_k, \, S_k \in \mathcal{M}^{ T}_{\sigma, d+1}, \quad Q_k \in \mathcal{M}^{ T}_{\sigma, d+2}, \quad R_k \in \mathcal{M}^{ T}_{\sigma, d}. \end{array}
    \end{equation*}
    Using the above notation, we observe that 
    \begin{equation*}
        G^{k+1} = G^k G = \begin{pmatrix} P_k G_1 + Q_k G_3 & P_kG_2+Q_kG_4\\ R_kG_1+S_kG_3 & R_k G_2 + S_kG_4  \end{pmatrix}.
    \end{equation*}
Thanks to the inductive hypotheses, Proposition \ref{prop:Csigma_prop_norms_matrix} and recalling the definition of the norm $|\cdot|^T_{\sigma, \mathcal{D}}$ in~\eqref{def:norm_M_D}, we obtain that 
\begin{align*}
    |P_k G_1 + Q_k G_3|^T_{\sigma, d+1} &\le C(\sigma, m)\left(|P_k|^T_{\sigma, d+1}|G_1|^T_{\sigma, 0} + |Q_k|^T_{\sigma, d+2}|G_3|^T_{\sigma, 0}\right) \le C(\sigma, m)|G|_{\sigma, \mathcal{D}}^T \left(|G|_{\sigma, 0}^T\right)^{k-1}|G|_{\sigma, 0}^T,\\
    |P_k G_2 + Q_k G_4|^T_{\sigma, d+2} &\le C(\sigma, m)\left(|P_k|^T_{\sigma, 0}|G_2|^T_{\sigma, d+2} + |Q_k|^T_{\sigma, d+2}|G_4|^T_{\sigma, 0}\right)\\
    &\le C(\sigma, m)\left(\left(|G|_{\sigma, 0}^T \right)^k|G|_{\sigma, \mathcal{D}}^T + |G|_{\sigma, \mathcal{D}}^T \left(|G|_{\sigma, 0}^T\right)^{k-1}|G|_{\sigma, 0}^T\right),\\
    |R_k G_1 + S_k G_3|^T_{\sigma, d} &\le C(\sigma, m)\left(|R_k|^T_{\sigma, d}|G_1|^T_{\sigma, 0} + |S_k|^T_{\sigma, d+1}|G_3|^T_{\sigma, 0}\right) \le C(\sigma, m)|G|_{\sigma, \mathcal{D}}^T \left(|G|_{\sigma, 0}^T\right)^{k-1}|G|_{\sigma, 0}^T,\\
    |R_k G_2 + S_k G_4|^T_{\sigma, d+1} &\le C(\sigma, m)\left(|R_k|^T_{\sigma, 0}|G_2|^T_{\sigma, d+2} + |S_k|^T_{\sigma, d+1}|G_4|^T_{\sigma, 0}\right)\\
    &\le C(\sigma, m)\left(\left(|G|_{\sigma, 0}^T \right)^k|G|_{\sigma, \mathcal{D}}^T + |G|_{\sigma, \mathcal{D}}^T \left(|G|_{\sigma, 0}^T\right)^{k-1}|G|_{\sigma, 0}^T\right).
\end{align*}
The result follows from the latter and the definition of the norm 
$|\cdot|^T_{\sigma, \mathcal{D}}$ given in~\eqref{def:norm_M_D}.
\end{proof}
Given $d>1$, for the rest of this section, we define the following additional vector $\mathfrak{D}(d)$ which should not be confused with $\mathcal{D}(d)$ in~\eqref{def:vector_D_diff_decay}
\begin{equation}\label{def:vector_frakD_diff_decay}
    \mathfrak{D}(d) = (d+1, d+2, d+2, d+3).
\end{equation}
We have the following properties
\begin{lemma}\label{prop:exp_bound_par}
    Let $\sigma \ge 0$, $d >1$, $J_m$ as in~\eqref{def:J}, $\mathcal{D}(d)$ and $\mathfrak{D}(d)$ as in~\eqref{def:vector_D_diff_decay} and in~\eqref{def:vector_frakD_diff_decay}, respectively. For any $M,G, F \in \mathcal{M}^{ T}_{\sigma, \mathcal{D}(d)}$, $N \in \mathcal{M}^{ T}_{\sigma, \mathcal{D}(d+1)}$ and $B \in \mathcal{M}^{ T}_{\sigma, \mathfrak{D}(d)}$, the following statements hold for some constants $C = C(\sigma, m)$, depending only on $\sigma$ and $m$. 
    \begin{enumerate}
        \item \label{it:bound_expF_par} $e^{F} - \mathrm{Id}_{2m} \in \mathcal{M}^{ T}_{\sigma, \mathcal{D}(d)}$ and  $|e^{F} - \mathrm{Id}_{2m}|^T_{\sigma, \mathcal{D}(d)} \le |F|_{\sigma, \mathcal{D}(d)}^T e^{C|F|_{\sigma, 0}^T}$.
        \item \label{it:boound_expFBexpF_par} $|e^{F^\top}Be^{F}|^T_{\sigma, \mathfrak{D}(d)} \le |B|^T_{\sigma, \mathfrak{D}(d)} \left(1 + C e^{C|F|_{\sigma, 0}^T}|F|_{\sigma, \mathcal{D}(d)}^T  \right)$.
        \item \label{it:bound_expFtopJexpF} $|(e^{F^\top} - \mathrm{Id}_{2m})J_m e^F|^T_{\sigma, \mathfrak{D}(d-1)} \le C |F|_{\sigma, \mathcal{D}(d)}^T e^{C|F|_{\sigma, 0}^T}    $
        \item \label{it:bound_BA_par} $|B\mathcal{A}(F,G)|^T_{\sigma, \mathfrak{D}(2d+1)} \le C|B|^T_{\sigma, \mathfrak{D}(d)}|G|^T_{\sigma, \mathcal{D}(d)}\left(1 + e^{C|F|_{\sigma, 0}^T}|F|_{\sigma, \mathcal{D}(d)}^T\right)$.
        \item \label{it:bound_MA_par} $|M\mathcal{A}(F,G)|^T_{\sigma, \mathfrak{D}(2d+1)} \le C|M|^T_{\sigma, \mathcal{D}(d)}|G|^T_{\sigma, \mathcal{D}(d)}\left(1 + e^{C|F|_{\sigma, 0}^T}|F|_{\sigma, \mathcal{D}(d)}^T \right)$.
        \item \label{it:bound_A(G,N)_par} $|\mathcal{A}(F,N)|^T_{\sigma, \mathcal{D}(d+1)} \le |N|^T_{\sigma, \mathcal{D}(d+1)} \left(1 + Ce^{C|F|_{\sigma, 0}^T}|F|_{\sigma, \mathcal{D}(d)}^T \right). $
        \item \label{it:bound_NJA(F,G)_par} $|N^\top J_m \mathcal{A}(F,G)|^T_{\sigma, \mathfrak{D}(2d+1)} \le C|N|^T_{\sigma, \mathcal{D}(d+1)}|G|^T_{\sigma, \mathcal{D}(d)}\left(1 + e^{C|F|_{\sigma, 0}^T}|F|_{\sigma, \mathcal{D}(d)}^T \right)$, \\
        $|J_m N \mathcal{A}(F,G)|^T_{\sigma, \mathfrak{D}(2d+1)} \le C|N|^T_{\sigma, \mathcal{D}(d+1)}|G|^T_{\sigma, \mathcal{D}(d)}\left(1 + e^{C|F|_{\sigma, 0}^T}|F|_{\sigma, \mathcal{D}(d)}^T \right)$.
        \item \label{it:bound_A-G_par} $|\mathcal{A}(F,G) - G|^T_{\sigma, \mathfrak{D}(2d+1)} \le C |G|^T_{\sigma, \mathcal{D}(d)} |F|_{\sigma, \mathcal{D}(d)}^T e^{C|F|_{\sigma, 0}^T}$.
        \item \label{it:bound_L(expG)_par} If $\Lo F \in \mathcal{M}^{T}_{\sigma, \mathcal{D}(d+1)}$, then $|\Lo e^F|^T_{\sigma, \mathcal{D}(d+1)} \le  C|\Lo F|^T_{\sigma, \mathcal{D}(d+1)} e^{C|F|_{\sigma, 0}^T}\left(1 + |F|_{\sigma, \mathcal{D}(d)}^T\right).$
        \item \label{it:bound_exp_Lomega_exp_par} If $\Lo F \in \mathcal{M}^{T}_{\sigma, \mathcal{D}(d+1)}$, then 
        \begin{align*}
            |(e^{F^\top} - \mathrm{Id}_{2m})J_m \Lo (e^F)|^T_{\sigma, \mathfrak{D}(2d+1)} \le Ce^{C|F|_{\sigma, 0}^T}|\Lo F|^T_{\sigma, \mathcal{D}(d+1)} \left(|F|_{\sigma, \mathcal{D}(d)}^T + \left(|F|_{\sigma, \mathcal{D}(d)}^T\right)^2\right)
        \end{align*}
        \item \label{it:bound_JLomega(A-G)} If $\Lo F, \, \Lo G \in \mathcal{M}^{T}_{\sigma, \mathcal{D}(d+1)}$, then
        \begin{align*}
        &|J_m \Lo \left(\mathcal{A}(F,G)-G\right)|^T_{\sigma, \mathfrak{D}(2d+1)} \\
        &\qquad \le C e^{C|F|_{\sigma, 0}^T} \left(1 + |F|_{\sigma, \mathcal{D}(d)}^T\right) \bigg[|G|_{\sigma, \mathcal{D}(d)}^T|\Lo F|_{\sigma, \mathcal{D}(d+1)}^T + |\Lo G|_{\sigma, \mathcal{D}(d+1)}^T|F|_{\sigma, \mathcal{D}(d)}^T \bigg].
        \end{align*}
        \item \label{it:bound_JMLomegaA_par} If $\Lo F, \, \Lo G \in \mathcal{M}^{T}_{\sigma, \mathcal{D}(d+1)}$, then
        \begin{align*}
            &|J_m M \Lo \left(\mathcal{A}(F,G)\right)|^T_{\sigma, \mathfrak{D}(2d+1)}\\
            &\qquad \le C|M|^T_{\sigma, \mathcal{D}(d)}|\Lo G|_{\sigma, \mathcal{D}(d+1)}^T\\
            &\qquad + C e^{C|F|_{\sigma, 0}^T}|M|^T_{\sigma, \mathcal{D}(d)}\left(1 + |F|_{\sigma, \mathcal{D}(d)}^T\right) \bigg[|G|_{\sigma, \mathcal{D}(d)}^T|\Lo F|_{\sigma, \mathcal{D}(d+1)}^T + |\Lo G|_{\sigma, \mathcal{D}(d+1)}^T|F|_{\sigma, \mathcal{D}(d)}^T\bigg].
        \end{align*}
        \item \label{it:bound_RL_omegaA(F,G)} If $\Lo F, \, \Lo G \in \mathcal{M}^{T}_{\sigma, \mathcal{D}(d+1)}$ and $R \in \mathcal{M}^{T}_{\sigma, \mathfrak{D}(d-1)}$, then
        \begin{align*}
            &|R\Lo \left(\mathcal{A}(F,G)\right)|^T_{\sigma, \mathfrak{D}(2d+1)} \\
            &\qquad \le C |R|^T_{\sigma, \mathfrak{D}(d-1)}|\Lo G|_{\sigma, \mathcal{D}(d+1)}^T\\
            &\qquad + C e^{C|F|_{\sigma, 0}^T}|R|^T_{\sigma, \mathfrak{D}(d-1)}\left(1 + |F|_{\sigma, \mathcal{D}(d)}^T\right) \bigg[|G|_{\sigma, \mathcal{D}(d)}^T|\Lo F|_{\sigma, \mathcal{D}(d+1)}^T + |\Lo G|_{\sigma, \mathcal{D}(d+1)}^T|F|_{\sigma, \mathcal{D}(d)}^T\bigg].
        \end{align*}
    \end{enumerate}
\end{lemma}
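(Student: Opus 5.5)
The plan is to prove all thirteen items of Lemma~\ref{prop:exp_bound_par} by a single block bookkeeping scheme, reducing everything to Proposition~\ref{prop:Csigma_prop_norms_matrix}, Lemma~\ref{lem:exp_expansion_bounds} and Lemma~\ref{lemma:Prod_matrx_diff_decay}. The basic principle is a block product rule. If $X$ has blocks with decays $(x_1,x_2,x_3,x_4)$ and $Y$ has blocks with decays $(y_1,y_2,y_3,y_4)$, then the blocks of $XY$ have decays at least
\[
\bigl(\min(x_1+y_1,x_2+y_3),\ \min(x_1+y_2,x_2+y_4),\ \min(x_3+y_1,x_4+y_3),\ \min(x_3+y_2,x_4+y_4)\bigr),
\]
with norm bounded by $C(\sigma,m)$ times the products of the block norms, by item~\ref{prop:bound_norm_product_matrices} of Proposition~\ref{prop:Csigma_prop_norms_matrix}. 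Multiplying by a matrix in $\mathcal{M}^T_{\sigma,0}$ only requires the weaker rule with $y_k=0$, and then the estimate for $X$ is preserved in the combination $|X|_{\sigma,\mathcal D}|Y|_{\sigma,0}+|X|_{\sigma,0}|Y|_{\sigma,\mathcal D}$.

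For item~\ref{it:bound_expF_par}, I would write $e^F-\mathrm{Id}=\sum_{k\ge1}F^k/k!$ and apply Lemma~\ref{lemma:Prod_matrx_diff_decay} term by term. This gives the bound $|F|_{\sigma,\mathcal D(d)}\sum_k C^{k}(|F|_{\sigma,0})^{k-1}/k!\le |F|_{\sigma,\mathcal D(d)}e^{C|F|_{\sigma,0}}$.

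The integral operator $\mathcal A(F,G)=\int_0^1 e^{-sF}Ge^{sF}\,ds$ is handled by writing
\[
\mathcal A(F,G)=G+\int_0^1\bigl[(e^{-sF}-\mathrm{Id})Ge^{sF}+G(e^{sF}-\mathrm{Id})\bigr]ds .
\]
Item~\ref{it:bound_expF_par} together with the block rule then gives the following.
\begin{itemize}
\item $\mathcal A(F,G)\in\mathcal M_{\sigma,\mathcal D(d)}$, and $\mathcal A(F,N)\in\mathcal M_{\sigma,\mathcal D(d+1)}$, which is item~\ref{it:bound_A(G,N)_par}.
\item $\mathcal A(F,G)-G$ is a product of two $\mathcal D(d)$-matrices. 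A direct check of the four minima shows that $\mathcal D(d)\cdot\mathcal D(d)$ lands in $(2d+2,2d+3,2d+1,2d+2)$. This dominates $\mathfrak D(2d+1)=(2d+2,2d+3,2d+3,2d+4)$ only in the first two blocks.
\end{itemize}

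This mismatch is the main obstacle. The lower blocks seem not to gain enough decay under the naive block rule, so I expect items~\ref{it:bound_A-G_par}, \ref{it:bound_BA_par} and \ref{it:bound_MA_par} to require tracking decay gains block by block rather than only through the product rule.

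I would check them in the following order.
\begin{itemize}
\item For item~\ref{it:bound_BA_par}, left multiplication by $B\in\mathfrak D(d)$ gives lower blocks $\min(d+2+y_1,d+3+y_3)$. With $y=\mathcal D(d)$ this is only $2d+3$, while $\mathfrak D(2d+1)$ asks for $2d+3$ in the third block, which is fine, and $2d+4$ in the fourth block, which is obtained from $\min(d+2+d+2,\,d+3+d+1)$.
\item For item~\ref{it:bound_MA_par}, the analogous computation with $M\in\mathcal D(d)$ gives $\min(2d+1,\dots)$. The constant part $M G$ must therefore be handled using the additional structure in which the lemma is applied, where $M$ is $(e^{F}-\mathrm{Id})$ times $\Mpar$, whose only nonzero block is the upper-left one. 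I would first try to exploit that the products actually used have this zero-block structure, and if needed reformulate the hypotheses so that the block-sparse factor sits on the correct side.
\item For item~\ref{it:bound_A-G_par}, I would check directly whether the quadratic structure in $F$ and $G$ produces the extra gain. If it does not, I would weaken the target to what is used in the proof of Lemma~\ref{lem:DG_limit_par}, where it always appears premultiplied by $\Mpar$.
\end{itemize}

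Once these are in place, items \ref{it:boound_expFBexpF_par} and \ref{it:bound_expFtopJexpF} follow by expanding $e^{F^\top}Be^F=B+(e^{F^\top}-\mathrm{Id})B+\dots$ and applying the block rule.

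The operator items~\ref{it:bound_L(expG)_par}--\ref{it:bound_RL_omegaA(F,G)} are obtained by differentiating under the integral, using \eqref{eq:derivative_exp_parameter} and the identity $\Lo(e^{-sF}Ge^{sF})=e^{-sF}\bigl(\Lo G+[G,\mathcal A(sF,s\Lo F)]\bigr)e^{sF}$. Each resulting term is a product of the type already estimated, so the same block counting applies with $\Lo F,\Lo G\in\mathcal D(d+1)$.
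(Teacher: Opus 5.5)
Your block-counting scheme is the same as the paper's proof. Both use the exponential series with Lemma~\ref{lemma:Prod_matrx_diff_decay} for (1), the splitting $\mathcal{A}(F,G)=G+\int_0^1[(e^{-sF}-\mathrm{Id}_{2m})Ge^{sF}+G(e^{sF}-\mathrm{Id}_{2m})]\,ds$, and blockwise decay bookkeeping for the rest. The obstruction you flag is real, and it is fatal to the statement: items (5) and (8) are false as written, so no argument can establish them, and there is no hidden quadratic gain to find. Take $q$-independent
\[
F=\begin{pmatrix}0&0\\ t^{-d}\mathrm{Id}_m&0\end{pmatrix},\qquad G=\begin{pmatrix}t^{-d-1}\mathrm{Id}_m&0\\0&-t^{-d-1}\mathrm{Id}_m\end{pmatrix},
\]
both in $\mathfrak{sp}(2m,\R)$ with $|F|^T_{\sigma,\mathcal{D}(d)}=|G|^T_{\sigma,\mathcal{D}(d)}=1$. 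Since $F^2=0$ and $FGF=0$, one gets $\mathcal{A}(F,G)-G=\tfrac12[G,F]$, whose lower-left block is $-t^{-2d-1}\mathrm{Id}_m$. This is not $O(t^{-2d-3})$, so $\mathcal{A}(F,G)-G\notin\mathcal{M}^T_{\sigma,\mathfrak{D}(2d+1)}$, while the right-hand side of (8) is finite. With $F=0$ and $M$ equal to the matrix $F$ above, $M\mathcal{A}(0,G)=MG$ has the same lower-left block, contradicting (5).

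The paper's proof of (8) derives lower-block decays $2d+1$ and $2d+2$, i.e.\ membership in $\mathcal{M}^T_{\sigma,\mathcal{D}(2d+1)}$, and then asserts $\mathfrak{D}(2d+1)$. Item (5) is declared analogous to (4), but (4) works only because $B\in\mathcal{M}^T_{\sigma,\mathfrak{D}(d)}$ supplies two extra orders of decay in the lower blocks, which $M\in\mathcal{M}^T_{\sigma,\mathcal{D}(d)}$ lacks.

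So state (5) and (8) with $\mathcal{D}(2d+1)$ in place of $\mathfrak{D}(2d+1)$; this is exactly what your count gives. Your proposed downstream repair is then the right one. In Lemma~\ref{lem:DG_limit_par}, (8) enters only through $\Mpar(\mathcal{A}(G_*,G)-G)$, and left multiplication by $\Mpar$ annihilates the two lower blocks. Item (5) enters only with two choices of $M$:
\begin{itemize}
\item $M=\Mpar(e^{G_*}-\mathrm{Id}_{2m})$, which has zero lower blocks;
\item $M=(e^{G_*^\top}-\mathrm{Id}_{2m})\Mpar e^{G_*}$. With $E=e^{G_*}-\mathrm{Id}_{2m}$, its blocks are $E_1^\top\Lambda(\mathrm{Id}_m+E_1)$, $E_1^\top\Lambda E_2$, $E_2^\top\Lambda(\mathrm{Id}_m+E_1)$, $E_2^\top\Lambda E_2$, decaying like $(d+1,2d+3,d+2,2d+4)$.
\end{itemize}
Both lie in $\mathcal{M}^T_{\sigma,\mathfrak{D}(d)}$, so item (4) applies to them directly.

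One correction to the part you treat as routine. Multiplication by an arbitrary matrix in $\mathcal{M}^T_{\sigma,0}$ does not preserve a non-uniform block decay. With $y_k=0$, your rule turns $\mathcal{D}(d)$ into $(d+1,d+1,d,d)$ under right multiplication. The estimate survives only when both factors carry the block structure, as in Lemma~\ref{lemma:Prod_matrx_diff_decay}. You must therefore split every $e^{\pm sF}$ as $\mathrm{Id}_{2m}+E^{\pm s}$ with $E^{\pm s}\in\mathcal{M}^T_{\sigma,\mathcal{D}(d)}$, as the paper does. Otherwise, terms with no spare decay lose one order in some blocks. Examples are the factor $e^F$ in $\Lo e^F=e^F\mathcal{A}(F,\Lo F)$ for (9), and the conjugation $e^{-sF}(\Lo G)e^{sF}$ in (11)--(13).

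Also, in your formula for $\Lo(e^{-sF}Ge^{sF})$ the commutator must be taken with $e^{sF}\mathcal{A}(sF,s\Lo F)e^{-sF}=\mathcal{A}(-sF,s\Lo F)$, not with $\mathcal{A}(sF,s\Lo F)$. This does not affect the counting.
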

\begin{proof}
    To avoid a flood of constants, we denote by $C$ a generic positive constant depending only on $\sigma$ and $m$. In this proof, we widely use the properties contained in Proposition \ref{prop:Csigma_prop_norms_matrix}, especially Properties \ref{prop:crescita_indici_norm_ell} and \ref{prop:bound_norm_product_matrices}, without referring to them each time. 
    
    We observe that 
    \begin{align*}
        |e^F - \mathrm{Id}_{2m}|_{\sigma, \mathcal{D}(d)}^T & \leq  \sum_{k \geq 1}\frac{|F^k|^T_{\sigma, \mathcal{D}(d)}}{k!}  \leq \sum_{k \geq 1}\frac{C|F|^T_{\sigma, \mathcal{D}(d)}|F^{k - 1}|^T_{\sigma, 0}}{k!}  \\
        &\leq  \sum_{k \geq 1}\frac{C^{k-1}|F|^T_{\sigma, \mathcal{D}(d)}(|F|^T_{\sigma, 0})^{k - 1}}{k!}  \leq |F|^T_{\sigma, \mathcal{D}(d)}  e^{C(\sigma,m)|F|^T_{\sigma, 0}}
    \end{align*}
    where in the above inequalities we used Lemma \ref{lemma:Prod_matrx_diff_decay}. This proves~\eqref{it:bound_expF_par}.

    We rewrite $e^{F^\top}Be^{F}$ as
    \begin{equation}\label{eq:prop_dim_item_2_forma}
        e^{F^\top}Be^{F} = (e^{F^\top} - \mathrm{Id}_{2m})Be^{F}  +  B(e^{F} - \mathrm{Id}_{2m}) + B.
    \end{equation}
    We estimate each term on the RHS of the latter separately. For this purpose, using the notation~\eqref{def:Matrix_block} and Property \ref{it:bound_expF_par}, we write $e^{F} - \mathrm{Id}_{2m}$ as follows  
 \begin{equation}\label{eq:prop_matrix_E}
         e^{F} - \mathrm{Id}_{2m} = E =\begin{pmatrix} E_1 & E_2 \\ E_3 & E_4 \end{pmatrix} \qquad   \begin{array}{lll}  E_k : \T^n \times I_T \to \mathcal{M}_m(\R), \quad \mbox{for $1 \le k \le 4$}, \\ E_1 \in \mathcal{M}^{T}_{\sigma, d+1}, \quad E_2 \in \mathcal{M}^{T}_{\sigma, d+2},\\
        E_3 \in \mathcal{M}^{T}_{\sigma, d}, \hspace{8mm} E_4 \in \mathcal{M}^{T}_{\sigma, d +1}. \end{array}  
     \end{equation}
Using the above notation, we can write 
\begin{equation*}
B(e^{F} - \mathrm{Id}_{2m}) = BE=
\begin{pmatrix}
B_1E_1+B_2E_3 & B_1E_2+B_2E_4\\
B_3E_1+B_4E_3 & B_3E_2+B_4E_4
\end{pmatrix}
\end{equation*}
where, by Property \ref{it:bound_expF_par}
\begin{align*}
   |B_1E_1+B_2E_3|^T_{\sigma, d+1} &\le C\left(|B_1|^T_{\sigma, d+1}|E_1|_{\sigma, 0}^T +  |B_2|^T_{\sigma, d+2}|E_3|_{\sigma, 0}^T\right) \le C|B|^T_{\sigma, \mathfrak{D}(d)}|F|_{\sigma, \mathcal{D}(d)}^T e^{C|F|_{\sigma, 0}^T},\\
   |B_1E_2+B_2E_4|^T_{\sigma, d+2} &\le C\left(|B_1|^T_{\sigma, 0}|E_2|_{\sigma, d+2}^T +  |B_2|^T_{\sigma, d+2}|E_4|_{\sigma, 0}^T\right) \le C|B|^T_{\sigma, \mathfrak{D}(d)}|F|_{\sigma, \mathcal{D}(d)}^T e^{C|F|_{\sigma, 0}^T}
\end{align*}
and similarly
\begin{equation*}
        |B_3E_1+B_4E_3|^T_{\sigma, d+2}, \, |B_3E_2+B_4E_4|^T_{\sigma, d+3},\le C |B|^T_{\sigma, \mathfrak{D}(d)}|F|_{\sigma, \mathcal{D}(d)}^T e^{C|F|_{\sigma, 0}^T}.
\end{equation*}
Remembering that $e^F - \mathrm{Id}_{2m}= E$, the above estimates prove that 
\begin{equation*}
    |B(e^F - \mathrm{Id}_{2m})|^T_{\sigma, \mathfrak{D}(d)} \le C |B|^T_{\sigma, \mathfrak{D}(d)}|F|_{\sigma, \mathcal{D}(d)}^T e^{C|F|_{\sigma, 0}^T}.
\end{equation*}
  We stress that $e^{F^\top} - \mathrm{Id}_{2m} = E^\top$ and, as in the previous case, we write
\begin{align*}
&(e^{F^\top} - \mathrm{Id}_{2m})Be^{F}  = E^\top B(E + \mathrm{Id}_{2m}) = E^\top BE + E^\top B , \\ & \quad =\begin{pmatrix}
E_1^\top(B_1E_1+B_2E_3)+E_3^\top(B_3E_1+B_4E_3)
&
E_1^\top(B_1E_2+B_2E_4)+E_3^\top(B_3E_2+B_4E_4)
\\
E_2^\top(B_1E_1+B_2E_3)+E_4^\top(B_3E_1+B_4E_3)
&
E_2^\top(B_1E_2+B_2E_4)+E_4^\top(B_3E_2+B_4E_4)
\end{pmatrix}\\
& + \quad \begin{pmatrix}
E_1^\top B_1+E_3^\top B_3 &
E_1^\top B_2+E_3^\top B_4
\\
E_2^\top B_1+E_4^\top B_3 &
E_2^\top B_2+E_4^\top B_4
\end{pmatrix},
\end{align*}
 using Property \ref{it:bound_expF_par} and the trivial estimate $|E_k|^T_{\sigma, 0} = |e^F - \mathrm{Id}_{2m}|^T_{\sigma, 0} \le 1 + |e^F|^T_{\sigma, 0} \le Ce^{C|F|^T_{\sigma, 0}}$ , for $1 \le k \le 4$, given by Property \ref{it:bound_expF} of Lemma \ref{lem:exp_expansion_bounds}, we have 
\begin{align*}
    &|E_1^\top(B_1E_1+B_2E_3)+E_3^\top(B_3E_1+B_4E_3) + E_1^\top B_1+E_3^\top B_3|^T_{\sigma, d+1} \\
    &\qquad \le C \bigg(|E_1|^T_{\sigma, 0}|B_1|^T_{\sigma, d+1}|E_1|^T_{\sigma,0} + |E_1|^T_{\sigma, d+1}|B_2|^T_{\sigma, 0}|E_3|^T_{\sigma,0} + |E_3|^T_{\sigma, 0}|B_3|^T_{\sigma, 0}|E_1|^T_{\sigma,d+1}\\
    &\qquad +|E_3|^T_{\sigma, 0}|B_4|^T_{\sigma, d+3}|E_3|^T_{\sigma,0} + |E_1|^T_{\sigma, 0}|B_1|^T_{\sigma, d+1} + |E_3|^T_{\sigma, 0}|B_3|^T_{\sigma, d+2}\bigg) \\
    &\qquad \le C |B|^T_{\sigma, \mathfrak{D}(d)}|F|_{\sigma, \mathcal{D}(d)}^T \left( e^{C|F|_{\sigma, 0}^T}\right)^2 \le C |B|^T_{\sigma, \mathfrak{D}(d)}|F|_{\sigma, \mathcal{D}(d)}^T  e^{C|F|_{\sigma, 0}^T}
\end{align*}
and similarly
\begin{align*}
   & |E_1^\top(B_1E_2+B_2E_4)+E_3^\top(B_3E_2+B_4E_4) + E_1^\top B_2+E_3^\top B_4|^T_{\sigma, d+2},\\
   &|E_2^\top(B_1E_1+B_2E_3)+E_4^\top(B_3E_1+B_4E_3) + E_2^\top B_1+E_4^\top B_3|^T_{\sigma, d+2},\\
   & |E_2^\top(B_1E_2+B_2E_4)+E_4^\top(B_3E_2+B_4E_4) + E_2^\top B_2+E_4^\top B_4|^T_{\sigma, d+3} \le C |B|^T_{\sigma, \mathfrak{D}(d)}|F|_{\sigma, \mathcal{D}(d)}^T  e^{C|F|_{\sigma, 0}^T}.
\end{align*}
Combining the above estimates, we prove that 
\begin{equation*}
     |(e^{F^\top} - \mathrm{Id}_{2m})Be^{F}|^T_{\sigma, \mathfrak{D}(d)} \le C  |B|^T_{\sigma, \mathfrak{D}(d)}|F|_{\sigma, \mathcal{D}(d)}^T  e^{C|F|_{\sigma, 0}^T}
\end{equation*}
which concludes the proof of Property \ref{it:boound_expFBexpF_par}.

Using the notation defined by~\eqref{eq:prop_matrix_E}, we write 
\begin{align*}
    &(e^{F^\top} - \mathrm{Id}_{2m})J_m e^F = E^\top J_m (E + \mathrm{Id}_{2m}) = E^\top J_mE + E^\top J_m \\
    & \qquad = \begin{pmatrix}
E_1^\top E_3-E_3^\top E_1
&
E_1^\top E_4-E_3^\top E_2
\\
E_2^\top E_3-E_4^\top E_1
&
E_2^\top E_4-E_4^\top E_2
\end{pmatrix} + \begin{pmatrix}
-E_3^\top&E_1^\top\\
-E_4^\top&E_2^\top
\end{pmatrix}.
\end{align*}
From this decomposition, the proof of Property \ref{it:bound_expFtopJexpF} follows directly from  Property \ref{it:bound_expF_par} and the trivial estimate $|E_k|^T_{\sigma, 0} = |e^F - \mathrm{Id}_{2m}|^T_{\sigma, 0} \le 1 + |e^F|^T_{\sigma, 0} \le Ce^{C|F|^T_{\sigma, 0}}$ , for $1 \le k \le 4$, given by Property \ref{it:bound_expF} of Lemma \ref{lem:exp_expansion_bounds}.

Remembering the definition~\eqref{eq:adjoint_formula}, we observe that 
    \begin{equation}\label{eq:BA(F,G)_est}
        B\mathcal{A}(F,G) = BG + \int_0^1 B(e^{-sF^\top} - \mathrm{Id}_{2m})Ge^{sF} +  BG(e^{sF} - \mathrm{Id}_{2m}) ds.
    \end{equation}
    We need to estimate each term in the RHS of the latter. To this end, using the notation~\eqref{def:Matrix_block}, we can write $BG$ as
    \begin{equation*}
        BG=
\begin{pmatrix}
B_1G_1+B_2G_3 & B_1G_2+B_2G_4\\
B_3G_1+B_4G_3 & B_3G_2+B_4G_4
\end{pmatrix}.
    \end{equation*}
    where, 
    \begin{align*}
        |B_1G_1+B_2G_3|^T_{\sigma, 2d+2} &\le C \left(|B_1|^T_{\sigma, d+1}|G_1|^T_{\sigma, d+1} + |B_2|^T_{\sigma, d+2} |G_3|^T_{\sigma, d}\right)\\
        &\le  C |B|^T_{\sigma, \mathfrak{D}(d)}|G|^T_{\sigma, \mathcal{D}(d)}.
    \end{align*}
    and, similarly, one can verify that 
    \begin{equation*}
        |B_1G_2+B_2G_4|^T_{\sigma, 2d+3}, \, |B_3G_1+B_4G_3|^T_{\sigma, 2d+1}, \, |B_3G_2+B_4G_4|^T_{\sigma, 2d+2},\le C |B|^T_{\sigma, \mathfrak{D}(d)}|G|^T_{\sigma, \mathcal{D}(d)}.
    \end{equation*}
    Combining the above estimates, we prove that 
    \begin{equation}\label{eq:Item_2_prop_norm_par_1}
        |BG|^T_{\sigma, \mathfrak{D}(2d+1)} \le C |B|^T_{\sigma, \mathfrak{D}(d)}|G|^T_{\sigma, \mathcal{D}(d)}.
    \end{equation}
    It remains to estimate the other terms on the RHS of~\eqref{eq:BA(F,G)_est}. To this end,for any $s \in [0,1]$, we denote
 \begin{equation}\label{eq:prop_matrix_E_s}
         e^{\pm sF} - \mathrm{Id}_{2m} = E^{\pm s} =\begin{pmatrix} E^{\pm s}_1 & E^{\pm s}_2 \\[0.2cm] E^{\pm s}_3 & E^{\pm s}_4 \end{pmatrix} 
     \end{equation}
     where $E^{\pm s}_k$ satisfies the same properties of $E_k$ in~\eqref{eq:prop_matrix_E} for all $1 \le k \le 4$. Using the above notation, we can write $BG(e^{sF} - \mathrm{Id}_{2m})$ in~\eqref{eq:BA(F,G)_est} as
     \begin{equation}
        BGE^{+s}=
        \begin{pmatrix}
        (B_1G_1+B_2G_3)E_1^{+s}+(B_1G_2+B_2G_4)E_3^{+s} &
        (B_1G_1+B_2G_3)E_2^{+s}+(B_1G_2+B_2G_4)E_4^{+s}
        \\
        (B_3G_1+B_4G_3)E_1^{+s}+(B_3G_2+B_4G_4)E_3^{+s} &
        (B_3G_1+B_4G_3)E_2^{+s}+(B_3G_2+B_4G_4)E_4^{+s}
        \end{pmatrix}.
    \end{equation}
    As in the previous case, and using Property \ref{it:bound_expF_par}, and remembering that $E^{+s} = e^{sF} - \mathrm{Id}_{2m}$ , one can verify that 
    \begin{equation}\label{eq:Item_2_prop_norm_par_2}
         |BG(e^{sF} - \mathrm{Id}_{2m})|^T_{\sigma, \mathfrak{D}(2d+1)} \le C |B|^T_{\sigma, \mathfrak{D}(d)}|G|^T_{\sigma, \mathcal{D}(d)}|F|_{\sigma, \mathcal{D}(d)}^T e^{C|F|_{\sigma, 0}^T}.
    \end{equation}
    Similarly, using Property \ref{it:bound_expF} of Lemma \ref{lem:exp_expansion_bounds}, a straightforward computation shows that  
    \begin{align*}
        |B(e^{-sF^\top} - \mathrm{Id}_{2m})Ge^{sF}|^T_{\sigma, \mathfrak{D}(2d+1)}, \, |B(e^{-sF} - \mathrm{Id}_{2m})G|^T_{\sigma, \mathfrak{D}(2d+1)} &\le C |B|^T_{\sigma, \mathfrak{D}(d)}|G|^T_{\sigma, \mathcal{D}(d)}|F|_{\sigma, \mathcal{D}(d)}^T e^{C|F|_{\sigma, 0}^T}.
    \end{align*}
    Combining the above estimates with~\eqref{eq:Item_2_prop_norm_par_2},~\eqref{eq:Item_2_prop_norm_par_1} and~\eqref{eq:BA(F,G)_est}, we conclude the proof of Property \ref{it:bound_BA_par}. The proofs of Properties \ref{it:bound_MA_par}, \ref{it:bound_A(G,N)_par} and \ref{it:bound_NJA(F,G)_par} are similar to that of Property \ref{it:bound_BA_par}. For this reason, they are omitted. 

    Remembering~\eqref{eq:adjoint_formula}, we have that 
    \begin{equation*}
        \mathcal{A}(F,G) - G =  \int_0^1 (e^{-sF} - \mathrm{Id}_{2m})Ge^{sF} +   G(e^{sF} - \mathrm{Id}_{2m}) ds. 
    \end{equation*}
We estimate each term in the RHS of the above equality. Using the notation introduced by~\eqref{eq:prop_matrix_E_s}, we observe that
\begin{equation*}
   G(e^{sF} - \mathrm{Id}_{2m}) = \begin{pmatrix}
G_1E_1^{+s}+G_2E_3^{+s} & G_1E_2^{+s}+G_2E_4^{+s}\\
G_3E_1^{+s}+G_4E_3^{+s} & G_3E_2^{+s}+G_4E_4^{+s}
\end{pmatrix}. 
\end{equation*}
and using Property \ref{it:bound_expF_par}, 
\begin{align*}
    |G_1E_1^{+s}+G_2E_3^{+s}|^T_{\sigma, 2d+2} &\le C \left(|G_1|^T_{\sigma, d+1}|E_1^{+s}|^T_{\sigma, d+1} + |G_2|^T_{\sigma, d+2} |E_3^{+s}|^T_{\sigma, d}\right)\\
        &\le  C |G|^T_{\sigma, \mathcal{D}(d)} |F|_{\sigma, \mathcal{D}(d)}^T e^{C|F|_{\sigma, 0}^T}.
\end{align*}
Similarly, one can verify that 
\begin{align*}
    &|G_1E_2+G_2E_4|^T_{\sigma, 2d+3},\, |G_3E_1+G_4E_3|^T_{\sigma, 2d+1}, \\ &\qquad |G_3E_2+G_4E_4|^T_{\sigma, 2d+2},\le  C |G|^T_{\sigma, \mathcal{D}(d)} |F|_{\sigma, \mathcal{D}(d)}^T e^{C|F|_{\sigma, 0}^T},
\end{align*}
and combining the above estimates, we prove that  
\begin{equation*}
    |G(e^{sF} - \mathrm{Id}_{2m})|^T_{\sigma, \mathfrak{D}(2d+1)}\le C |G|^T_{\sigma, \mathcal{D}(d)} |F|_{\sigma, \mathcal{D}(d)}^T e^{C|F|_{\sigma, 0}^T}.
\end{equation*}
Analogously, using Property \ref{it:bound_expF} of Lemma \ref{lem:exp_expansion_bounds}, one can show that 
\begin{equation*}
    |(e^{-sF} - \mathrm{Id}_{2m})Ge^{sF}|^T_{\sigma, \mathfrak{D}(2d+1)}\le C |G|^T_{\sigma, \mathcal{D}(d)} |F|_{\sigma, \mathcal{D}(d)}^T e^{C|F|_{\sigma, 0}^T}.
\end{equation*}
This concludes the proof of Property \ref{it:bound_A-G_par}. 

Noticing that 
\begin{equation*}
    \Lo e^F = e^F \mathcal{A}(F, \Lo E) 
\end{equation*}
and using Properties \ref{it:bound_expF_par}, \ref{it:bound_MA_par} and \ref{it:bound_A(G,N)_par},and Property \ref{it:bound_expF} of Lemma \ref{lem:exp_expansion_bounds}, we conclude the proof of Property \ref{it:bound_L(expG)_par}.

Remembering that $\Lo (e^F) = \Lo (e^F - \mathrm{Id}_{2m})$ and using the notation defined by~\eqref{eq:prop_matrix_E}, we observe that 
\begin{align*}
    (e^{F^\top} - \mathrm{Id}_{2m})J_m \Lo (e^F) =E^\top J_m \Lo E =  \begin{pmatrix}
E_1^\top\Lo E_3-E_3^\top\Lo E_1
&
E_1^\top\Lo E_4-E_3^\top\Lo E_2
\\
E_2^\top\Lo E_3-E_4^\top\Lo E_1
&
E_2^\top\Lo E_4-E_4^\top\Lo E_2
\end{pmatrix}
    \end{align*}
and by Properties \ref{it:bound_expF_par} and \ref{it:bound_L(expG)_par} we have that 
\begin{align*}
    &|E_1^\top\Lo E_3-E_3^\top\Lo E_1|^T_{\sigma, 2d+2} \le C\left(|E_1^\top|^T_{\sigma, d+1}|\Lo E_3|^T_{\sigma, d+1} + |E_3^\top|^T_{\sigma, d}|\Lo E_1|^T_{\sigma, d+2}\right)\\
    &\qquad \le C|e^{F^\top}-\mathrm{Id}_{2m}|^T_{\sigma, \mathcal{D}(d)}|\Lo e^F|^T_{\sigma, \mathcal{D}(d+1)} \le Ce^{C|F|_{\sigma, 0}^T}|\Lo F|^T_{\sigma, \mathcal{D}(d+1)} \left(|F|_{\sigma, \mathcal{D}(d)}^T + \left(|F|_{\sigma, \mathcal{D}(d)}^T\right)^2\right)
\end{align*}
and similarly
\begin{align*}
    &|E_1^\top\Lo E_4-E_3^\top\Lo E_2|^T_{\sigma, 2d+3}, \, |E_2^\top\Lo E_3-E_4^\top\Lo E_1|^T_{\sigma, 2d+1}, \, |E_2^\top\Lo E_4-E_4^\top\Lo E_2|^T_{\sigma, 2d+2}, \\
    &\qquad \le Ce^{C|F|_{\sigma, 0}^T}|\Lo F|^T_{\sigma, \mathcal{D}(d+1)} \left(|F|_{\sigma, \mathcal{D}(d)}^T + \left(|F|_{\sigma, \mathcal{D}(d)}^T\right)^2\right).
\end{align*}
This concludes the proof of Property \ref{it:bound_exp_Lomega_exp_par}.

    We observe that 
    \begin{equation*}
        J_m \Lo \left(\mathcal{A}(F,G)-G\right) = J_m \Lo \left(\int_0^1 \left(e^{-sF} -\mathrm{Id}_{2m}\right) G e^{sF} + G\left(e^{sF} - \mathrm{Id}_{2m}\right)ds\right).
    \end{equation*}
    We analyze each term on the RHS of the latter separately. To this end, using the notation~\eqref{eq:prop_matrix_E_s}, we have that 
    \begin{align*}
        J_m \Lo \left(G\left(e^{sF} - \mathrm{Id}_{2m}\right)\right) &= J_m \Lo \left(GE^{+s}\right) = J_m \Lo \left(G\right)E^{+s} + J_m G\Lo\left(E^{+s}\right)\\
        & = \begin{pmatrix}
\Lo(G_3)E^{+s}_1+\Lo(G_4)E^{+s}_3
&
\Lo(G_3)E^{+s}_2+\Lo(G_4)E^{+s}_4
\\
-\Lo(G_1)E^{+s}_1-\Lo(G_2)E^{+s}_3
&
-\Lo(G_1)E^{+s}_2-\Lo(G_2)E^{+s}_4
\end{pmatrix} \\
&+ \begin{pmatrix}
G_3\Lo(E^{+s}_1)+G_4\Lo(E^{+s}_3)
&
G_3\Lo(E^{+s}_2)+G_4\Lo(E^{+s}_4)
\\
-G_1\Lo(E^{+s}_1)-G_2\Lo(E^{+s}_3)
&
-G_1\Lo(E^{+s}_2)-G_2\Lo(E^{+s}_4)
\end{pmatrix}.
    \end{align*}
 Using and using Properties \ref{it:bound_expF_par} and \ref{it:bound_L(expG)_par}, and noticing that $\Lo e^F = \Lo (e^F-\mathrm{Id}_{2m})$, we obtain that 
 \begin{align*}
    & |\Lo(G_3)E^{+s}_1+\Lo(G_4)E^{+s}_3 + G_3\Lo(E^{+s}_1)+G_4\Lo(E^{+s}_3)|^T_{\sigma, 2d+2}\\
    &\qquad \le C\bigg( |\Lo G_3|^T_{\sigma, d+1} |E^{+s}_1|^T_{\sigma, d+1} + |\Lo G_4|^T_{\sigma, d+2} |E^{+s}_3|^T_{\sigma, d} + |G_3|^T_{\sigma, d} |\Lo E^{+s}_1|^T_{\sigma, d+2} \\
    &\qquad + |G_4|^T_{\sigma, d+1} |\Lo E^{+s}_3|^T_{\sigma, d+1} \bigg) \\
    &\qquad \le C e^{C|F|^T_{\sigma, 0}} \left(1 + |F|^T_{\sigma, \mathcal{D}(d)}\right)\left(|\Lo G|^T_{\sigma, \mathcal{D}(d+1)} |F|^T_{\sigma, \mathcal{D}(d)} + |G|^T_{\sigma, \mathcal{D}(d)} |\Lo F|^T_{\sigma, \mathcal{D}(d+1)}\right).
 \end{align*}
 Similarly, one can verify that 
 \begin{align*}
     & |\Lo(G_3)E^{+s}_2+\Lo(G_4)E^{+s}_4 + G_3\Lo(E^{+s}_2)+G_4\Lo(E^{+s}_4)|^T_{\sigma, 2d+3},\\
     & |\Lo(G_1)E^{+s}_1+\Lo(G_2)E^{+s}_3 +G_1\Lo(E^{+s}_1)+G_2\Lo(E^{+s}_3)|^T_{\sigma, 2d+3},\\
     & |\Lo(G_1)E^{+s}_2+\Lo(G_2)E^{+s}_4 +G_1\Lo(E^{+s}_2)+G_2\Lo(E^{+s}_4)|^T_{\sigma, 2d+4},\\
     &\qquad \le C e^{C|F|^T_{\sigma, 0}} \left(1 + |F|^T_{\sigma, \mathcal{D}(d)}\right)\left(|\Lo G|^T_{\sigma, \mathcal{D}(d+1)} |F|^T_{\sigma, \mathcal{D}(d)} + |G|^T_{\sigma, \mathcal{D}(d)} |\Lo F|^T_{\sigma, \mathcal{D}(d+1)}\right). 
 \end{align*}
     Combining the above estimates, we prove that 
     \begin{align*}
         &\left|J_m \Lo \left(\int_0^1 G\left(e^{sF} - \mathrm{Id}_{2m}\right)ds\right)\right|^T_{\sigma, \mathfrak{D}(2d+1)}\\
         &\qquad \le C e^{C|F|^T_{\sigma, 0}} \left(1 + |F|^T_{\sigma, \mathcal{D}(d)}\right)\left(|\Lo G|^T_{\sigma, \mathcal{D}(d+1)} |F|^T_{\sigma, \mathcal{D}(d)} + |G|^T_{\sigma, \mathcal{D}(d)} |\Lo F|^T_{\sigma, \mathcal{D}(d+1)}\right)
     \end{align*}
     and by Property \ref{it:bound_expF} of Lemma \ref{lem:exp_expansion_bounds} and reasoning as in the previous case, one can verify that 
     \begin{align*}
         &\left|J_m \Lo \left(\int_0^1 \left(e^{-sF} -\mathrm{Id}_{2m}\right) G e^{sF}ds\right)\right|^T_{\sigma, \mathfrak{D}(2d+1)}\\
         &\qquad \le C e^{C|F|^T_{\sigma, 0}} \left(1 + |F|^T_{\sigma, \mathcal{D}(d)}\right)\left(|\Lo G|^T_{\sigma, \mathcal{D}(d+1)} |F|^T_{\sigma, \mathcal{D}(d)} + |G|^T_{\sigma, \mathcal{D}(d)} |\Lo F|^T_{\sigma, \mathcal{D}(d+1)}\right).
     \end{align*}
     This concludes the proof of Property \ref{it:bound_JLomega(A-G)}. 

     We can write $J_m M \Lo \left(\mathcal{A}(F,G)\right)$ as
     \begin{align*}
         &J_m M \Lo \left(\mathcal{A}(F,G)\right) =  J_m M \Lo G + J_m M \Lo \left(\int_0^1 (e^{-sF} - \mathrm{Id}_{2m})G e^{sF} + G(e^{sF} - \mathrm{Id}_{2m})ds\right)
     \end{align*}
     and, using the notation~\eqref{def:Matrix_block}, we observe that 
     \begin{equation*}
J_mM\Lo G
=
\begin{pmatrix}
M_3\Lo G_1+M_4\Lo G_3
&
M_3\Lo G_2+M_4\Lo G_4
\\
-M_1\Lo G_1-M_2\Lo G_3
&
-M_1\Lo G_2-M_2\Lo G_4
\end{pmatrix}
\end{equation*}
     with
     \begin{align*}
         |M_3\Lo G_1+M_4\Lo G_3|^T_{\sigma, 2d+2} &\le C\left(|M_3|^T_{\sigma, d}|\Lo G_1|^T_{\sigma, d+2} + |M_4|^T_{\sigma, d+1}|\Lo G_3|^T_{\sigma, d+1}\right),\\
         &\le C|M|^T_{\sigma, \mathcal{D}(d)}|\Lo G|^T_{\sigma, \mathcal{D}(d+1)}
     \end{align*}
and similarly 
\begin{align*}
  &|M_3\Lo G_2+M_4\Lo G_4|^T_{\sigma, 2d+3}, \,  |M_1\Lo G_1+M_2\Lo G_3|^T_{\sigma, 2d+3}  ,\\
  & \qquad  |M_1\Lo G_2+M_2\Lo G_4|^T_{\sigma, 2d+4}  \le C|M|^T_{\sigma, \mathcal{D}(d)}|\Lo G|^T_{\sigma, \mathcal{D}(d+1)}.
\end{align*}
The rest of the proof of Property \ref{it:bound_JMLomegaA_par} follows by combining  Properties \ref{it:bound_expF_par} and \ref{it:bound_L(expG)_par} and the arguments used in the proofs of the previous properties.

We write  $R\Lo \left(\mathcal{A}(F,G)\right)$ as
     \begin{align*}
         &R\Lo \left(\mathcal{A}(F,G)\right) =  R \Lo G + R \Lo \left(\int_0^1 (e^{-sF} - \mathrm{Id}_{2m})G e^{sF} + G(e^{sF} - \mathrm{Id}_{2m})ds\right)
     \end{align*}
     where 
    \begin{equation*}
R\Lo G
=
\begin{pmatrix}
R_1\Lo G_1+R_2\Lo G_3
&
R_1\Lo G_2+R_2\Lo G_4
\\
R_3\Lo G_1+R_4\Lo G_3
&
R_3\Lo G_2+R_4\Lo G_4
\end{pmatrix}
\end{equation*}
with 
     \begin{align*}
         |R_1\Lo G_1+R_2\Lo G_3|^T_{\sigma, 2d+2} &\le C\left(|R_1|^T_{\sigma, d}|\Lo G_1|^T_{\sigma, d+2} + |R_2|^T_{\sigma, d+1}|\Lo G_3|^T_{\sigma, d+1}\right),\\
         &\le C|R|^T_{\sigma, \mathfrak{D}(d-1)}|\Lo G|^T_{\sigma, \mathcal{D}(d+1)}
     \end{align*}
and similarly 
\begin{align*}
  &|R_1\Lo G_2+R_2\Lo G_4|^T_{\sigma, 2d+3}, \,  |R_3\Lo G_1+R_4\Lo G_3|^T_{\sigma, 2d+3}  ,\\
  & \qquad  |R_3\Lo G_2+R_4\Lo G_4|^T_{\sigma, 2d+4}  \le C|R|^T_{\sigma, \mathfrak{D}(d-1)}|\Lo G|^T_{\sigma, \mathcal{D}(d+1)}.
\end{align*}
The rest of the proof of Property \ref{it:bound_RL_omegaA(F,G)} follows along the same lines as the previous one, combining Properties \ref{it:bound_expF_par} and \ref{it:bound_L(expG)_par}.
\end{proof}

\appendix
\section{Hölder class of functions}\label{app:Holder}
In this section, we recall the definition of  Hölder classes of functions and some of their well-known properties. For this purpose, given an open subset $E$ of $\R^n$ and a positive integer $k \ge 0$,  we denote by $C^k(E)$ the spaces of functions $f: E \to \R$ with continuous partial derivatives $\partial^\alpha f \in C^0(E)$ for all $\alpha \in \N^n$ with $|\alpha|= |\alpha|_1 = \alpha_1+...+\alpha_n \le k$. Furthermore, for all $f \in C^k(E)$, we define the following norm
\begin{equation*}
|f|_{C^k} = \sup_{|\alpha|\le k}|\partial^\alpha f|_{C^0},
\end{equation*}
where $|\partial^\alpha f|_{C^0} = \sup_{x \in E}|\partial^\alpha f(x)|$. 

Let $\sigma = k + \mu$, where $k \in \Z$, $k \ge 0$ and $0 < \mu <1$.  We define by $C^\sigma(E)$ the space of Hölder functions $f\in C^k(E)$ verifying 
\begin{equation}
\label{def:Holder_norm}
|f|_{C^\sigma} = \sup_{|\alpha|\le k}|\partial^\alpha f|_{C^0} + \sup_{x,y\in E\\, x\ne y, |\alpha| = k}{|\partial^\alpha f(x) - \partial^\alpha f(y)| \over |x-y|^\mu}<\infty.
\end{equation}
It is well-known that $C^\sigma(E)$ endowed with the norm~\eqref{def:Holder_norm} is a Banach space. We adopt the same notation for Real-valued functions and matrices. We say that a Real-valued function or a matrix belongs to $C^\sigma(E)$ if each of its components does. In this case, the corresponding norm is defined as the maximum of the norms of its components.

We recall that $C(\cdot)$ stands for a constant depending on the parameters in brackets. The following proposition summarizes some properties of the norm~\eqref{def:Holder_norm}. 
\begin{proposition}
\label{prop: prop_Holder_norms}
We consider $f$, $g \in C^\sigma(E)$ and $\sigma \ge 0$.
\begin{enumerate}
\item For all $\beta \in \N^{n}$ and $s\ge 0$, if $|\beta| + s \le \sigma$ then  $\left|{\partial^{|\beta|} \over \partial{x_1}^{\beta_1}... \partial{x_n}^{\beta_n}} f \right|_{C^s} \le C|f|_{C^\sigma}$.\\
\item  $|fg|_{C^\sigma} \le C(\sigma)\left(|f|_{C^0}|g|_{C^\sigma} + |f|_{C^\sigma}|g|_{C^0}\right)$. 
\end{enumerate}
 Let $E_1$ be an open subset of $\R^n$ and $z:E_1\to E$ a function taking values in the domain of $f$.  In what follows $\partial z$ stands for the partial derivatives of $z$.
\begin{enumerate}
\item[(3)] If $\sigma < 1$, $f \in C^1(E)$, $z \in C^\sigma (E_1)$ then $f\circ z \in C^\sigma(E_1)$ and
$$
|f \circ z|_{C^\sigma} \le C(|f|_{C^1}|z|_{C^\sigma}+ |f|_{C^0}).
$$
\end{enumerate} 
\begin{enumerate}
\item[(4)] If $\sigma < 1$, $f \in C^\sigma(E)$, $z \in C^1 (E_1)$ then $f\circ z \in C^\sigma(E_1)$ 
$$
|f \circ z|_{C^\sigma} \le C(|f|_{C^\sigma}|\partial z|^\sigma_{C^0}+ |f|_{C^0}).
$$
\end{enumerate}
\begin{enumerate}
\item[(5)] If $\sigma \ge 1$ and $f \in C^\sigma (E)$, $z \in C^\sigma (E_1)$ then $f\circ z \in C^\sigma(E_1)$ 
$$
|f \circ z|_{C^\sigma} \le C(\sigma) \left(|f|_{C^\sigma}|\partial z|^\sigma_{C^0} + |f|_{C^1}|\partial z|_{C^{\sigma-1}}+ |f|_{C^0}\right).
$$
\end{enumerate}
\end{proposition}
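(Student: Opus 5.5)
This is a standard result on compositions of Hölder functions, so I will prove it by reduction to the one-variable-at-a-time structure of the Hölder seminorm. Items (1) and (2) come first. Item (1) is immediate from the definition~\eqref{def:Holder_norm}. If $s$ is an integer, every derivative of order $\le s$ of $\partial^\beta f$ is a derivative of order $\le \sigma$ of $f$. If $s=j+\nu$ with $0<\nu<1$, I bound the $\nu$-Hölder quotient of $\partial^{\alpha}\partial^\beta f$, with $|\alpha|=j$, by interpolation. For $|x-y|\le 1$, either $|\alpha|+|\beta|<k$ and I use the mean value theorem with the $C^{|\alpha|+|\beta|+1}$ bound, or $|\alpha|+|\beta|=k$ and I use $|x-y|^\nu\ge |x-y|^\mu$ (since $\nu\le\mu$). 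For $|x-y|>1$ I simply use twice the sup norm. For this I take $E$ convex, or at least uniformly locally convex, which is the case for balls and tori used in the paper.

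Item (2) follows from the Leibniz rule $\partial^\alpha(fg)=\sum_{\gamma\le\alpha}\binom{\alpha}{\gamma}\partial^\gamma f\,\partial^{\alpha-\gamma}g$. I bound each product of sup norms by the standard interpolation inequality $|\partial^\gamma f|_{C^0}|\partial^{\alpha-\gamma}g|_{C^0}\le C(|f|_{C^0}|g|_{C^\sigma}+|f|_{C^\sigma}|g|_{C^0})$, which comes from $|h|_{C^j}\le C|h|_{C^0}^{1-j/\sigma}|h|_{C^\sigma}^{j/\sigma}$ followed by Young's inequality. For the Hölder quotient of a product I write $uv(x)-uv(y)=(u(x)-u(y))v(x)+u(y)(v(x)-v(y))$ and interpolate in the same way.

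For (3) and (4), where $\sigma<1$, the argument is a direct quotient estimate. For (3), $|f(z(x))-f(z(y))|\le |f|_{C^1}|z(x)-z(y)|\le |f|_{C^1}|z|_{C^\sigma}|x-y|^\sigma$. For (4), $|f(z(x))-f(z(y))|\le [f]_\mu |z(x)-z(y)|^\mu\le [f]_\mu|\partial z|_{C^0}^\mu|x-y|^\mu$, using convexity of $E_1$. In both cases I add $|f|_{C^0}$ for the sup part.

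Item (5) is the main obstacle. Write $\sigma=k+\mu$ with $k\ge1$. The plan is to use the Faà di Bruno formula: $\partial^\alpha(f\circ z)$ for $|\alpha|\le k$ is a sum of terms $(\partial^\gamma f)\circ z\cdot\prod_{i=1}^{|\gamma|}\partial^{\beta_i}z$ with $\sum|\beta_i|=|\alpha|$ and every $|\beta_i|\ge1$. I then estimate each term in $C^0$, and for $|\alpha|=k$ also in $C^\mu$. To do this I apply (2) to the products and (4) to $(\partial^\gamma f)\circ z$, which gives $[(\partial^\gamma f)\circ z]_\mu\le |f|_{C^{|\gamma|+\mu}}|\partial z|_{C^0}^\mu$. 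The delicate point is collapsing the mixed terms into the three terms $|f|_{C^\sigma}|\partial z|^\sigma_{C^0}$, $|f|_{C^1}|\partial z|_{C^{\sigma-1}}$ and $|f|_{C^0}$. I will handle this by the same interpolation inequalities, now applied to $f$ between $C^1$ and $C^\sigma$ and to $\partial z$ between $C^0$ and $C^{\sigma-1}$. Each term then has the form $|f|_{C^{j+\theta}}\prod|\partial z|_{C^{a_i}}$ with homogeneity matching, which is the standard tame estimate of de la Llave–Obaya. Rather than redo this bookkeeping in full, I would cite that reference for the precise combinatorics and only check that the exponents balance, since the interpolation step is where an error would most likely creep in.
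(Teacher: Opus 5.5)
Your proposal is correct and essentially reproduces the standard argument behind the references the paper cites in place of a proof (H\"ormander's appendix and Scarcella): Leibniz and Fa\`a di Bruno formulas plus the H\"older interpolation inequality, with the combinatorics of (5) likewise deferred to the literature; the exponents do balance, since the top-order terms give $X^{\theta}Y^{1-\theta}$ with $X=|f|_{C^\sigma}|\partial z|_{C^0}^\sigma$ and $Y=|f|_{C^1}|\partial z|_{C^{\sigma-1}}$. Your added convexity hypothesis on $E$ and $E_1$ is necessary, not just convenient: on a slit plane, a smooth $f$ with bounded derivatives that jumps across the slit already violates item (1), so the statement is false for arbitrary open sets, though it holds for the balls and tori where the paper uses it.
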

\begin{proof}
    We refer to~\cite{Hor76} and~\cite{Sca22} for the proof. 
\end{proof}

The following is a quantitative version of the classical Implicit Function Theorem (see, e.g., \cite{ChiAM2}). 

\begin{theorem}
\label{thm:QIFT}
    Let $(X, |\cdot |_X), (Y, |\cdot|_Y)$, $(Z, |\cdot|_Z)$ be Banach spaces and $U \subseteq X,$ $V \subseteq Y$ be open subsets containing $0$. Let $F:U \times V \subseteq X \times Y \to Z$ be a $C^1$ map such that $F(0, 0) = 0$ and $D_2 F(0, 0): Y 
    \to Z$ is invertible. 
    
    Suppose there exist $M, r, \rho > 0$ satisfying the following.
    \begin{enumerate}
        \item   $B_X(r) \times B_Y(\rho) \subseteq U \times V$.
        \item $\| D_2 F(0, 0)^{-1}\|_{L(Z, Y)} \leq M$.
        \item $|F(x, 0)|_Y \leq \tfrac{\rho}{2M}$, for all $x \in B_X(r) $.
        \item $\|D_2 F(x, y) - D_2F(0, 0)\|_{L(Y, Z)} \leq \frac{1}{2M}$, for all $x \in B_X(r)$ and all $y \in B_Y(\rho)$.
    \end{enumerate}
    Then, there exists a $C^1$ map $g: B_X(r) \subseteq X \to B_Y(\rho) \subseteq Y$ such that
    \[ F(x, g(x)) = 0, \qquad \text{ for all } x \in B_X(r).  \]
\end{theorem}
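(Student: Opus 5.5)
The plan is to prove the quantitative Implicit Function Theorem by a contraction argument carried out separately for each fixed $x$. Write $L = D_2F(0,0)$; by hypothesis (2) we have $\|L^{-1}\| \le M$. For every $x \in B_X(r)$ consider the map
\[ \Phi_x(y) = y - L^{-1}F(x,y), \qquad y \in \overline{B_Y(\rho')}, \]
where $\rho' < \rho$ is chosen close to $\rho$; openness is handled at the end. The zeros of $F(x,\cdot)$ are exactly the fixed points of $\Phi_x$.

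The first step is to show that $\Phi_x$ contracts. Its derivative is $D\Phi_x(y) = L^{-1}\big(D_2F(0,0) - D_2F(x,y)\big)$, and hypothesis (4) bounds its norm by $M \cdot \tfrac{1}{2M} = \tfrac12$. Since the ball is convex, the mean value inequality gives that $\Phi_x$ is $\tfrac12$-Lipschitz.

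The second step is to show that $\Phi_x$ maps the ball into itself, and this is where the main care is needed. Decompose
\[ \Phi_x(y) = \big(\Phi_x(y) - \Phi_x(0)\big) + \Phi_x(0). \]
The first term has norm at most $\tfrac12|y|$. The second equals $-L^{-1}F(x,0)$, whose norm is at most $M|F(x,0)|$. Hypothesis (3), read as a bound in $Z$ (the paper writes $|\cdot|_Y$, but it must mean the $Z$-norm), gives $M|F(x,0)| \le \tfrac{\rho}{2}$. So $|\Phi_x(y)| \le \tfrac12|y| + \tfrac{\rho}{2} < \rho$ for all $|y| < \rho$. This is borderline for closed balls: the bound only yields $\le \rho$, not invariance of a closed ball strictly inside $B_Y(\rho)$. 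To avoid this I would iterate $y_{k+1} = \Phi_x(y_k)$ starting from $y_0 = 0$. The estimate $|y_{k+1} - y_k| \le 2^{-k}|y_1|$ shows the sequence is Cauchy, and summing gives $|y_k| \le 2|y_1| \le 2M|F(x,0)| \le \rho$. Equality $2M|F(x,0)| = \rho$ is the only problematic case. To handle it I would either assume a strict inequality in (3), or observe that the whole argument works for any $\rho'' < \rho$ with the bound (3) replaced by $\rho''/(2M)$, after a harmless shrinking. The limit $g(x)$ lies in $\overline{B}(2M|F(x,0)|)$ and is a fixed point. Uniqueness within $B_Y(\rho)$ follows from the contraction property.

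The third step is regularity of $g$. Continuity follows because $\Phi_x$ is a uniform contraction and depends continuously on $x$. For $C^1$, note that $\|D_2F(x,g(x)) - L\| \le \tfrac{1}{2M}$, so by a Neumann series $D_2F(x,g(x))$ is invertible with inverse bounded by $2M$. The classical implicit function theorem, applied locally at each point $(x, g(x))$ and combined with uniqueness, then identifies $g$ locally with the $C^1$ implicit function. This gives $Dg = -D_2F^{-1}D_1F$. The main obstacle is only the boundary case in step two; everything else is routine.
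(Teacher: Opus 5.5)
Your contraction argument is the standard proof of this quantitative implicit function theorem; the paper gives no proof of its own and only cites the literature. Your Steps 1 and 3 are correct. The one thing to correct is how you treat the boundary case in Step 2. It is not a technicality that a ``harmless shrinking'' removes. Replacing the bound in (3) by $\rho''/(2M)$ with $\rho''<\rho$ is a strictly stronger hypothesis, not a consequence of the given ones.

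In fact no argument can avoid strengthening something, because the statement as written (open balls, non-strict inequalities in (3) and (4)) is false. Take the following data:
\begin{itemize}
\item $X=Y=Z=\R$, $U=(-2,2)$, $V=(-1,1)$;
\item a $C^1$ function $\psi:\R\to[0,1]$ with $\psi(0)=0$ and $\psi\equiv 1$ for $|x|\ge 1$;
\item $F(x,y)=y-\tfrac12\psi(x)(1+y)$.
\end{itemize}
Then $F(0,0)=0$ and $D_2F(0,0)=1$. With $M=1$, $r=2$, $\rho=1$, hypotheses (1)--(4) all hold, since $|F(x,0)|=\psi(x)/2\le \tfrac12$ and $|D_2F(x,y)-1|=\psi(x)/2\le \tfrac12$. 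Yet for $|x|\ge 1$ one has $F(x,y)=(y-1)/2$, which has no zero in $(-1,1)$. The fixed point of your $\Phi_x$ sits exactly at $|y|=\rho$, which is precisely what your estimate $|y_\infty|\le 2M|F(x,0)|\le\rho$ permits.

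So state explicitly which corrected version you are proving. Any one of the following works:
\begin{itemize}
\item Strict inequality in (3) for each $x$. Then $|g(x)|\le 2M|F(x,0)|<\rho$ and your iteration closes in the open ball.
\item A uniform margin in (4): $\|D_2F(x,y)-D_2F(0,0)\|\le \theta/M$ with $\theta<\tfrac12$. Then $\Phi_x$ is $\theta$-Lipschitz and $|g(x)|\le M|F(x,0)|/(1-\theta)\le \rho/(2(1-\theta))<\rho$.
\item $\overline{B_Y(\rho)}\subseteq V$, with $g$ allowed to take values in the closed ball, which is complete and invariant.
\end{itemize}
Any of these suffices for the paper's applications. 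There the differences $\|D_\varphi\mathcal F^T(P,\varphi)-D_\varphi\mathcal F^T(0,0)\|$ and $\|D_G\mathcal G^T(P,G)-D_G\mathcal G^T(0,0)\|$ decay like a negative power of $T$ uniformly on the balls. Hence (4) holds with any prescribed margin once $T$ is large, and one could also simply enlarge $\rho$.
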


\section*{Acknowledgments} D.S. have been partially supported by the grant PID2024-158570NB-I00  funded by \\
MCIN/AEI/10.13039/501100011033 and “ERDF A way of making Europe”.  D.S. also acknowledges partial support from the ICREA Acadèmia 2023 grant awarded to Dr. Marcel Guardia Munàrriz. 

\bibliographystyle{amsalpha}
\bibliography{ref}
\end{document}